\documentclass{amsart}
\usepackage{amsmath}
\usepackage{amsfonts}
\usepackage{amssymb}
\usepackage{epsfig}
\newtheorem{theorem}{Theorem}[section]

\newtheorem{lm}[theorem]{Lemma}

\newtheorem{cor}[theorem]{Corollary}

\newtheorem{rem}[theorem]{Remark}
\newtheorem{pr}[theorem]{Proposition}
\newtheorem{example}[theorem]{Example}
\newtheorem{hyp}[theorem]{Hypothesis}

\usepackage{color}

\begin{document}
	
	\title{Quasi-reductive supergroups with small even parts}
	\author{}
	\address{}
	\email{}
	\author{A. N. Zubkov}
	\address{Sobolev Institute of Mathematics, Omsk Branch, Pevtzova 13, 644043 Omsk, Russian Federation}
	\email{a.zubkov@yahoo.com}
	\begin{abstract}
		We describe all supergroups with the largest even supersubgroups being isomorphic to $\mathrm{GL}_2, \mathrm{SL}_2$ or $\mathrm{PSL}_2$. These results are applied to
		the description of centralizers of certain tori in the quasi-reductive supergroups.
	\end{abstract}
	\maketitle
	
	\section*{Introduction}
	
	The purpose of this article is to describe (affine and algebraic) supergroups $\mathbb{G}$ whose the largest even (super)subgroups $\mathbb{G}_{ev}$ are isomorphic to one of the following groups :
	$\mathrm{GL}_2, \mathrm{SL}_2$ or $\mathrm{PSL}_2$. This is motivated by the following observation.
	
	Let $\mathbb{G}$ be a quasi-reductive supergroup (over an algebraically closed field $\Bbbk$ of zero or odd characteristic) and let $S$ be a (not necessary maximal) torus in the reductive group $G\simeq\mathbb{G}_{ev}$. Combining \cite[Theorem 6.6]{mas-shib} and \cite[Corollary 17.59]{milne}, we obtain that $\mathrm{Cent}_{\mathbb{G}}(S)$ is a quasi-reductive supergroup as well. 
	
	Furthermore, if $T$ is a maximal torus in $G$ and $\Delta$ is the root system of $\mathbb{G}$, which is associated with the adjoint action of $T$ on the Lie superalgebra $\mathfrak{G}=\mathrm{Lie}(\mathbb{G})$, then for any $\alpha\in\Delta$ we set $T_{\alpha}=\ker(\alpha)_t$. Note that $T_{\alpha}$ is a subtorus of $T$ of codimension at most one, 
	such that $G_{\alpha}=\mathrm{Cent}_G(T_{\alpha})$ is either $T$ or a (split) reductive group of semisimple rank one  (cf. \cite[Theorem 21.11]{milne}). In the latter case
	$G_{\alpha}$ is isomorphic to one of the following groups : 
	\[\mathrm{GL}_2\times T', \ \mathrm{SL}_2\times T', \mathrm{PSL}_2\times T',\]
	where $T'$ is a certain subtorus of $T$. Further, the subtorus $T'$ is central in $\mathbb{G}_{\alpha}=\mathrm{Cent}_{\mathbb{G}}(T_{\alpha})$ and 
	$\mathbb{G}_{\alpha}/T'$ has the even part being isomorphic to $\mathrm{GL}_2, \mathrm{SL}_2$ or $\mathrm{PSL}_2$. 
	Therefore, the problem of describing supergroups with the even part isomorphic to $\mathrm{GL}_2, \mathrm{SL}_2$ or $\mathrm{PSL}_2$ naturally arises.
	
	Our approach to solving this problem is based on the Harish-Chandra pair method (see \cite{mas-shib, maszub}). More precisely, any supergroup $\mathbb{G}$ is uniquely (up to an isomorphism) determined by the associated Harish-Chandra pair $(G, \mathfrak{G}_{\bar 1})$, where $\mathfrak{G}_{\bar 1}$ is regarded as a $G$-module with respect to the adjoint action. Thus all we need is to describe (up to an isomorphism as well) all Harish-Chandra pairs $(G, V)$, such that
	$G$ is isomorphic to $\mathrm{GL}_2, \mathrm{SL}_2$ or $\mathrm{PSL}_2$, and then translate it to the category of supergroups.
	
	As might be expected, the case $G\simeq\mathrm{GL}_2$ turned out to be the most non-trivial. However, its complete solution relies heavely on the case $G\simeq\mathrm{SL}_2$.
	
	Note that if $\mathbb{G}$ is quasi-reductive, then its \emph{unipotent radical} $\mathrm{R}_u(\mathbb{G})$ is \emph{purely odd}, that is $(\mathrm{R}_u(\mathbb{G}))_{ev}=e$.
	The latter implies that $\mathrm{R}_u(\mathbb{G})$ is a direct product of one-dimensional purely odd unipotent supergroups, hence abelian (see the next section for more detail). 
	Since $(\mathbb{G}/\mathrm{R}_u(\mathbb{G}))_{ev}\simeq G$, without loss of generality, one can assume that $\mathrm{R}_u(\mathbb{G})=e$, i.e. $\mathbb{G}$ is a \emph{reductive} supergroup. 
	
	So, let $\mathbb{G}$ be a quasi-reductive and reductive supergroup with $G\simeq\mathrm{SL}_2$. By Theorem \ref{or ... } one of the following statements holds :
	
	1. $\mathbb{G}\simeq\mathrm{SpO}(2|1)$.
	
	2. $\mathrm{char}\Bbbk=3$ and the $\mathrm{SL}_2$-module $\mathfrak{G}_{\bar 1}$ is isomorphic to the \emph{Weyl module} $V(3)$. Moreover, the corresponding Harish-Chandra pair structure, or equivalently, the Lie bracket on $\mathfrak{G}_{\bar 1}$, is unique (up to a scalar multiple). 
	The supergroup, associated with this Harish-Chandra pair, is denoted by $\mathbb{H}(3/1)$, and it was first introduced in \cite{zub-bar}. In the terminology of \cite{zub-bar}, $\mathbb{H}(3/1)$ is an almost-simple supergroup (in the strong sense), as well as $\mathrm{SpO}(2|1)$ is.
	
	3. $\mathfrak{G}_{\bar 1}$ is isomorphic to $L(0)\oplus L(2)$, where $L(2)$ can be identified with $\mathfrak{sl}_2$ (equipped with the adjoint action of $\mathrm{SL}_2$), so that
	\[ [L(0), L(0)]=[L(2), L(2)]=0 \ \mbox{and} \ [b, x]=bx, b\in\Bbbk\simeq L(0), x\in\mathrm{sl}_2.\]
	The supergroup, associated with this Harish-Chandra pair, is denoted by $\mathbb{H}(0\oplus 2)$. One can show that its \emph{derived supersubgroup} $\mathcal{D}(\mathbb{H}(0\oplus 2))$ is isomorphic to a semi-direct product of $\mathrm{SL}_2$ and $\mathrm{R}_u(\mathcal{D}(\mathbb{H}(0\oplus 2)))$. 
	Besides,  $\mathbb{H}(0\oplus 2)/\mathcal{D}(\mathbb{H}(0\oplus 2))$ is the one-dimensional purely odd unipotent supergroup.
	
	The case $G\simeq\mathrm{PSL}_2$ can be easily reduced to the previous one, so that $\mathbb{G}\simeq\mathbb{H}(0\oplus 2)/\mu_2$. 
	
	Finally, if $G\simeq\mathrm{GL}_2$, then all the Harish-Chandra pairs are divided into several families in accordance with what $[\mathfrak{G}_{\bar 1}, \mathfrak{G}_{\bar 1}]$ is equal to. We will present here the most non-trivial cases, referring to Section 5.1 in the rest. 
	
	Let $Z$ denote the center of $\mathrm{GL}_2$. Then $\mathfrak{G}_{\bar 1}=\oplus_{k\in\mathbb{Z}}(\mathfrak{G}_{\bar 1})_k$, where \[(\mathfrak{G}_{\bar 1})_k=\{x\in \mathfrak{G}_{\bar 1}\mid gx=\det(g)^{\frac{k}{2}}x, \ \mbox{for any} \ g\in Z \}.\]
	Further, $[(\mathfrak{G}_{\bar 1})_k, (\mathfrak{G}_{\bar 1})_t]\neq 0$ implies $k+t=0$ and for any integer $k$, $[(\mathfrak{G}_{\bar 1})_k, (\mathfrak{G}_{\bar 1})_{-k}]$ is equal to $\mathfrak{z}=\mathrm{Lie}(Z)=\Bbbk I_2, \mathfrak{sl}_2$ or $\mathfrak{gl}_2$. 
	
	We discuss in detail the cases $\mathfrak{G}_{\bar 1}=(\mathfrak{G}_{\bar 1})_k\oplus (\mathfrak{G}_{\bar 1})_{-k}$ and $[(\mathfrak{G}_{\bar 1})_k, (\mathfrak{G}_{\bar 1})_{-k}]]=\mathfrak{sl}_2$ or $[(\mathfrak{G}_{\bar 1})_k, (\mathfrak{G}_{\bar 1})_{-k}]=\mathfrak{gl}_2$ only. All other cases are easily reduced to these ones. 
	
	If $[(\mathfrak{G}_{\bar 1})_k, (\mathfrak{G}_{\bar 1})_{-k}]]=\mathfrak{sl}_2$, then $(\mathfrak{G}_{\bar 1})_{-k}\simeq L((t, t))\simeq \Bbbk\otimes\det^{-t}$ and
	$(\mathfrak{G}_{\bar 1})_k\simeq L((t+1, t-1))\simeq L(1, -1)\otimes\det^t\simeq\mathrm{sl}_2\otimes\det^t, t\in\mathbb{Z}$. We also have
	\[[b\otimes\mathrm{det}^{-t}, x\otimes\mathrm{det}^t]=bx, b\in\Bbbk, x\in\mathfrak{sl}_2.\]
	Let $[(\mathfrak{G}_{\bar 1})_k, (\mathfrak{G}_{\bar 1})_{-k}]]=\mathfrak{gl}_2$. If $k=0$, then $\mathfrak{G}_{\bar 1}\simeq L((0, 0))\oplus L((1, -1))\simeq\mathfrak{gl}_2$ and the Harish-Chandra pair structure is given by
	\[[\alpha I_2+x, \beta I_2+y]=(\alpha\beta a+c\mathrm{tr}(xy)) I_2+ (\beta x+\alpha y), \ x, y\in\mathfrak{sl}_2, \ \alpha, \beta\in\Bbbk,\]
	where $a, c$ are scalars, which are not equal to zero simultaneously. 
	
	The supergroup, associated with this Harish-Chandra pair, is denoted by ${\bf Q}(2; a, c)$. One can show that ${\bf Q}(2; a, c)\simeq {\bf Q}(2; a', c')$ if and only if there is
	$\alpha\in\Bbbk\setminus 0$, such that $a=\alpha a', c=\alpha^{-1}c'$. Besides,  ${\bf Q}(2; 1, \frac{1}{2})$ is isomorphic to the queer supergroup ${\bf Q}(2)$. So, the supergroups
	${\bf Q}(2; a, c)$ can be regarded as \emph{deformations} of ${\bf Q}(2)$.
	
	Finally, the case $k\neq 0$ splits into four :
	
	$1'$. $p| k=2t$. $(\mathfrak{G}_{\bar 1})_k\simeq L((t, t))\oplus L((t+1, t-1)), (\mathfrak{G}_{\bar 1})_{-k}\simeq L((-t, -t))\simeq L((t, t))^*$ and in the above notations, we have
	\[[a\otimes\mathrm{det}^t, b\otimes\mathrm{det}^{-t}]=ab I_2, \ [x\otimes\mathrm{det}^t, b\otimes\mathrm{det}^{-t}]=bx, \ a, b\in\Bbbk, x, y\in\mathfrak{sl}_2.\]
	Symmetrically,  $(\mathfrak{G}_{\bar 1})_k\simeq L((t, t))\oplus L((t+1, t-1)), (\mathfrak{G}_{\bar 1})_{-k}\simeq L((-t+1, -t-1))\simeq L((t+1, t-1))^*$ with the obvious modification of the Lie bracket.
	
	To formulate $2'$ and $3'$ we need some additional notations. Let $N_{\pm k}$ denote the $\mathrm{GL}_2$-submodules $\{x\in M_{\pm k}\mid [x, M_{\mp k}]\subseteq\mathfrak{z} \}$.
	Then $M_{k}/N_{k}$ and $M_{-k}/N_{-k}$ are mutually non-isomorphic modules from the list $L((t+1, t-1)), L((-t, -t))$. For the sake of certainty, we assume that
	$M_k/N_k\simeq L((t+1, t-1)), M_{-k}/N_{-k}\simeq L((-t, -t))$. Besides, in $(2')$ and $(3')$ we still assume that $p|k$.
	
	$2'$. $\mathfrak{G}_{\bar 1}\simeq (L((t+1, t-1))\oplus N_k)\oplus M_{-k}$, where $[L((t+1, t-1)), N_{-k}]=0$ and the Lie bracket on
	$L((t+1, t-1))\times M_{-k}/N_{-k}\to\mathfrak{sl}_2$ is determined as in the point $1'$. Besides, the Lie bracket on $N_k\times M_{-k}\to\mathfrak{z}$ is a non-degenerate pairing and both $N_k, M_{-k}$ are trivial $\mathfrak{gl}_2$-modules.  
	
	$3'$. $\mathfrak{G}_{\bar 1}\simeq (L((t+1, t-1))\oplus N_k)\oplus (L((-t, -t))\oplus N_{-k})$, where 
	\[[L((t+1, t-1)), N_{-k}]=[L((-t, -t)), N_k]=0, \]
	the Lie bracket on $L((t+1, t-1))\times L((-t, -t))$ is determined as in the point $1'$, and it induces a non-degenerate pairing $N_k\times N_{-k}\to\mathfrak{z}$, where both $N_k, N_{-k}$ are trivial $\mathfrak{gl}_2$-modules.
	
	$4'$. $p\not| k=2t-1$, $(\mathfrak{G}_{\bar 1})_k\simeq L((t, t-1))\simeq V\otimes\det^{t-1}, (\mathfrak{G}_{\bar 1})_{-k}\simeq L((t, t-1))^*\simeq L((1-t, -t))\simeq V\otimes\det^{-t}$,
	where $V\simeq L((1, 0))$ is the standard two-dimensional $\mathrm{GL}_2$-module. Let $v_1, v_{-1}$ be a basis of $V$, such that for any diagonal matrix 
	\[g=\left(\begin{array}{cc}
		g_1 & 0 \\
		0 & g_{-1}
	\end{array}\right)\in\mathrm{GL}_2\]
	we have $gv_{\pm 1}=g_{\pm 1}v_{\pm 1}$.  Then
	\[ [v_1\otimes\mathrm{det}^{t-1}, v_1\otimes\mathrm{det}^{-t}]=E_{1, -1}, \ [v_1\otimes\mathrm{det}^{t-1}, v_{-1}\otimes\mathrm{det}^{-t}]=-\frac{1}{2}(H-\frac{1}{k}I_2), \]
	\[ [v_{-1}\otimes\mathrm{det}^{t-1}, v_1\otimes\mathrm{det}^{-t}]=-\frac{1}{2}(H+\frac{1}{k}I_2), \ [v_{-1}\otimes\mathrm{det}^{t-1}, v_{-1}\otimes\mathrm{det}^{-t}]=-E_{-1, 1},\]
	where $H=E_{11}-E_{-1, -1}$ and $E_{ij}v_k=\delta_{jk}v_i, i, j, k\in\{\pm 1\}$.
	
	Let $\mathbb{H}(t)$ denote the supergroup, which corresponds to the Harish-Chandra pair from the point $4'$. Respectively, let $\mathfrak{H}(t)$ denote its Lie superalgebra. Note that $\mathbb{H}(1)\simeq\mathrm{SL}(2|1)$ and by \cite[Lemma 2.2]{zub-bar} each supergroup $\mathbb{H}(t)$ is almost-simple (in the weak sense).
	
	Next, it is easy to see that
	$\mathbb{H}(t)\simeq\mathbb{H}(1-t)$. One can show even more, $\mathbb{H}(t)$ is isomorphic to $\mathbb{H}(t')$ if and only if $t=t'$ or $t+t'=1$. What is more curious is that all Lie superalgebras $\mathfrak{H}(t)$ are isomorphic to $\mathfrak{H}(1)\simeq\mathfrak{sl}(2|1)$!
	
	Similarly, let $\mathbb{S}(t)$ and $\mathbb{L}(t)$ denote the supergroups corresponding to the non-isomorphic Harish-Chandra pairs from the point $1'$. Compairing (super)dimensions, one can easily see that for any integers $t, t'$ we have $\mathbb{S}(t)\not\simeq\mathbb{L}(t')$, $\mathbb{S}(t)\not\simeq\mathbb{H}(t')$ and $\mathbb{L}(t)\not\simeq\mathbb{H}(t')$. 
	We also prove that $\mathbb{S}(t)\simeq\mathbb{S}(t')$ or $\mathbb{L}(t)\simeq\mathbb{L}(t')$ if and only if $t=\pm t'$.
	
	The problem to describe the isomorphism classes of supergroups from point $2'$ and $3'$ is much more complicated and we leave it for the future work. 
	
	The article is organized as follows. In the first section we collect all auxiliary facts, results and notations concerning the representation theory of reductive groups, quasi-reductive supergroups and Harish-Chandra pairs, associated with them. In the second section we discuss the structure of centralizers of certain tori in quasi-reductive supergroups and provide some examples to illustrate it.  
	
	The third section is devoted to the description of supergroups with the even part being isomorphic to $\mathrm{SL}_2$. Using these results we describe supergroups with even parts being isomorphic to $\mathrm{PSL}_2\simeq\mathrm{PGL}_2$ or $\mathrm{GL}_2$, in the fourth and fifth sections respectively. Unfortunately, the results of the fifth section are difficult, if not impossible, to formulate in the form of a single theorem. This is why we selected the most interesting cases and discussed them above.
	
	In the sixth section we apply these results to describe the structure of the centralizers $\mathrm{Cent}_{\mathbb{G}}(T_{\alpha})$ and $\mathrm{Cent}_{\mathbb{G}}(\mathbb{T}_{\alpha})$ in a more detailed way.  Here $\mathbb{T}=\mathrm{Cent}_{\mathbb{G}}(T)$ is so-called \emph{Cartan supersubgroup}. We prove that if $\mathbb{G}$ is quasi-reductive and reductive, then the Cartan supersubgroup $\mathbb{T}$ is reductive also.
	
	\section{Auxiliary results}
	
	Throughout this article $\Bbbk$ is an algebraically closed field of arbitrary characteristic, unless stated otherwise.
	
	Recall some standard facts from representation theory of reductive groups (see also \cite{jan} for more details). 
	
	Let $G$ be a (smooth and connected) reductive group over $\Bbbk$ and $T$ its maximal torus. The elements of the character group $X(T)$ are called \emph{weights}, whenever they appear in a \emph{weight decomposition} $\oplus_{\lambda\in X(T)} W^{\lambda}$ of a $T$-module $W$, where $W^{\lambda}=\{w\in W\mid tw=\lambda(t)w, t\in T\}$.

	Let $\mathfrak{g}$ denote the Lie algebra of $G$. We have a \emph{root decomposition} 
	\[\mathfrak{g}=\mathfrak{t}\oplus (\oplus_{\alpha\in \Delta}\mathfrak{g}^{\alpha} )\]
	with respect to the adjoint action of $T$, where $\mathfrak{t}=\mathrm{Lie}(T)=\mathfrak{g}^0$ and $\Delta$ is the root system of $G$, determined by the choice of $T$. The choice of subset of positive roots $\Delta^+$ corresponds to the choice of \emph{positive} Borel subgroup $B^+$. 	The \emph{opposite} Borel subgroup $B^-$ corresponds to the subset of negative roots $\Delta^-=-\Delta^+$. The choice of $\Delta^+$ determines on $X(T)$ a partial order $\leq$ also, such that $\mu\leq\lambda$ if $\lambda-\mu$ is a sum of positive roots.
	
	Let $B$ be a Borel subgroup of $G$, such that $T\leq B$. Then $B=T\ltimes U$, where $U$ is the \emph{unipotent radical} of $B$. Recall that each $\lambda\in X(T)$ determines the one-dimensional representation $\Bbbk_{\lambda}$ of $B$, such that $U$ acts trivially on $\Bbbk_{\lambda}$.
	
	Set $H^0(\lambda)=\mathrm{ind}^G_{B^-}\Bbbk_{\lambda}$. A weight $\lambda$ is called \emph{dominant}, if $H^0(\lambda)\neq 0$. If $\lambda$ is dominant, then
	the socle of $H^0(\lambda)$ is an irreducible $G$-module, which is denoted by $L(\lambda)$. Conversely, any irreducible $G$-module is isomorphic to some $L(\lambda)$. In addition,
	$H^0(\lambda)^{\lambda}$ is a $B^+$-stable line and for any $\mu\neq\lambda$, $H^0(\lambda)^{\mu}\neq 0$ implies $\mu <\lambda$. 
	
	Let $W$ be a $G$-module and $V$ be a one-dimensional $B^+$-submodule of $W$, or equivalently, $V\subseteq W^{U^+}$. Then each $v\in V\setminus 0$ is called \emph{primitive}. For example, any nonzero element from 
	$H^0(\lambda)^{\lambda}$ is primitive. Set $V(\lambda)=H(-w_0(\lambda))^*$, where $w_0$ is the longest element of the \emph{Weyl group} $W(G, T)=\mathrm{N}_G(T)/T$ of $G$ (also determined by the choice of $T$). The module $V(\lambda)$ is said to be  the \emph{Weyl module}, or the \emph{universal highest weight module} of weight $\lambda$. Note that the top of $V(\lambda)$ is isomorphic to $L(\lambda)$. Moreover, for any $\mu\neq\lambda$, $V(\lambda)^{\mu}\neq 0$ implies $\mu<\lambda$. In particular, $V(\lambda)$ is generated by a primitive element of the weight $\lambda$. 
	The modules $H^0(\lambda)$ and $V(\lambda)$ have the same formal characters, but not necessary isomorphic. 
	
	The following lemma is a folklore.
	\begin{lm}\label{Weyl module is mapped onto}
		Let  $V$ be a finite dimensional $G$-module, such that its top is isomorphic to an irreducible $G$-module $L(\lambda)$ and any other composition factor $L(\mu)$ of $V$ satisfies $\mu<\lambda$.
		Then $V$ is an epimorphic image of the Weyl module $V(\lambda)$.  Symmetrically, if the socle of $V$ is isomorphic to $L(\lambda)$ and any other composition factor $L(\mu)$ of $V$  satisfies $\mu<\lambda$, then $V$ is embedded into $H^0(\lambda)$.		
	\end{lm}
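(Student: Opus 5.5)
We use the universal property of the Weyl module: $V(\lambda)$ is generated by a primitive vector $v_\lambda$ of weight $\lambda$, and for any finite dimensional $G$-module $M$ and any primitive vector $m\in M^{\lambda}$ such that every weight $\mu$ of $M$ satisfies $\mu\not>\lambda$, there is a unique homomorphism $V(\lambda)\to M$ with $v_\lambda\mapsto m$. This is the standard characterization $\mathrm{Hom}_G(V(\lambda),M)\simeq \mathrm{Hom}_{B^+}(\Bbbk_\lambda,M)$ restricted to such $M$. It follows by dualizing Frobenius reciprocity for $H^0(-w_0\lambda)=\mathrm{ind}^G_{B^-}\Bbbk_{-w_0\lambda}$ and translating via $w_0$, which interchanges $B^-$ and $B^+$. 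Since $\mathrm{Hom}_G(M^*,H^0(-w_0\lambda))\simeq\mathrm{Hom}_{B^-}(M^*,\Bbbk_{-w_0\lambda})$, we obtain $\mathrm{Hom}_G(V(\lambda),M)\simeq \mathrm{Hom}_{B^-}(\Bbbk_{w_0\lambda},M)$. Conjugating by a representative of $w_0$ turns this into $\mathrm{Hom}_{B^+}(\Bbbk_\lambda,M)$.

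For the first claim, every weight of $V$ is a weight of some composition factor $L(\mu)$ with $\mu\le\lambda$, so all weights of $V$ are $\le\lambda$. Moreover, $\lambda$ occurs in $V$ with multiplicity exactly one, coming from the top, since $L(\mu)^\lambda=0$ for $\mu<\lambda$. Let $R=\mathrm{rad}\,V$, so that $V/R\simeq L(\lambda)$. Then $V^\lambda$ is a line mapping isomorphically onto $L(\lambda)^\lambda$, and $R^\lambda=0$. Any $v\in V^\lambda\setminus 0$ is primitive, because $U^+$ raises weights and $V$ has no weights $>\lambda$. In Lie-algebra and distribution-algebra terms, $\mathrm{Dist}(U^+)^+$ kills $v$ for weight reasons, so the line $\Bbbk v$ is $B^+$-stable. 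The universal property then gives $\phi:V(\lambda)\to V$ with $\phi(v_\lambda)=v$. The composition $V(\lambda)\to V\to V/R\simeq L(\lambda)$ is nonzero because $v\notin R$. Hence $\phi(V(\lambda))$ is a submodule not contained in the radical $R$. Since $V$ has simple top, $R$ is the unique maximal submodule, so any submodule not contained in $R$ equals $V$. Therefore $\phi$ is surjective.

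The second claim is dual. Suppose $\mathrm{soc}\,V\simeq L(\lambda)$ and all other factors are $L(\mu)$ with $\mu<\lambda$. Then $V^*$ has top $L(\lambda)^*\simeq L(-w_0\lambda)$, and its other factors are $L(-w_0\mu)$. Since $-w_0$ preserves the dominance order, we have $-w_0\mu<-w_0\lambda$. The first part gives an epimorphism $V(-w_0\lambda)\to V^*$, and dualizing yields an embedding $V\hookrightarrow V(-w_0\lambda)^*=H^0(\lambda)$. This uses the definition $V(\nu)=H^0(-w_0\nu)^*$ together with $w_0^2=1$.

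The main obstacle is stating the universal property carefully, in particular the weight condition that makes $v$ genuinely $B^+$-primitive and the $w_0$-twist in Frobenius reciprocity. Everything after that is formal.
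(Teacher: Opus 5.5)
Your proof is correct and follows the paper's argument for the first claim. In both, a nonzero vector of weight $\lambda$ is primitive for weight reasons and generates $V$ modulo its radical, so the universal property of $V(\lambda)$ gives the epimorphism; the paper cites this property as Jantzen II.2.13(b), while you rederive it from Frobenius reciprocity and conjugation by $w_0$. The only real difference is in the second claim: the paper uses the contravariant duality $M\mapsto {}^{\tau}M$, which fixes $L(\lambda)$ and sends $H^0(\lambda)$ to $V(\lambda)$, whereas you use the ordinary dual together with the fact that $-w_0$ preserves the dominance order, and both reductions are valid.
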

	\begin{proof}
		The category of finite dimensional $G$-modules has the anti-equivalence $M\mapsto \llap{}^{\tau}M$, such that $\llap{}^{\tau} L(\lambda)\simeq L(\lambda)$ and
		$\llap{}^{\tau}H^0(\lambda)\simeq V(\lambda)$ (cf. \cite[II.2.12(2) and II.2.13(2)]{jan}).  In particular, the second statement is equivalent to the first one.
		Choose a nonzero vector $v\in V$ of weight $\lambda$. Since all weights of $V$ are less or equal to $\lambda$, the vector $v$ is primitive. Moreover, $v$ generates $V$ modulo its radical , hence $v$ generates the whole $V$ as well. Then \cite[Lemma II.2.13(b)]{jan} concludes the proof.	
	\end{proof}
	For any integer $r\geq 1$ let $G_r$ denote $r$-th \emph{Frobenius kernel} (cf. \cite[I.9.4]{jan}), which is a finite normal infinitesimal subgroup of $G$.  
	\begin{lm}\label{g-invariants}
		Let $V$ be a $G$-module. If $\mathrm{char}\Bbbk=p>0$ and $V$ is a trivial $\mathfrak{g}$-module, then each composition factor of $V$ has a form $L(\lambda)^{[1]}\simeq L(p\lambda)$.
	\end{lm}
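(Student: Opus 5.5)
Since $V$ is a trivial $\mathfrak{g}$-module, every $G$-submodule and every subquotient of $V$ is again a trivial $\mathfrak{g}$-module. It therefore suffices to prove the claim for a single irreducible $G$-module $L(\lambda)$ on which $\mathfrak{g}$ acts trivially. The identification $L(\lambda)^{[1]}\simeq L(p\lambda)$ is standard (Frobenius twist of a simple module is simple, with highest weight multiplied by $p$; cf. \cite[II.3.16]{jan}), so the point is to show that such an $L(\lambda)$ is a Frobenius twist.

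First, I translate $\mathfrak{g}$-triviality into triviality of the first Frobenius kernel. A $G_1$-module structure is the same as a restricted $\mathfrak{g}$-module structure, equivalently a module over $\mathrm{Dist}(G_1)=u(\mathfrak{g})$, the restricted enveloping algebra, which is generated by $\mathfrak{g}$ (cf. \cite[I.9.6]{jan}). Hence, if $\mathfrak{g}$ acts trivially on $L$, then $\mathrm{Dist}(G_1)$ acts through the counit. Since $G_1$ is infinitesimal, $\Bbbk[G_1]$ is dual to $\mathrm{Dist}(G_1)$, and this forces $G_1$ to act trivially on $L$.

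Next, I use the fact that a $G$-module on which $G_1$ acts trivially factors through $G/G_1$. Since $G/G_1\simeq G^{(1)}$ via the Frobenius morphism $F$, we get $L\simeq M^{[1]}$ for some $G$-module $M$ (cf. \cite[I.9.10]{jan}). Because $M\mapsto M^{[1]}$ preserves submodules, $M$ is irreducible as well, so $M\simeq L(\mu)$ for some dominant $\mu$. Comparing highest weights, which are multiplied by $p$ under twisting, gives $\lambda=p\mu$, and so $L\simeq L(\mu)^{[1]}\simeq L(p\mu)$.

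The main point needing care is the first step, the passage from triviality of the Lie algebra action to triviality of the $G_1$-action. This rests on $\mathrm{Dist}(G_1)$ being generated by $\mathfrak{g}$, so I would cite \cite{jan} precisely for it. Everything after that is formal.
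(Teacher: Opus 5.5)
Your proof is correct. Its first half is the same as the paper's: both use $\mathrm{Dist}(G_1)\simeq U^{[p]}(\mathfrak{g})$ \cite[I.9.6]{jan} to turn triviality of the $\mathfrak{g}$-action into triviality of the $G_1$-action.

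The second half takes a different route. The paper's proof cites \cite[II.3.15]{jan} and Steinberg's tensor product theorem \cite[II.3.17]{jan}, and it argues one composition factor at a time:
\begin{itemize}
\item it writes $L(\lambda)\simeq L(\lambda_0)\otimes L(\lambda_1)^{[1]}$ with $\lambda_0\in X_1(T)$;
\item restricted to $G_1$, this is a direct sum of copies of $L(\lambda_0)$, and $L(\lambda_0)$ is $G_1$-simple;
\item so triviality of the $G_1$-action forces $L(\lambda_0)$ to be one-dimensional with weight in $pX(T)$;
\item for non-semisimple $G$ such as $\mathrm{GL}_2$, where $X_1(T)$ contains powers of $\det$, this character then has to be absorbed into the twisted factor.
\end{itemize}

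You instead use Frobenius descent: a $G$-module on which $G_1$ acts trivially is a Frobenius twist. This avoids Steinberg's theorem and the classification of simple $G_1$-modules entirely, and it yields more. Applied directly to $V$, it gives $V\simeq (V')^{[1]}$ for some $G$-module $V'$, and the claim about composition factors follows by twisting a composition series of $V'$. So your preliminary reduction to simple modules is harmless but unnecessary.

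One point deserves to be stated explicitly. Passing from a $G/G_1\simeq G^{(1)}$-module to a $G$-module uses the identification $G^{(1)}\simeq G$, i.e. that $G$ is defined over $\mathbb{F}_p$. This holds automatically here, because a reductive group over algebraically closed $\Bbbk$ is split, and it is Jantzen's standing convention in Part II.
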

	\begin{proof}
		The statement follows by \cite[Proposition II.3.15]{jan}, \cite[Corollary II.3.17]{jan} (Steinberg's tensor product theorem) and the natural equivalence between the category of $G_1$-modules and the category of $U^{[p]}(\mathfrak{g})$-modules (cf. \cite[I.9.6]{jan}).   	
	\end{proof}
	\begin{rem}\label{converse of Lemma 1.2}
		The converse takes place if $\mathfrak{g}=\mathfrak{t}+\mathfrak{h}$, where $\mathfrak{t}=\mathrm{Lie}(T)$ and $\mathfrak{h}=[\mathfrak{h}, \mathfrak{h}]$.	Indeed, if each composition factor of $V$ has a form $L(\lambda)^{[1]}$, then the image of the induced Lie algebra morphism $\mathfrak{g}\to\mathfrak{gl}(V)$ is a nilpotent subalgebra. In particular, the image of $\mathfrak{h}$ is zero and it remains to note that $V$ is a semi-simple $\mathfrak{t}$-module, hence trivial as well.
	\end{rem}
	Let $\mathsf{SAlg}_{\Bbbk}$ denote the category of (associative and unital) super-commutative superalgebras. 
	An \emph{affine supergroup} $\mathbb{G}$ is a representable functor from $\mathsf{SAlg}_{\Bbbk}$ to the category of groups. In other words,  there is a Hopf superalgebra, denoted by
	$\Bbbk[\mathbb{G}]$, such that
	\[\mathbb{G}(A)=\mathrm{Hom}_{\mathsf{SAlg}_{\Bbbk}}(\Bbbk[\mathbb{G}], A), \ A\in\mathsf{SAlg}_{\Bbbk}.\]
	Closed supersubgroups of $\mathbb{G}$ are in one-to-one correspondence with the Hopf superideals of $\Bbbk[\mathbb{G}]$. More precisely, a group subfunctor $\mathbb{H}\leq\mathbb{G}$ is a closed supersubgroup if and only if there is a Hopf superideal $I$ of $\Bbbk[\mathbb{G}]$, such that
	\[\mathbb{H}(A)=\{g\in\mathbb{G}(A)\mid g(I)=0 \},  A\in\mathsf{SAlg}_{\Bbbk}. \] 
	In particular, $\Bbbk[\mathbb{H}]\simeq \Bbbk[\mathbb{G}]/I$. For example, $\Bbbk[\mathbb{G}]\Bbbk[\mathbb{G}]_{\bar 1}$ is a Hopf superideal, which determines the largest \emph{purely even} supersubgroup $\mathbb{G}_{ev}$. Note that for any superalgebra $A$ we have $\mathbb{G}_{ev}(A)=\mathbb{G}_{ev}(A_0)=\mathbb{G}(A_0)$, hence $\mathbb{G}_{ev}$ can be regarded as an affine group $G$, represented by the Hopf algebra $\Bbbk[G]=\Bbbk[\mathbb{G}]/\Bbbk[\mathbb{G}]\Bbbk[\mathbb{G}]_{\bar 1}$. 
	
	A supersubgroup $\mathbb{H}$ of $\mathbb{G}$ is called \emph{normal}, and it is denoted by $\mathbb{H}\unlhd\mathbb{G}$, if $\mathbb{H}(A)\unlhd\mathbb{G}(A)$ for arbitrary $A\in\mathsf{SAlg}_{\Bbbk}$. 
	
	An affine supergroup $\mathbb{G}$ is called \emph{algebraic}, provided $\Bbbk[\mathbb{G}]$ is finitely generated. From now on all supergroups are supposed to be affine and algebraic, unless stated otherwise. The category of (left) rational $\mathbb{G}$-supermodules is naturally identified with the category of (right) $\Bbbk[\mathbb{G}]$-supercomodules.
	
	Let $\mathfrak{G}$ denotes the Lie superalgebra of $\mathbb{G}$. Then $\mathfrak{G}$ is a $\mathbb{G}$-supermodule with respect to the adjoint action. 
	
	Let $G$ be an algebraic group and $V$ be a finite dimensional $G$-module.   
	The pair $(G, V)$ is said to be a \emph{Harish-Chandra pair}, if the following conditions hold:
	\begin{enumerate}
		\item There is a symmetric bilinear map $V\times V\to\mathrm{Lie}(G)$, denoted by $[ \ , \ ]$;
		\item This map is $G$-equivariant with respect to the diagonal action of $G$ on $V\times V$ and the adjoint action of $G$ on $\mathrm{Lie}(G)$;
		\item The induced action of $\mathrm{Lie}(G)$ on $V$, denoted by the same symbol $[ \ , \ ]$,  satisfies $[[v, v], v]=0$ for all $v\in V$. 
	\end{enumerate}
	The morphism of Harish-Candra pairs $(G, V)\to (H, W)$ is a couple $f : G\to H$ and $u : V\to W$, where $f$ is a morphism of algebraic groups and $u$ is a morphism of vector spaces, such that 
	\begin{enumerate}
		\item $u (gv)= f(g)u(v), \text{ for all } g\in G, \text{ and } v\in V$;
		\item $[u(v), u(v')]=\mathrm{d}_e(f)([v, v']), \text{ for all } v, v'\in V$. 
	\end{enumerate}  
	With each algebraic supergroup $\mathbb{G}$ we associate its Harish-Chandra pair $(G, \mathfrak{G}_{\bar 1})$, where $G=\mathbb{G}_{ev}$ acts on $\mathfrak{G}$ via the adjoint action
	and the bilinear map $\mathfrak{G}_{\bar 1}\times \mathfrak{G}_{\bar 1}\to\mathrm{Lie}(G)=\mathfrak{G}_{\bar 0}$ is the restriction of the Lie bracket on $\mathfrak{G}_{\bar 1}$. 
	The functor $\mathbb{G}\mapsto (G, \mathfrak{G}_{\bar 1})$ is an equivalence of the category of algebraic supergroups to the category of Harish-Chandra pairs (see \cite[Theorem 5.4 and Theorem 6.1]{mas-shib} or \cite[Theorem 12.10]{maszub}). 
	
	Furthermore, if a (closed) supersubgroup $\mathbb{H}$ of  $\mathbb{G}$ is represented by its Harish-Chandra (sub)pair $(H, \mathfrak{H}_{\bar 1})$, then  
	$\mathbb{H}$ is normal in $\mathbb{G}$ if and only if
	\begin{enumerate}
		\item $H\unlhd G$;
		\item $\mathfrak{H}_{\bar 1}$ is a $G$-submodule of $\mathfrak{G}_{\bar 1}$;
		\item $H\leq\ker(G\to\mathrm{GL}(\mathfrak{G}_{\bar 1}/\mathfrak{H}_{\bar 1}))$; 
		\item $[\mathfrak{G}_{\bar 1}, \mathfrak{H}_{\bar 1}]\subseteq\mathrm{Lie}(H)$.
	\end{enumerate}
	We call the conditions $(1)-(4)$ a \emph{normality criterion}. 
	The Harish-Chandra pair of the algebraic supergroup $\mathbb{G}/\mathbb{H}$ is naturally isomorphic to $(G/H, \mathfrak{G}_{\bar 1}/\mathfrak{H}_{\bar 1})$.
	
	Recall that a supergroup $\mathbb{H}$ is said to be \emph{unipotent}, if for any $\mathbb{H}$-supermodule $V$ there is $V^{\mathbb{H}}\neq 0$. 
	By \cite[Proposition 2.1]{maszub1}, $\mathbb{H}$ is unipotent if and only if $\mathbb{H}_{ev}=H$ is.
	
	Let $\mathbb{G}$ be a smooth connected supergroup. Using \cite[Lemma 1.1]{zub-bar}, one can easily show that $\mathbb{G}$ contains the largest normal smooth connected unipotent supersubgroup, that is called an \emph{unipotent radical} of $\mathbb{G}$ and denoted by $\mathrm{R}_u(\mathbb{G})$. We say that $\mathbb{G}$ is \emph{reductive}, if $\mathrm{R}_u(\mathbb{G})=e$.
	
	Similarly, one can define a \emph{solvable radical} of $\mathbb{G}$ as the largest normal smooth connected solvable supersubgroup (see \cite[Lemma 1.2]{zub-bar}). We denote it by 
	$\mathrm{R}(\mathbb{G})$. We obviously have $\mathrm{R}_u(\mathbb{G})\leq \mathrm{R}_u(\mathrm{R}(\mathbb{G}))$.
	
	A supergroup $\mathbb{G}$ is called \emph{purely odd}, if $G=e$. Such supergroup is isomorphic to the direct product of several copies of the one-dimensional purely odd (unipotent) supergroup $\mathbb{G}_a^-$. As a group fuctor $\mathbb{G}_a$ is defined by $\mathbb{G}_a(R)=(R_{\bar 1}, +)$.
	
	By the normality criterion, a purely odd supersubgroup $\mathbb{H}\leq\mathbb{G}$ is normal if and only if $\mathfrak{H}=\mathfrak{H}_{\bar 1}$ is a $G$-submodule of $\mathfrak{G}_{\bar 1}$, such that $[\mathfrak{G}_{\bar 1}, \mathfrak{H}]=0$. 
	
	Recall that $\mathbb{G}$ is said to be \emph{quasi-reductive}, if its largest even (super)subgroup $\mathbb{G}_{ev}=G$ is a reductive group. Note that if $\mathbb{G}$ is quasi-reductive, then $\mathrm{R}_u(\mathbb{G})$ is purely odd. Moreover, the Harish-Chandra pair of $\mathrm{R}_u(\mathbb{G})$ has a form $(e, \mathfrak{R})$, where $\mathfrak{R}$ is the largest $G$-submodule in $\mathfrak{G}_{\bar 1}$, such that $[\mathfrak{G}_{\bar 1}, \mathfrak{R}]=0$. 
	
	If the natural embedding $\mathbb{G}_{ev}\to\mathbb{G}$ is split, then $\mathbb{G}$ is said to be a \emph{split supergroup} as well. The latter takes place if and only if $[\mathfrak{G}_{\bar 1}, \mathfrak{G}_{\bar 1}]=0$ if and only if $\mathbb{G}\simeq\mathbb{G}_{ev}\ltimes \mathbb{G}_{odd}$, where $\mathbb{G}_{odd}\simeq (\mathbb{G}_a^-)^{\dim\mathfrak{G}_{\bar 1}}\leq\mathrm{R}_u(\mathbb{G})$.
	
	It is worth of noting that the property of being quasi-reductive does not imply the property of being reductive and vice versa. In fact, if $G$ is a reductive group and $V$ is a nonzero $G$-module, then the zero map $V\times V\to\mathrm{Lie}(G)$ defines a split supergroup $\mathbb{G}\simeq G\ltimes\mathrm{R}_u(\mathbb{G})$, where $\mathrm{R}_u(\mathbb{G})\simeq (\mathbb{G}_a^{-})^{\dim V}$. Conversely, a reductive, but not quasi-reductive, supergroup has been constructed in \cite[Section 8]{zubgrish}.
	
	If $V$ and $W$ are subspaces of $\mathfrak{G}_{\bar 1}$, then $[V, W]$ denotes the subspace of $\mathfrak{G}_{\bar 0}$ generated by all elements $[v, w], v\in V, w\in W$.
	\begin{lm}\label{may be useful}
		Let $\mathfrak{G}$ be a Lie superalgebra, such that $\mathfrak{G}_{\bar 0}$ is a simple Lie algebra. If $V_1$ and $V_2$ are $\mathfrak{G}_{\bar 0}$-submodules of $\mathfrak{G}_{\bar 1}$ and $V_1\cap V_2=0$, then $[V_1, V_1]\neq 0$ implies $V_2$ is a trivial $\mathfrak{G}_{\bar 0}$-module and $[V_2, V_2]=0$. 	
	\end{lm}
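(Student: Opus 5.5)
The plan is to show that $[V_1,V_1]$ is all of $\mathfrak{G}_{\bar 0}$, use the super Jacobi identity to see that $\mathfrak{G}_{\bar 0}$ acts on $V_2$ with values in $V_1\cap V_2=0$, and then use that a simple Lie algebra has trivial center.

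\emph{Step 1: $[V_1,V_1]=\mathfrak{G}_{\bar 0}$.} Since $V_1$ is a $\mathfrak{G}_{\bar 0}$-submodule, $[V_1,V_1]$ is an ideal of $\mathfrak{G}_{\bar 0}$. Indeed, for $x\in\mathfrak{G}_{\bar 0}$ and $v,w\in V_1$ the Jacobi identity gives
\[[x,[v,w]]=[[x,v],w]+[v,[x,w]]\in [V_1,V_1].\]
This ideal is nonzero by hypothesis, so simplicity of $\mathfrak{G}_{\bar 0}$ forces $[V_1,V_1]=\mathfrak{G}_{\bar 0}$.

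\emph{Step 2: $V_2$ is a trivial $\mathfrak{G}_{\bar 0}$-module.} Take $v,w\in V_1$ and $u\in V_2$, all odd. The super Jacobi identity for three odd elements reads
\[[[v,w],u]=[v,[w,u]]+[w,[v,u]].\]
The left-hand side lies in $V_2$, because $[v,w]\in\mathfrak{G}_{\bar 0}$ and $V_2$ is a submodule. On the right, $[w,u]$ and $[v,u]$ lie in $\mathfrak{G}_{\bar 0}$. Since $V_1$ is a submodule, $[v,[w,u]]=\pm[[w,u],v]\in V_1$, and likewise $[w,[v,u]]\in V_1$. Thus both sides lie in $V_1\cap V_2=0$, so $[[v,w],u]=0$. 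By Step 1 the elements $[v,w]$ span $\mathfrak{G}_{\bar 0}$, hence $\mathfrak{G}_{\bar 0}$ annihilates $V_2$.

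\emph{Step 3: $[V_2,V_2]=0$.} For $x\in\mathfrak{G}_{\bar 0}$ and $u,u'\in V_2$, Step 2 gives
\[[x,[u,u']]=[[x,u],u']+[u,[x,u']]=0.\]
Hence $[V_2,V_2]$ lies in the center of $\mathfrak{G}_{\bar 0}$. A simple Lie algebra is non-abelian, so its center is a proper ideal and therefore zero, and $[V_2,V_2]=0$.

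\emph{Main obstacle.} The argument uses nothing beyond the Jacobi identities, so the only delicate point is their validity in small characteristic. In characteristic $2$ or $3$ I would check that the Lie superalgebra axioms in force (here, those coming from $\mathrm{Lie}(\mathbb{G})$, in particular condition $(3)$ of a Harish-Chandra pair together with $G$-equivariance) supply the polarized identity $[[v,w],u]=[v,[w,u]]+[w,[v,u]]$ for odd $v,w,u$. For a Lie superalgebra of an algebraic supergroup this identity holds in all characteristics, so I expect no real difficulty.
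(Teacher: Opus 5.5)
Your proof is correct and follows essentially the same route as the paper: the super Jacobi identity places $[[v,w],u]$ in $V_1\cap V_2=0$, $[V_1,V_1]$ is a nonzero ideal and hence all of $\mathfrak{G}_{\bar 0}$, and $[V_2,V_2]$ is then central and therefore zero. Your characteristic concern does not affect the lemma, because it is stated for an abstract Lie superalgebra, where the graded Jacobi identity you use is an axiom; the paper's proof relies on exactly that identity.
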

	\begin{proof}
		For arbitrary $x, y\in V_1, z\in V_2$ we have
		\[[[x, z], y]+[[z, y], x]=-[[x, y], z]\in V_1\cap V_2=0 .\]
		It remains to note that $[V_1, V_1]$ is a nonzero ideal in $\mathfrak{G}_{\bar 0}$, hence $[V_1, V_1]=\mathfrak{G}_{\bar 0}$ and therefore, $[x, v]=0$ for any $x\in\mathfrak{G}_{\bar 0}, v\in V_2$. Finally, if $V_2$ is a trivial $\mathfrak{G}_{\bar 0}$-module, then
		$[V_2, V_2]$ is a central ideal in $\mathfrak{G}_{\bar 0}$, that implies $[V_2, V_2]=0$.	
	\end{proof}	
	\begin{lm}\label{may be useful too}
		Let $\mathfrak{G}$ be a Lie superalgebra, such that $\mathfrak{G}_{\bar 0}$ is a simple Lie algebra. If $V_1$ and $V_2$ are $\mathfrak{G}_{\bar 0}$-submodules of $\mathfrak{G}_{\bar 1}$, such that $[V_1, V_2]=0$ and $[V_1, V_1]\neq 0$, then $V_2$ is a trivial $\mathfrak{G}_{\bar 0}$-module. 		
	\end{lm}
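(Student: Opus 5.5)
The idea is to reuse the super-Jacobi identity from the proof of Lemma \ref{may be useful}, with the hypothesis $[V_1, V_2]=0$ now playing the role that $V_1\cap V_2=0$ played there. Take arbitrary $x, y\in V_1$ and $z\in V_2$. Since all three elements are odd, the super-Jacobi identity gives
\[[[x, y], z]=-[[x, z], y]-[[z, y], x].\]
Both $[x, z]$ and $[z, y]=[y, z]$ lie in $[V_1, V_2]=0$, so the right-hand side vanishes. Hence $[[x, y], z]=0$ for all such $x, y, z$. By linearity, the whole subspace $[V_1, V_1]\subseteq\mathfrak{G}_{\bar 0}$ acts trivially on $V_2$.

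Next I will check that $[V_1, V_1]$ is an ideal of $\mathfrak{G}_{\bar 0}$. Let $h\in\mathfrak{G}_{\bar 0}$ and $x, y\in V_1$. The Jacobi identity with an even element gives
\[[h, [x, y]]=[[h, x], y]+[x, [h, y]].\]
Because $V_1$ is a $\mathfrak{G}_{\bar 0}$-submodule, both $[h, x]$ and $[h, y]$ lie in $V_1$, so the right-hand side lies in $[V_1, V_1]$. Thus $[V_1, V_1]$ is an ideal, and by hypothesis it is nonzero. Since $\mathfrak{G}_{\bar 0}$ is simple, $[V_1, V_1]=\mathfrak{G}_{\bar 0}$.

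Combining the two paragraphs, all of $\mathfrak{G}_{\bar 0}$ annihilates $V_2$, so $V_2$ is a trivial $\mathfrak{G}_{\bar 0}$-module.

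I expect no genuine obstacle here. The only care needed is with the signs in the super-Jacobi identity for three odd elements. That is harmless, because both terms on the right vanish regardless of their signs.
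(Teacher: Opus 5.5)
Your proof is correct and is essentially the paper's argument: the paper also uses that the nonzero ideal $[V_1,V_1]$ must equal $\mathfrak{G}_{\bar 0}$, and then applies the Jacobi identity, $[[V_1,V_1],V_2]=[V_1,[V_1,V_2]]=0$. The only difference is that you write out the super-Jacobi identity for three odd elements and verify the ideal property explicitly, where the paper compresses both into one line.
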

	\begin{proof}
		As above, $[V_1, V_1]=\mathfrak{G}_{\bar 0}$. Thus
		\[[\mathfrak{G}_{\bar 0}, V_2]=[[V_1, V_1], V_2]=[V_1, [V_1, V_2]]=0.\]		
	\end{proof}	
	
	\section{Centralizers of tori. A preliminary look at their description.}
	
	From now on we assume that $\mathrm{char}\Bbbk\neq 2$, unless stated otherwise.	Let $\mathbb{G}$ be a quasi-reductive supergroup. Let $T$ be a maximal torus in the reductive group $\mathbb{G}_{ev}=G$. Then \cite[Theorem 6.6]{mas-shib} and \cite[Proposition 17.61]{milne} imply that
	the Harish-Chandra pair of $\mathrm{Cent}_{\mathbb{G}}(T)$ is $(\mathrm{Cent}_G(T), \mathfrak{G}_{\bar 1}^T)=(T, \mathfrak{G}_{\bar 1}^T)$. The supergroup $\mathbb{T}=\mathrm{Cent}_{\mathbb{G}}(T)$ is called
	the \emph{Cartan supersubgroup} or \emph{super-torus} of $\mathbb{G}$ (cf. \cite[Lemma 3.5]{tshib}). 
	
	We have $\mathfrak{G}=\oplus_{\alpha\in\Delta}\mathfrak{G}^{\alpha} \oplus\mathfrak{G}^0$ a \emph{root decomposition} of $\mathfrak{G}$ with respect to the adjoint action of $T$,
	where $\Delta$ consists of nonzero roots. It is clear that 
	\[\mathrm{Lie}(\mathbb{T})=\mathfrak{t}\oplus\mathfrak{G}_{\bar 1}^T =\mathfrak{G}^0 .\]
	Set $\Delta_{\bar i}=\{\alpha\in\Delta| \mathfrak{G}^{\alpha}_{\bar i}=\mathfrak{G}^{\alpha}\cap\mathfrak{G}_{\bar i}\neq 0 \}, i=0, 1$. Note that the set 
	$\Delta_{\bar 0}\cap\Delta_{\bar 1}$ is not necessary empty.
	
	For a root $\alpha\in\Delta$, set $T_{\alpha}=\ker(\alpha)_t$. By \cite[Theorem 6.6(2)]{mas-shib} the Harish-Chandra pair of $\mathbb{G}_{\alpha}=\mathrm{Cent}_{\mathbb{G}}(T_{\alpha})$ has a form $(G_{\alpha}, \mathfrak{G}_{\bar 1}^{T_{\alpha}})$, where $G_{\alpha}=\mathrm{Cent}_G(T_{\alpha})$. 
	Recall that the description of centralizers $G_{\alpha}$ plays crucial role in the description of reductive groups (see \cite[Chapters 20-21]{milne}). So, we expect that the description of centralizers $\mathbb{G}_{\alpha}$ may be also useful for the future structural theory of quasi-reductive supergroups (especially, in positive characteristic).  
	\begin{lm}\label{structure of centralizer}
		The supergroup $\mathbb{G}_{\alpha}$ is quasi-reductive. Moreover, we have :
		\begin{enumerate}
			\item If $\alpha\in\Delta_{\bar 0}$, then 
			\[\mathrm{Lie}(G_{\alpha})=\mathfrak{t}\oplus\mathfrak{G}^{\alpha}_{\bar 0}\oplus\mathfrak{G}^{-\alpha}_{\bar 0}, \ \dim\mathfrak{G}_{\bar 0}^{\pm\alpha}=1, \ \mathfrak{G}_{\bar 1}^{T_{\alpha}}=(\oplus_{\beta\in\Delta_{\bar 1}\cap\mathbb{Q}\alpha}\mathfrak{G}^{\beta}_{\bar 1})\oplus \mathfrak{G}_{\bar 1}^T ; \]
			\item If $\alpha\in\Delta_{\bar 1}\setminus \Delta_{\bar 0}$, then
			\[\mathrm{Lie}(G_{\alpha})=\mathfrak{t}\oplus\mathfrak{G}^{\gamma}_{\bar 0}\oplus\mathfrak{G}^{-\gamma}_{\bar 0}, \ \dim\mathfrak{G}_{\bar 0}^{\pm\gamma}=1, \ \mathfrak{G}_{\bar 1}^{T_{\alpha}}=(\oplus_{\beta\in\Delta_{\bar 1}\cap\mathbb{Q}\alpha}\mathfrak{G}^{\beta}_{\bar 1})\oplus \mathfrak{G}_{\bar 1}^T, \]
			where $\gamma$ is the unique positive root from $\Delta_{\bar 0}\cap\mathbb{Q}\alpha$, otherwise
			\[G_{\alpha}=T, \ \mathfrak{G}_{\bar 1}^{T_{\alpha}}=(\oplus_{\beta\in\Delta_{\bar 1}\cap\mathbb{Q}\alpha}\mathfrak{G}^{\beta}_{\bar 1})\oplus \mathfrak{G}_{\bar 1}^T.\]
		\end{enumerate} 	
	\end{lm}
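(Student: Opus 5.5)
Quasi-reductivity is immediate from what is already recorded. By \cite[Theorem 6.6(2)]{mas-shib} the Harish-Chandra pair of $\mathbb{G}_{\alpha}$ is $(G_{\alpha}, \mathfrak{G}_{\bar 1}^{T_{\alpha}})$, so $(\mathbb{G}_{\alpha})_{ev}=G_{\alpha}=\mathrm{Cent}_G(T_{\alpha})$. Since $T_{\alpha}$ is a torus in the reductive group $G$, its centralizer is a smooth connected reductive subgroup containing $T$ (\cite[Corollary 17.59]{milne}). Thus $\mathbb{G}_{\alpha}$ is quasi-reductive, and the remaining work is to compute $\mathrm{Lie}(G_{\alpha})$ and $\mathfrak{G}_{\bar 1}^{T_{\alpha}}$ in terms of the root decomposition.

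\textbf{Step 1 (odd part).} I would show that $\mathfrak{G}_{\bar 1}^{T_{\alpha}}$ is the sum of the $T$-weight spaces $\mathfrak{G}^{\beta}_{\bar 1}$ with $\beta|_{T_{\alpha}}=1$. Since $T_{\alpha}=\ker(\alpha)_t$ is the reduced connected component of $\ker\alpha$, the characters of $T$ trivial on $T_{\alpha}$ are those $\beta$ with $n\beta\in\mathbb{Z}\alpha$ for some $n\neq 0$. Indeed, $X(T/T_{\alpha})$ has rank one and, by the standard description of characters vanishing on a subtorus, is the saturation of $\mathbb{Z}\alpha$ in $X(T)$. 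This gives precisely $\beta\in\mathbb{Q}\alpha$. Adding $\beta=0$ yields $\mathfrak{G}_{\bar 1}^{T_{\alpha}}=(\oplus_{\beta\in\Delta_{\bar 1}\cap\mathbb{Q}\alpha}\mathfrak{G}^{\beta}_{\bar 1})\oplus\mathfrak{G}_{\bar 1}^T$. The same argument applied to the adjoint action on $\mathfrak{g}=\mathfrak{G}_{\bar 0}$ gives $\mathrm{Lie}(G_{\alpha})=\mathfrak{g}^{T_{\alpha}}=\mathfrak{t}\oplus\bigoplus_{\gamma\in\Delta_{\bar 0}\cap\mathbb{Q}\alpha}\mathfrak{g}^{\gamma}$, using $\mathrm{Lie}(\mathrm{Cent}_G(S))=\mathfrak{g}^S$ (\cite[Proposition 17.61]{milne} or \cite[Theorem 6.6]{mas-shib}).

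\textbf{Step 2 (even part).} Here I would use the standard theory of reduced root systems of reductive groups (\cite[Chapters 20--21]{milne}): $\Delta_{\bar 0}$ is reduced and every even root space is one-dimensional. Hence $\Delta_{\bar 0}\cap\mathbb{Q}\alpha$ is either empty or equal to $\{\pm\gamma\}$ for a single positive $\gamma$.
\begin{itemize}
\item If $\alpha\in\Delta_{\bar 0}$, then $\gamma=\pm\alpha$, which gives case (1).
\item If $\alpha\in\Delta_{\bar 1}\setminus\Delta_{\bar 0}$ and some even root lies on the line $\mathbb{Q}\alpha$, then $\mathrm{Lie}(G_{\alpha})=\mathfrak{t}\oplus\mathfrak{G}^{\gamma}_{\bar 0}\oplus\mathfrak{G}^{-\gamma}_{\bar 0}$.
\item Otherwise $\mathrm{Lie}(G_{\alpha})=\mathfrak{t}$, and since $G_{\alpha}$ is smooth, connected and contains $T$, we get $G_{\alpha}=T$.
\end{itemize}

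\textbf{Main obstacle.} The step I expect to need the most care is the identification of the characters trivial on $T_{\alpha}$ with $X(T)\cap\mathbb{Q}\alpha$. The paper works in odd or arbitrary characteristic, and $\ker\alpha$ need not be reduced or connected, so passing to $(\ker\alpha)_t$ is exactly what makes the answer rational multiples rather than integer multiples. I would handle this via the duality between subtori of $T$ and saturated sublattices of $X(T)$.
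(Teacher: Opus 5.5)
Your proposal is correct and takes essentially the same route as the paper. The paper gets the odd part from $T_{\alpha}=T_{\beta}$ if and only if $k\alpha=l\beta$ with $k,l\neq 0$, which is your identification of the characters trivial on $T_{\alpha}$ with $X(T)\cap\mathbb{Q}\alpha$. It gets the even part from Milne's Theorem 21.11, which says $G_{\alpha}$ is split reductive of semisimple rank $0$ or $1$ with a unique $\gamma$. You instead read the same facts off from $\mathrm{Lie}(G_{\alpha})=\mathfrak{g}^{T_{\alpha}}$, reducedness of $\Delta_{\bar 0}$, and a dimension count, which is only a cosmetic difference.
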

	\begin{proof}
		The second identities are obvious, since $\mathfrak{G}^{\beta}_{\bar 1}\subseteq \mathfrak{G}_{\bar 1}^{T_{\alpha}}$ if and only if $T_{\alpha}\leq\ker(\beta)$ if and only if $T_{\alpha}=T_{\beta}$. It remains to note that if $\alpha$ and $\beta$ are nonzero characters, then $T_{\alpha}=T_{\beta}$ if and only if there are nonzero integers $k$ and $l$, such that $k\alpha=l\beta$. 
		
		As it has been observed in \cite[Theorem 21.11]{milne}, $G_{\alpha}$ is a split reductive group with maximal torus $T$, which has the semisimple rank $0$ or $1$. This rank equals $1$ if and only if there is $\gamma\in\Delta_0$, such that $T_{\alpha}\leq\ker(\gamma)$. Moreover, if it takes place, then the root $\gamma$ is unique.  
		If such root does not exists, then the semisimple rank of $G_{\alpha}$ is equal to zero, hence $G_{\alpha}=T$.	
	\end{proof}
	Assume that $G_{\alpha}$ has semisimple rank one. Then by \cite[Theorem 20.33]{milne}, $G_{\alpha}$ is isomorphic to one of the following groups : 
	\[ \mathrm{SL}_2\times T', \ \mathrm{PGL}_2\times T', \ \mathrm{GL}_2\times T', \]
	where $T'$ is a subtorus of $T$. Note that in the first two cases, we have $T'=T_{\alpha}$. Indeed, in these cases the center of $G_{\alpha}$ is either $\mu_2\times T'$ or just $T'$, and it contains $T_{\alpha}$ also. Since $T'$ has codimension one in $T$, we obtain $T'=T_{\alpha}$. In the last case the center of $G_{\alpha}$ coincides with $\mathrm{Z}(\mathrm{GL}_2)\times T'\simeq G_m\times T'$, which is a subtorus of $T$ of codimension one. Thus $T_{\alpha}=G_m\times T'$. Note also that in all these cases
	$\ker(\alpha)$ is equal to either $T_{\alpha}$ or $\mu_2\times T_{\alpha}$. Since $\mathrm{char}\Bbbk\neq 2$, we have $T_{\alpha}=\ker(\alpha)^0$. 
	
	Let $\overline{G_{\alpha}}$ denote the first factor of $G_{\alpha}$, so that $G_{\alpha}=\overline{G_{\alpha}}\times T'$ and $\overline{G_{\alpha}}$ is isomorphic to one of the following groups : $\mathrm{SL}_2$, $\mathrm{PGL}_2\simeq\mathrm{PSL}_2$ or $\mathrm{GL}_2$. Observe also, that if $\overline{G}_{\alpha}\simeq \mathrm{GL}_2$, then its center acts trivially on $\mathfrak{G}_{\bar 1}^{T_{\alpha}}$.	
	
	We have $\mathrm{Lie}(G_{\alpha})=\mathrm{Lie}(\overline{G_{\alpha}})\oplus\mathfrak{t}'$ , where $\mathfrak{t}'=\mathrm{Lie}(T')$ and both direct terms are $G_{\alpha}$-submodules with respect to the adjoint action. The restriction of the Lie bracket on $\mathfrak{G}_{\bar 1}^{T_{\alpha}}$ can be recorded as
	\[[x, y]=[x, y]_1 +[x, y]_2, \ \mbox{where} \ x, y\in \mathfrak{G}_{\bar 1}^{T_{\alpha}}, \ [x, y]_1\in\mathrm{Lie}(\overline{G_{\alpha}}), \ [x, y]_2\in\mathfrak{t}' .\]
	Since $\mathfrak{t}'$ acts trivially on  $\mathfrak{G}_{\bar 1}^{T_{\alpha}}$, the identity $[[x, x], x]=0$ is equivalent to $[[x, x]_1, x]=0$, that is 
	$(\overline{G_{\alpha}}, \mathfrak{G}_{\bar 1}^{T_{\alpha}})$ has the structure of Harish-Chandra pair with respect to the symmetric bilinear map $(x, y)\mapsto [x, y]_1$.
	Note that the corresponding supergroup is isomorphic to $\mathbb{G}_{\alpha}/T'$. This , in addition to the pure theoretical interest, is another motivation for describing supergroups with the even part isomorphic
	to one of the groups : $\mathrm{SL}_2$, $\mathrm{PGL}_2\simeq\mathrm{PSL}_2$ or $\mathrm{GL}_2$. 
	
	In the third and fourth sections we completely investigate the structure of such supergroups and apply it to the above discussed centralizers of certain subtori. 
	Unfortunately, this does not give a complete description of centralizers $\mathbb{G}_{\alpha}$. One of the obstacles is that, in contrast to the purely even case, the exact sequence 
	\[e\to T'\to\mathbb{G}_{\alpha}\to\mathbb{G}_{\alpha}/T'\to e \]
	is not always split.     
	
	The examples below illustrate the content of this section.
	\begin{example}\label{Centralizer in GL}
		Let $\mathbb{G}$ be the general linear supergroup $\mathrm{GL}(m|n)$. For any (super-commutative) superalgebra $R$, the group $\mathbb{G}(R)$ consists of all matrices
		\[g=\left(\begin{array}{cc}
			A & B \\
			C & D	
		\end{array}\right), \ A\in\mathrm{GL}_m(R_{\bar 0}), \ D\in\mathrm{GL}_n(R_{\bar 0}), \]
		\[B\in\mathrm{Mat}_{m\times n}(R_{\bar 1}), \ C\in\mathrm{Mat}_{n\times m}(R_{\bar 1}). \]
		If it does not lead to confusion, we omit $R$ from our notations. For example, we are recording
		\[\mathrm{GL}(m|n)=\left(\begin{array}{cc}
			\mathrm{GL}_m & \mathrm{Mat}_{m\times n} \\
			\mathrm{Mat}_{n\times m} & \mathrm{GL}_n	
		\end{array}\right).\]
		Further, $G=\mathbb{G}_{ev}\simeq \mathrm{GL}_m\times \mathrm{GL}_n$ and we choose the maximal torus $T\leq G$ consisting of all diagonal matrices. The root system has a form 
		\[\Delta=\{\epsilon_i-\epsilon_j \mid 1\leq i\neq j\leq m+n\} \]
		with
		\[\Delta_{\bar 0}=\{\epsilon_i-\epsilon_j \mid 1\leq i\neq j\leq m \ \mbox{or} \ m+1\leq i\neq j\leq m+n \} \]
		and
		\[\Delta_{\bar 1}=\{\epsilon_i-\epsilon_j \mid 1\leq i\leq m< j\leq m+n \ \mbox{or} \ 1\leq j\leq m< i\leq m+n\}. \]
		Note that $\Delta_{\bar 0}\cap\Delta_{\bar 1}=\emptyset$.
		
		Let $\alpha=\epsilon_i-\epsilon_j\in\Delta_{\bar 0}$. Since $\mathbb{Q}\alpha\cap\Delta_{\bar 1}=\emptyset$ and $\mathfrak{G}_{\bar 1}^T=0$, Lemma \ref{structure of centralizer}(1) implies $\mathbb{G}_{\alpha}=G_{\alpha}\simeq\mathrm{GL}_2\times T'$. The first factor, that is isomorphic to  $\mathrm{GL}_2$, consists of matrices $(g_{kl})_{1\leq k, l\leq m+n}$, such that $g_{kl}\neq 0$ if and only if either $k, l\in\{ i, j\}$ or $k=l\neq i, j$, and in the latter case $g_{kk}=1$. Besides, $T'$ is a subtorus of $T$ consisting of matrices $g\in T$, such that $g_{ii}=g_{jj}=1$.
		
		Let $\alpha=\epsilon_i-\epsilon_j\in\Delta_{\bar 1}$. Since $\mathbb{Q}\alpha\cap\Delta_{\bar 0}=\emptyset$ and $\mathbb{Q}\alpha\cap\Delta_{\bar 1}=\{\pm\alpha\}$, Lemma \ref{structure of centralizer}(2) implies $G_{\alpha}=T$ and $\mathfrak{G}_{\bar 1}^{T_{\alpha}}=\mathfrak{G}_{\bar 1}^{\alpha}\oplus\mathfrak{G}_{\bar 1}^{-\alpha}$. Thus 
		$\mathbb{G}_{\alpha}\simeq \mathrm{GL}(1|1)\times T'$, where the factors $\mathrm{GL}(1|1)$ and $T'$ are described similarly to the previous case.   
	\end{example}
	\begin{example}\label{centralizer in SL}
		Let $\mathbb{G}$ be the special linear supergroup $\mathrm{SL}(m|n)$, where $m, n\geq 1$. Recall that \[\mathrm{SL}(m|n)=\{g\in \mathrm{GL}(m|n)\mid\mathrm{Ber}(g)=1\},\] where
		\[\mathrm{Ber}(g)=\det(A-BD^{-1}C)\det(D)^{-1}, \ g=\left(\begin{array}{cc}
			A & B \\
			C & D	
		\end{array}\right), \ A\in\mathrm{GL}_m, \ D\in\mathrm{GL}_m, \]
		\[B\in\mathrm{Mat}_{m\times n}, \ C\in\mathrm{Mat}_{n\times m}. \]  
		In particular, $G=\mathbb{G}_{ev}\simeq \ker\chi$, where \[\chi :  \mathrm{GL}_m\times \mathrm{GL}_n\to G_m, \chi(A\times D)=\det(A)\det(D)^{-1}.\]
		The maximal torus $T$ of $G$ consists of all diagonal matrices, whose diagonal entries satisfy $\prod_{1\leq k\leq m}t_{kk}=\prod_{m+1\leq k\leq m+n}t_{kk}$. 
		
		The character group $X(T)$ is isomorphic to $(\sum_{1\leq k\leq m+n}\mathbb{Z}\epsilon_k)/\mathbb{Z}\omega$, where $\omega=\sum_{1\leq k\leq m}\epsilon_k-\sum_{m+1\leq k\leq m+n}\epsilon_k$. The root system of $\mathbb{G}$ coincides with the image of the natural map $\Delta\to X(T)$, where $\Delta=\{\epsilon_i-\epsilon_j \mid 1\leq i\neq j\leq m+n\}$ is the root system from the previous example. 
		
		It is clear that $\mathfrak{G}_{\bar 1}^T\neq 0$ if and only if 
		$\Delta_{\bar 1}\cap\mathbb{Z}\omega\neq\emptyset$ if and only if $m=n=1$. If the latter takes place, then $\Delta_{\bar 0}=\emptyset, \Delta_{\bar 1}=\{\pm\omega \}$ and $T=T_{\omega}$ 
		implies $\mathbb{G}_{\omega}=\mathrm{Cent}_{\mathbb{G}}(T)=\mathbb{G}$.
		Until the end of this fragment we assume that $(m, n)\neq (1, 1)$.
		
		Let $\alpha=\epsilon_i-\epsilon_j\in\Delta_{\bar 0}$, where $1\leq i\neq j\leq m$ (the case $m+1\leq i\neq j\leq m+n$ is similar). One easily sees that there is no odd root $\beta$, such that $\beta\in\mathbb{Q}\alpha+\mathbb{Q}\omega$. Again, Lemma \ref{structure of centralizer}(1)
		implies $\mathbb{G}_{\alpha}=G_{\alpha}\simeq\mathrm{GL}_2\times T'$. The first factor, that is isomorphic to $\mathrm{GL}_2$, consists of matrices $(g_{kl})_{1\leq k, l\leq m+n}$, such that $g_{kl}\neq 0$ if and only if either $k, l\in\{ i, j\}$ or $k=l\neq i, j$, and in the latter case $g_{kk}=1$, except $g_{m+n, m+n}=g_{ii}g_{jj}-g_{ij}g_{ji}$. As above, $T'$ is a subtorus of $T$ consisting of matrices $g\in T$, such that $g_{ii}=g_{jj}=1$.
		
		Finally, let $\alpha\in\Delta_{\bar 1}$. As above, we have $G_{\alpha}=T$. However, $\Delta_{\bar 1}\cap (\mathbb{Q}\alpha +\mathbb{Q}\omega)=\{\pm\alpha\}$ if and only if
		$(m, n)\neq (2, 2)$, otherwise $\Delta_{\bar 1}\cap (\mathbb{Q}\alpha +\mathbb{Q}\omega)=\{\pm\alpha, \pm\beta\}$. More precisely, if $\alpha=\epsilon_i-\epsilon_j, 1\leq i\leq 2< j\leq 4$, then $\beta=\epsilon_{j'}-\epsilon_{i'}$, where $\{i'\}=\{1, 2\}\setminus\{i\}, \{j'\}=\{3, 4\}\setminus\{j\}$.
		Equivalently, $\mathfrak{G}_{\bar 1}^{T_{\alpha}}=\mathfrak{G}_{\bar 1}^{\alpha}\oplus \mathfrak{G}_{\bar 1}^{-\alpha}$ and both components $\mathfrak{G}_{\bar 1}^{\alpha}, \mathfrak{G}_{\bar 1}^{-\alpha}$ are one-dimensional if and only if $(m, n)\neq (2, 2)$, otherwise $\dim\mathfrak{G}_{\bar 1}^{\alpha}=\dim\mathfrak{G}_{\bar 1}^{-\alpha}=2$.
		
		Let $(m, n)\neq (2, 2)$. Then $\mathbb{G}_{\alpha}$ consists of matrices $g=(g_{kl})_{1\leq k, l\leq m+n}$, such that $g_{kl}\neq 0$ if and only if either $k=l$ or $k\neq l\in\{i, j\}$. We also have  
		\[\mathrm{Ber}(g)=(\prod_{1\leq k\neq i\leq m}g_{kk}\prod_{m+1\leq k\neq j\leq m+n}g_{kk}^{-1})((g_{ii}g_{jj}-g_{ij}g_{ji})g_{jj}^{-2})=1 .\]
		Since $m\neq 1$ or $n\neq 1$, we conclude $\mathbb{G}_{\alpha}\simeq\mathrm{GL}(1|1)\times T'$, where $T'$ is a subtorus of $T$ consisting of matrices $g\in T$, such that $g_{ii}=g_{jj}=1$. The first factor, that is isomorphic to $\mathrm{GL}(1|1)$, can be chosen as follows. Assume that $m>1$ (the case $n>1$ is similar). Let $k\in\{1, \ldots, m\}\setminus\{i\}$. Then the required supersubgroup consists of matrices $g=(g_{uv})_{1\leq u, v\leq m+n}$, such that $g_{uv}\neq 0$ if and only if either $u, v\in\{ i, j\}$ or $u=v\neq i, j$, and in the latter case $g_{uu}=1$, except $g_{kk}=(g_{ii}g_{jj}-g_{ij}g_{ji})g_{jj}^{-2}$.
		
		Finally, let $m=n=2$. Let $\alpha=\epsilon_1-\epsilon_3$ (any other choice can be treated similarly). Then $\mathbb{G}_{\alpha}$ consists of matrices 
		\[g=\left(\begin{array}{cccc}
			g_{11} & 0 & g_{13} & 0 \\
			0 & g_{22} & 0 & g_{24} \\
			g_{31} & 0 & g_{33} & 0 \\
			0 & g_{42} & 0 & g_{44} 	
		\end{array} \right)=g' g'', \ \mbox{where} \]
		\[g'=\left(\begin{array}{cccc}
			g_{11} & 0 & g_{13} & 0 \\
			0 & 1 & 0 & 0 \\
			g_{31} & 0 & g_{33} & 0 \\
			0 & 0 & 0 & 1 	
		\end{array} \right), \ g'' =\left(\begin{array}{cccc}
			1 & 0 & 0 & 0 \\
			0 & g_{22} & 0 & g_{24} \\
			0 & 0 & 1 & 0 \\
			0 & g_{42} & 0 & g_{44} 	
		\end{array}\right). \]
		Note that 
		\[\mathrm{Ber}(g')=\mathrm{Ber}(\left(\begin{array}{cc}
			g_{11} &  g_{13} \\
			g_{31} & g_{33}  	
		\end{array} \right)), \ \mathrm{Ber}(g'')=\mathrm{Ber}(\left(\begin{array}{cc}
			g_{22} & g_{24} \\
			g_{42} & g_{44} 	
		\end{array} \right)) \]
		and $\mathrm{Ber}(g)=\mathrm{Ber}(g')\mathrm{Ber}(g'')=1$. Thus
		\[g=g'\left(\begin{array}{cccc}
			1 & 0 & 0 & 0 \\
			0 & 1 & 0 & 0 \\
			0 & 0 & 1 & 0 \\
			0 & 0 & 0 & \mathrm{Ber}(g')
		\end{array} \right) \left(\begin{array}{cccc}
			1 & 0 & 0 & 0 \\
			0 & 1 & 0 & 0 \\
			0 & 0 & 1 & 0 \\
			0 & 0 & 0 & \mathrm{Ber}(g'')
		\end{array} \right)g'' ,\]
		and the matrices 
		\[g'\left(\begin{array}{cccc}
			1 & 0 & 0 & 0 \\
			0 & 1 & 0 & 0 \\
			0 & 0 & 1 & 0 \\
			0 & 0 & 0 & \mathrm{Ber}(g')
		\end{array} \right)=\left(\begin{array}{cccc}
			g_{11} & 0 & g_{13} & 0 \\
			0 & 1 & 0 & 0 \\
			g_{31} & 0 & g_{33} & 0 \\
			0 & 0 & 0 & \mathrm{Ber}(g') 	
		\end{array} \right), \]\[ \left(\begin{array}{cccc}
			1 & 0 & 0 & 0 \\
			0 & 1 & 0 & 0 \\
			0 & 0 & 1 & 0 \\
			0 & 0 & 0 & \mathrm{Ber}(g'')
		\end{array} \right)g''=\left(\begin{array}{cccc}
			1 & 0 & 0 & 0 \\
			0 & g_{22} & 0 & g_{24} \\
			0 & 0 & 1 & 0 \\
			0 & \mathrm{Ber}(g'')g_{42} & 0 & \mathrm{Ber}(g'')g_{44} 	
		\end{array}\right)\]
		belong to $\mathbb{G}=\mathrm{SL}(2|2)$. Conversely, the matrices from $\mathbb{G}$,  having the forms 
		\[h'=\left(\begin{array}{cccc}
			h_{11} & 0 & h_{13} & 0 \\
			0 & 1 & 0 & 0 \\
			h_{31} & 0 & h_{33} & 0 \\
			0 & 0 & 0 & z_{44} 
		\end{array} \right)\] 
		and 
		\[h''=\left(\begin{array}{cccc}
			1 & 0 & 0 & 0 \\
			0 & h_{22} & 0 & h_{24} \\
			0 & 0 & 1 & 0 \\
			0 & h_{42} & 0 & h_{44} 	
		\end{array}\right)\]
		respectively, form closed supersubgroups $\mathbb{H}'$ and $\mathbb{H}''$ in $\mathbb{H}$. Furthermore, $\mathbb{H}'\cap\mathbb{H}''=e$, $\mathbb{G}_{\alpha}=\mathbb{H}'\mathbb{H}''$
		and $\mathbb{H}'$ normalizes $\mathbb{H}''$. In fact,  for any $h'\in\mathbb{H}', h''\in\mathbb{H}''$ we have
		\[h' h'' (h')^{-1}=\left(\begin{array}{cccc}
			1 & 0 & 0 & 0 \\
			0 & h_{22} & 0 & h_{24}z_{44}^{-1} \\
			0 & 0 & 1 & 0 \\
			0 & z_{44}h_{42} & 0 & h_{44} 	
		\end{array}\right).\] 
		Additionally, $\mathbb{H}'\simeq\mathrm{GL}(1|1)$ and $\mathbb{H}''\simeq \mathrm{SL}(1|1)$, so that $\mathbb{G}_{\alpha}\simeq\mathrm{GL}(1|1)\ltimes\mathrm{SL}(1|1)$.
	\end{example}
	\begin{example}\label{centralizers in Q}
		Let $\mathbb{G}$ be the queer supergroup ${\bf Q}(n), n\geq 1$. Recall that ${\bf Q}(n)\leq\mathrm{GL}(n|n)$ consists of matrices
		\[\left(\begin{array}{cc}
			S & S' \\
			-S' & S	
		\end{array}\right), \ S\in\mathrm{GL}_n, \ S'\in\mathrm{Mat}_{n\times n}. \]
		We naturally identify $G=\mathbb{G}_{ev}$ with $\mathrm{GL}_n$ and the maximal torus $T\leq G$ with the subgroup of $\mathrm{GL}_n$ consisting of all diagnal matrices. 
		
		It is obvious that the root system of $\mathbb{G}$ has a form 
		\[\Delta=\{\epsilon_i-\epsilon_j\mid 1\leq i\neq j\leq n, \} \]
		and $\Delta=\Delta_{\bar 0}=\Delta_{\bar 1}$. Set $\alpha=\epsilon_i-\epsilon_j, 1\leq i< j\leq n$. Then $\mathbb{G}_{\alpha}$ consists of matrices
		\[\left(\begin{array}{cc}
			S & S' \\
			-S' & S	
		\end{array}\right), S\in G_{\alpha}, \ S'\in\mathrm{Mat}_{n\times n}^{T_{\alpha}}.\]
		More precisely, $S=(s_{kl})_{1\leq k, l\leq n}, S'=(s'_{kl})_{1\leq k, l\leq n}$ and the following conditions hold :
		$s_{kl}=0$ and $s'_{kl}=0$, whenever $k\neq l$ and $\{k, l\}\neq\{i, j\}$.
		
		One easily sees that $\mathbb{G}_{\alpha}\simeq {\bf Q}(2)\times \mathbb{T}'$. The first factor, that is isomorphic to ${\bf Q}(2)$, consists of all matrices satisfying the additional conditions $s_{kk}=1, s'_{kk}=0, k\neq i, j$. Respectively, $\mathbb{T}'$ is a supersubtorus of $\mathbb{T}$, that consists of all matrices from $\mathbb{G}_{\alpha}$ satisfying the additional conditions
		$s_{ii}=s_{jj}=1, s_{ij}=s_{ji}=0$ and $s'_{ii}=s'_{jj}=s'_{ij}=s'_{ji}=0$. Note also that $\mathbb{T}\simeq {\bf Q}(1)^n$ as well as $\mathbb{T}'\simeq {\bf Q}(1)^{n-2}$.

		The Harish-Chandra pair of $\mathbb{G}_{\alpha}$ is isomorphic to \[(\mathrm{GL}_2\times T', \mathfrak{gl}_2\oplus \Bbbk^{\oplus (n-2)}),\]
		where $T'$ acts trivially on the second component of this pair. Moreover, $\mathrm{GL}_2$ acts by conjugations on $\mathfrak{gl}_2$ and trivially one the second direct term.
	\end{example}

	\section{Supergroups with the even part isomorphic to $\mathrm{SL}_2$}
	
	Consider a supergroup $\mathbb{G}$, such that $\mathbb{G}_{ev}=G\simeq\mathrm{SL}_2$. Fix the maximal torus $T$ of $\mathrm{SL}_2$ consisting of diagonal matrices. Then $X(T)$ can be identified with $\mathbb{Z}$. Simple $\mathrm{SL}_2$-modules are indexed by nonnegative integers, so that for $n\geq 0$ the simple $\mathrm{SL}_2$-module $L(n)$ is the socle of $\mathrm{Sym}_n(V)\simeq H^0(n)$, where $V\simeq L(1)$ is the standard two-dimensional $\mathrm{SL}_2$-module with a weight basis $v_1, v_{-1}$. 
	We also note that the Weyl module $V(n)$ is isomorphic to $H^0(n)^*$ (cf. \cite[Section 2.3]{zub-bar}). In particular, $L(n)\simeq L(n)^*$ is isomorphic to the top of $V(n)\simeq\mathrm{Sym}_n(V)^*$ and additionally, if $0\leq n< p$, then $L(n)=H^0(n)\simeq V(n)$.
	
	The space $\mathrm{Sym}_n(V)$ has a (weight) basis $s_i=v_1^i v_{-1}^{n-i}, 0\leq i\leq n$. It is clear that the weight of $s_i$ is $2i-n$. The elements of dual basis of $\mathrm{Sym}_n(V)^*$ are denoted by $s_i^*$ (of weight $n-2i$ respectively).  
	
	Recall also that $L(2)=\mathrm{Sym}_2(V)$ is isomorphic to $\mathfrak{sl}_2$, which is regarded as a $\mathrm{SL}_2$-module with respect to the adjoint action.
	
	A matrix $E_{ij}$ is determined by
	$E_{ij}v_k=\delta_{jk}v_i, i, j, k\in\{1, -1\}$. The root subgroups of $\mathrm{SL}_2$ are 
	\[X_2(A)=\{X_2(t)=I_2+tE_{1, -1}\mid t\in A\} \ \mbox{and} \ X_{-2}(A)=\{ X_{-2}(t)=I_2+tE_{-1, 1}\mid t\in A\},\]
	where $A$ is a $\Bbbk$-algebra. Set $H=E_{11}-E_{-1, -1}$. The distribution algebra $\mathrm{Dist}(G)$ acts on $\mathrm{Sym}_n(V)$ and $\mathrm{Sym}_n(V)^*$ as
	\[\left(\begin{array}{c}
		H \\
		t
	\end{array}\right)s_i=\left(\begin{array}{c}
		2i-n \\
		t
	\end{array}\right)s_i, \   E_{1, -1}^{(k)}s_i=\left(\begin{array}{c}
		n-i \\
		k
	\end{array}\right)s_{i+k}, \ E_{-1, 1}^{(k)}s_i=\left(\begin{array}{c}
		i \\
		k
	\end{array}\right)s_{i-k}, \]
	\[\left(\begin{array}{c}
		H \\
		t
	\end{array}\right)s^*_i=\left(\begin{array}{c}
		n-2i \\
		t
	\end{array}\right)s^*_i, \   E_{1, -1}^{(k)}s^*_i=(-1)^k \left(\begin{array}{c}
		n-i+k \\
		k
	\end{array}\right)s^*_{i-k}, \]\[ E_{-1, 1}^{(k)}s^*_i=(-1)^k \left(\begin{array}{c}
		i+k \\
		k
	\end{array}\right)s^*_{i+k},  t, k\geq 0, \]
	respectively. 	
	\begin{lm}\label{non-symmetric case}
		Let $m< n$ be positive integers. Then
		\[\mathrm{Hom}_{\mathrm{SL}_2}(\mathrm{Sym}_m(V)^*\otimes \mathrm{Sym}_n(V)^*, \mathfrak{sl}_2)\simeq \mathrm{Hom}_{\mathrm{SL}_2}(\mathfrak{sl}_2, \mathrm{Sym}_m(V)\otimes \mathrm{Sym}_n(V))\]
		and this space is nonzero if and only if $n-m=2$. Besides, in the latter case this space is one-dimensional. 	
	\end{lm}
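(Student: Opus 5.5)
First I establish the isomorphism between the two Hom spaces by dualizing. Since $\mathfrak{sl}_2\simeq L(2)$ is self-dual (as recalled above, $L(n)\simeq L(n)^*$), we have
\[\mathrm{Hom}_{\mathrm{SL}_2}(\mathrm{Sym}_m(V)^*\otimes\mathrm{Sym}_n(V)^*, \mathfrak{sl}_2)\simeq \mathrm{Hom}_{\mathrm{SL}_2}(\mathfrak{sl}_2^*, (\mathrm{Sym}_m(V)^*\otimes\mathrm{Sym}_n(V)^*)^*)\simeq\mathrm{Hom}_{\mathrm{SL}_2}(\mathfrak{sl}_2, \mathrm{Sym}_m(V)\otimes\mathrm{Sym}_n(V)).\]
This uses only that all modules are finite dimensional, so that $f\mapsto f^*$ is a bijection. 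So it suffices to compute the right-hand side.

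Next I reduce to primitive vectors. Since $\mathfrak{sl}_2\simeq L(2)$ is simple, a nonzero morphism $\phi:\mathfrak{sl}_2\to W=\mathrm{Sym}_m(V)\otimes\mathrm{Sym}_n(V)$ is injective. Then $\phi(E_{1,-1})$ is a nonzero vector of weight $2$ in $W$ killed by all $E_{1,-1}^{(k)}$, $k\geq 1$, i.e. a $U^+$-invariant vector. To compare with the Weyl module, note that $\mathfrak{sl}_2$ is in general not $V(2)$: in characteristic $2$ these differ, but $\mathrm{char}\Bbbk\neq 2$ is assumed, so $L(2)=V(2)=H^0(2)$. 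By the universal property of $V(2)$ (\cite[Lemma II.2.13(b)]{jan}), $\mathrm{Hom}_{\mathrm{SL}_2}(V(2),W)$ is isomorphic to the space of primitive vectors of weight $2$ in $W$, that is, to $(W^{U^+})^{2}$. Hence the Hom space is isomorphic to $(W^{U^+})^{2}$, provided each such primitive vector indeed yields a morphism from $V(2)=\mathfrak{sl}_2$, which holds by that universal property.

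The main step is to compute $(W^{U^+})^{2}$ explicitly. A weight-$2$ vector of $W$ has the form $w=\sum_{i+j=(m+n+2)/2} c_{ij}\, s_i\otimes s'_j$, where $s_i$ are the basis vectors of $\mathrm{Sym}_m(V)$ and $s'_j$ those of $\mathrm{Sym}_n(V)$. In particular, if $m+n$ is odd then $(W^{U^+})^{2}=0$. Rather than solving the recursion given by the action of the divided powers $E_{1,-1}^{(k)}$ on the tensor product, I would use the good-filtration route. $W=H^0(m)\otimes H^0(n)$ has a good filtration with factors $H^0(m+n-2r)$, $0\le r\le m$ (Wang/Donkin–Mathieu). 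Moreover,
\[\mathrm{Hom}(V(2),W)\simeq\mathrm{Hom}(V(2),\ \text{good filtration})\]
has dimension equal to the multiplicity of $H^0(2)$ in that filtration, since $\mathrm{Ext}^1(V(\lambda),H^0(\mu))=0$. The value $2=m+n-2r$ with $0\le r\le m$ forces $r=(m+n-2)/2\le m$, i.e. $n\le m+2$. Combined with $n>m$ and the parity condition, this gives $n=m+1$ (excluded by parity) or $n=m+2$, and in the latter case the multiplicity is exactly one. This yields both the vanishing and the one-dimensionality.

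I expect the main obstacle to be justifying the good-filtration multiplicities cleanly in arbitrary characteristic. If citing \cite{jan} (II.4.21 and the $\mathrm{SL}_2$ Clebsch–Gordan character formula) is deemed too heavy, I would instead do the direct computation. Imposing $E_{1,-1}w=0$ determines all $c_{ij}$ from the extreme coefficient, with the top index $i$ limited by $i\le m$, and this is consistent only when $n-m=2$. The higher divided powers must then be checked separately in characteristic $p$, which is the messy part.
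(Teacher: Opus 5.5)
Your proposal is correct and follows essentially the paper's route: dualize using $\mathfrak{sl}_2\simeq V(2)\simeq H^0(2)\simeq\mathfrak{sl}_2^*$, identify the space with $\mathrm{Hom}_{\mathrm{SL}_2}(V(2), \mathrm{Sym}_m(V)\otimes\mathrm{Sym}_n(V))$, and use the Donkin--Mathieu good filtration so that its dimension equals the multiplicity of $H^0(2)$. The only difference is how that multiplicity is computed. You read it off the Clebsch--Gordan character identity, whereas the paper solves the recursion $(\star)$ in characteristic zero and then uses that these multiplicities do not depend on the characteristic.
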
  
	\begin{proof}
		Recall that $\mathfrak{sl}_2\simeq V(2)\simeq H^0(2)\simeq\mathfrak{sl}_2^*$, hence the first statement follows. Any morphism from 	$\mathrm{Hom}_{\mathrm{SL}_2}(\mathfrak{sl}_2, \mathrm{Sym}_m(V)\otimes \mathrm{Sym}_n(V))$ is uniquely determined by an $X_2$-stable element of weight $2$. The space $\mathrm{Sym}_m(V)\otimes \mathrm{Sym}_n(V)$ is spanned by
		the elements $s_i\otimes t_j$, where $s_i=v_1^i v_{-1}^{m-i}, t_j=v_1^j v_{-1}^{n-j}, 0\leq i\leq m, 0\leq j\leq n$. The weight of $s_i\otimes t_j$ is equal to $2(i+j)-m-n$, hence
		$\mathrm{Sym}_m(V)\otimes \mathrm{Sym}_n(V)$ has a component of weight $2$ if and only if $m+n$ is even. Set $m+n=2k$ and note that $m< k< n$. An element of weight $2$ has a form
		\[x=\sum_{0\leq i\leq m} c_i s_i\otimes t_{k+1-i},\]
		and it is $X_2$-stable if and only if $E_{1, -1}^{(r)}x=0$ for any $r\geq 1$. In particular, $E_{1, -1}x=0$ infers
		\[ (\star) \ (m-i+1)c_{i-1}+(n-k-1+i)c_i=0, c_{-1}=0, 0\leq i\leq m,\]
		provided $n\geq k+2$, otherwise the $0$-th equation should be eliminated. 
		
		Assume that $\mathrm{char}\Bbbk=0$ and $n\geq k+2$. In this case the system of linear equations $(\star)$ has only zero solution. On the other hand, if $n=k+1$ and respectively, $m=k-1$, then
		any solution of $(\star)$ has a form
		\[c_i=(-1)^i \left(\begin{array}{c}
			k-1 \\
			i
		\end{array}\right)c_0, 1\leq i\leq k-1.\]
		By Donkin-Mathieu theorem \cite[Proposition II.4.21]{jan}, the $\mathrm{SL}_2$-module $\mathrm{Sym}_m(V)\otimes \mathrm{Sym}_n(V)$ has a good filtration. Moreover, 
		\[\dim \mathrm{Hom}_{\mathrm{SL}_2}(\mathfrak{sl}_2, \mathrm{Sym}_m(V)\otimes \mathrm{Sym}_n(V))=\dim \mathrm{Hom}_{\mathrm{SL}_2}(V(2), \mathrm{Sym}_m(V)\otimes \mathrm{Sym}_n(V))\] equals the multiplicity of $H^0(2)\simeq\mathfrak{sl}_2$ in any such filtration.
		Since the formal character of $\mathrm{Sym}_m(V)\otimes \mathrm{Sym}_n(V)$, as well as the formal characters of induced modules $H^0(l)$, do not depend on $\mathrm{char}\Bbbk$, this dimension also does not. Lemma is proved.
	\end{proof}	
	Let $s_i^*$ and $t_j^*$ denote the elements of dual bases of $\mathrm{Sym}_m(V)^*$ and $\mathrm{Sym}_n(V)^*$ respectively. Then any morphism from
	$\mathrm{Hom}_{\mathrm{SL}_2}(\mathrm{Sym}_m(V)^*\otimes \mathrm{Sym}_n(V)^*, \mathfrak{sl}_2)$ has a form
	\[s^*_i\otimes t^*_{k-i}\mapsto a_i H, \ s^*_i\otimes t^*_{k-1-i}\mapsto b_i E_{1, -1}, \ s_i^*\otimes t^*_{k+1-i}\mapsto c_i E_{-1, 1}, 0\leq i\leq k-1.\]
	Since this morphism commutes with the induced action of $\mathfrak{sl}_2$, we obtain
	\[(k-i)b_{i-1}+(i+2)b_i=2a_i, \ (i+1)c_{i+1}+(k+1-i)c_i=-2a_i,\]
	\[(i+1)a_{i+1}+(k-i)a_i=b_i, \ (k-i)a_{i-1}+(i+1)a_i=-c_i, \]
	\[0\leq i\leq k-1.\]
	Note also that if $i$ occurs to be negative or at least $k$, then $a_i=b_i=c_i=0$. 
	
	It is easy to check that this system has the nontrivial, whence unique, solution
	\[(\star\star) \ a_i=(-1)^{i+1} \left(\begin{array}{c}
		k-1 \\
		i
	\end{array}\right)a, \ b_i=(-1)^{i+1} \left(\begin{array}{c}
		k-1 \\
		i
	\end{array}\right)a, \ c_i=(-1)^i \left(\begin{array}{c}
		k-1 \\
		i
	\end{array}\right)a,\]\[ 0\leq i\leq k-1, \ a\in\Bbbk.\]  
	The following lemma refines \cite[Proposition 2.15]{zub-bar}.
	\begin{lm}\label{if m=n}
		We have
		\[\mathrm{Hom}_{\mathrm{SL}_2}((\mathrm{Sym}_n(V)^*)^{\otimes 2}, \mathfrak{sl}_2)\simeq \mathrm{Hom}_{\mathrm{SL}_2}(\mathfrak{sl}_2, \mathrm{Sym}_n(V)^{\otimes 2})\]	
		and both spaces are one-dimensional. Moreover, if $n$ is odd, then
		\[\mathrm{Hom}_{\mathrm{SL}_2}((\mathrm{Sym}_n(V)^*)^{\otimes 2}, \mathfrak{sl}_2)=\mathrm{Hom}_{\mathrm{SL}_2}(\mathrm{Sym}_2(\mathrm{Sym}_n(V)^*), \mathfrak{sl}_2)\]
		and $\mathrm{Hom}_{\mathrm{SL}_2}(\Lambda^2(\mathrm{Sym}_n(V)^*), \mathfrak{sl}_2)=0$, otherwise
		\[\mathrm{Hom}_{\mathrm{SL}_2}((\mathrm{Sym}_n(V)^*)^{\otimes 2}, \mathfrak{sl}_2)=\mathrm{Hom}_{\mathrm{SL}_2}(\Lambda^2(\mathrm{Sym}_n(V)^*), \mathfrak{sl}_2)\]
		and $\mathrm{Hom}_{\mathrm{SL}_2}(\mathrm{Sym}_2(\mathrm{Sym}_n(V)^*), \mathfrak{sl}_2)=0$.
	\end{lm}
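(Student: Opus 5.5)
The plan is to follow the proof of Lemma \ref{non-symmetric case}. First, $\mathfrak{sl}_2\simeq V(2)\simeq H^0(2)\simeq\mathfrak{sl}_2^*$, so dualizing gives the isomorphism
\[\mathrm{Hom}_{\mathrm{SL}_2}((\mathrm{Sym}_n(V)^*)^{\otimes 2}, \mathfrak{sl}_2)\simeq\mathrm{Hom}_{\mathrm{SL}_2}(\mathfrak{sl}_2, \mathrm{Sym}_n(V)^{\otimes 2}).\]
For the dimension I will use the Donkin--Mathieu theorem \cite[Proposition II.4.21]{jan}. Since $\mathrm{Sym}_n(V)=H^0(n)$, the module $\mathrm{Sym}_n(V)^{\otimes 2}$ has a good filtration. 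Hence $\dim\mathrm{Hom}(V(2), \mathrm{Sym}_n(V)^{\otimes 2})$ equals the multiplicity of $H^0(2)$ in such a filtration, and this is determined by the formal character, so it does not depend on $\mathrm{char}\Bbbk$. In characteristic zero the Clebsch--Gordan rule gives $\mathrm{Sym}_n\otimes\mathrm{Sym}_n\simeq\oplus_{j=0}^{n}L(2j)$, so the multiplicity of $L(2)$ is exactly one, provided $n\geq 1$. (For $n=0$ the space is zero, so I assume $n\geq 1$ as the context intends.) One could also argue directly: an $X_2$-stable weight-$2$ vector $x=\sum c_i s_i\otimes s_{n+1-i}$ must satisfy the recurrence coming from $E_{1,-1}x=0$, and it is then unique up to scalar, with coefficients of the form $c_i=\pm\binom{n-1}{i-1}$ up to sign. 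I will only rely on the good filtration argument, though.

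For the symmetric versus alternating part, let $\sigma$ be the flip on $\mathrm{Sym}_n(V)^{\otimes 2}$. It is a $G$-module automorphism of order $2$, and since $\mathrm{char}\Bbbk\neq 2$ we have $\mathrm{Sym}_n(V)^{\otimes 2}=S^2\oplus\Lambda^2$ (the $\pm 1$ eigenspaces). The one-dimensional Hom space therefore splits as the sum of its parts landing in $S^2$ and in $\Lambda^2$. Exactly one part is nonzero, and it is the one containing the unique (up to scalar) primitive weight-$2$ vector $x$. So it suffices to decide whether $\sigma x=x$ or $\sigma x=-x$.

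That sign computation is the main step. I will take the explicit form $x=\sum_{i=1}^{n}c_i s_i\otimes s_{n+1-i}$. Then $\sigma x=\sum c_{n+1-i}s_i\otimes s_{n+1-i}$, so I need to compare $c_{n+1-i}$ with $c_i$. Solving $E_{1,-1}x=0$ gives the recurrence $(n-i+1)c_i+(i)c_{i+1}\cdot(\ldots)=0$, whose solution is $c_i=(-1)^i\binom{n-1}{i-1}c$. Then $c_{n+1-i}=(-1)^{n+1-i}\binom{n-1}{n-i}c=(-1)^{n+1}c_i$. Hence $\sigma x=(-1)^{n+1}x$: the vector is symmetric when $n$ is odd and alternating when $n$ is even. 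The nonzero morphism therefore factors through $\mathrm{Sym}_2$ in the odd case and through $\Lambda^2$ in the even case. Dualizing back gives the statement about $\mathrm{Sym}_2(\mathrm{Sym}_n(V)^*)$ and $\Lambda^2(\mathrm{Sym}_n(V)^*)$, using that the dual of the decomposition $S^2\oplus\Lambda^2$ is again the symmetric/alternating decomposition.

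The obstacle is to justify the explicit coefficients in positive characteristic, where some binomial coefficients may vanish. To avoid this, I would do the sign computation over $\mathbb{Z}$ (or in characteristic $0$) and then reduce. The symmetric/alternating type of the unique line is determined by the characteristic-free good-filtration multiplicities of $H^0(2)$ in $S^2$ and in $\Lambda^2$. Both of these have good filtrations when $p\neq 2$ (as summands of a module with good filtration), so their multiplicities are read off from characters, and characteristic zero decides: $L(2)\subseteq S^2$ iff $n$ is odd.
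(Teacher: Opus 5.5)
Your proposal is correct and follows essentially the paper's route: dualize using $\mathfrak{sl}_2\simeq V(2)\simeq H^0(2)$, find the unique primitive weight-$2$ vector $\sum_i c_i s_i\otimes s_{n+1-i}$ with $c_i=\pm\binom{n-1}{i-1}$ in characteristic zero, read off its sign $(-1)^{n+1}$ under the flip, and transfer to characteristic $p$ via Donkin--Mathieu good-filtration multiplicities. Your remark that $\mathrm{Sym}_2$ and $\Lambda^2$ of $\mathrm{Sym}_n(V)$ have good filtrations as direct summands (since $p\neq 2$) spells out what the paper compresses into ``the same good filtration arguments''; note that the recurrence you wrote is garbled (it should read $(n-i+1)c_{i-1}+(i-1)c_i=0$), though the solution you use is right, and your caveat $n\geq 1$ is indeed needed.
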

	\begin{proof}
		Arguing as in \cite[Proposition 2.15]{zub-bar}, one easily sees that the natural isomorphism $(\mathrm{Sym}_n(V)^*)^{\otimes 2}\simeq (\mathrm{Sym}_n(V)^{\otimes 2})^*$
		sends $\mathrm{Sym}_2(\mathrm{Sym}_n(V)^*)$ and $\Lambda^2(\mathrm{Sym}_n(V)^*)$ onto $\mathrm{Sym}_2(\mathrm{Sym}_n(V))^*$ and $\Lambda^2(\mathrm{Sym}_n(V))^*$ respectively.
		In particular, we have the naturally induced isomorphisms 
		\[\mathrm{Hom}_{\mathrm{SL}_2}(\mathrm{Sym}_2(\mathrm{Sym}_n(V)^*), \mathfrak{sl}_2)\simeq \mathrm{Hom}_{\mathrm{SL}_2}(\mathfrak{sl}_2, \mathrm{Sym}_2(\mathrm{Sym}_n(V))) \]	
		and 
		\[\mathrm{Hom}_{\mathrm{SL}_2}(\Lambda^2(\mathrm{Sym}_n(V)^*), \mathfrak{sl}_2)\simeq \mathrm{Hom}_{\mathrm{SL}_2}(\mathfrak{sl}_2, \Lambda^2(\mathrm{Sym}_n(V))). \]
		As above, the morphisms from $\mathrm{Hom}_{\mathrm{SL}_2}(\mathfrak{sl}_2, \mathrm{Sym}_n(V)^{\otimes 2})$ are in one-to-one correspondence with the elements
		$z=\sum_{1\leq i\leq n} c_i s_i\otimes s_{n+1-i}$ whose coefficients satisfy 
		\[(\circ) \ c_{i-1}(n-i+1)+c_i(i-1)=0, 1\leq i\leq n, \ c_0=0.\]
		If $\mathrm{char}\Bbbk=0$, then this system has the unique solution
		\[c_i=(-1)^{i-1}\left(\begin{array}{c}
			n-1 \\
			i-1
		\end{array}\right)c_1, \ 1\leq i\leq n.\]
		The element $z$ belongs to $\mathrm{Sym}_2(\mathrm{Sym}_n(V))$ (respectively, $z$ belongs to $\Lambda^2(\mathrm{Sym}_n(V))$) if and only if $\tau(z)=z$ (respectively, $\tau(z)=-z$), where $\tau$ is a \emph{twist map} $f\otimes g\mapsto g\otimes f, f, g\in \mathrm{Sym}_n(V)$. Using the same good filtration arguments, we complete the proof.  
	\end{proof}
	Recall that any morphism from $\mathrm{Hom}_{\mathrm{SL}_2}((\mathrm{Sym}_n(V)^*)^{\otimes 2}, \mathfrak{sl}_2)$
	has a form
	\[s^*_i\otimes s^*_{n-i}\mapsto a_i H, \ s^*_{j}\otimes s^*_{n-1-j}\mapsto b_j E_{1, -1}, \ s^*_k\otimes s^*_{n+1-k}\mapsto c_k E_{-1, 1}, \]
	where 
	\[(\circ\circ) \ a_i=\frac{(-1)^i}{2} (\left(\begin{array}{c}
		n-1 \\
		i
	\end{array}\right)-\left(\begin{array}{c}
		n-1 \\
		i-1
	\end{array}\right))a, \ 0\leq i\leq n,\]
	\[b_j=(-1)^j \left(\begin{array}{c}
		n-1 \\
		j
	\end{array}\right)a, \ 0\leq j\leq n-1, \]
	\[c_k=(-1)^k \left(\begin{array}{c}
		n-1 \\
		k-1
	\end{array}\right)a, \ 1\leq k\leq n, \ a\in\Bbbk.\]
	These formulas were derived in \cite{zub-bar} regardless the parity of $n$. 
	\begin{rem}\label{onto}
		From now on we use the following elementay observation. Let $\phi : L(m)\otimes L(n)\to\mathfrak{sl}_2$ be a nonzero morphism of $\mathrm{SL}_2$-modules.  Then we have a nonzero composition of $\mathrm{SL}_2$-module morphisms \[V(m)\otimes V(n)\simeq \mathrm{Sym}_m(V)^*\otimes \mathrm{Sym}_n(V)^*\to L(m)\otimes L(n)\to \mathfrak{sl}_2, \] 
		denoted by $\phi'$. Lemma \ref{non-symmetric case} and Lemma \ref{if m=n} imply that either $m=n$ or $|m-n|=2$. Moreover, $\phi'$ is uniquely (up to a nonzero multiple) determined by the formulas $(\circ\circ)$ and $(\star\star)$ respectively. In particular, $\phi$ is also uniquely (up to a nonzero multiple) determined by the same formulas. In fact,  for any
		$x\in L(m), y\in L(n)$ let $x'\in V(m), y'\in L(n)$ be their preimages. Then $\phi(x\otimes y)=\phi'(x'\otimes y')$. 
		
		As a rule, this remark will apply in the following cases. Let $V_1\simeq L(m)$ and $V_2\simeq L(n)$ be the (not necessary different) $\mathrm{SL}_2$-submodules of $\mathfrak{G}_{\bar 1}$. Then the Lie bracket induces a morphism $V_1\otimes V_2\to\mathfrak{sl}_2$ of $\mathrm{SL}_2$-modules. If it does not lead to confusion, we write
		$[x, y]=[x', y']$ for $x\in V_1, y\in V_2$, where $[x', y']$ is the result of applying of the formulas $(\star\star)$ and $(\circ\circ)$ to $x'\otimes y'$.
		
		Similarly, assume that $K$ is a $\mathrm{SL}_2$-submodule of $\mathfrak{G}_{\bar 1}$, such that $[V_1, K]=0$ and $V_2=\mathfrak{G}_{\bar 1}/K\simeq L(n)$. Then we have the induced morphism $V_1\otimes V_2\to\mathfrak{sl}_2$ of $\mathrm{SL}_2$-modules, determined as
		\[x\otimes \overline{y}\mapsto [x, y], \ \mbox{where} \ \overline{y}=y+K,  x\in V_1, y\in \mathfrak{G}_{\bar 1}.\]
		As above, we write $[x, \overline{y}]=[x', y']$, where $x'$ and $y'$ are preimages of $x$ and $\overline{y}$ in $V(m)$ and $V(n)$ respectively.
	\end{rem}
	\begin{lm}\label{when pairing is trivial}
		The space $\mathrm{Hom}_{\mathrm{SL}_2}(L(m)\otimes L(n), \mathfrak{sl}_2)$ is trivial if :
		\begin{enumerate}
			\item $m=n$ and $p|n$;
			\item $p|m$ and $m=n+2$.
		\end{enumerate}		
	\end{lm}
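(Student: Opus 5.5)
The plan is to avoid the explicit formulas $(\star\star)$ and $(\circ\circ)$ altogether and use adjunction together with Steinberg's tensor product theorem. Since $L(m)^*\simeq L(m)$ and $\mathfrak{sl}_2\simeq L(2)$ (as recalled at the beginning of this section), we have
\[\mathrm{Hom}_{\mathrm{SL}_2}(L(m)\otimes L(n), \mathfrak{sl}_2)\simeq \mathrm{Hom}_{\mathrm{SL}_2}(L(n), L(m)^*\otimes L(2))\simeq \mathrm{Hom}_{\mathrm{SL}_2}(L(n), L(m)\otimes L(2)).\]
So it suffices to identify $L(m)\otimes L(2)$ when $p\mid m$.

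First I dispose of characteristic zero. There, $p\mid n$ should be read as $n=0$. Then $L(0)\otimes L(0)$ is trivial, and $\mathfrak{sl}_2^{\mathrm{SL}_2}=0$, so case (1) holds. Case (2) is vacuous, because it would force $m=0$ and $n=-2$.

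Now let $p=\mathrm{char}\Bbbk>2$ and $p\mid m$, say $m=pm'$. By Steinberg's tensor product theorem (\cite[Corollary II.3.17]{jan}, as used in Lemma \ref{g-invariants}), $L(m)\simeq L(m')^{[1]}$. Since $0\leq 2\leq p-1$, the same theorem gives
\[L(m)\otimes L(2)\simeq L(2)\otimes L(m')^{[1]}\simeq L(2+pm')=L(m+2),\]
which is simple. Hence the Hom space is $\mathrm{Hom}_{\mathrm{SL}_2}(L(n), L(m+2))$. It is nonzero only if $n=m+2$.

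In case (1) we apply this with $m=n$. Here $n\neq n+2$, so the space vanishes.

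In case (2) we have $n=m-2\neq m+2$, so it vanishes again.

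The only point needing care is that $L(2)$ is restricted, i.e. $2\leq p-1$. This is where $\mathrm{char}\Bbbk\neq 2$ is used; the case $p=3$ is still fine. Everything else is formal.

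As a sanity check, case (1) also follows directly. By Lemma \ref{g-invariants} and Remark \ref{converse of Lemma 1.2}, $L(n)$ is a trivial $\mathfrak{sl}_2$-module. Hence the image of any morphism $L(n)\otimes L(n)\to\mathfrak{sl}_2$ is killed by $\mathrm{ad}\,\mathfrak{sl}_2$. It therefore lies in the center of $\mathfrak{sl}_2$, which is zero for $p\neq 2$.
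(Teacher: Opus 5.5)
Your proof is correct, but it takes a different route from the paper.

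The paper treats the two cases separately and by hand.
\begin{itemize}
\item For (1), it notes that all weights of $L(n)\simeq L(n/p)^{[1]}$ are divisible by $p$. Hence so are the weights of $L(n)\otimes L(n)$ and of the image of any nonzero map. This is impossible for the image $\mathfrak{sl}_2$, which has weight $2$.
\item For (2), it lifts a putative nonzero map to the Weyl modules $V(m)\otimes V(n)$ via Remark \ref{onto}. There the map is given by the explicit formulas $(\star\star)$ from Lemma \ref{non-symmetric case}. These formulas give a nonzero value on $t^*_{n+1}\otimes s^*_0$. But $t^*_{n+1}$ has weight $-n\equiv 2 \pmod p$, so it maps to zero in $L(m)$, a contradiction.
\end{itemize}

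You instead combine adjunction with Steinberg's theorem to get $L(m)\otimes\mathfrak{sl}_2\simeq L(m+2)$ whenever $p\mid m$. This handles both cases at once. It also proves a sharper statement: for $p\mid m$, the space $\mathrm{Hom}_{\mathrm{SL}_2}(L(m)\otimes L(n),\mathfrak{sl}_2)$ is nonzero if and only if $n=m+2$, and then it is one-dimensional. So the lemma is sharp. The single surviving configuration (the smaller weight divisible by $p$, the other two larger) is exactly the case $s=n-2$, $p\mid s$ in Proposition \ref{if top is simple and l=2}. The paper has to dispose of that case with a block argument rather than a vanishing of pairings.

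Your approach is more conceptual and independent of the structure constants. The paper's computation stays inside the Weyl-module framework ($(\star\star)$, $(\circ\circ)$ and Remark \ref{onto}) that Section 3 uses throughout anyway. Your ``sanity check'' for (1) is essentially the Lie-algebra version of the paper's weight argument: the image is killed by $\mathrm{ad}\,\mathfrak{sl}_2$ rather than having weights divisible by $p$.
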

	\begin{proof}
		Let $m=n$ and $p|n$. In particular, all weights of $L(n)\simeq L(\frac{n}{p})^{[1]}$ are multiples of $p$. Let $\phi\in \mathrm{Hom}_{\mathrm{SL}_2}(L(n)^{\otimes 2}, \mathfrak{sl}_2)\setminus 0$. Then the image of $\phi$ coincides with $\mathfrak{sl}_2$ and all its weights are multiples of $p$ also, which is a contradiction.   
		
		Let $p|m$ and $m=n+2$. Let $\phi\in \mathrm{Hom}_{\mathrm{SL}_2}(L(m)\otimes L(n), \mathfrak{sl}_2)\setminus 0$. Using $(\star\star)$, we have $\phi'(t^*_{n+1}\otimes s^*_0)=-aH\neq 0$, but the weight of $t^*_{n+1}$ equals $-n$, hence it is coprime to $p$ and the image of $t^*_{n+1}$ in $L(m)$ is zero. Thus the image of $t_{n+1}^*\otimes s^*_0$ in $L(m)\otimes L(n)$ is zero, which is a contradiction.
	\end{proof}
	\begin{rem}\label{blocks}
		By \cite[II.7.2(1)]{jan} the block $B(n)$ of a dominant weight $n\geq 0$ is contained in $((n+2p\mathbb{Z})\sqcup ((-n-2)+2p\mathbb{Z}))\cap \mathbb{N}$. 
	\end{rem}
	Let $N$ be a finite dimensional $\mathrm{SL}_2$-module. Let $l=l(N)$ denote its \emph{Loewy length}. In other words, there is a $\mathrm{SL}_2$-module filtration of $N$
	\[0\subseteq N_{(1)}\subseteq N_{(2)}\subseteq \ldots\subseteq N_{(l)}=N,\]
	called a \emph{Loewy filtrations}, such that $N_{(0)}=0$ and $N_{(i)}/N_{(i-1)}$ is the socle of $N/N_{(i-1)}, 1\leq i\leq l$.
	
	In what follows we suppose that $\mathfrak{G}_{\bar 1}\neq 0$. Let $l=l(\mathfrak{G}_{\bar 1})$. To simplify our notations, we denote each term $(\mathfrak{G}_{\bar 1})_{(i)}$ just by $M_{(i)}$. For arbitrary $\mathrm{SL}_2$-submodule $S$ of $\mathfrak{G}_{\bar 1}$, let $K(S)$ denote the largest submodule of $S$, such that  $[K(S), S]=0$. We  call
	$K(S)$ a \emph{kernel} of $S$.
	\begin{pr}\label{M_{(1)}}
		We have $M_{(1)}=C_{(1)}\oplus K(M_{(1)})$. Furthermore, only one of the following alternatives holds :
		\begin{enumerate} 
			\item $C_{(1)}=L(1)^s, \ 0\leq s\leq 1$;
			\item $C_{(1)}= L(0)^t\oplus L(2)^t$, where $[L(0)^t, L(0)^t]=[L(2)^t, L(2)^t]=0$ and $0\leq t\leq 1$.  
		\end{enumerate}
		Besides, $K(M_{(1)})$ is a trivial $\mathfrak{sl}_2$-module, provided $C_{(1)}\neq 0$. 
	\end{pr}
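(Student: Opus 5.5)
Since $M_{(1)}$ is the socle of $\mathfrak{G}_{\bar 1}$, it is semisimple: $M_{(1)}=\oplus_j V_j$ with each $V_j\simeq L(n_j)$. The plan is to understand the bracket between these summands via Remark \ref{onto}, Lemma \ref{when pairing is trivial} and Lemmas \ref{may be useful}, \ref{may be useful too}. Here $\mathfrak{sl}_2=\mathfrak{G}_{\bar 0}$ is simple, since $\mathrm{char}\Bbbk\neq 2$.

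First I construct $K(M_{(1)})$. Because $K(M_{(1)})$ is a submodule of a semisimple module, it has a complement $C_{(1)}$ in $M_{(1)}$, so $M_{(1)}=C_{(1)}\oplus K(M_{(1)})$. If $C_{(1)}\neq 0$ then $[M_{(1)},M_{(1)}]\neq 0$: otherwise $K(M_{(1)})=M_{(1)}$. Now take $V_1=M_{(1)}$ and $V_2=K(M_{(1)})$. Then $[V_1,V_2]=0$ and $[V_1,V_1]\neq 0$, so Lemma \ref{may be useful too} shows that $K(M_{(1)})$ is a trivial $\mathfrak{sl}_2$-module. This gives the last assertion.

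Next I analyse $C_{(1)}$, assuming it is nonzero. Then $[M_{(1)},M_{(1)}]=\mathfrak{sl}_2$, and the image of $C_{(1)}$ acts faithfully enough. Take a simple summand $W\simeq L(n)$ of $C_{(1)}$.

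\emph{Case $[W,W]\neq 0$.} Lemma \ref{may be useful} applied to $W$ and any complementary submodule $W'$ (with $W\cap W'=0$) shows that $W'$ is $\mathfrak{sl}_2$-trivial and $[W',W']=0$. By Lemma \ref{if m=n} and Remark \ref{onto}, the nonzero symmetric pairing $L(n)\otimes L(n)\to\mathfrak{sl}_2$ forces $n$ odd. Lemma \ref{when pairing is trivial}(1) excludes $p\mid n$. Next I impose $[[x,x],x]=0$ on $W$, using the explicit formulas $(\circ\circ)$ on the image of $V(n)$. Computing on a highest weight vector $x=v+w$ chosen generically, I expect this to force $n=1$; $V(3)$ in characteristic $3$ is a module with top $L(3)$, but $L(3)$ itself has dimension $2$ there. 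This computation is the main obstacle. I would carry it out using $x=s^*_0+\lambda s^*_n$ and the coefficients $a_0,a_n,b,c$. Now consider $W'$ inside $C_{(1)}$. The bracket $[W,W']$ lands in $\mathfrak{sl}_2$ and pairs $L(n)$ with $L(0)$; this is zero unless $n=2$, and $n$ is odd. So $[W,W']=0$ and $[W',W']=0$. Hence $W'\subseteq K(M_{(1)})$, because $[W',M_{(1)}]=0$. Therefore $C_{(1)}=W\simeq L(1)$, which is alternative (1).

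\emph{Case $[V,V]=0$ for every simple summand $V$ of $C_{(1)}$.} Then nonzero brackets occur only between distinct summands $V_1\simeq L(m)$ and $V_2\simeq L(n)$. By Remark \ref{onto}, $m=n$ or $|m-n|=2$. Both $V_1$ and $V_1+V_2$ can be handled via Lemma \ref{may be useful} applied to diagonal submodules. If $m=n$, the diagonal copies $\{v+\phi(v)\}$ of $L(n)$ have nonzero self-bracket for a suitable isomorphism $\phi$. This reduces to the previous case, which yields $L(1)$ and a contradiction with the choice of complement (it falls under (1)). If $|m-n|=2$, I use the Jacobi identity $[[x,y],z]+\ldots=0$ together with Lemma \ref{may be useful too}. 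This shows that one of $V_1,V_2$ acts trivially on everything else; trivial summands are $L(0)$ or $L(p\lambda)$ by Lemma \ref{g-invariants}. The pair must then be $L(0),L(2)$ with $p\nmid 2$, using Lemma \ref{when pairing is trivial}(2) to exclude $L(p),L(p+2)$-type pairs. Multiplicity one follows because two copies of $L(2)$ paired with a common $L(0)$ give a combination in the kernel. Finally, any further summand brackets trivially with $L(0)\oplus L(2)$, so it lies in $K(M_{(1)})$. This gives alternative (2) with $t=1$.
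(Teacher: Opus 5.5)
Your treatment of the last assertion (Lemma \ref{may be useful too} with $V_1=M_{(1)}$, $V_2=K(M_{(1)})$) is sound. So is your first case, where some simple $W\subseteq M_{(1)}$ has $[W,W]\neq 0$. The computation you defer works and can replace the paper's appeal to \cite[Theorem 2.16]{zub-bar}: for odd $n\geq 3$ with $p\nmid n$, $x=s_0^*+s_n^*$ gives $[x,x]=aH$ and $[[x,x],x]=na(s_0^*-s_n^*)\neq 0$ in $L(n)$. One small slip there: an $\mathfrak{sl}_2$-trivial $W'$ has summands $L(p\mu)$, not just $L(0)$. Still $[W,W']=0$, because a nonzero map $L(1)\otimes L(p\mu)\to\mathfrak{sl}_2$ forces $p\mu=3$, which Lemma \ref{when pairing is trivial}(2) excludes.

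\emph{The genuine gap is your second case.} For $V_1\simeq V_2\simeq L(n)$ with $n$ even, the diagonal trick fails. By Lemma \ref{if m=n} (via Remark \ref{onto}), every equivariant map $L(n)\otimes L(n)\to\mathfrak{sl}_2$ is then alternating. So with $\beta(v,w)=[v,\phi(w)]$ you get $[v+c\phi(v),w+c\phi(w)]=c(\beta(v,w)+\beta(w,v))=0$. Hence no simple submodule of $V_1\oplus V_2$ has a nonzero self-bracket, and there is nothing to reduce to. An example is $V_1\simeq V_2\simeq\mathfrak{sl}_2$ with $[x_1,y_2]=[x,y]$, where $x_1\in V_1$, $y_2\in V_2$ are the copies of $x,y$. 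This configuration must be killed by evaluating $[[v,v],v]$ on a mixed vector, as the paper does. For instance, $v=s_0^*+t_n^*$ gives $[v,v]=aH$ and $[[v,v],v]=na(s_0^*-t_n^*)\neq 0$, since $p\nmid n$ by Lemma \ref{when pairing is trivial}(1).

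For $|m-n|=2$ you do not actually have an argument. Lemma \ref{may be useful too} needs a nonzero self-bracket, which is exactly what this case lacks, and the super-Jacobi identity only gives $[[x,z],x]=[[x,z],z]=0$. A weight computation is needed: with $m<n$, $k=m+1$ and $v=s_0^*+t_k^*$ one gets $[[v,v],v]=-2am(s_0^*-t_k^*)$, hence $p\mid m$.

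Your next step then fails. Lemma \ref{when pairing is trivial}(2) concerns the \emph{larger} weight being divisible by $p$, so it says nothing about $L(p\lambda)$ paired with $L(p\lambda+2)$; indeed $\mathrm{Hom}_{\mathrm{SL}_2}(L(3)\otimes L(5),\mathfrak{sl}_2)\neq 0$ for $p=3$. This sub-case needs its own check. Write $L(p\lambda)=L(\lambda)^{[1]}$ and $L(p\lambda+2)=L(2)\otimes L(\lambda)^{[1]}$. Then the bracket is $[x,w\otimes u]=a\langle x,u\rangle w$, with $\langle\ ,\ \rangle$ the invariant form. Take $z=w_1\otimes u_1+w_2\otimes u_2$ with $u_1,u_2$ independent, $\langle x,u_1\rangle=1$, $\langle x,u_2\rangle=0$ and $[w_1,w_2]\neq 0$. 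This gives $[[x,z],z]=a[w_1,w_2]\otimes u_2\neq 0$, so $v=x+z$ violates $[[v,v],v]=0$. Weight-vector tests such as the paper's $s_0^*+t^*_{k+1}$ degenerate here, since $s_1^*$ dies in $L(p\lambda)$.

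Finally, your ``combination in the kernel'' argument only proves $t=u$. If $t=u\geq 2$ and the pairing matrix is invertible, no kernel element exists. Then you need the paper's identity $[[x,y_1],y_2]=0$ for $v=x+y_1+y_2$. Alternatively, normalize the pairing matrix to $I$ and apply Lemma \ref{may be useful} to $L(0)_1\oplus L(2)_1$ against the remaining summands.
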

	\begin{proof}
		The last statement easily follows by Lemma \ref{may be useful}. 
		For any simple direct term $V_1$ of $C_{(1)}$ there is a simple direct term $V_2$ (possible equal to $V_1$), such that $[V_1, V_2]\neq 0$. Let $V_1\simeq L(m)$ and $V_2\simeq L(n)$.	
		
		First, consider the case $n=m$ and $V_1\neq V_2$. If $[V_1, V_1]\neq 0$, then by \cite[Theorem 2.16]{zub-bar} we have $n=1$. On the other hand, Lemma \ref{g-invariants} and Lemma \ref{may be useful} imply $p|m$, which is a contradiction. Therefore, $[V_1, V_1]=[V_2, V_2]=0$. 
		
		In the notations of  Lemma \ref{non-symmetric case}, we fix the bases $s_i^*$ and $t^*_j, 1\leq i, j\leq n$, of $V(m)$ and $V(n)$ respectively. Following Remark \ref{onto}, set $v=s_0^* +t^*_{n-1}$ (the picky reader can replace $s_0^*$ and $t^*_{n-1}$ with their images in $V_1$ and $V_2$ respectively). Using formulas $(\circ\circ)$, we have
		$[[v, v], v]=-4at^*_{n-2}\neq 0$, provided $n\geq 2$. Thus $n=1$ and we set $v=s_0^*+t^*_1$. Then $[[v, v], v]=as^*_0-a t^*_1\neq 0$ and this contradiction
		implies $V_1=V_2$. Combining the condition $[V_1, V_1]\neq 0$ and \cite[Theorem 2.16]{zub-bar}, one obtains $n=1$. 
		
		Now, let $m< n$. Then $n-m=2$. If $[V_2, V_2]\neq 0$, then the above arguments show that $n=1$, whence $m=-1$. This contradiction infers $[V_2, V_2]=0$.
		For $V_1$ there are two alternatives, either $[V_1, V_1]\neq 0$, that in turn implies $m=1, n=3$, or $[V_1, V_1]=0$. 
		
		Assume that $[V_1, V_1]\neq 0$, so that $m=1, n=3$ . The aforementioned formulas $(\circ\circ)$ imply
		\[[s_0^*, s_1^*]=\frac{a'}{2} H, \ [s_0^*, s_0^*]=a' E_{1, -1}, \ [s_1^*, s_1^*]=-a' E_{-1, 1}, a'\in \Bbbk\setminus 0.\]
		Similarly, the formulas $(\star\star)$ infer
		\[[s_0^*, t^*_2]=-aH, \ [s^*_1, t^*_1]=aH, \]  	 
		\[[s^*_0, t^*_1]=-aE_{1, -1}, \ [s^*_1, t^*_0]=aE_{1, -1},\]
		\[[s^*_0, t^*_3]=aE_{-1, 1}, \ [s^*_1, t^*_2]=-aE_{-1, 1}, \ a\in\Bbbk\setminus 0.\]	
		Set $v=s_0^*+t^*_2$. Then
		\[ [[v, v], v]=-2a' t_1^* -2as_0^*+2at^*_2\neq 0,\]
		which is a contradiction.  Therefore, we obtain $[V_1, V_1]=0$. 
		
		If $k>1$, then set $v=s^*_0+t^*_{k+1}$. We have $[[v, v], v]=-2as_1^*\neq 0$,
		which is a contradiction again. 
		
		It remains to consider the case $k=1$, that is $m=0, n=2$. As above, there is
		\[[s_0^* , t_1^*]=-aH, \ [s_0^* , t_0^*]=-aE_{1, -1}, \ [s_0^* , t_2^*]=aE_{-1, 1}, \ a\in\Bbbk\setminus 0.\]
		Set $v=\alpha s_0^*+\beta t_0^*+\gamma t_1^*+\delta t_2^*$. Then
		\[[v, v]=a(-2\alpha\beta E_{1, -1}-2\alpha\gamma H+2\alpha\delta E_{-1, 1}),\]
		that in turn implies $[[v, v], v]=0$. 
		
		Summing all up, we obtain that $C_{(1)}=L(1)^s\oplus L(0)^t\oplus L(2)^u, 0\leq s\leq 1, t, u\geq 0$. Besides, $[L(1)^s, L(0)^t\oplus L(2)^u]=0$ and $t$ is nonzero if and only if $u$ is. 
		
		Assume that $s, t, u\geq 1$. Set $x=x_1+x_2, x_1\in L(1), x_2\in L(2)$. Then the identity $[[x, x], x]=0$ is equivalent to $[[x_1, x_1], x_2]=0$. Since $L(2)\simeq\mathfrak{sl}_2$, the latter would imply that  $E_{1, -1}$ is a central element of $\mathfrak{sl}_2$. This contradiction infers that either 
		$C_{(1)}=L(1)^s$ or $C_{(1)}=L(0)^t\oplus L(2)^u$. 
		
		Now, assume that $u> 1$. Set $v=x+y_1+y_2$, where $x\in L(0)$, but the elements $y_1$ and $y_2$ belong to the different terms isomorphic to $L(2)$. The identity $[[v, v], v]=0$ 
		is equivalent to $[[x, y_1], y_2]=[[x, y_2], y_1]=0$. Without loss of generality, one can suppose that $[x, y_1]\neq 0$, whence again, the latter element is central in $\mathfrak{sl}_2$. This contradiction implies $u\leq 1$. 
		
		Finally, if $t>1$, then let $x_1$ and $x_2$ be the basic vectors of two different terms isomorphic to $L(0)$. Since both of them are not orthogonal to $L(2)$, by Lemma \ref{non-symmetric case} there is a nonzero scalar $\alpha\in\Bbbk$, such that
		$[x_1-\alpha x_2, L(2)]=0$. In other words, $\Bbbk(x_1-\alpha x_2)\simeq L(0)$ is contained in $K(M_{(1)})\cap C_{(1)}=0$. This contradicion concludes the proof.   
	\end{proof}
	\begin{cor}\label{description in char=0}
		Assume that $l=1$, i.e. $\mathfrak{G}_{\bar 1}$ is semi-simple as a $\mathrm{SL}_2$-module. For example, it takes place if $\mathrm{char}\Bbbk=0$, since then $\mathrm{SL}_2$ is linearly reductive. As we have already observed, the Harish-Chandra subpair $(1, K(\mathfrak{G}_{\bar 1}))$ corresponds to $\mathrm{R}_u(\mathbb{G})$. If $\mathbb{G}$ is not split, then \cite[Theorem 2.16]{zub-bar} implies
		that $\mathbb{G}/\mathrm{R}_u(\mathbb{G})\simeq\mathrm{SpO}(2|1)$, provided the first alternative of Proposition \ref{M_{(1)}} holds, otherwise the Harish-Chandra subpair $(\mathrm{SL}_2, K(\mathfrak{G}_{\bar 1})\oplus L(2))$ corresponds to a normal split
		supersubgroup $\mathbb{L}\simeq\mathrm{SL}_2\ltimes (\mathbb{G}_a^-)^{\dim K(\mathfrak{G}_{\bar 1})+3}$, such that $\mathbb{L}\geq\mathrm{R}_u(\mathbb{G})$ and $\mathbb{G}/\mathbb{L}\simeq \mathbb{G}_a^-$. 
	\end{cor}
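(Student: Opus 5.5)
Since $l=1$, we have $\mathfrak{G}_{\bar 1}=M_{(1)}$, and Proposition \ref{M_{(1)}} gives $\mathfrak{G}_{\bar 1}=C_{(1)}\oplus K(\mathfrak{G}_{\bar 1})$. The kernel $K(\mathfrak{G}_{\bar 1})$ is the largest submodule $\mathfrak{R}$ with $[\mathfrak{G}_{\bar 1},\mathfrak{R}]=0$. To see this, note that $[K(M_{(1)}),C_{(1)}]=0$ because $K(M_{(1)})$ is orthogonal to the whole $M_{(1)}$ by definition. Hence $(e,K(\mathfrak{G}_{\bar 1}))$ is the Harish-Chandra pair of $\mathrm{R}_u(\mathbb{G})$, and the quotient $\mathbb{G}/\mathrm{R}_u(\mathbb{G})$ has pair $(\mathrm{SL}_2,C_{(1)})$ with the induced bracket. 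If $\mathbb{G}$ is not split, then $[\mathfrak{G}_{\bar 1},\mathfrak{G}_{\bar 1}]\neq 0$, so $C_{(1)}\neq 0$ and exactly one of the two alternatives of Proposition \ref{M_{(1)}} holds with $s=1$ or $t=1$.

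In the first alternative, $C_{(1)}\simeq L(1)$ with a nonzero bracket. That bracket is unique up to a scalar by Lemma \ref{if m=n}, and it is symmetric since $n=1$ is odd. By \cite[Theorem 2.16]{zub-bar}, this Harish-Chandra pair is that of $\mathrm{SpO}(2|1)$. Alternatively, after rescaling the bracket we can identify it with the standard pair of $\mathrm{SpO}(2|1)$. The equivalence of categories between supergroups and Harish-Chandra pairs then gives $\mathbb{G}/\mathrm{R}_u(\mathbb{G})\simeq\mathrm{SpO}(2|1)$.

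In the second alternative, $C_{(1)}=L(0)\oplus L(2)$ with $[L(0),L(0)]=[L(2),L(2)]=0$. Consider the subpair $(\mathrm{SL}_2,K(\mathfrak{G}_{\bar 1})\oplus L(2))$. Its bracket vanishes, since $[L(2),L(2)]=0$ and $K$ is orthogonal to everything, so it is a split supersubgroup $\mathbb{L}\simeq\mathrm{SL}_2\ltimes(\mathbb{G}_a^-)^{\dim K+3}$ containing $\mathrm{R}_u(\mathbb{G})$. We check normality via the normality criterion:
\begin{enumerate}
\item $H=G$ is normal in $G$;
\item $K\oplus L(2)$ is a $G$-submodule;
\item $G$ acts trivially on $\mathfrak{G}_{\bar 1}/(K\oplus L(2))\simeq L(0)$;
\item $[\mathfrak{G}_{\bar 1},K\oplus L(2)]\subseteq\mathfrak{sl}_2=\mathrm{Lie}(G)$, which is automatic.
\end{enumerate}
The quotient $\mathbb{G}/\mathbb{L}$ has pair $(e,L(0))$, hence it is purely odd of dimension one, i.e. $\mathbb{G}_a^-$. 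Finally, the example $\mathrm{char}\Bbbk=0$ follows from the linear reductivity of $\mathrm{SL}_2$.

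The main obstacle is the identification in the first alternative. It relies entirely on \cite[Theorem 2.16]{zub-bar}, namely the uniqueness, up to scalar and isomorphism, of the nonzero bracket on $L(1)$. Beyond that, we only need the rescaling of the scalar $a$ in $(\circ\circ)$, which can be absorbed by rescaling $\mathfrak{G}_{\bar 1}$.
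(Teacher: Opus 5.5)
Your proposal is correct and takes essentially the paper's route: you read off $\mathfrak{G}_{\bar 1}=C_{(1)}\oplus K(\mathfrak{G}_{\bar 1})$ from Proposition \ref{M_{(1)}}, identify the $L(1)$ case as $\mathrm{SpO}(2|1)$ via \cite[Theorem 2.16]{zub-bar}, and in the $L(0)\oplus L(2)$ case check the normality criterion for the zero-bracket subpair. The paper gives no separate proof of this corollary, so your argument simply writes out the verifications that the statement itself indicates.
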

	\begin{rem}\label{simplifying (2)}
		If we identify $L(2)$ with $\mathfrak{sl}_2$, regarded as a $\mathrm{SL}_2$-module with respect to the adjoint action, then the Lie bracket on $L(0)\oplus L(2)$ is determined as
		\[ [b, x]=abx, \ b\in\Bbbk\simeq L(0), \ x\in\mathfrak{sl}_2 \ \mbox{and} \ a \ \mbox{is a nonzero scalar}. \]	
	\end{rem}
	Until the end of this section we assume that $\mathrm{char}\Bbbk=p>0$ and $l\geq 2$. Let $K_{(l)}$ denote the largest $\mathrm{SL}_2$-submodule of $\mathfrak{G}_{\bar1}$, such that
	$[K_{(l)}, M_{(l-1)}]=0$. 
	\begin{lm}\label{if K_2 is large}
		If $K_{(l)}+M_{(l-1)}=\mathfrak{G}_{\bar 1}$, then $K(\mathfrak{G}_{\bar 1})\neq 0$.	
	\end{lm}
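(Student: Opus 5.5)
The plan is to produce a nonzero $\mathrm{SL}_2$-submodule $S$ of $\mathfrak{G}_{\bar 1}$ with $[S,\mathfrak{G}_{\bar 1}]=0$. Since the sum of two submodules that are both orthogonal to $\mathfrak{G}_{\bar 1}$ is again such a submodule, $K(\mathfrak{G}_{\bar 1})$ contains every such $S$, and the lemma follows. The candidate I will use is the socle of $K_{(l)}$.

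\textbf{Step 1: $K_{(l)}\neq 0$.} By the definition of the Loewy length $l$, we have $M_{(l-1)}\neq M_{(l)}=\mathfrak{G}_{\bar 1}$. The hypothesis $K_{(l)}+M_{(l-1)}=\mathfrak{G}_{\bar 1}$ then forces $K_{(l)}\not\subseteq M_{(l-1)}$. In particular $K_{(l)}\neq 0$.

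\textbf{Step 2: the socle of $K_{(l)}$ lies in $M_{(l-1)}$.} Let $S$ be the socle of $K_{(l)}$. It is nonzero because $K_{(l)}$ is a nonzero finite dimensional module. Being semisimple, $S$ lies in the socle $M_{(1)}$ of $\mathfrak{G}_{\bar 1}$. Since $l\geq 2$, we have $M_{(1)}\subseteq M_{(l-1)}$, so $S\subseteq M_{(l-1)}$.

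\textbf{Step 3: $S$ is orthogonal to $\mathfrak{G}_{\bar 1}$.} Because $S\subseteq K_{(l)}$, the definition of $K_{(l)}$ gives $[S,M_{(l-1)}]=0$. Because $S\subseteq M_{(l-1)}$ and the bracket is symmetric, $[K_{(l)},S]\subseteq[K_{(l)},M_{(l-1)}]=0$. Combining the two,
\[[S,\mathfrak{G}_{\bar 1}]=[S,K_{(l)}+M_{(l-1)}]=0.\]
Hence $0\neq S\subseteq K(\mathfrak{G}_{\bar 1})$.

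I do not expect a real obstacle here. The only points needing care are that $l\geq 2$ is used exactly to get $M_{(1)}\subseteq M_{(l-1)}$, and that $K(\mathfrak{G}_{\bar 1})$ is indeed the largest submodule orthogonal to $\mathfrak{G}_{\bar 1}$. The latter holds by bilinearity of the bracket, since the sum of two submodules annihilated by $\mathfrak{G}_{\bar 1}$ is again annihilated by it.
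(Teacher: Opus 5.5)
Your proposal is correct and follows essentially the same route as the paper: the socle of $K_{(l)}$ you use is exactly $M_{(1)}\cap K_{(l)}$, which is nonzero and lies in $K_{(l)}\cap M_{(l-1)}$, and that intersection is annihilated by $K_{(l)}+M_{(l-1)}=\mathfrak{G}_{\bar 1}$. You also correctly identify that the standing assumption $l\geq 2$ is what gives $M_{(1)}\subseteq M_{(l-1)}$.
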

	\begin{proof}
		Since $K_{(l)}\neq 0$, we have $0\neq M_{(1)}\cap K_{(l)}\subseteq K_{(l)}\cap M_{(l-1)}$. It remains to note that $[K_{(l)}\cap M_{(l-1)}, \mathfrak{G}_{\bar 1}]=0$.	
	\end{proof}
	
	Suppose that $l=2$ and $[\mathfrak{G}_{\bar 1}, \mathfrak{G}_{\bar 1}]\neq 0$. It is clear that $K_{(2)}\cap M_{(1)}=K(M_{(1)})$. 
	
		Let $M$ be a $\mathrm{SL}_2$-module and $N$ be its submodule. We call the factor-module $M^{\oplus s}/R$, where the submodule $R$ is generated by the elements
	\[(0, \ldots, 0, \underbrace{n}_{i-\mbox{th place}}, 0, \ldots, 0)-(0, \ldots, 0, \underbrace{n}_{j-\mbox{th place}}, 0, \ldots, 0), 1\leq i\neq j\leq s, n\in N,\]
	an \emph{amalgamated direct sum} of $s$ copies of $M$ along $N$, and denote it by $M^{\oplus_N s}$. It is clear that $N^{\oplus s}/R\simeq N$ and
	$M^{\oplus_N s}/N\simeq (M/N)^{\oplus s}$.
	
	\begin{lm}\label{K_2=0}
		If $K_{(2)}=0$, then $p=3$ and $\mathfrak{G}_{\bar 1}\simeq V(3)$.	
	\end{lm}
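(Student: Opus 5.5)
Under the standing assumptions ($l=2$, $[\mathfrak{G}_{\bar 1},\mathfrak{G}_{\bar 1}]\neq 0$), the condition $K_{(2)}=0$ gives $K(M_{(1)})=K_{(2)}\cap M_{(1)}=0$. I would begin with Proposition \ref{M_{(1)}}, which then yields $M_{(1)}=C_{(1)}\neq 0$ with either $M_{(1)}\simeq L(1)$ or $M_{(1)}\simeq L(0)\oplus L(2)$. Moreover, every simple submodule $V$ of $\mathfrak{G}_{\bar 1}/M_{(1)}$ lifts to a submodule $\widetilde V$ with $[\widetilde V, M_{(1)}]\neq 0$, since otherwise $\widetilde V\subseteq K_{(2)}=0$. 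The composition factors of $\mathfrak{G}_{\bar 1}/M_{(1)}$ therefore pair nontrivially with composition factors of $M_{(1)}$ into $\mathfrak{sl}_2$. Using Remark \ref{onto}, a factor $L(n)$ in the top pairing with $L(1)$ must have $n\in\{1,3\}$. A factor pairing with $L(0)$ must have $n=2$, and one pairing with $L(2)$ must have $n\in\{0,2,4\}$.

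Next I would cut these possibilities down by non-splitness and blocks. Since $l=2$, each top factor lies in a non-split extension with a socle factor, so Remark \ref{blocks} forces the top factor into the block of a socle factor. In the case $M_{(1)}\simeq L(1)$ the candidate top factor is $L(3)$; it lies in the block of $1$ only if $3\in (1+2p\mathbb{Z})\cup(-3+2p\mathbb{Z})$, i.e.\ $p=3$. (The factor $L(1)$ over $L(1)$ would need $\mathrm{Ext}^1(L(1),L(1))\neq 0$, which fails for $\mathrm{SL}_2$, $p\neq 2$.) In the case $M_{(1)}\simeq L(0)\oplus L(2)$ the candidate factors are $L(0)$, $L(2)$ and $L(4)$. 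I would exclude $L(0)$ and $L(2)$ via $\mathrm{Ext}$ vanishing between these small weights, or via Lemma \ref{when pairing is trivial} when $p\mid n$. For $L(4)$ the extension must be with $L(2)$ and $p=3$. In that configuration I would write $v=x+y$ with $x\in L(0)$ and $y$ a lift of a vector of the $L(4)$-factor, compute $[[v,v],v]$ using $(\star\star)$ and Remark \ref{simplifying (2)}, and derive a contradiction as in the proof of Proposition \ref{M_{(1)}}.

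So $p=3$, $M_{(1)}\simeq L(1)$, and every top factor is $L(3)$, with $V(3)$ the non-split extension $0\to L(1)\to V(3)\to L(3)\to 0$ for $p=3$. Lemma \ref{Weyl module is mapped onto} then shows that each cyclic submodule generated by a lift of a top $L(3)$ is a quotient of $V(3)$, and is non-split, hence isomorphic to $V(3)$. Consequently $\mathfrak{G}_{\bar 1}\simeq V(3)^{\oplus_{L(1)} s}$, the amalgamated sum along the common socle. To show $s=1$ I would argue as at the end of Proposition \ref{M_{(1)}}: with two copies, the pairings of their tops against $L(1)$ are proportional by the one-dimensionality in Lemma \ref{non-symmetric case}. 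A suitable difference $y_1-\alpha y_2$ then generates a submodule $[\,\cdot\,]$-orthogonal to $M_{(1)}$, which lies in $K_{(2)}=0$; this contradiction gives $s=1$.

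The main obstacle is the block and extension bookkeeping in the second step, above all the exclusion of $L(0)\oplus L(2)$ with an $L(4)$ on top when $p=3$. That is where the explicit $[[v,v],v]$ computation is needed.
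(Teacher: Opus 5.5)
Your plan (restrict the possible top factors, use linkage, then show there is only one copy) is reasonable. In the branch $M_{(1)}\simeq L(1)$ it is more economical than the paper's route, which passes through the covering pair $(\mathrm{SL}_2,V(n))$ and \cite[Theorem 2.16]{zub-bar} and then computes all cross brackets $[N_i,N_j]$ by the Jacobi identity. However, the step that produces your candidate lists is not justified as written. Since $[M_{(1)},M_{(1)}]\neq 0$, the bracket $\mathfrak{G}_{\bar 1}\times M_{(1)}\to\mathfrak{sl}_2$ does not factor through $\mathfrak{G}_{\bar 1}/M_{(1)}$. So Remark \ref{onto}, which needs a submodule $K$ with $[V_1,K]=0$, does not let a top factor ``pair'' with a socle factor. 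Your lifting remark also carries no information, because any submodule mapping onto $V$ meets $M_{(1)}$, where the bracket is already nonzero.

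What $K_{(2)}=0$ really gives is that $x\mapsto [x,-]|_{M_{(1)}}$ embeds $\mathfrak{G}_{\bar 1}$ into $\mathrm{Hom}(M_{(1)},\mathfrak{sl}_2)\simeq M_{(1)}\otimes L(2)$. The composition factors of $L(1)\otimes L(2)$, respectively $(L(0)\oplus L(2))\otimes L(2)$, are exactly your lists. With this fix the $L(1)$ branch is correct, including your $s=1$ argument via one-dimensionality of $\mathrm{Hom}_{\mathrm{SL}_2}(V(1)\otimes V(3),\mathfrak{sl}_2)$ and the common socle. In fact the embedding alone finishes that branch: $L(1)\otimes L(2)$ is $L(1)\oplus L(3)$ for $p>3$, and the uniserial tilting module $L(1)|L(3)|L(1)$ for $p=3$.

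The genuine error is in the $L(4)$ branch. By Remark \ref{blocks}, $B(2)\subseteq(2+2p\mathbb{Z})\cup(-4+2p\mathbb{Z})$ never contains $4$ for odd $p$. On the other hand $4\in -2+2p\mathbb{Z}$, the second coset for $B(0)$, exactly when $p=3$. So the case to exclude is a non-split extension of $L(4)$ by $L(0)$. By Lemma \ref{Weyl module is mapped onto} and a dimension count, this is a submodule $V(4)\oplus L(2)$ of $\mathfrak{G}_{\bar 1}$ with $\mathrm{soc}\,V(4)=L(0)$.

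Your test vector $v=x+y$ with $x\in L(0)$ and $y$ a lift in this $V(4)$ cannot give a contradiction. By Lemma \ref{if m=n} with $n=4$ even, there is no nonzero symmetric equivariant map $V(4)\times V(4)\to\mathfrak{sl}_2$. Hence $[V(4),V(4)]=0$ and $[[v,v],v]=0$ for every $v\in V(4)$.

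The contradiction has to come from the bracket $L(2)\times V(4)\to\mathfrak{sl}_2$. By Lemma \ref{non-symmetric case} (note $L(2)=V(2)$ for $p=3$), this bracket is given by $(\star\star)$ with $m=2$, $n=4$, $k=3$. Its scalar $a$ is nonzero because the bracket restricts to the nonzero bracket $[L(0),L(2)]$. Then $v=s_0^*+t_4^*$ gives $[[v,v],v]=-2as_1^*\neq 0$, which is exactly the $k>1$ computation in Proposition \ref{M_{(1)}}. The paper instead disposes of $n\geq 4$ here by the same covering-pair appeal to \cite[Theorem 2.16]{zub-bar}. As your proposal stands, the case $p=3$, $M_{(1)}\simeq L(0)\oplus L(2)$ with an $L(4)$ on top is not excluded.
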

	\begin{proof} By Proposition \ref{M_{(1)}} we have either $M_{(1)}=L(1)$ or $M_{(1)}=L(0)\oplus L(2)$. Let $N$ be a $\mathrm{SL}_2$-submodule of $\mathfrak{G}_{\bar 1}$, such that $M_{(1)}=N_{(1)}$ and $N/M_{(1)}\simeq L(n)$ is a direct simple term of $\mathfrak{G}_{\bar 1}/M_{(1)}$. By Remark \ref{blocks}, $\mathrm{Ext}^1_{\mathrm{SL}_2}(L(s), L(t))\neq 0$ infers $2|(s-t)$, and $s\neq t$ by \cite[II.2.12(1)]{jan}.
		In particular, if $M_{(1)}=L(1)$, then $n$ is odd and $n\geq 3$. By Lemma \ref{Weyl module is mapped onto}, $(\mathrm{SL}_2, V(n))$ has a Harish-Chandra pair structure, which is mapped onto subpair $(\mathrm{SL}_2, N)$. Then \cite[Theorem 2.16]{zub-bar} implies that $n=p=3$ and $N\simeq V(3)\simeq\mathrm{Sym}_3(V)^*$. In particular, $\mathfrak{G}_{\bar 1}/M_{(1)}\simeq L(3)^s$, and therefore, $\mathfrak{G}_{\bar 1}\simeq V(3)^{\oplus_{L(1)} s}, s\geq 1$.
		
		Similarly, if $M_{(1)}=L(0)\oplus L(2)$, then $n$ is even and the above arguing with the "covering" Harish-Chandra pair $(\mathrm{SL}_2, V(n))$ implies that it may not be strictly larger than $2$, that is $n=0, 2$. Since $L(0)$ and $L(2)$ are obviously tilting modules (cf. \cite[II.E.1]{jan}), \cite[Proposition II.4.16]{jan} infers that the embedding $M_{(1)}\to \mathfrak{G}_{\bar 1}$ is split, which is a contradicton.
		
		Now, let $N_i$ denote the submodule of $\mathfrak{G}_{\bar 1}$, such that $M_{(1)}=(N_i)_{(1)}$ and $N_i/M_{(1)}$ is isomorphic to the $i$-th direct simple term of $\mathfrak{G}_{\bar 1}/M_{(1)}$. To simplify our notations we denote
		the basic vectors $s^*_0, s^*_1, s^*_2, s^*_3$ of $\mathrm{Sym}_3(V)^*$ by $w_3, w_1, w_{-1}, w_{-3}$, respectively. Since each $N_i$ is isomorphic to $\mathrm{Sym}_3(V)^*$, we denote the corresponding basic vectors of $N_i$ by $w^{(i)}_3, w^{(i)}_1, w^{(i)}_{-1}, w^{(i)}_{-3}$. Besides, since each two modules $N_i$ and $N_j$ have the common socle $M_{(1)}$, one can assume that $w^{(i)}_{\pm 1}=w^{(j)}_{\pm 1}$
		for any $1\leq i, j\leq s$. Using the formulas $(\circ\circ)$, we have
		\[ [w_{\pm 3}^{(i)}, w^{(i)}_{\mp 3}]=-a_iH, \ [w^{(i)}_{\pm 1}, w^{(i)}_{\mp 1}]=a_iH, \]
		\[[w^{(i)}_3, w^{(i)}_{-1}]=[w^{(i)}_{-1}, w^{(i)}_3]=[w^{(i)}_1, w^{(i)}_1]=a_iE_{1, -1}, \]
		\[[w^{(i)}_{-3}, w^{(i)}_1]=[w^{(i)}_1, w^{(i)}_{-3}]=[w^{(i)}_{-1}, w^{(i)}_{-1}]=-a_i E_{-1, 1}, \ a_i\in\Bbbk\setminus 0, \ 1\leq i\leq s,\]
		and our assumption implies $a_i=a_j=a\neq 0$ for any $1\leq i, j\leq s$.
		
		Let $1\leq i\neq j\leq t$. Then $[w_3^{(i)}, w_{-3}^{(j)}]=a_{ij}H$ and the Jacobi identity implies
		\[[[w_3^{(i)}, w_{-3}^{(j)}], w_{-1}^{(i)}]=-a_{ij}w^{(i)}_{-1}=-[[w_{-3}^{(j)}, w_{-1}^{(i)}], w_3^{(i)}]-[[w_3^{(i)}, w_{-1}^{(i)}], w_{-3}^{(j)}]= a w^{(j)}_{-1},\] 
		hence $a_{ij}=-a$ and we conclude that $[w_{\pm 3}^{(i)}, w^{(j)}_{\mp 3}]=-aH$ for all $1\leq i, j\leq s$. 
		
		Similarly, we have $[w_{3}^{(i)}, w_{-1}^{(j)}]=b_{ij}E_{1, -1}$ and the Jacobi identity implies
		\[[[w_{3}^{(i)}, w_{-1}^{(j)}], w_{-1}^{(i)}]=b_{ij}w_1^{(i)}=-[[w_{-1}^{(j)}, w_{-1}^{(i)}], w_{3}^{(i)}]-[[w_{3}^{(i)}, w_{-1}^{(i)}], w_{-1}^{(j)}]= \]
		\[-aw_1^{(i)}-aw_1^{(j)}=-2aw_1^{(i)}=aw_1^{(i)},\]
		hence $b_{ij}=a$, so that $[w^{(i)}_3, w^{(j)}_{-1}]=[w^{(j)}_{-1}, w^{(i)}_3]=aE_{1, -1}$ for all $1\leq i, j\leq t$. We leave for the reader to show that
		$[w^{(i)}_{-3}, w^{(j)}_1]=[w^{(j)}_1, w^{(i)}_{-3}]=-aE_{-1, 1}$ for all $1\leq i, j\leq s$.
		
		Since the value of $[w_a^{(i)}, w_b^{(j)}]$ does not depend on the indices $i$ and $j$, we have 
		\[[w_a^{(1)}-w_a^{(2)}, w_b^{(j)}]=0, \ a\in\{\pm 3\}, \ b\in\{\pm 1, \pm 3\}, \ 1\leq j\leq s,\]
		provided $s\geq 2$. 
		In other words, in this case $K(\mathfrak{G}_{\bar 1})\neq 0$, which contradicts the fact that $M_{(1)}\subseteq K(\mathfrak{G}_{\bar 1})$ and $[M_{(1)}, M_{(1)}]\neq 0$.  
		\end{proof}
	\begin{rem}\label{H(3^s/1) does not exists, if s> 1}
The proof of Lemma \ref{K_2=0} can be interpreted so that there is no nontrivial Harish-Chandra pair structure on $(\mathrm{SL}_2, V(3)^{\oplus_{L(1)} s})$, provided $s\geq 2$. 		
	\end{rem}
	\begin{pr}\label{if top is simple and l=2}
		Let $\mathfrak{G}_{\bar 1}/M_{(1)}\simeq L(n)$ and $K(\mathfrak{G}_{\bar 1})=0$. Then $p=3$ and $\mathfrak{G}_{\bar 1}\simeq V(3)$.
	\end{pr}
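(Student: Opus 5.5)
We are in the standing situation $l=2$, $[\mathfrak{G}_{\bar 1},\mathfrak{G}_{\bar 1}]\neq 0$, with $\mathfrak{G}_{\bar 1}/M_{(1)}\simeq L(n)$ and $K(\mathfrak{G}_{\bar 1})=0$. The plan is to reduce to Lemma \ref{K_2=0}, i.e. to prove $K_{(2)}=0$. Lemma \ref{K_2=0} then gives $p=3$ and $\mathfrak{G}_{\bar 1}\simeq V(3)^{\oplus_{L(1)} s}$, and since the top is simple we get $s=1$, so $\mathfrak{G}_{\bar 1}\simeq V(3)$.

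\emph{Step 1: $K_{(2)}\subseteq M_{(1)}$.} Suppose $K_{(2)}\not\subseteq M_{(1)}$. The image of $K_{(2)}$ in $\mathfrak{G}_{\bar 1}/M_{(1)}\simeq L(n)$ is a nonzero submodule, hence everything, and so $K_{(2)}+M_{(1)}=\mathfrak{G}_{\bar 1}$. Since $l=2$ we have $M_{(l-1)}=M_{(1)}$, so Lemma \ref{if K_2 is large} gives $K(\mathfrak{G}_{\bar 1})\neq 0$, contradicting the hypothesis. Therefore $K_{(2)}\subseteq M_{(1)}$.

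\emph{Step 2: $K_{(2)}=0$.} By Step 1, $K_{(2)}=K_{(2)}\cap M_{(1)}=K(M_{(1)})$. By Proposition \ref{M_{(1)}}, $M_{(1)}=C_{(1)}\oplus K(M_{(1)})$, and $K(M_{(1)})$ is a trivial $\mathfrak{sl}_2$-module whenever $C_{(1)}\neq 0$. I would show $K(M_{(1)})=0$ by proving it is contained in $K(\mathfrak{G}_{\bar 1})=0$. We already have $[K(M_{(1)}),M_{(1)}]=0$, so it remains to control $[K(M_{(1)}),y]$ for $y$ lifting the top $L(n)$.

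This pairing factors through a morphism $K(M_{(1)})\otimes L(n)\to\mathfrak{sl}_2$. If it is nonzero, Remark \ref{onto} forces the composition factors $L(m)$ of $K(M_{(1)})$ to satisfy $m=n$ or $|m-n|=2$. Lemma \ref{when pairing is trivial} excludes some of these cases. When $C_{(1)}\neq 0$ we have $m=p\mu$ by Lemma \ref{g-invariants}, and the $\mathfrak{sl}_2$-triviality together with the Jacobi argument of Lemma \ref{may be useful too} (using $[C_{(1)},C_{(1)}]=\mathfrak{sl}_2$ when $C_{(1)}=L(1)$) should kill the remaining ones. Concretely, for $k\in K(M_{(1)})$ and $x,x'\in C_{(1)}$,
\[[[x,x'],[k,y]]\]
can be rewritten via Jacobi in terms of $[k,[x,y]]$-type terms, which vanish or land in $M_{(1)}$. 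When $C_{(1)}=L(0)\oplus L(2)$ the analogous computation uses $[L(0),L(2)]=\mathfrak{sl}_2$. The case $C_{(1)}=0$ cannot occur: then $M_{(1)}=K(M_{(1)})=K_{(2)}$ would pair trivially with $M_{(1)}$, forcing $[\mathfrak{G}_{\bar 1},\mathfrak{G}_{\bar 1}]$ to come only from the top. Remark \ref{onto} and a Weyl module covering argument (Lemma \ref{Weyl module is mapped onto}) then show that $M_{(1)}$ is killed.

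I expect Step 2 to be the main obstacle. The cleanest way through it is probably the one that avoids trouble with nonsplit extensions: use Remark \ref{blocks} to restrict the possible $m$ to the block of $n$, and combine this with the divisibility $p\mid m$ to contradict $m=n$ or $m=n\pm2$ via Lemma \ref{when pairing is trivial}. Once $K_{(2)}=0$ is established, Lemma \ref{K_2=0} finishes the proof as described above.
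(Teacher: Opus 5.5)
Your Step 1 is correct and matches the paper. The paper likewise reduces to $K_{(2)}=K(M_{(1)})\neq 0$ and derives a contradiction, since otherwise Lemma \ref{K_2=0} applies.

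For $C_{(1)}\neq 0$ your Jacobi idea does work, and it is shorter than the paper's part A, but state it precisely and drop the detours. Take $x,x'\in C_{(1)}$, $k\in K(M_{(1)})$, $y\in\mathfrak{G}_{\bar 1}$ and $u=[x,x']$. Then $[u,k]=[x,[x',k]]+[x',[x,k]]=0$. Also $[u,y]=[x,[x',y]]+[x',[x,y]]\in M_{(1)}$, because $M_{(1)}$ is $\mathfrak{sl}_2$-stable. Hence $[u,[k,y]]=[[u,k],y]+[k,[u,y]]=0$. Since $[C_{(1)},C_{(1)}]=\mathfrak{sl}_2$ has trivial centre, $[k,y]=0$, so $K(M_{(1)})\subseteq K(\mathfrak{G}_{\bar 1})=0$. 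This needs neither $p\mid m$, nor Lemma \ref{when pairing is trivial}, nor blocks.

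The route you call cleanest in your last paragraph is not valid. A summand of $M_{(1)}$ in a block other than $B(n)$ is a module direct summand, yet it can still bracket nontrivially with lifts of the top, so blocks do not restrict $m$. Moreover, Lemma \ref{when pairing is trivial} says nothing about $m=n-2$ with $p\mid m$. The paper has to handle that case by splitting off $L(n-2)$ and a Weyl-covering argument.

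The genuine gap is the case $C_{(1)}=0$, which you dismiss in one sentence. There $[M_{(1)},M_{(1)}]=0$, so $M_{(1)}=K(M_{(1)})=K_{(2)}\neq 0$, and you must show this configuration is impossible. The Jacobi trick has no input because $[C_{(1)},C_{(1)}]=0$, and there is no reason for $p\mid m$. Saying that Remark \ref{onto} and Lemma \ref{Weyl module is mapped onto} ``kill'' $M_{(1)}$ is not an argument. That Lemma cannot even be applied to $\mathfrak{G}_{\bar 1}$, since $M_{(1)}$ may contain $L(n)$ or $L(n+2)$, whose highest weights are not $<n$.

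The paper's part B supplies what is missing:
\begin{enumerate}
\item $K(\mathfrak{G}_{\bar 1})=0$ together with uniqueness of the pairings up to scalar gives $M_{(1)}\simeq L(n)^{k_1}\oplus L(n-2)^{k_2}\oplus L(n+2)^{k_3}$ with $k_i\leq 1$, and $L(n)^{k_1}$ splits off.
\item Remark \ref{blocks} shows that exactly one of two cases occurs: $p\mid n$ with $L(n-2)$ linked to the top, or $p\mid(n+2)$ with $L(n+2)$ linked to the top.
\item If $p\mid n$, the indecomposable summand $R'$ with socle $L(n-2)$ and top $L(n)$ has $[R',R']\neq 0$. The $V(n)$-covering plus \cite[Theorem 2.16]{zub-bar} then forces $R'\simeq V(3)$ and $p=3$. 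This contradicts $[\mathrm{rad}(R'),\mathrm{rad}(R')]=0$, by the explicit bracket of Lemma \ref{K_2=0}.
\item If $p\mid(n+2)$, Lemma \ref{when pairing is trivial}(2) kills the pairing $L(n+2)\otimes L(n)\to\mathfrak{sl}_2$.
\end{enumerate}
Without some argument of this kind, Step 2 (and hence the reduction to Lemma \ref{K_2=0}) is incomplete.
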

	\begin{proof}
		Without loss of generality, one can suppose that $K_{(2)}\neq 0$. By Lemma \ref{if K_2 is large}, $K_{(2)}=K(M_{(1)})$. Note also that for any simple submodule  
		$R\subseteq K(M_{(1)})$ the Lie bracket induces a nontrivial morphism $R\otimes \mathfrak{G}_{\bar 1}/M_{(1)}\to \mathfrak{sl}_2$ of $\mathrm{SL}_2$-modules. Following Remark \ref{onto}, we write $[x, y]=[x, \overline{y}]$, where $x\in R, y\in \mathfrak{G}_{\bar 1}$ and $\overline{y}=y+M_{(1)}$. As above, we have  $R\simeq L(s)$, where $s\in\{n, n\pm 2\}$.
		We will split the proof into two cases.

		A) Let $C_{(1)}\neq 0$, or equivalently, $[M_{(1)}, M_{(1)}]\neq 0$. Then by Lemma \ref{may be useful}, or by Lemma \ref{may be useful too} as well, each simple term of $K(M_{(1)})$ has the highest weight multiple of $p$. Thus follows that $K(M_{(1)})\simeq L(s)$, where $s\in\{n, n\pm 2\}$. Indeed, it is clear that $K(M_{(1)})$ does not have simple terms with different highest weights. Next, if $K(M_{(1)})$ has at least two simple terms, say $L_1$ and $L_2$, then by Lemma \ref{non-symmetric case} and Lemma \ref{if m=n} there is an isomorphism $\phi : L_1\to L_2$ of $\mathrm{SL}_2$-modules and a nonzero scalar $\alpha\in\Bbbk$, such that for any $x\in L_1, y\in\mathfrak{G}_1$, we have \[[x, y]=[x, \overline{y}]=\alpha[\phi(x), \overline{y}]=\alpha [\phi(x), y].\]
		In other words, $[L, \mathfrak{G}_{\bar 1}]=0$ for the submodule $L=\{x-\alpha\phi(x)\mid x\in L_1\}$, that contradicts to $K(\mathfrak{G}_{\bar 1})=0$.
		
		Let $s=n$ and $p|n$. Combining Remark \ref{onto} with Lemma \ref{when pairing is trivial}(1), we conclude that  $[K(M_{(1)}), \mathfrak{G}_{\bar 1}/M_{(1)}]=0$,  which is a contradiction.   
		
		Now, assume that $s=n-2$. 
		In particular, $n\geq 2$. If $n=2$, then all weights of $\mathfrak{G}_{\bar 1}$ are less or equal to $2$, and since the modules $L(0), L(1), L(2)$ are tilting, the module $\mathfrak{G}_{\bar 1}$ is semi-simple, which is a contradiction. So, we have $n\geq 5$. 
		
		By Remark \ref{blocks} the block $B(n)$ is contained in  $(2+2p\mathbb{N})\sqcup ((p-4)+2p\mathbb{N})$.
		Since $n-2$ does not belong to $B(n)$, there is 
		$\mathfrak{G}_{\bar 1}=L(n-2)\oplus S$, where $C_{(1)}\subseteq S$ and $S/C_{(1)}\simeq L(n)$. If $C_{(1)}\simeq L(1)$, then $C_{(1)}=\mathrm{rad}(S)$ and combining Lemma \ref{Weyl module is mapped onto} with \cite[Theorem 2.16]{zub-bar}, we conclude that $n=p=3$ (and $S\simeq V(3)$), which contradicts to $p|(n-2)$. If $C_{(1)}\simeq L(0)\oplus L(2)$, then $0\not\in B(n)$ implies $\mathfrak{G}_{\bar 1}=L(0)\oplus L(n-2)\oplus S'$, where $\mathrm{rad}(S')\simeq L(2)$ and $S'/\mathrm{rad}(S')\simeq L(n)$. The same arguments entail
		$n=p=3$, which is a contradiction again.
		
		It remains to consider the case $s=n+2$ and $p|s$. Applying Lemma \ref{when pairing is trivial}(2) to the (induced) nonzero morphism $K(M_{(1)})\otimes\mathfrak{G}_{\bar 1}/M_{(1)}\to\mathfrak{sl}_2$, we obtain a contradiction.

		B) Let $C_{(1)}=0$. Repeating the arguments of part A), we obtain $\mathfrak{G}_{\bar 1}=L(n)^{\oplus k_1}\oplus R$, where $R\cap M_{(1)}=L(n-2)^{\oplus k_2}\oplus L(n+2)^{\oplus k_3},
		R/(R\cap M_{(1)})\simeq L(n), 0\leq k_1, k_2, k_3\leq 1$ and $k_2+k_3>0$. The aforementioned description of the block $B(n)$ implies that $n-2\in B(n)$ only if $p|n$. Similarly, $n+2\in B(n)$ only if $p|(n+2)$. Since $R$ is not semi-simple, one and only one of these alternatives takes place.
		
		Let $p|n$. Then $\mathfrak{G}_{\bar 1}=L(n)^{\oplus k_1}\oplus L(n+2)^{\oplus k_3}\oplus R'$, where $\mathrm{rad}(R')\simeq L(n-2)$ and $R'/\mathrm{rad}(R')\simeq L(n)$. 
		Note that $[R', R']\neq 0$, otherwise $\mathrm{rad}(R')\subseteq K(\mathfrak{G}_{\bar 1})$. Combining $[R', R']\neq 0$ with Lemma \ref{Weyl module is mapped onto} and 
		\cite[Theorem 2.16]{zub-bar}, we also conclude that $n=p=3$ and $R'\simeq V(3)$. But $[\mathrm{rad}(R'), \mathrm{rad}(R')]=0$, that contradicts how the Lie bracket on $V(3)$ is defined
		in Lemma \ref{K_2=0}.
		
		Let $p|(n+2)$. Then $\mathfrak{G}_{\bar 1}=L(n)^{\oplus k_1}\oplus L(n-2)^{\oplus k_2}\oplus R'$, where $\mathrm{rad}(R')\simeq L(n+2), R'/\mathrm{rad}(R')\simeq L(n)$. 
		Since $[\mathrm{rad}(R'), \mathrm{rad}(R')]=0$ and $[R', R']\neq 0$, we have the induced nonzero morphism  \[\mathrm{rad}(R')\otimes R'/\mathrm{rad}(R')\simeq L(n+2)\otimes L(n)\to\mathfrak{sl}_2, \]
		hence again Lemma \ref{when pairing is trivial}(2) drives to a contradiction. 
	\end{proof}
	\begin{theorem}\label{Loewy length two}
		If $l=2$, then $\mathfrak{G}_1/K(\mathfrak{G}_1)$ is isomorphic to one of the following $\mathrm{SL}_2$-modules :  $0, L(1), L(0)\oplus L(2)$ or $V(3)$ (in the latter case $p=3$ also). Moreover, the induced Harish-Chandra pair structure on $(\mathrm{SL}_2, \mathfrak{G}_1/K(\mathfrak{G}_1))$ is described in Proposition \ref{M_{(1)}} and Lemma \ref{K_2=0}, respectively.	
	\end{theorem}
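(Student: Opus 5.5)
We pass to the quotient by the unipotent radical and reduce to the case $K(\mathfrak{G}_{\bar 1})=0$. As recalled in Section 1, the Harish-Chandra subpair $(e, K(\mathfrak{G}_{\bar 1}))$ corresponds to the normal purely odd supersubgroup $\mathrm{R}_u(\mathbb{G})$. Since $[\mathfrak{G}_{\bar 1}, K(\mathfrak{G}_{\bar 1})]=0$, the bracket descends to $\overline{\mathfrak{G}}_{\bar 1}=\mathfrak{G}_{\bar 1}/K(\mathfrak{G}_{\bar 1})$, so $(\mathrm{SL}_2, \overline{\mathfrak{G}}_{\bar 1})$ is the Harish-Chandra pair of $\mathbb{G}/\mathrm{R}_u(\mathbb{G})$. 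Its kernel is zero: if $[x, \mathfrak{G}_{\bar 1}]=0$ modulo nothing (the bracket lands in $\mathfrak{sl}_2$, not in a quotient), then the preimage of $K(\overline{\mathfrak{G}}_{\bar 1})$ is a submodule bracketing trivially with $\mathfrak{G}_{\bar 1}$, hence lies in $K(\mathfrak{G}_{\bar 1})$. Also $l(\overline{\mathfrak{G}}_{\bar 1})\leq 2$.

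If the bracket is zero, the quotient is $0$. If $l(\overline{\mathfrak{G}}_{\bar 1})=1$, Proposition \ref{M_{(1)}} applies with $K(M_{(1)})=0$, giving $L(1)$ (the case $s=1$) or $L(0)\oplus L(2)$ (the case $t=1$), with the bracket described there. So the main case is $l=2$ with $K(\mathfrak{G}_{\bar 1})=0$ and a nonzero bracket, and we rename the quotient $\mathfrak{G}_{\bar 1}$.

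In this case Lemma \ref{if K_2 is large} excludes $K_{(2)}+M_{(1)}=\mathfrak{G}_{\bar 1}$. The argument of Proposition \ref{if top is simple and l=2} shows that $K_{(2)}\subseteq K(M_{(1)})$. If $K_{(2)}=0$, Lemma \ref{K_2=0} gives $p=3$ and $\mathfrak{G}_{\bar 1}\simeq V(3)$ with the stated bracket. If the top $\mathfrak{G}_{\bar 1}/M_{(1)}$ is simple, Proposition \ref{if top is simple and l=2} gives the same conclusion.

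The remaining subcase, $K_{(2)}\neq 0$ with non-simple top $\bigoplus_i L(n_i)$, is the main obstacle. Here we reduce to the simple-top case by submodules. For each simple top summand, take the preimage $N_i$ of $L(n_i)$. Then $N_i\supseteq M_{(1)}$, and $(\mathrm{SL}_2, N_i)$ is a Harish-Chandra subpair of Loewy length at most $2$. We divide out $K(N_i)$ and apply Proposition \ref{if top is simple and l=2} (or the semisimple case) to $N_i/K(N_i)$.

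\emph{Case $C_{(1)}\neq 0$.} Lemmas \ref{may be useful} and \ref{may be useful too} force $K(M_{(1)})$ to have Frobenius-twisted composition factors. Each $N_i/K(N_i)$ is then either semisimple or $V(3)$, and the remaining subquotients are forced into $K(\mathfrak{G}_{\bar 1})$. We then argue as in the amalgamation computation of Lemma \ref{K_2=0} and Remark \ref{H(3^s/1) does not exists, if s> 1}: either differences of the lifts span a nonzero submodule lying in $K(\mathfrak{G}_{\bar 1})$, a contradiction, or only one copy survives, giving $V(3)$.

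\emph{Case $C_{(1)}=0$.} Every non-split extension $N_i$ has $[N_i, N_i]\neq 0$ only via pairings $K(M_{(1)})\otimes L(n_i)\to\mathfrak{sl}_2$. Lemma \ref{when pairing is trivial} together with the block analysis of Remark \ref{blocks}, exactly as in part B of Proposition \ref{if top is simple and l=2}, kills every non-split piece. Hence $\mathfrak{G}_{\bar 1}$ is semisimple, contradicting $l=2$.

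The delicate point is checking that the lifted vectors with equal brackets really produce a $G$-submodule in the kernel. For this I would use the uniqueness up to scalar in Remark \ref{onto}, as in part A of Proposition \ref{if top is simple and l=2}.
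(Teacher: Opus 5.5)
Your reduction to $K(\mathfrak{G}_{\bar 1})=0$ is fine, and so is your treatment of the cases $K_{(2)}=0$ and of a simple top. The non-simple-top case, however, has two genuine gaps.

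\textbf{First gap.} The sentence ``the argument of Proposition~\ref{if top is simple and l=2} shows that $K_{(2)}\subseteq K(M_{(1)})$'' does not hold here. That argument uses two ingredients:
\begin{itemize}
\item Lemma~\ref{if K_2 is large}, which only gives $K_{(2)}+M_{(1)}\neq\mathfrak{G}_{\bar 1}$;
\item the simplicity of $\mathfrak{G}_{\bar 1}/M_{(1)}$, which is what forces $K_{(2)}+M_{(1)}=M_{(1)}$.
\end{itemize}
With a non-simple top, $K_{(2)}+M_{(1)}$ may a priori lie strictly between $M_{(1)}$ and $\mathfrak{G}_{\bar 1}$, and this has to be excluded separately.

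The paper does this as follows. Take $S\supseteq M_{(1)}$ with $S/M_{(1)}$ a complement of $(K_{(2)}+M_{(1)})/M_{(1)}$. Then $K(S)\subseteq K_{(2)}\cap S\subseteq M_{(1)}$, and $\mathfrak{G}_{\bar 1}=K_{(2)}+S$ gives $K(S)\subseteq K(\mathfrak{G}_{\bar 1})=0$. Induction on $\dim\mathfrak{G}_{\bar 1}$ yields $S\simeq V(3)$. Hence $M_{(1)}\simeq L(1)$ with $[M_{(1)},M_{(1)}]\neq 0$, which contradicts $0\neq K_{(2)}\cap M_{(1)}\subseteq K(M_{(1)})$.

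If you prefer to avoid induction, first prove the result under the extra hypothesis $K_{(2)}\subseteq M_{(1)}$. Then apply it to $S$, whose own $K_{(2)}$ is $K_{(2)}\cap S\subseteq M_{(1)}$.

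\textbf{Second gap.} Your analysis never links the pieces $N_i$ to each other. Proposition~\ref{if top is simple and l=2} only describes $N_i/K(N_i)$, and a non-split extension inside $N_i$ can lie entirely in $K(N_i)$. So neither ``kills every non-split piece'' nor ``the remaining subquotients are forced into $K(\mathfrak{G}_{\bar 1})$'' follows. Also, in Case $C_{(1)}=0$ it is not true that $[N_i,N_i]$ comes only from pairings with $K(M_{(1)})$: lifts of a top summand $L(1)$ bracket nontrivially.

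The missing facts are:
\begin{itemize}
\item $K(N_i)\subseteq K_{(2)}$, because $N_i\supseteq M_{(1)}$;
\item $\bigcap_i K(N_i)\subseteq K(\mathfrak{G}_{\bar 1})=0$, because $\sum_i N_i=\mathfrak{G}_{\bar 1}$.
\end{itemize}
The paper uses the two-piece version: $K(S)+K(S')\subseteq K(M_{(1)})$ and $K(S)\cap K(S')=0$.

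With these facts and $K_{(2)}=K(M_{(1)})$, your per-summand route works, and no amalgamation computation is needed. The outcome ``$V(3)$'' is impossible in any case, since $V(3)$ has $K_{(2)}=0$. Note that each $N_i/K(N_i)$ has zero kernel. The three cases then close as follows.
\begin{itemize}
\item If $C_{(1)}\simeq L(1)$: $N_i/K(N_i)$ contains a copy of $C_{(1)}$ and has dimension $>2$, hence is $V(3)$. So $K(N_i)=K(M_{(1)})$ for all $i$, giving $K(M_{(1)})=0$, a contradiction.
\item If $C_{(1)}\simeq L(0)\oplus L(2)$: $N_i/K(N_i)$ would have dimension $>4$, which is impossible.
\item If $C_{(1)}=0$: $V(3)$ is excluded for $N_i/K(N_i)$, since the socle of $V(3)$ has nonzero bracket whereas $[M_{(1)},M_{(1)}]=0$. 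So by Proposition~\ref{M_{(1)}} each $N_i/K(N_i)$ is $L(1)$ or $L(0)\oplus L(2)$. Since $M_{(1)}\hookrightarrow\bigoplus_i M_{(1)}/K(N_i)$, all composition factors of $\mathfrak{G}_{\bar 1}$ lie among $L(0),L(1),L(2)$. These lie in pairwise different blocks and have no self-extensions, so $\mathfrak{G}_{\bar 1}$ is semisimple, contradicting $l=2$.
\end{itemize}
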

	\begin{proof}
		We use the induction on $\dim \mathfrak{G}_1$. Replacing $\mathfrak{G}_1$ by $\mathfrak{G}_1/K(\mathfrak{G}_1)$, one can suppose that $K(\mathfrak{G}_1)=0$ and the Loewy length remains two, otherwise Proposition \ref{M_{(1)}} concludes the proof. 
		
		By Lemma \ref{if K_2 is large}, $K_{(2)}+M_{(1)}$ is a proper submodule of $\mathfrak{G}_1$. 
		Next, if $K_{(2)}\not\subseteq M_{(1)}$, then let $S$ denote a submodule of $\mathfrak{G}_1$, such that $M_{(1)}=S_{(1)}$ and $S/M_{(1)}$ is a (nonzero!) direct complement to $(K_{(2)}+M_{(1)})/M_{(1)}$. Since $\dim S<\dim \mathfrak{G}_1$ and $S\not\subseteq K_{(2)}$, $S/K(S)$ is isomorphic to $L(1), L(0)\oplus L(2)$ or $V(3)$. Note that 
		$K(S)\subseteq K_{(2)}\cap S \subseteq M_{(1)}$ and thus the identity $\mathfrak{G}_1=K_{(2)}+S$ implies $K(S)\subseteq K(\mathfrak{G}_1)=0$. Since $S$ has the Loewy length two, we conclude that $p=3$ and $S\simeq V(3)$. Moreover, the structure of Harish-Chandra (sub)pair on $(\mathrm{SL}_2, S)$ is described in Lemma \ref{K_2=0}. In particular,  $M_{(1)}\simeq L(1)$ is the socle of $V(3)$, so that we have $[M_{(1)}, M_{(1)}]\neq 0$.  On the other hand, $0\neq K_{(2)}\cap M_{(1)}=K(M_{(1)})$ infers $K(M_{(1)})=M_{(1)}$, which is a contradiction.
		
		Now, let $K_{(2)}\subseteq M_{(1)}$, that is $K_{(2)}=K(M_{(1)})\neq 0$. If the number of direct simple terms of $\mathfrak{G}_1/M_{(1)}$ is at least two, then there are two proper
		submodules $S$ and $S'$, such that $M_{(1)}=S\cap S'$ and $S+S'=\mathfrak{G}_1$. By the above, $K(S)+K(S')\subseteq K(M_{(1)})$ and $K(S)\cap K(S')=0$.
		
		If $S/K(S)\simeq L(1)$, then $K(S)=M_{(1)}=K(M_{(1)})$. In particular, $[M_{(1)}, M_{(1)}]=0$. On the other hand, the inclusion $K(S')\subseteq M_{(1)}=K(S)$ infers $K(S')=0$. Arguing as above, we conclude
		that $S'\simeq V(3)$, which in turn implies $M_{(1)}\simeq L(1)$ and $[M_{(1)}, M_{(1)}]\neq 0$!
		
		Similarly, if $S/K(S)\simeq V(3)$, then $M_{(1)}=K(S)\oplus L$, where $L\simeq L(1)$ and $[L, L]\neq 0$. Thus $K(S')\subseteq K(M_{(1)})=K(S)$, whence $K(S')=0$ again.
		Moreover, arguing as above, we conclude that $S'\simeq V(3)$. In particular, $M_{(1)}\simeq L(1)$ and $K(S)=0$. Summing all up, we obtain
		$\mathfrak{G}_1\simeq V(3)^{\oplus_{L(1)} 2}$, that in turm implies the contradictory statement $K(\mathfrak{G}_{\bar 1})\neq 0$. 
		
		Now, assume that $S/K(S)\simeq L(0)\oplus L(2)\simeq S'/K(S')$. It is clear that both $K(S)$ and $K(S')$ are proper submodules in $M_{(1)}$. 
		Considering the filtration
		\[0\subseteq K(S)\oplus K(S')\subseteq M_{(1)}\subseteq S,\]
		we immediately conclude that 
		$M_{(1)}=K(S)\oplus K(S')$.  In particular, each composition factor of $\mathfrak{G}_{\bar 1}$ are isomorphic either to $L(0)$ or to $L(2)$.  Combining all with the aforementioned fact that $L(0)$ and $L(2)$ are tilting modules, we conclude that $\mathfrak{G}_1$ is semi-simple, which is a contradiction.
		
		Finally, the case $\mathfrak{G}_1/M_{(1)}\simeq L(n)$ has already been considered in Proposition \ref{if top is simple and l=2}. 
	\end{proof}
	\begin{lm}\label{if W_2 is V(3)}
		Let $p=3$ and let $W$ be a $\mathrm{SL}_2$-module, such that $l(W)=3, W_{(2)}\simeq V(3)$ and $W/W_{(2)}\simeq L(1)$. There is no Harish-Chandra pair structure on $(\mathrm{SL}_2, W)$, such that $(\mathrm{SL}_2, W_{(2)})$ is a subpair isomorphic to that described in Lemma \ref{K_2=0}.	
	\end{lm}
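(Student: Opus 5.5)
Suppose, to the contrary, that such a Harish-Chandra pair structure exists. I will first pin down $W$ as an $\mathrm{SL}_2$-module and then obtain a contradiction from the identity $[[w,w],w]=0$ applied to a suitable odd element.

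The socle series of $W$ is $L(1)\subset V(3)\subset W$ with factors $L(1), L(3), L(1)$. Since $W_{(2)}/W_{(1)}\simeq L(3)$ lies in the second socle layer, $W/W_{(1)}$ is a non-split extension of $L(1)$ by $L(3)$. Over $p=3$ this should be $H^0(3)\simeq\mathrm{Sym}_3(V)$, the dual of $V(3)$; the symmetric form of Lemma \ref{Weyl module is mapped onto} embeds it into $H^0(3)$. So $W$ has weights $\pm 3$ once and $\pm1$ twice.

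Choose weight vectors $w_{\pm3},w_{\pm1}$ of $W_{(2)}\simeq V(3)$ as in Lemma \ref{K_2=0}, so that $\mathrm{span}(w_{\pm1})=W_{(1)}\simeq L(1)$, and choose $u_{\pm1}\in W$ lifting a basis of $W/W_{(2)}\simeq L(1)$. The Lie bracket restricted to $W_{(2)}$ is the one of Lemma \ref{K_2=0}, with some $a\neq0$. The unknown brackets are $[u_{\epsilon}, w_{\delta}]$ and $[u_\epsilon,u_{\epsilon'}]$. Weight considerations reduce these to a handful of scalars, for example $[u_1,w_1]=xE_{1,-1}$, $[u_1,w_{-1}]=yH$, $[u_1,w_{-3}]=zE_{-1,1}$, $[u_1,u_1]=cE_{1,-1}$, $[u_1,u_{-1}]=dH$, together with the analogues for $u_{-1}$.

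Next, I determine the $\mathfrak{sl}_2$- and $\mathrm{Dist}$-action on the $u_{\pm1}$ modulo $W_{(2)}$. Non-splitness should force $E_{-1,1}w_{3}$ or $E_{1,-1}u_{1}$ (or a divided power such as $E^{(2)}_{1,-1}$) to hit $w_{3}$ nontrivially. Equivariance then ties $x,y,z,\dots$ to $a$ through relations like those derived for $(\star\star)$ and $(\circ\circ)$. I expect $u_1$ to behave like $s_1$ in $\mathrm{Sym}_3(V)$ modulo the socle.

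I then test $[[w,w],w]=0$ on elements of the form $w=u_1+\lambda w_3+\mu w_{-3}$, or on polarized versions: by polarization, $[[v,v'],v'']$ plus its cyclic permutations vanish. For instance, taking $(u_1,w_3,w_{-3})$ and using $[w_3,w_{-3}]=-aH$ gives
\[
-a[H,u_1]+[[w_3,u_1],w_{-3}]+[[u_1,w_{-3}],w_3]=0 .
\]
Here $[H,u_1]=u_1$ plus terms in $W_{(2)}$, while the other two summands lie in $W_{(2)}$ once $u_1$ acts on $W_{(2)}$. Projecting modulo $W_{(2)}$ gives $-a\overline{u_1}=0$, hence $a=0$, a contradiction. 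The point is that the brackets $[w_{\pm3},u]$ take values in $\mathfrak{sl}_2$, and $\mathfrak{sl}_2$ acts on $W_{(2)}$ via the submodule, so these summands stay in $W_{(2)}$. Only $[w_3,w_{-3}]$, acting on $u_1$, survives modulo $W_{(2)}$.

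The main obstacle is making this last step rigorous. A weight check is encouraging: $[w_3,u_1]$ has weight $4$ and so vanishes in $\mathfrak{sl}_2$, while $[u_1,w_{-3}]$ has weight $-2$, so $[[u_1,w_{-3}],w_3]$ has weight $1$. The latter is $zE_{-1,1}w_3$, and it lies in $W_{(2)}$ because $W_{(2)}$ is a submodule. The remaining subtlety is the term $[H,u_1]$. If the projection argument fails, for example because $H$ acts by $-1\cdot 3=0$-type degeneracies mod $3$, I fall back to full coordinates. I would use the explicit matrix of $\mathrm{Dist}(\mathrm{SL}_2)$ on $H^0(3)$-type extensions, solve for $x,y,z,c,d$, and exhibit a vector $w$ with $[[w,w],w]\neq0$.
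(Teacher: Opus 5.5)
Your argument is correct, and it takes a genuinely different and much shorter route than the paper. The paper works in coordinates. It parametrizes the action of $E_{\pm 1,\mp 1}$ on the lifts $u_{\pm 1}$ (scalars $\alpha,\beta,\gamma,\delta$) and solves the equivariance system for the brackets $[u_{\pm1},w_k]$ and $[u_{\pm1},u_{\pm1}]$. It then applies $[[v,v],v]=0$ to $v=mu_1+nu_{-1}$, and uses non-splitness ($\beta\neq 0$) together with one Jacobi identity to force all brackets involving $u_{\pm1}$ to vanish, contradicting $K(W)=0$.

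You need only one instance of the polarized identity. Polarizing $[[v,v],v]=0$ gives twice the cyclic sum, so the step is legitimate for $p=3$. For the triple $(u_1,w_3,w_{-3})$, the summands $[[w_3,u_1],w_{-3}]$ and $[[u_1,w_{-3}],w_3]$ lie in $\mathfrak{sl}_2\cdot W_{(2)}\subseteq W_{(2)}$. Meanwhile $[[w_3,w_{-3}],u_1]=-aHu_1\equiv -a\overline{u_1}\pmod{W_{(2)}}$. The degeneracy you worry about does not occur: $\overline{u_1}$ has weight $1\not\equiv 0 \pmod 3$, so $H\overline{u_1}=\overline{u_1}\neq 0$. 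Hence $a=0$, contradicting $a\neq 0$. Consequently you can drop your first step (identifying $W/W_{(1)}$ with $H^0(3)$), the parametrization of the brackets, and the coordinate fallback.

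Conceptually, your argument is the quotient version of Lemma \ref{may be useful}. For any submodule $V_1\subseteq W$, the Jacobi identity gives $[[V_1,V_1],W]\subseteq V_1$. So in any Harish-Chandra pair containing the subpair of Lemma \ref{K_2=0}, all of $[V(3),V(3)]=\mathfrak{sl}_2$ acts trivially on $W/V(3)$. This rules out $L(1)$ as a quotient without using $l(W)=3$ or non-splitness. By Lemma \ref{g-invariants}, it even shows that every composition factor of $W/V(3)$ has the form $L(3m)$. The paper's computation gives more explicit information about the would-be brackets, but at the cost of a lengthy calculation that your observation makes unnecessary.
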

	\begin{proof}
		We fix the basis $\{w_k\}_{k\in \{\pm 1, \pm 3\}}$ of $W_{(2)}$ as in Lemma \ref{K_2=0}. Similarly, let $u_1=s^*_0, u_{-1}=s^*_1$ be the standard basis of $W/W_{(2)}\simeq V(1)\simeq L(1)$. 
		Suppose that $p=3$ and $(\mathrm{SL}_2, W)$ is a Harish-Chandra pair, such that $(\mathrm{SL}_2, W_{(2)})$ is a subpair isomorphic to that described in Lemma \ref{K_2=0}.
		Since $W_{(1)}\simeq L(1)$ and $[W_{(1)}, W_{(1)}]\neq 0$, it follows that $K(W)=0$.
		
		It is clear that
		\[E_{1, -1}u_{-1}=-u_1+\alpha w_1, \ E_{1, -1}u_1=\beta w_3, \]
		\[E_{-1, 1}u_{-1}=\gamma w_{-3}, \ E_{-1, 1}u_1=-u_{-1}+\delta w_{-1}.\]
		Replacing $u_1$ by $u_1-\delta w_1$, one can assume that $\delta=0$.
		
		Using identity $[E_{1, -1}, E_{-1, 1}]=H$, we obtain
		\[-\alpha-\gamma=-\alpha+\beta=0.\]
		We also have
		\[[u_1, w_3]=0, \ [u_1, w_1]=bE_{1, -1}, \ [u_1, w_{-1}]=cH, \ [u_1, w_{-3}]=d E_{-1, 1},\]
		\[[u_{-1}, w_3]=d' E_{1, -1}, \ [u_{-1}, w_1]=c' H, \ [u_{-1}, w_{-1}]=b' E_{-1, 1}, \ [u_{-1}, w_{-3}]=0,\] 
		Since the Lie bracket commutes with the adjoint action of $\mathfrak{sl}_2$, we obtain a system of linear equations on the parameters $b, c, b', c', d, d'$. 
		For example, $0=[E_{-1, 1}, [u_1, w_3]]$ implies
		\[-d'-b=0.\]
		Continuing, we derive
		\[-c' +c=-b;\]
		\[a\beta +b=\alpha a+b=c;\]
		\[b'=c ;\]
		\[-a\beta-c=-\alpha a-c =d;\]
		\[a\gamma+c'=-\alpha a+c'=d';\]
		\[-b+\alpha a=c';\]
		\[a\gamma-b'=-\alpha a-b'=c';\]
		\[-c+\alpha a +c'=b';\]
		\[d+\alpha a+b'=0.\]
		This system has the solution 
		\[b=-c'+\alpha a, \ c=-c'-\alpha a, \ b'=-c'-\alpha a,\]
		\[d=c', \ d'=c'-\alpha a.\]
		Set
		\[ [u_1, u_{-1}]=fH, \ [u_1, u_1]=gE_{1, -1}, \ [u_{-1}, u_{-1}]=h E_{-1, 1}.\]
		Arguing as above, we have
		\[f=-g=h+\alpha c'.\]
		Consider an element $v=mu_1+nu_{-1}, m, n\in\Bbbk$. Then
		\[0=[[v, v], v]=[m^2 g E_{1, -1}-mnf H + n^2 h E_{-1, 1}, v]= \]
		\[m^3 g\beta w_3-m^2n(f+g)u_1+\alpha g m^2n w_1-mn^2(f-h)u_{-1}+n^3 h\gamma w_{-3},\]
		which in turn implies
		\[f+g=f-h=\alpha g=g\beta=h\gamma=0.\]
		Observe that $\beta$ is nonzero, otherwise $u_1$ is obviously primitive. But if the latter takes place, then $u_1$ generates a submodule isomorphic to $L(1)$. Therefore, $W_{(1)}$ contains
		at least two simple terms, which contradicts our assumption. Now, we conclude $g=0$, hence $f=h=0$ also. In addition, we have $\alpha c'=0$. 
		Using Jacoby identity, we obtain
		\[0=[[u_1, u_1], w_1]=-[u_1, [u_1, w_1]]=b\beta w_3,\]
		and by the above remark, there is $b=-c'+\alpha a=0$. Combining with $\alpha c'=0$, we derive $\alpha^2 a=0$, that implies $\alpha=0$ and $c'=0$ as well, hence
		$[u_{\pm 1}, W]=0$. This contradiction concludes the proof.
	\end{proof} 
	Below, Lemma \ref{K_l=0}, Proposition \ref{top is simple again} and Theorem \ref{general case} generalize Lemma \ref{K_2=0}, Proposition \ref{if top is simple and l=2} and Theorem \ref{Loewy length two}  respectively. We use the joint induction on the set of couples $(l, \dim\mathfrak{G}_{\bar 1})$ with respect to the lexicographical order. In particular, the latter three statements correspond to $l\leq 2$.
	
	\begin{lm}\label{K_l=0}
		If $l\geq 2$ and $K_{(l)}=0$, then $p=3$ and $\mathfrak{G}_{\bar 1}\simeq V(3)$. 	
	\end{lm}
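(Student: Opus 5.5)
We argue by the joint induction on $(l,\dim\mathfrak{G}_{\bar 1})$ announced above; the case $l=2$ is Lemma \ref{K_2=0}, so we assume $l\geq 3$ and aim for a contradiction. The plan is to cut $\mathfrak{G}_{\bar 1}$ down to a submodule of Loewy length exactly $l$ with smaller dimension, and to find inside it a copy of $V(3)$ whose socle bracket is nonzero.

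\textbf{Step 1: the socle is not central.} Since $K_{(l)}=0$ and $M_{(1)}\subseteq M_{(l-1)}$, we have $K(\mathfrak{G}_{\bar 1})\cap M_{(1)}=0$, hence $K(\mathfrak{G}_{\bar 1})=0$. In particular $[M_{(1)},\mathfrak{G}_{\bar 1}]\neq 0$.

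\textbf{Step 2: apply induction to $M_{(l-1)}$.} The Harish-Chandra subpair $(\mathrm{SL}_2, M_{(l-1)})$ has Loewy length $l-1\geq 2$, so Theorem \ref{Loewy length two}, or its general version (Theorem \ref{general case}) available by induction, applies to it. Thus $M_{(l-1)}/K(M_{(l-1)})$ is one of $0$, $L(1)$, $L(0)\oplus L(2)$ or $V(3)$.

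If $[M_{(l-1)},M_{(l-1)}]=0$, then $K(M_{(l-1)})=M_{(l-1)}$. I would then pass to a submodule $S$ with $S_{(l-1)}=M_{(l-1)}$ and $S/M_{(l-1)}$ simple. Since $K_{(l)}=0$, every such $S$ pairs nontrivially with $M_{(l-1)}$. Exactly as in Proposition \ref{if top is simple and l=2}, we get induced nonzero morphisms $R\otimes S/M_{(l-1)}\to\mathfrak{sl}_2$ for simple $R\subseteq M_{(l-1)}$. Remark \ref{onto}, Lemma \ref{when pairing is trivial} and Remark \ref{blocks} should then force all composition factors into a set of tilting modules, namely $L(0),L(1),L(2)$ and the $L(n\pm 2)$ pattern. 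That contradicts $l\geq 3$.

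In the remaining case the quotient $M_{(l-1)}/K(M_{(l-1)})$ is $L(1)$, $L(0)\oplus L(2)$ or $V(3)$. If $K(M_{(l-1)})\neq 0$, it meets $M_{(1)}$, and elements of $M_{(1)}$ lying in $K(M_{(l-1)})$ must pair with the top layer. Here I would repeat the Lemma \ref{if K_2 is large}/Proposition \ref{if top is simple and l=2} reasoning, with $M_{(l-1)}$ in the role of $M_{(1)}$.

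\textbf{Step 3: reduce to Lemma \ref{if W_2 is V(3)}.} The main goal is to show $M_{(l-1)}\simeq V(3)$ with $p=3$, so that $l=3$. Choose $W\subseteq\mathfrak{G}_{\bar 1}$ with $W_{(2)}=V(3)$ and $W/W_{(2)}\simeq L(n)$ simple. Since $\mathrm{Ext}^1(L(3),L(n))\neq 0$ and Remark \ref{blocks} holds, $n$ is odd and lies in the block of $3$. The bracket of the top $L(n)$ with the socle $L(1)$ or with $L(3)$ forces $n\in\{1,3,5\}$. Using Remark \ref{blocks} for $p=3$, together with the requirement $W_{(1)}=L(1)$, the case $n=1$ should be the only one that survives. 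This is because $\mathrm{Ext}^1(L(3),L(5))$, respectively $\mathrm{Ext}^1(L(3),L(3))$, vanishes or is incompatible with the pairing constraints of Lemma \ref{when pairing is trivial}. Lemma \ref{if W_2 is V(3)} then excludes the case $n=1$, which is the desired contradiction.

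\textbf{Main obstacle.} I expect the hardest part to be Step 2's sub-case $K(M_{(l-1)})\neq 0$. There one must show that this kernel cannot absorb the radical layers, so that $M_{(l-1)}$ really becomes $V(3)$ on the nose rather than only modulo a kernel. I would attack it via the amalgamation argument of Lemma \ref{K_2=0}/Remark \ref{H(3^s/1) does not exists, if s> 1}: produce a nonzero submodule killed by $\mathfrak{G}_{\bar 1}$, contradicting Step 1.
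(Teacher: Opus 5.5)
Your overall plan is the paper's: induct on $M_{(l-1)}$, identify it with $V(3)$, then eliminate the top layer, using Lemma \ref{if W_2 is V(3)} for $n=1$. But Step 2 has a genuine gap, caused by a missing one-line observation. $K(M_{(l-1)})$ is a submodule of $\mathfrak{G}_{\bar 1}$ with $[K(M_{(l-1)}),M_{(l-1)}]=0$, so by maximality of $K_{(l)}$ we get $K(M_{(l-1)})=K_{(l)}\cap M_{(l-1)}=0$. The inductive hypothesis (Theorem \ref{general case} in Loewy length $l-1\geq 2$) then gives $p=3$ and $M_{(l-1)}\simeq V(3)$ at once. Hence $l=3$, because $V(3)$ has Loewy length $2$; this is exactly how the paper starts. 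It follows that both of your sub-cases, $[M_{(l-1)},M_{(l-1)}]=0$ and $K(M_{(l-1)})\neq 0$ (your ``main obstacle''), are vacuous. You do not notice this, and you dispose of them only by unsupported assertions. One is ``should then force all composition factors into a set of tilting modules'', although the $L(n\pm 2)$ are not tilting in general; another is ``I would repeat the \dots\ reasoning''. So, as written, the identification $M_{(l-1)}\simeq V(3)$, on which all of Step 3 rests, is never proved.

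In Step 3, the bound $n\in\{1,3,5\}$ cannot come from Remark \ref{onto} and Lemma \ref{non-symmetric case}. No morphism $L(1)\otimes L(n)\to\mathfrak{sl}_2$ or $L(3)\otimes L(n)\to\mathfrak{sl}_2$ is induced: since $[M_{(1)},M_{(1)}]\neq 0$ inside $V(3)$, neither $[M_{(1)},M_{(2)}]$ nor $[M_{(1)},W]$ vanishes. Blocks and $\mathrm{Ext}$ alone do not bound $n$ either; for $p=3$ the Weyl module $V(7)$ has top $L(7)$ and radical $L(3)$.

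The bound does follow from a weight count. Take $v\in W$ of weight $n$ mapping to a highest weight vector of $W/M_{(2)}$. The set $\{x\in\mathfrak{G}_{\bar 1}\mid [x,M_{(2)}]=0\}$ is a submodule, hence equals $K_{(3)}=0$, so $[v,M_{(2)}]\neq 0$. The weights of $M_{(2)}$ are $\pm 1,\pm 3$ while those of $\mathfrak{sl}_2$ are $0,\pm 2$, which forces $n\leq 5$.

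With that repair your ending is correct: blocks exclude $5$, $\mathrm{Ext}^1(L(3),L(3))=0$ excludes $3$, and Lemma \ref{if W_2 is V(3)} excludes $1$. It is also more elementary than the paper's route. The paper rules out every $n>3$ by observing that $v$ is primitive, so it generates an epimorphic image of $V(n)$, and then invokes \cite[Theorem 2.16]{zub-bar} as in Lemma \ref{K_2=0}.
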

	\begin{proof}
		Let $l\geq 3$.
		Then $K(M_{(l-1)})=K_{(l)}\cap M_{(l-1)}=0$, hence by induction $M_{(l-1)}$ satisfies the conclusion of Theorem \ref{general case}. In other words, $p=3$ and $M_{(l-1)}\simeq V(3)$. In particular, $l=3$. 
		
		Let $L(n)$ be a simple term of $\mathfrak{G}_{\bar 1}/M_{(2)}$. The positive integer $n$ is odd and $n\neq 3$. If $n> 3$, then let $v$ be an element of weight $n$, which generates $L(n)$ (modulo $M_{(2)}$). It is clear that $v$ is primitive, hence the $\mathrm{SL}_2$-submodule, generated by $v$, is an epimorphic image of $V(n)$.
		As above, we conclude that $n=p=3$, which contradicts our assumption. It remains to note that the case $n=1$ is impossible by Lemma \ref{if W_2 is V(3)}.
	\end{proof}
	\begin{pr}\label{top is simple again}
		If $l\geq 2$, $K(\mathfrak{G}_{\bar 1})=0$ and $\mathfrak{G}_{\bar 1}/M_{(l-1)}$ is irreducible, then $p=3$ and $\mathfrak{G}_{\bar 1}\simeq V(3)$.	
	\end{pr}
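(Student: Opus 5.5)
We argue within the joint induction on $(l,\dim\mathfrak{G}_{\bar 1})$, following the pattern of Proposition \ref{if top is simple and l=2}. Write $\mathfrak{G}_{\bar 1}/M_{(l-1)}\simeq L(n)$.

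\textbf{Reduction.} If $K_{(l)}=0$, Lemma \ref{K_l=0} gives $p=3$ and $\mathfrak{G}_{\bar 1}\simeq V(3)$, so we may assume $K_{(l)}\neq 0$. Since the top is simple and $K(\mathfrak{G}_{\bar 1})=0$, Lemma \ref{if K_2 is large} shows $K_{(l)}+M_{(l-1)}\neq\mathfrak{G}_{\bar 1}$. Hence $K_{(l)}\subseteq M_{(l-1)}$, because $\mathfrak{G}_{\bar 1}/M_{(l-1)}$ is simple. Then $K_{(l)}=K(M_{(l-1)})\neq 0$.

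\textbf{Structure of $M_{(l-1)}$.} The module $M_{(l-1)}$ has Loewy length $l-1$, so the induction hypothesis (Theorem \ref{general case} for smaller $l$) applies. It gives $M_{(l-1)}/K(M_{(l-1)})\in\{0,\ L(1),\ L(0)\oplus L(2),\ V(3)\}$, with the corresponding Harish-Chandra structures.

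\textbf{Pairing with the top.} Since $K(\mathfrak{G}_{\bar 1})=0$, the bracket induces a nonzero pairing $K(M_{(l-1)})\otimes L(n)\to\mathfrak{sl}_2$. By Remark \ref{onto}, every simple submodule $R\simeq L(s)$ of $K(M_{(l-1)})$ pairing nontrivially satisfies $s\in\{n,n\pm2\}$. Lemma \ref{when pairing is trivial} excludes $s=n$ with $p\mid n$ and $s=n+2$ with $p\mid s$.

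As in part A) of Proposition \ref{if top is simple and l=2}, the uniqueness of these pairings (Lemmas \ref{non-symmetric case}, \ref{if m=n}) shows that two isomorphic simple terms of the socle of $K(M_{(l-1)})$ would produce a nonzero submodule killing $\mathfrak{G}_{\bar 1}$. Actually we need more: every nonzero submodule of $K(M_{(l-1)})$ must pair nontrivially with the top. In particular, the socle of $\mathfrak{G}_{\bar 1}$ lying in $K(M_{(l-1)})$ is multiplicity-free and consists of factors from $\{L(n),L(n\pm2)\}$.

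If $M_{(l-1)}/K(M_{(l-1)})\neq0$, then $[M_{(l-1)},M_{(l-1)}]=\mathfrak{sl}_2$. By Lemma \ref{may be useful too}, $K(M_{(l-1)})$ is then a trivial $\mathfrak{sl}_2$-module. Lemma \ref{g-invariants} forces all its composition factors to have highest weights divisible by $p$.

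\textbf{Block argument.} Next we combine the following constraints:
\begin{itemize}
\item Remark \ref{blocks} (blocks of $\mathrm{SL}_2$);
\item the fact that $L(0),L(1),L(2)$ are tilting;
\item Lemma \ref{Weyl module is mapped onto}, which lets a submodule generated by a primitive vector of weight $n$ be covered by $V(n)$, so that \cite[Theorem 2.16]{zub-bar} forces $n=p=3$.
\end{itemize}
With these we split off direct summands, exactly as in cases A) and B) of Proposition \ref{if top is simple and l=2}. The goal is to show that one of two things happens. Either the indecomposable summand carrying the top has $[\ ,\ ]$ restricted to it zero on its radical, contradicting Lemma \ref{when pairing is trivial}. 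Or that summand is $V(3)$, with $K$-part zero.

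In the surviving case $M_{(l-1)}/K(M_{(l-1)})\simeq V(3)$ (so $l\ge3$), the top must be $L(1)$ or $L(n)$ with $n$ odd. We then invoke Lemma \ref{if W_2 is V(3)}, or the primitive-vector argument of Lemma \ref{K_l=0}, to reach a contradiction. This leaves only $l=2$ and $\mathfrak{G}_{\bar 1}\simeq V(3)$.

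\textbf{Main obstacle.} The hardest step is controlling $K(M_{(l-1)})$ when its Loewy length exceeds one. The pairing argument only sees the socle, while intermediate layers may link to the top through non-split extensions. I would first try an induction on the layers of $K(M_{(l-1)})$, using that $[K(M_{(l-1)}),M_{(l-1)}]=0$ and the Jacobi identity to show the pairing with the top factors through the head of $K(M_{(l-1)})$. Then I would apply the block restrictions to each layer.
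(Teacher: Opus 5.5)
\textbf{There is a genuine gap.} Your reduction to $0\neq K_{(l)}=K(M_{(l-1)})$ and your remarks on the pairing $K(M_{(l-1)})\otimes L(n)\to\mathfrak{sl}_2$ match the paper. The case split according to whether $[M_{(l-1)},M_{(l-1)}]\neq 0$ is also the right frame. But the heart of the proof is only announced. The mechanism you point to does not carry over: cases A) and B) of Proposition \ref{if top is simple and l=2} rely on a Loewy-length-two module with simple top being covered by $V(n)$, and that is unavailable for $l\geq 3$.

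\textbf{Missing idea: block separation plus induction on dimension.} This is what makes your ``main obstacle'' disappear. If $K(M_{(l-1)})\neq M_{(l-1)}$, then every composition factor of $K(M_{(l-1)})$ has highest weight divisible by $p$ (Lemmas \ref{may be useful too} and \ref{g-invariants}), not only its socle. So your exclusions leave only $K(M_{(l-1)})\cap M_{(1)}\simeq L(n-2)$ with $p\mid n-2$. By Remark \ref{blocks} the block $B(n)$ then contains no multiple of $p$. In the block decomposition $\mathfrak{G}_{\bar 1}=S\oplus R$, with $S$ the $B(n)$-component, all of $K(M_{(l-1)})$ therefore lies in $R\neq 0$. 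Hence $l(S)=l$ but $\dim S<\dim\mathfrak{G}_{\bar 1}$. This is where the lexicographic induction is used at the \emph{same} Loewy length, which your proposal never does.

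When $M_{(l-1)}/K(M_{(l-1)})$ is $L(1)$ or $V(3)$, one gets $S_{(1)}\simeq L(1)$ with $[S_{(1)},S_{(1)}]\neq 0$. So $K(S)=0$, and induction gives $S\simeq V(3)$, contradicting $l(S)\geq 3$. The case $L(0)\oplus L(2)$ is ruled out by $0,1\notin B(n)=B(2)$ together with a semisimplicity argument. The inner layers of $K(M_{(l-1)})$ are never examined.

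Your proposed fix, that the pairing with the top factors through the head of $K(M_{(l-1)})$, contradicts your own correct remark that the socle of $K(M_{(l-1)})$ pairs nontrivially with the top. Such a factorization would amount to $K(M_{(l-1)})$ being semisimple.

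\textbf{Two concrete steps also fail.}

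First, in your ``surviving case'' $M_{(l-1)}/K(M_{(l-1)})\simeq V(3)$, neither Lemma \ref{if W_2 is V(3)} nor the argument of Lemma \ref{K_l=0} applies. Both need $M_{(l-1)}\simeq V(3)$ itself. You cannot pass to $\mathfrak{G}_{\bar 1}/K(M_{(l-1)})$, because $K(M_{(l-1)})$ pairs nontrivially with the top and the bracket does not descend. The paper handles this case by the block argument above.

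Second, when $K(M_{(l-1)})=M_{(l-1)}$, nothing in your list excludes a socle $L(n)$ with $p\nmid n$; Lemma \ref{when pairing is trivial}(1) only covers $p\mid n$. The paper needs a direct computation here. Take $v=t+m$, with $m\in M_{(1)}$ of weight $n$ and $t$ a weight vector lifting the lowest weight vector of the top. Then $[[v,v],v]\equiv 2[[t,m],t]\pmod{M_{(l-1)}}$, and this equals $c\,n\,t$ for a nonzero scalar $c$, which forces $p\mid n$. The socles $L(n\pm 2)$ and the non-simple ones are then excluded by indecomposability, Remark \ref{blocks} and Lemma \ref{when pairing is trivial}.
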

	\begin{proof}
		Let $l\geq 3$. 	By Lemma \ref{if K_2 is large}, $K_{(l)}=K(M_{(l-1)})$ and by Lemma \ref{K_l=0}, one can suppose that $K_{(l)}\neq 0$. Let $\mathfrak{G}_{\bar 1}/M_{(l-1)}\simeq L(n)$.
		
		Assume that $K_{(l)}\neq M_{(l-1)}$.  Then $[M_{(l-1)}, M_{(l-1)}]\neq 0$ and by Lemma \ref{may be useful too} the algebra $\mathfrak{sl}_2$ acts trivially on $K_{(l)}$. 
		In particular, the highest weight of arbitrary composition factor of $K_{(l)}$ is multiple of $p$. 
		
		Note that for any simple term $R$ of $K_{(l)}\cap M_{(1)}$ we have $[R, \mathfrak{G}_{\bar 1}/M_{(l-1)}]\neq 0$. Using Lemma \ref{when pairing is trivial} and arguing as in Proposition \ref{if top is simple and l=2}, we derive $L(n-2)\simeq K_{(l)}\cap M_{(1)}\subseteq K(M_{(1)}), \ p|(n-2)$. In particular, all weights from $B(n)$ are coprime to $p$. Thus $M_{(1)}\not\subseteq K_{(l)}$, otherwise $\mathfrak{G}_{\bar 1}$ is indecomposable and $n-2$ belongs
		to $B(n)$, which is a contradiction.
		
		By induction, Theorem \ref{general case} can be applied to $M_{(l-1)}/K_{(l)}$, that is the latter is isomorphic to one of the following $\mathrm{SL}_2$-modules : $L(1), L(0)\oplus L(2)$ or $V(3)$. We consider these cases separately. 
		
		A) In the first and third cases
		$M_{(1)}/(K_{(l)}\cap M_{(1)})\simeq L(1)$. Moreover, if $L$ is a direct complement to $K_{(l)}\cap M_{(1)}$ in $M_{(1)}$, then $L\simeq L(1)$ and $[L, L]\neq 0$. In other words,
		$K_{(l)}\cap M_{(1)}=K(M_{(1)})$ and $C_{(1)}\simeq L(1)$.
		
		Further, we have the \emph{block decomposition} $\mathfrak{G}_{\bar 1}=S\oplus R$ (cf. \cite[Lemma 7.1]{jan}), where $S$ corresponds to the block $B(n)$ and $R$ is a sum of all other block terms. Thus $S_{(1)}=C_{(1)}$ and since $R\subseteq M_{(l-1)}$, $S/(S\cap M_{(l-1)})\simeq L(n)$ and $l(S)=l$. Similarly,
		$R_{(1)}=K(M_{(1)})$ and $l(R) <l$. Since $[S_{(1)}, S_{(1)}]\neq 0$, it follows that $K(S)=0$. By induction, Theorem \ref{general case} infers that $l(S)\leq 2$, which is a  contradiction. 
		
		B) Assume that $M_{(l-1)}/K_{(l)}\simeq L(0)\oplus L(2)$. As above, we derive that $M_{(1)}=(K_{(l)}\cap M_{(1)})\oplus N$, where $K_{(l)}\cap M_{(1)}=K(M_{(1)})\simeq L(n-2)$, and $N$ is isomorphic to a submodule of $L(0)\oplus L(2)$. 
		Further, if $S$ is a term of block decomposition of $\mathfrak{G}_{\bar 1}$, that corresponds to $B(n)$, then $S_{(1)}\subseteq N$. Since $0\not\in B(n)$, $S_{(1)}\simeq L(2)$ . 
		
		Consider again the block desomposition $\mathfrak{G}_{\bar 1}=S\oplus R$, where
		$l(S)=l$, $S_{(1)}\simeq L(2)$ and $S/(S\cap M_{(l-1)})\simeq L(n)$. In particular, $B(2)=B(n)$. 
		
		If $S\neq K(S)$, then by induction either $S/K(S)\simeq L(1)$ or $S/K(S)\simeq V(3)$. In both cases there should be $1\in B(n)=B(2)$, which is a contradiction. 
		
		Finally, if $[S, S]=0$, then set $T=S\oplus K_{(l)}$. There is  $K(T)=S\cap M_{(l-1)}\oplus V$, where $V=\{k\in K_{(l)}\mid [k, S]=0 \}$. Since $V_{(1)}\subseteq K_{(l)}\cap M_{(1)}\simeq L(n-2)$, but we also have  $[K_{(l)}\cap M_{(1)}, S/(S\cap M_{(l-1)})]\neq 0$, thus follows $V=0$. By induction, $T/K(T)=L(n)\oplus K_{(l)}\simeq L(0)\oplus L(2)$, which implies $n=2$ and $K_{(l)}=L(0)$. Combining all together, we conclude that all composition factors of $\mathfrak{G}_{\bar 1}$ are $L(0)$ or $L(2)$. In particular, $\mathfrak{G}_{\bar 1}$ is semisimple, which is a contradiction.

		Now, let $[M_{(l-1)}, M_{(l-1)}]=0$. Since arbitrary simple term $R$ of $M_{(1)}$ satisfies $[R, \mathfrak{G}_{\bar 1}/M_{(l-1)}]\neq 0$, we have \[M_{(1)}\simeq L(n)^{k_1}\oplus L(n-2)^{k_2}\oplus L(n+2)^{k_3}, \ 0\leq k_1, k_2, k_3\leq 1, \ k_1+k_2+k_3>0.\]
		
		Let $M_{(1)}\simeq L(n)$. Set $v=t+m, t\in\mathfrak{G}_{\bar 1}, m\in M_{(1)}$. Using $(\circ\circ)$, we have 
		\[[[v, v], v]\equiv 2[[\overline{t}, m], t]\equiv \frac{(-1)^n na}{2}t\pmod{M_{(l-1)}},\]
		provided $\overline{t}=t+M_{(l-1)}$ and $m$ are represented by the elements $s_n^*$ and $s^*_0$ respectively. Thus $p|n$ and by Lemma \ref{when pairing is trivial}(1) there is
		$[M_{(1)}, \mathfrak{G}_{\bar 1}/M_{(l-1)}]=0$, which is a contradiction.
		
		Let $M_{(1)}\simeq L(n-2)$. Since $\mathfrak{G}_{\bar 1}$ is indecomposable, $n-2$ belongs to $B(n)$, that infers $p|n$ and Lemma \ref{when pairing is trivial}(2) drives to the contradictory statement $[M_{(1)}, \mathfrak{G}_{\bar 1}/M_{(l-1)}]=0$ again.  If $M_{(1)}\simeq L(n+2)$, then $p|(n+2)$ and this case is similar to the previous one.
		
		Assume that $M_{(1)}$ is not simple. Since the weights $n-2, n$ and $n+2$ can not belong to the same block simultaneously,  then either $\mathfrak{G}_{\bar 1}$ is indecomposable and $M_{(1)}$ contains only two simple terms, or $\mathfrak{G}_{\bar 1}=S\oplus R$,
		where $S$ is a proper indecomposable submodule, such that $S/(S\cap M_{(l-1)})\simeq L(n)$ and $S_{(1)}\oplus R_{(1)}=L(n-2)\oplus L(n)\oplus L(n+2)$. Note that $K(S)=0$, whence all we need is to consider the first case only. 
		
		So, let $M_{(1)}\simeq L(n-2)\oplus L(n)$. Since $\mathfrak{G}_{\bar 1}$ is indecomposable, then $n-2\in B(n)$ infers $p|n$ and again, 
		we obtain the contradictory statement $[L(n-2), \mathfrak{G}_{\bar 1}/M_{(l-1)}]=0$. Similarly, $M_{(1)}\simeq L(n)\oplus L(n+2)$ implies $p|(n+2)$ and we refer to the above arguments as well. Finally, the case $M_{(1)}\simeq L(n-2)\oplus L(n+2)$ is impossible, since $(n-2)$ and $(n+2)$ 
		can not belong to $B(n)$ simultaneously. 		
	\end{proof}
	\begin{theorem}\label{general case}
		If $l\geq 2$ and $K(\mathfrak{G}_{\bar 1})=0$, then $p=3$ and $\mathfrak{G}_{\bar 1}\simeq V(3)$. 	
	\end{theorem}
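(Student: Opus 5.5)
We argue within the joint induction on pairs $(l,\dim\mathfrak{G}_{\bar 1})$ in lexicographical order, exactly as set up before Lemma \ref{K_l=0}. The case $l=2$ is Theorem \ref{Loewy length two}: with $K(\mathfrak{G}_{\bar 1})=0$ the quotient $\mathfrak{G}_{\bar 1}/K(\mathfrak{G}_{\bar 1})=\mathfrak{G}_{\bar 1}$ has Loewy length two, so it can only be $V(3)$, since $L(1)$ and $L(0)\oplus L(2)$ are semisimple. So let $l\geq 3$. The model is the proof of Theorem \ref{Loewy length two}, with Proposition \ref{if top is simple and l=2} replaced by Proposition \ref{top is simple again}.

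\textbf{Step 1.} If $\mathfrak{G}_{\bar 1}/M_{(l-1)}$ is irreducible, Proposition \ref{top is simple again} applies directly and gives the claim. If $K_{(l)}=0$, Lemma \ref{K_l=0} gives the claim. So assume $K_{(l)}\neq 0$ and that the top layer $\mathfrak{G}_{\bar 1}/M_{(l-1)}$ has at least two simple summands.

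By Lemma \ref{if K_2 is large}, $K_{(l)}+M_{(l-1)}$ is a proper submodule. Suppose $K_{(l)}\not\subseteq M_{(l-1)}$. Let $S$ be the preimage of a nonzero complement of $(K_{(l)}+M_{(l-1)})/M_{(l-1)}$ in $\mathfrak{G}_{\bar 1}/M_{(l-1)}$. Then $\mathfrak{G}_{\bar 1}=K_{(l)}+S$ and $K(S)\subseteq K(\mathfrak{G}_{\bar 1})=0$. Moreover $S_{(l-1)}=M_{(l-1)}$, so $l(S)=l$ while $\dim S<\dim\mathfrak{G}_{\bar 1}$. The induction hypothesis forces $S\simeq V(3)$, hence $l=2$, a contradiction.

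\textbf{Step 2.} Now $K_{(l)}\subseteq M_{(l-1)}$, so $K_{(l)}=K(M_{(l-1)})\neq 0$. Choose proper submodules $S,S'\supseteq M_{(l-1)}$ with $S\cap S'=M_{(l-1)}$ and $S+S'=\mathfrak{G}_{\bar 1}$, as in Theorem \ref{Loewy length two}. Both have Loewy length $l$ and smaller dimension. As before, $K(S)\cap K(S')\subseteq K(\mathfrak{G}_{\bar 1})=0$ and $K(S),K(S')\subseteq M_{(l-1)}$.

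By induction, each of $S/K(S)$ and $S'/K(S')$ is one of $L(1)$, $L(0)\oplus L(2)$, $V(3)$. This uses the statement of the theorem together with the analogue of Theorem \ref{Loewy length two} for arbitrary $l$, which the joint induction also provides. Then:
\begin{enumerate}
\item If $S/K(S)\simeq L(1)$ or $V(3)$, then $M_{(l-1)}/K(S)$ has Loewy length at most $1$. Since $l\geq 3$, this means $K(S)$ contains $M_{(l-2)}$, in particular $M_{(1)}$.
\item If both quotients are $L(0)\oplus L(2)$, then as in Theorem \ref{Loewy length two} we get $M_{(l-1)}=K(S)\oplus K(S')$.
\end{enumerate}
Since $K(S)\cap K(S')=0$ and both meet $M_{(1)}$ whenever they are nonzero, at least one of them, say $K(S')$, vanishes. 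The induction then gives $S'\simeq V(3)$, contradicting $l(S')=l\geq 3$. In the second alternative above, at least one of $K(S),K(S')$ is nonzero, so the other has socle disjoint from it. Pushing this through, every composition factor of $\mathfrak{G}_{\bar 1}$ is $L(0)$ or $L(2)$. These are tilting modules, so $\mathfrak{G}_{\bar 1}$ is semisimple, contradicting $l\geq 3$.

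\textbf{Main obstacle.} The delicate point is the mixed cases: ensuring that $K(S)$ and $K(S')$ cannot both be nonzero and disjoint when the socle contains several simples, since vanishing of one of them is not automatic. There I would first try the block decomposition $\mathfrak{G}_{\bar 1}=\bigoplus$ (block components), as in Proposition \ref{top is simple again}. Indecomposable components have simple top or fall under induction. A decomposable $\mathfrak{G}_{\bar 1}=A\oplus B$ with $K=0$ forces $[A,B]\neq0$ or else $K(A)=K(B)=0$. Combined with Lemma \ref{may be useful too}, this should reduce to components of smaller Loewy length.
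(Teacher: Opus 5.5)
Your outline is the paper's: reduce to $l\geq 3$, dispose of $K_{(l)}=0$ and of an irreducible top via Lemma \ref{K_l=0} and Proposition \ref{top is simple again}, and rule out $K_{(l)}\not\subseteq M_{(l-1)}$ with a complement $S$. Your Step 1 is correct. You then split the top layer by $S,S'$ with $S\cap S'=M_{(l-1)}$.

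\textbf{The gap.} It is in Step 2, exactly where you place the ``main obstacle'': you have not shown that one of $K(S)$, $K(S')$ is zero.

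In (1) you argue that $M_{(l-1)}/K(S)$ has Loewy length at most one, hence $K(S)\supseteq M_{(l-2)}\supseteq M_{(1)}$. That inference is invalid. Semisimplicity of $M_{(l-1)}/K(S)$ only gives $K(S)\supseteq \mathrm{rad}(M_{(l-1)})$, which says nothing about the socle layers.

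\begin{itemize}
\item When $S/K(S)\simeq L(1)$ your conclusion does hold. There $M_{(l-1)}/K(S)$ is a proper submodule of a simple module, so $K(S)=M_{(l-1)}$.
\item When $S/K(S)\simeq V(3)$, the quotient $M_{(l-1)}/K(S)$ may be the socle $L(1)$ of $V(3)$. Nothing in your argument then excludes a simple summand of $M_{(1)}$ mapping onto it, i.e. $M_{(1)}\not\subseteq K(S)$.
\end{itemize}

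Without $M_{(1)}\subseteq K(S)$, the fact that nonzero submodules meet $M_{(1)}$ does not contradict $K(S)\cap K(S')=0$ once the socle is not simple. The block-decomposition repair you sketch is speculative and not needed.

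\textbf{The missing observation.} Since $M_{(l-1)}\subseteq S\cap S'$, you get $K(S)+K(S')\subseteq K_{(l)}=K(M_{(l-1)})$. Moreover $[K_{(l)},K_{(l)}]=0$, because $K_{(l)}\subseteq M_{(l-1)}$. So $K_{(l)}/K(S)$ is a submodule of $S/K(S)$ on which the induced bracket vanishes.

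If $S/K(S)$ is $L(1)$ or $V(3)$, every nonzero submodule contains the socle $L(1)$, and the bracket on that socle is nonzero:
\begin{itemize}
\item for $L(1)$, because $K(S/K(S))=0$;
\item for $V(3)$, by the explicit bracket in Lemma \ref{K_2=0}.
\end{itemize}
Hence $K_{(l)}=K(S)$, so $K(S')\subseteq K(S)\cap K(S')=0$. Induction then gives $S'\simeq V(3)$, contradicting $l\geq 3$. This is the paper's argument, and it covers the mixed cases at once.

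\textbf{The remaining case.} For $S/K(S)\simeq S'/K(S')\simeq L(0)\oplus L(2)$, your claim $M_{(l-1)}=K(S)\oplus K(S')$ is carried over from Theorem \ref{Loewy length two}. It is not justified here, because there it relied on $M_{(1)}$ being semisimple. Argue instead with the diagonal embedding $M_{(l-1)}\hookrightarrow M_{(l-1)}/K(S)\oplus M_{(l-1)}/K(S')$, whose target lies in $(L(0)\oplus L(2))^{\oplus 2}$. This makes $M_{(l-1)}$ semisimple, contradicting $l(M_{(l-1)})=l-1\geq 2$.
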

	\begin{proof}
		Let $l\geq 3$. By Lemma \ref{if K_2 is large}, $K_{(l)}+M_{(l-1)}$ is a proper submodule in $\mathfrak{G}_{\bar 1}$. By Lemma \ref{K_l=0}, one can assume that $K_{(l)}\neq 0$.
		
		Assume that $K_{(l)}\not\subseteq M_{(l-1)}$. Let $S$ be a submodule of $\mathfrak{G}_{\bar 1}$, such that $M_{(l-1)}=S_{(l-1)}$ and $S/M_{(l-1)}$ is a direct complement to
		$(K_{(l)}+M_{(l-1)})/M_{(l-1)}$ in $\mathfrak{G}_{\bar 1}/M_{(l-1)}$. We have $K(S)\subseteq K_{(l)}\cap S\subseteq M_{(l-1)}$, that being combined with $\mathfrak{G}_{\bar 1}=K_{(l)}+S$ implies $K(S)\subseteq K(\mathfrak{G}_{\bar 1})=0$. 
		Since $\dim S<\dim\mathfrak{G}_{\bar 1}$, by induction we conclude that $S\simeq V(3)$. Thus $l(S)=l=2$, which contradicts our assumption. 
		
		Now, let $K_{(l)}=K(M_{(l-1)})$. In particular, $[K_{(l)}, K_{(l)}]=0$, or equivalently, $K(K_{(l)})=K_{(l)}$. By Proposition \ref{top is simple again} one can suppose that $\mathfrak{G}_{\bar 1}/M_{(l-1)}$ is not irreducible. Therefore,  as in the proof of Theorem \ref{Loewy length two}, we can choose two proper submodules
		$S$ and $S'$, such that $S+S'=\mathfrak{G}_{\bar 1}$ and $S\cap S'=M_{(l-1)}$. Furthermore, $K(S)+K(S')\subseteq K(M_{(l-1)})=K_{(l)}$ and $K(S)\cap K(S')=0$.	
		
		If $l(S/K(S))\geq 2$, then by induction there is $p=3$ and $S/K(S)\simeq V(3)$. On the other hand, we have  $[K_{(l)}/K(S), K_{(l)}/K(S)]=0$, that implies $K_{(l)}=K(S)$ and therefore, $K(S')=0$. By induction, $S'\simeq V(3)$, that in turn implies $l=2$, which contradicts our assumption again. 
		
		Let $l(S/K(S))=l(S'/K(S'))=1$. If $S/K(S)\simeq L(1)$, then as above, we have  $K(S)=M_{(l-1)}$ and it infers $K(S')=0$, hence a contradictory statement $l=2$. Now, one can assume that $S/K(S)\simeq S'/K(S')\simeq L(0)\oplus L(2)$
		and both $K(S)$ and $K(S')$ are proper submodules of $M_{(l-1)}$. Since $K(S)\cap K(S')=0$, then as in Theorem \ref{Loewy length two} we conclude that $M_{(l-1)}\simeq L(0)\oplus L(2)\simeq \mathfrak{G}_{\bar 1}/M_{(l-1)}$, hence $\mathfrak{G}_{\bar 1}$ is semi-simple. This contradiction completes the proof. 
	\end{proof}
	The results of this section can be summarized as follows. 
	\begin{theorem}\label{or ... }
		Let $\mathbb{G}$ be a supergroup with $G\simeq\mathrm{SL}_2$. Then the Harish-Chandra pair of $\mathbb{H}=\mathbb{G}/\mathrm{R}_u(\mathbb{G})$ is $(\mathrm{SL}_2, \mathfrak{H}_{\bar 1})$, where $\mathfrak{H}_{\bar 1}$ is isomorphic to one of the following $\mathrm{SL}_2$-modules : $0, \ L(1), \ L(0)\oplus L(2), \ V(3)$. Moreover, in the first case $\mathbb{G}$ is split and in the last case $p=3$ and the supergroup $\mathbb{H}$ is almost-simple in the sense of \cite{zub-bar}.	The rest cases were discussed in Corollary \ref{description in char=0} and Remark \ref{simplifying (2)}.	
	\end{theorem}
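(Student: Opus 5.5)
The plan is to pass to the quotient $\mathbb{H}=\mathbb{G}/\mathrm{R}_u(\mathbb{G})$ and read off its Harish-Chandra pair from the results already proved. Since $G\simeq\mathrm{SL}_2$ is reductive, $\mathbb{G}$ is quasi-reductive. Hence $\mathrm{R}_u(\mathbb{G})$ is purely odd, with Harish-Chandra pair $(e, K(\mathfrak{G}_{\bar 1}))$, where $K(\mathfrak{G}_{\bar 1})$ is the largest $G$-submodule $\mathfrak{R}$ with $[\mathfrak{G}_{\bar 1},\mathfrak{R}]=0$, i.e. the kernel. By the normality criterion and the description of quotients, the Harish-Chandra pair of $\mathbb{H}$ is $(\mathrm{SL}_2, \mathfrak{G}_{\bar 1}/K(\mathfrak{G}_{\bar 1}))$, with the induced bracket. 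Put $\mathfrak{H}_{\bar 1}=\mathfrak{G}_{\bar 1}/K(\mathfrak{G}_{\bar 1})$.

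The first point to check is $K(\mathfrak{H}_{\bar 1})=0$, i.e. that $\mathbb{H}$ is reductive. If $X/K(\mathfrak{G}_{\bar 1})$ is a submodule with $[X,\mathfrak{G}_{\bar 1}]=0$, then $X\subseteq K(\mathfrak{G}_{\bar 1})$ by maximality. So $K(\mathfrak{H}_{\bar 1})=0$.

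The argument then splits by the Loewy length $l$ of $\mathfrak{H}_{\bar 1}$.
\begin{itemize}
\item If $\mathfrak{H}_{\bar 1}=0$, then $\mathfrak{G}_{\bar 1}=K(\mathfrak{G}_{\bar 1})$. Thus $[\mathfrak{G}_{\bar 1},\mathfrak{G}_{\bar 1}]=0$, and $\mathbb{G}$ is split.
\item If $l=1$, Proposition \ref{M_{(1)}} applied to $\mathbb{H}$ gives $M_{(1)}=C_{(1)}\oplus K(M_{(1)})$ with $K(M_{(1)})=K(\mathfrak{H}_{\bar 1})=0$. Hence $\mathfrak{H}_{\bar 1}=C_{(1)}$ is $L(1)$ or $L(0)\oplus L(2)$, and the brackets are as in Corollary \ref{description in char=0} and Remark \ref{simplifying (2)}.
\item If $l\geq 2$, which forces $\mathrm{char}\Bbbk=p>0$, Theorem \ref{general case} applies directly since $K(\mathfrak{H}_{\bar 1})=0$. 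It gives $p=3$ and $\mathfrak{H}_{\bar 1}\simeq V(3)$, with the bracket unique up to a scalar as in Lemma \ref{K_2=0}.
\end{itemize}

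It remains to show that $\mathbb{H}$ is almost-simple in the $V(3)$ case. I would invoke the criterion of \cite{zub-bar} in terms of Harish-Chandra pairs: there should be no nontrivial proper normal supersubgroups beyond finite central ones. The check goes through the normality criterion. A normal $\mathbb{N}$ has $N\unlhd\mathrm{SL}_2$, so $N$ is finite central or all of $\mathrm{SL}_2$, and $\mathfrak{N}_{\bar 1}$ is a submodule of $V(3)$, so it is $0$, $L(1)$ or $V(3)$.

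If $\mathfrak{N}_{\bar 1}\neq 0$, then $[V(3),\mathfrak{N}_{\bar 1}]=\mathfrak{sl}_2\subseteq\mathrm{Lie}(N)$, which forces $N=\mathrm{SL}_2$. Then condition (3) of the normality criterion requires $\mathrm{SL}_2$ to act trivially on $V(3)/\mathfrak{N}_{\bar 1}$, which forces $\mathfrak{N}_{\bar 1}=V(3)$. If $\mathfrak{N}_{\bar 1}=0$, then $N$ must be finite central.

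I expect this last verification to be the main obstacle, mainly in matching the precise definition of almost-simplicity used in \cite{zub-bar}. Everything else is bookkeeping on top of Theorem \ref{general case}.
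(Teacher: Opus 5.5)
\textbf{Classification part.} Your reduction to $\mathbb{H}=\mathbb{G}/\mathrm{R}_u(\mathbb{G})$, the check $K(\mathfrak{H}_{\bar 1})=0$, and the case split all match the paper. The cases are $\mathfrak{H}_{\bar 1}=0$, then $l=1$ via Proposition \ref{M_{(1)}}, then $l\geq 2$ via Theorem \ref{general case}. The paper states this theorem as a summary of the section in exactly this way. It does not prove the almost-simplicity of $\mathbb{H}(3/1)$ at all; it attributes that to \cite{zub-bar}.

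\textbf{The gap is in the almost-simplicity check you add yourself.} You treat normal subgroups of $\mathrm{SL}_2$ as if they were reduced. In this paper, closed supersubgroups correspond to arbitrary Hopf superideals, so $N$ may be a non-reduced subgroup scheme. For $p=3$ the Frobenius kernels $(\mathrm{SL}_2)_r$ are normal and finite, they are not central, and $\mathrm{Lie}((\mathrm{SL}_2)_r)=\mathfrak{sl}_2$. So neither claim holds: ``$N$ is finite central or $\mathrm{SL}_2$'' is false, and so is ``$\mathfrak{sl}_2\subseteq\mathrm{Lie}(N)$ forces $N=\mathrm{SL}_2$''.

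Concretely, the pair $((\mathrm{SL}_2)_1, L(1))$ satisfies all four conditions of the normality criterion. Both $[L(1),L(1)]$ and $[V(3),L(1)]$ equal $\mathfrak{sl}_2=\mathrm{Lie}((\mathrm{SL}_2)_1)$. Also $(\mathrm{SL}_2)_1$ acts trivially on $V(3)/L(1)\simeq L(3)\simeq L(1)^{[1]}$. Likewise the Frobenius kernels $((\mathrm{SL}_2)_r, V(3))$ of $\mathbb{H}$ are normal. So the property you set out to verify is false for $\mathbb{H}(3/1)$: it does have proper normal supersubgroups beyond finite central ones. Any workable notion of almost-simplicity in characteristic $p$ must tolerate such infinitesimal normal supersubgroups.

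\textbf{What your method proves after correction.} A closed normal subgroup scheme $N$ of $\mathrm{SL}_2$ is either finite or all of $\mathrm{SL}_2$, because $(N_{\mathrm{red}})^0$ is a smooth connected normal subgroup.
\begin{itemize}
\item If $N=\mathrm{SL}_2$, condition (3) forces $\mathfrak{N}_{\bar 1}=V(3)$, since $\mathrm{SL}_2$ acts nontrivially on both $V(3)$ and $L(3)$.
\item If $N$ is finite, then $\mathbb{N}$ is finite, since $\Bbbk[\mathbb{N}]\simeq\Bbbk[N]\otimes\Lambda(\mathfrak{N}_{\bar 1}^*)$ as superalgebras.
\end{itemize}
Hence every proper normal supersubgroup of $\mathbb{H}$ is finite, and the only smooth connected normal ones are $e$ and $\mathbb{H}$. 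You would still need to check that this matches the definition of almost-simplicity in \cite{zub-bar}. Otherwise, simply cite \cite{zub-bar} as the paper does.
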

	The supergroups from Theorem \ref{or ... }, whose Harish-Chandra pairs are isomorphic to $(\mathrm{SL}_2, L(0)\oplus L(2))$ or $(\mathrm{SL}_2, V(3))$, are denoted by $\mathbb{H}(0\oplus 2)$ and $\mathbb{H}(3/1)$ respectively.
	
	\section{Supergroups with the even part isomorphic to $\mathrm{PSL}_2$}
	
	The catergory of $\mathrm{PSL}_2$-modules is naturally identified with the full subcategory of $\mathrm{SL}_2$-modules, on which the central element $(-I_2)$ acts trivially. Equivalently, $(-I_2)$ acts trivially on $\mathrm{SL}_2$-module $M$ if and only if any composition factor of $M$ has a form $L(2n), n\in\mathbb{N}$. Combining with Theorem \ref{or ... }, we obtain the following.
	\begin{theorem}\label{general for PSL_2}
		Let $\mathbb{G}$ be a supergroup with $\mathbb{G}_{ev}=G\simeq\mathrm{PSL}_2$. Then the Harish-Chandra pair of $\mathbb{H}=\mathbb{G}/\mathrm{R}_u(\mathbb{G})$ is $(\mathrm{PSL}_2, \mathfrak{H}_{\bar 1})$, where $\mathfrak{H}_{\bar 1}$ is isomorphic to one of the following $\mathrm{PSL}_2$-modules : $0, \ L(0)\oplus L(2)$. 
	\end{theorem}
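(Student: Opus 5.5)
The plan is to lift the problem to $\mathrm{SL}_2$ and apply Theorem \ref{or ... }. Let $\pi:\mathrm{SL}_2\to\mathrm{PSL}_2$ be the quotient by the central subgroup $\mu_2=\{\pm I_2\}$; since $\mathrm{char}\Bbbk\neq 2$, $\pi$ is smooth, so $\mathrm{d}_e\pi:\mathfrak{sl}_2\to\mathrm{Lie}(\mathrm{PSL}_2)$ is an isomorphism of $\mathrm{SL}_2$-modules. Given the Harish-Chandra pair $(\mathrm{PSL}_2,\mathfrak{G}_{\bar 1})$ of $\mathbb{G}$, regard $\mathfrak{G}_{\bar 1}$ as an $\mathrm{SL}_2$-module via $\pi$, and define the bracket $\mathfrak{G}_{\bar 1}\times\mathfrak{G}_{\bar 1}\to\mathfrak{sl}_2$ as $(\mathrm{d}_e\pi)^{-1}\circ[\ ,\ ]$. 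Equivariance and the identity $[[v,v],v]=0$ carry over directly, because the action of $\mathfrak{sl}_2$ on $\mathfrak{G}_{\bar 1}$ is the action of $\mathrm{Lie}(\mathrm{PSL}_2)$ pulled back through $\mathrm{d}_e\pi$. This gives a Harish-Chandra pair $(\mathrm{SL}_2,\mathfrak{G}_{\bar 1})$ and hence a supergroup $\widetilde{\mathbb{G}}$ with even part $\mathrm{SL}_2$. By construction $-I_2$ acts trivially on $\mathfrak{G}_{\bar 1}$, so all composition factors of $\mathfrak{G}_{\bar 1}$ have the form $L(2n)$.

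Next, compare the radicals. By the description recalled in Section 1, the Harish-Chandra pair of $\mathrm{R}_u$ of a quasi-reductive supergroup is $(e,\mathfrak{R})$, where $\mathfrak{R}$ is the largest $G$-submodule of $\mathfrak{G}_{\bar 1}$ with $[\mathfrak{G}_{\bar 1},\mathfrak{R}]=0$. A subspace of $\mathfrak{G}_{\bar 1}$ is $\mathrm{PSL}_2$-stable if and only if it is $\mathrm{SL}_2$-stable, and the brackets differ only by the isomorphism $\mathrm{d}_e\pi$. Therefore $\mathfrak{R}$ is the same for $\mathbb{G}$ and $\widetilde{\mathbb{G}}$, namely $K(\mathfrak{G}_{\bar 1})$. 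It follows that $\mathfrak{H}_{\bar 1}=\mathfrak{G}_{\bar 1}/K(\mathfrak{G}_{\bar 1})$ is, as an $\mathrm{SL}_2$-module, exactly the odd part of $\widetilde{\mathbb{G}}/\mathrm{R}_u(\widetilde{\mathbb{G}})$.

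By Theorem \ref{or ... }, this module is one of $0$, $L(1)$, $L(0)\oplus L(2)$, or $V(3)$ (the last only for $p=3$). Being a quotient of $\mathfrak{G}_{\bar 1}$, it has all composition factors of the form $L(2n)$. This excludes $L(1)$, and it excludes $V(3)$ because the socle of $V(3)$ is $L(1)$ (this was used in Lemma \ref{K_2=0}). Only $0$ and $L(0)\oplus L(2)$ remain, and both are $\mathrm{PSL}_2$-modules.

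The main point needing care is the first step, namely that the pullback to $\mathrm{SL}_2$ is a genuine Harish-Chandra pair. This rests on $\mathrm{d}_e\pi$ being an isomorphism, which is exactly where $p\neq 2$ enters. The remaining steps are bookkeeping.
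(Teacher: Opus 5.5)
Your proposal is correct and takes the same route as the paper. The paper also identifies $\mathrm{PSL}_2$-modules with the $\mathrm{SL}_2$-modules on which $-I_2$ acts trivially (equivalently, all composition factors are of the form $L(2n)$), applies Theorem \ref{or ... }, and discards $L(1)$ and $V(3)$. You additionally spell out two points the paper leaves implicit: that $\mathrm{d}_e\pi$ is an isomorphism for $p\neq 2$, so the pulled-back bracket really gives a Harish-Chandra pair for $\mathrm{SL}_2$, and that the kernel $K(\mathfrak{G}_{\bar 1})$, hence $\mathrm{R}_u$, is the same for both pairs.
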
	
	Note that in the second case the supergroup $\mathbb{H}$ is isomorphic to $\mathbb{H}(0\oplus 2)/\mu_2$.
	
	\section{Supergroups with the even part isomorphic to $\mathrm{GL}_2$}
	
	\subsection{Special cases}	
	
	Recall that the irreducible $\mathrm{GL}_2$-modules are uniquely defined by their highest weights, which are (ordered) partitions $\lambda=(\lambda_1, \lambda_2), \lambda_1\geq\lambda_2$. The set of such (dominant) weights is a poset with respect to the partial order : 
	$\mu\leq\lambda$ if $\mu_1\leq\lambda_1$ and $\lambda_1+\lambda_2=\mu_1+\mu_2$.
	
	For any (not necessary dominant) weight $\lambda$ let $|\lambda|$ denote $\lambda_1+\lambda_2$.
	
	We have $L(\lambda)\simeq L((n, 0))\otimes \det^{\lambda_2}$, where $n=\lambda_1-\lambda_2$ and $L((n, 0))|_{\mathrm{SL}_2}\simeq L(n)$ . In particular, a $\mathrm{GL}_2$-module $M$ is irreducible if and only if $M|_{\mathrm{SL}_2}$ is. For example, $\mathfrak{sl}_2$, regarded as a $\mathrm{GL}_2$-module with respect to the adjoint action, is isomorphic to $L((1, -1))\simeq L((2, 0))\otimes \det^{-1}$. Recall also that $L(\lambda)^*\simeq L(\lambda^*)$, where $\lambda^*=-w_0\lambda=(-\lambda_2, -\lambda_1)$. In particular, $L(\lambda)$ is self-dual if and only if $|\lambda|=0$. 
	
	Similarly, we have \[H^0(\lambda)\simeq H^0((n, 0))\otimes\mathrm{det}^{\lambda_2}=\mathrm{Sym}_n(V)\otimes\mathrm{det}^{\lambda_2}\] and 
	\[V(\lambda)=H^0(\lambda^*)^*\simeq (H^0((n, 0))\otimes\mathrm{det}^{-\lambda_1})^*\simeq \mathrm{Sym}_n(V)^*\otimes \mathrm{det}^{\lambda_1}.\]
	
	Let $Z\simeq G_m$ denote the center of $\mathrm{GL}_2$ consisting of all scalar matrices. Then any $\mathrm{GL}_2$-module $M$ has a weight decompostion $M=\oplus_{k\in\mathbb{Z}} M_k$ with respect to the action of $Z$. Moreover, each component $M_k$ is obviously a $\mathrm{GL}_2$-submodule. We also have $L(\lambda)=L(\lambda)_{|\lambda|}$.
	
	The Lie subalgebra $\mathrm{Lie}(Z)=\mathfrak{z}=\Bbbk I_2$ is also a trivial $\mathrm{GL}_2$-submodule of $\mathfrak{gl}_2$. Besides, $\mathfrak{gl}_2=\mathfrak{z}\oplus \mathfrak{sl}_2$. 
	
	Let $\mathbb{G}$ be an algebraic  supergroup, such that $\mathbb{G}_{ev}=G\simeq\mathrm{GL}_2$. As above, for any $\mathrm{GL}_2$-submodule $S$ of $\mathfrak{G}_{\bar 1}$ let $K(S)$ denote the largest subspace of $S$, such that
	$[K(S), S]=0$. It is clear that $K(S)$ is a $\mathrm{GL}_2$-submodule as well. 
	
	For the sake of simplicity, assume that $K(\mathfrak{G}_{\bar 1})=0$, that is $\mathbb{G}$ is reductive.
	
	To simplify our notations, let $M_k$ denote $(\mathfrak{G}_{\bar 1})_k, k\in\mathbb{Z}$. Then $[M_k, M_l]\neq 0$ implies $k+l=0$. In particular, for any integer $k$ the component $M_k$ 
	is nonzero if and only if $M_{-k}$ is if and only if $[M_k, M_{-k}]\neq 0$. Furthermore, for any $\mathrm{GL}_2$-submodule $R$ of $M_k$ we have $[R, M_{-k}]\neq 0$.
	\begin{rem}\label{submodules of gl_2}
		Any nonzero $\mathrm{GL}_2$-submodule of $\mathfrak{gl}_2$ is equal to one of the following submodules : $\mathfrak{z}, \mathfrak{sl}_2$ or $\mathfrak{gl}_2$.
		In fact, if it is contained neither in $\mathfrak{z}$ nor in $\mathfrak{sl}_2$, then it contains an element $x=z+s, z\in \mathfrak{z}\setminus 0, s\in\mathfrak{sl}_2\setminus 0$.
		There is $g\in\mathrm{GL}_2(\Bbbk)$, such that $gsg^{-1}\neq s$ and thus our submodule contains $x-gxg^{-1}=s-gsg^{-1}\neq 0$, hence it entirely contains $\mathfrak{sl}_2$.
	\end{rem}
	Let $I=\{k\in\mathbb{Z}\mid [M_k, M_{-k}]\neq 0  \}$. Then $I=I_1\sqcup I_2\sqcup I_3$, where
	\[I_1=\{k\in I \mid [M_k, M_{-k}]=\mathfrak{z}\}, \ I_2=\{k\in I \mid [M_k, M_{-k}]=\mathfrak{sl}_2\}, \]
	\[I_3=\{k\in I \mid [M_k, M_{-k}]=\mathfrak{gl}_2\}.\]
	\begin{pr}\label{when [g, g] is  z}
		Assume that $[\mathfrak{G}_{\bar 1}, \mathfrak{G}_{\bar 1}]=\mathfrak{z}$ (or equivalently, $I_2=I_3=\emptyset$). Then for each $k$, such that $M_k\neq 0$, there is $p|k$ and $M_k^*\simeq M_{-k}$. Furthermore, 
		the Harish-Chandra pair structure on $\mathfrak{G}_{\bar 1}$ is determined as
		\[ [x, y]=<x, y> I_2, x\in M_k, y\in M_{-k},\]
		where $< \ , \ >$ is a natural bilinear map $M_k\times M_{-k}\to\Bbbk$, determined by the above mentioned isomorphism $M_k^*\simeq M_{-k}$ of $\mathrm{GL}_2$-modules.	
	\end{pr}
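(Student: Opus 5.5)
Since $[\mathfrak{G}_{\bar 1},\mathfrak{G}_{\bar 1}]=\mathfrak{z}$, for each $k$ with $M_k\neq 0$ the Lie bracket restricted to $M_k\times M_{-k}$ takes values in $\mathfrak{z}=\Bbbk I_2$. It can therefore be written as $[x,y]=\langle x,y\rangle I_2$ for a $\mathrm{GL}_2$-invariant bilinear form $\langle\ ,\ \rangle: M_k\times M_{-k}\to\Bbbk$; invariance holds because $\mathfrak{z}$ is a trivial $\mathrm{GL}_2$-submodule of $\mathfrak{gl}_2$.

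First I will show that this pairing is non-degenerate, which gives $M_{-k}\simeq M_k^*$. Let $R=\{x\in M_k\mid \langle x,M_{-k}\rangle=0\}$. It is a $\mathrm{GL}_2$-submodule, and $[R,M_l]=0$ for all $l$: for $l\neq -k$ this follows from the weight rule $[M_k,M_l]\neq0\Rightarrow k+l=0$, and for $l=-k$ it is the definition of $R$. Hence $R\subseteq K(\mathfrak{G}_{\bar 1})=0$. The same argument applies on the $M_{-k}$ side, so the pairing is perfect and induces an isomorphism $M_{-k}\simeq M_k^*$ of $\mathrm{GL}_2$-modules. This also gives the stated form of the bracket.

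Next I will show $p\mid k$. The key tool is the Harish-Chandra identity $[[v,v],v]=0$, which polarizes to
\[[[x,y],z]+[[y,z],x]+[[z,x],y]=0\]
for all $x,y,z\in\mathfrak{G}_{\bar 1}$. Take $x,z\in M_k$ and $y\in M_{-k}$. Then $[z,x]\in[M_k,M_k]$, and this is zero unless $k=0$. The element $I_2\in\mathfrak{z}$ acts on $M_k$ by the scalar $k$: the differential of $\lambda\mapsto\lambda^k$ on $Z$ is multiplication by $k$, and in the paper's convention $\det^{k/2}$ restricted to $Z$ is $\lambda^k$. So for $k\neq0$ the identity becomes
\[k\langle x,y\rangle z+k\langle z,y\rangle x=0\quad\text{in } \Bbbk.\]
If $\dim M_k\ge2$, choose $x,z$ linearly independent and $y$ with $\langle x,y\rangle\neq0$; this forces $k=0$ in $\Bbbk$. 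If $\dim M_k=1$, take $x=z$; then $2k\langle x,y\rangle x=0$, and since $\mathrm{char}\Bbbk\neq 2$ we again get $k=0$ in $\Bbbk$. Either way $p\mid k$, and when $\mathrm{char}\Bbbk=0$ this means $k=0$. For $k=0$ the conclusion $p\mid k$ is trivial.

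The remaining case $k=0$ needs a separate check that the description is consistent. Here $[M_0,M_0]\subseteq\mathfrak{z}$ and $I_2$ acts by $0$, so the identity $[[v,v],v]=0$ holds automatically. The symmetric invariant form is non-degenerate by the kernel argument above, so $M_0\simeq M_0^*$.

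The main obstacle is keeping the normalisation straight, namely that $I_2$ acts on $M_k$ by exactly the scalar $k$ (not $k/2$); a wrong constant here would not change the conclusion $p\mid k$ once $p$ is odd. The other point needing care is that the symmetric bracket on $M_k\times M_{-k}$ for $k\neq0$ is already encoded by $\langle\ ,\ \rangle$ together with symmetry, so no further constraints arise.
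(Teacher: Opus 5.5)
Your proof is correct and is essentially the argument the paper delegates to \cite[Proposition 2.17]{zub-bar}: you get non-degeneracy of $\langle\ ,\ \rangle$ from the standing assumption $K(\mathfrak{G}_{\bar 1})=0$, and $p\mid k$ from $[[v,v],v]=0$ with $I_2$ acting on $M_k$ by $k$, which is just the case $[\ ,\ ]_2=0$ of the paper's identities (1)--(2) in Section 5.2. The split on $\dim M_k$ is unnecessary, since taking $x=z$ (equivalently $v=x+y$) already gives $2k\langle x,y\rangle x=0$, and the closing consistency check for $k=0$ is not required by the statement.
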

	\begin{proof}
		Just mimic the proof of \cite[Proposition 2.17]{zub-bar}. 	
	\end{proof}
	\begin{rem}\label{if in the above char=0}
		Note that if $\mathrm{char}\Bbbk=0$, then $0|k$ means $k=0$. Regardless what $\mathrm{char}\Bbbk$ is equal to, if a supergroup $\mathbb{G}$ satisfies the condition of Proposition \ref{when [g, g] is  z}, then the subpair $(Z, \mathfrak{G}_{\bar 1})$ represents $\mathrm{R}(\mathbb{G})$. Indeed, if $\mathbb{H}$ is the normal supersubgroup of $\mathbb{G}$, represented by this subpair, then $\mathbb{H}$ is obviously smooth and connected. It remains to note that $\mathbb{H}$ is solvable by \cite[Corollary 6.4]{maszub1} and
		$\mathbb{G}/\mathbb{H}\simeq\mathrm{PGL}_2$ is semi-simple. 
	\end{rem}
	\begin{pr}\label{if [g, g] is sl_2}
		Assume that $[\mathfrak{G}_{\bar 1}, \mathfrak{G}_{\bar 1}]=\mathfrak{sl}_2$ (or equivalently, $I_1=I_3=\emptyset$). Then \[\mathfrak{G}_{\bar 1}\simeq L((-t, -t))\oplus L((t+1, t-1)), t\in\mathbb{Z}.\]
		Besides, we have
		\[[L((-t, -t)), L((-t, -t))]=[L((t+1, t-1)), L((t+1, t-1))]=0 \]
		and 
		\[[x\otimes \mathrm{det}^{-t}, y\otimes\mathrm{det}^{t}]=axy, \ x\in L((0, 0))\simeq\Bbbk, \ y\in L((1, -1)\simeq\mathfrak{sl}_2, \]
		for some scalar $a\in\Bbbk\setminus 0$.   		
	\end{pr}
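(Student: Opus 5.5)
The idea is to restrict the Harish-Chandra pair to $\mathrm{SL}_2\leq\mathrm{GL}_2$, apply Theorem \ref{or ... }, and then use the action of the center $Z$ to identify the $\mathrm{GL}_2$-structure.

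\emph{Reduction to $\mathrm{SL}_2$.} Since $[\mathfrak{G}_{\bar 1},\mathfrak{G}_{\bar 1}]=\mathfrak{sl}_2=\mathrm{Lie}(\mathrm{SL}_2)$, the same symmetric bracket makes $(\mathrm{SL}_2,\mathfrak{G}_{\bar 1}|_{\mathrm{SL}_2})$ a Harish-Chandra pair. Its equivariance and the identity $[[v,v],v]=0$ are inherited from the $\mathrm{GL}_2$-pair. Its kernel $K(\mathfrak{G}_{\bar 1})$, the largest $\mathrm{SL}_2$-submodule killed by the bracket, is contained in the largest such subspace. The latter is the $\mathrm{GL}_2$-kernel, which is $0$ by our standing assumption. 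Hence the corresponding supergroup with even part $\mathrm{SL}_2$ is reductive, and Theorem \ref{or ... } (with Theorem \ref{general case}) gives $\mathfrak{G}_{\bar 1}|_{\mathrm{SL}_2}\simeq L(1)$, $L(0)\oplus L(2)$ or $V(3)$ (the last only for $p=3$). The option $0$ is excluded because the bracket is nonzero.

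\emph{Excluding $L(1)$ and $V(3)$ via the $Z$-grading.} Every $\mathrm{GL}_2$-module splits as $\oplus_k M_k$, and $[M_k,M_l]\neq0$ forces $k+l=0$, since $Z$ acts trivially on $\mathfrak{gl}_2$. A composition factor $L(\lambda)$ of $M_k$ has $|\lambda|=k$, and its restriction to $\mathrm{SL}_2$ is $L(\lambda_1-\lambda_2)$. Because $\lambda_1-\lambda_2\equiv|\lambda|\pmod 2$, an $\mathrm{SL}_2$-factor $L(\mathrm{odd})$ can only occur in $M_k$ with $k$ odd.
\begin{itemize}
\item In the cases $L(1)$ and $V(3)$ all factors are odd, so $\mathfrak{G}_{\bar 1}=\oplus_{k\ \mathrm{odd}}M_k$.
\item The socle $L(1)$ satisfies $[L(1),L(1)]\neq 0$, by Proposition \ref{M_{(1)}} and by the bracket on $V(3)$ from Lemma \ref{K_2=0}.
\item Pick $M_k$ with $[M_k,M_k]\neq0$ (it exists since the socle $L(1)$, a submodule of the indecomposable $\mathfrak{G}_{\bar 1}$, sits inside a single $M_k$). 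Then $2k=0$, which is impossible for odd $k$.
\end{itemize}
Hence $\mathfrak{G}_{\bar 1}|_{\mathrm{SL}_2}\simeq L(0)\oplus L(2)$, with $[L(0),L(0)]=[L(2),L(2)]=0$ and the bracket given by Remark \ref{simplifying (2)}.

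\emph{Recovering the $\mathrm{GL}_2$-structure.} I would take $A\simeq L(0)$ and $B\simeq L(2)$ to be the $\mathrm{SL}_2$-isotypic components. Since $\mathrm{GL}_2=\mathrm{SL}_2\cdot Z$ and $Z$ commutes with $\mathrm{SL}_2$, these components are $Z$-stable, hence $\mathrm{GL}_2$-submodules. Being simple, each lies in a single $M_k$: say $A\subseteq M_{-k}$ and $B\subseteq M_{k'}$. Since $[A,B]\neq0$, we get $k'=k$.
\begin{itemize}
\item $A$ is one-dimensional with $|\lambda|=-k$ and $\lambda_1=\lambda_2$, so $k=2t$ is even and $A\simeq L((-t,-t))$.
\item $B$ has $\lambda_1-\lambda_2=2$ and $|\lambda|=2t$, so $B\simeq L((t+1,t-1))\simeq\mathfrak{sl}_2\otimes\det^t$.
\end{itemize}
The bracket $A\otimes B\to\mathfrak{sl}_2$ is then the $\mathrm{SL}_2$-pairing from Remark \ref{simplifying (2)} tensored with $\det^{-t}\otimes\det^t$. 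This gives $[x\otimes\det^{-t},y\otimes\det^t]=axy$ with $a\neq0$.

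\emph{Main obstacle.} The delicate point is the exclusion step: making the parity argument rigorous for the indecomposable $V(3)$. There I would argue directly that the socle $L(1)$, being a $\mathrm{GL}_2$-submodule, lies in one $M_k$ and has nonzero self-bracket, which forces $k=0$. A secondary point to check carefully is the identification of the $\mathrm{SL}_2$-kernel with the $\mathrm{GL}_2$-kernel.
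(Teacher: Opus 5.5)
Your proposal is correct and follows essentially the same route as the paper. You restrict the pair to $\mathrm{SL}_2$ and invoke the $\mathrm{SL}_2$ classification theorem. You then rule out $L(1)$ and $V(3)$: their socle $L(1)$ has nonzero self-bracket, so it would have to lie in $M_0$, which is incompatible with $\lambda_1-\lambda_2$ being odd. Finally, you read off the $\mathrm{GL}_2$-highest weights of the two summands of $L(0)\oplus L(2)$ from the $Z$-grading. Your explicit check that the $\mathrm{SL}_2$-kernel vanishes (the annihilator of $\mathfrak{G}_{\bar 1}$ is automatically $\mathrm{GL}_2$-stable) fills in a step the paper leaves implicit.
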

	\begin{proof}
		It is obvious that $(\mathrm{SL}_2, \mathfrak{G}_{\bar 1})$ is a subpair of  $(\mathrm{GL}_2, \mathfrak{G}_{\bar 1})$, which determines a normal supersubgroup $\mathbb{H}$, such that
		$\mathbb{G}/\mathbb{H}\simeq\mathbb{G}_m$ is the one-dimensional torus. If $\mathfrak{G}_{\bar 1}|_{\mathrm{SL}_2}$ is indecomposable, then Theorem \ref{or ... } implies that
		$\mathfrak{G}_{\bar 1}|_{\mathrm{SL}_2}$ is isomorphic to $L(1)$ or $p=3$ and $\mathfrak{G}_{\bar 1}|_{\mathrm{SL}_2}\simeq V(3)$. 
		
		In both cases $\mathfrak{G}_{\bar 1}=M_0$ and the socle of $\mathfrak{G}_{\bar 1}$ is isomorphic to $L(\lambda)$, where $\lambda=(\lambda_1, \lambda_2)$ satisfies $\lambda_1-\lambda_2=1, \lambda_1+\lambda_2=0$, that is obviously inconsistent.  
		
		It remains to consider the case $\mathfrak{G}_{\bar 1}|_{\mathrm{SL}_2}\simeq L(0)\oplus L(2)$. If $\mathfrak{G}_{\bar 1}=M_0$, then $\mathfrak{G}_{\bar 1}\simeq L((0, 0))\oplus L((1, -1))$. Otherwise, $\mathfrak{G}_{\bar 1}\simeq L(\mu)\oplus L(\lambda)$, where $|\mu|=-k, |\lambda|=k$ for some integer $k\neq 0$. Besides, $\lambda_1-\lambda_2=2$ and $\mu_1-\mu_2=0$. 
		Thus $k=2t$ and $\lambda=(t+1, t-1), \mu=(-t, -t)$.	Combining isomorphisms $L((-t, -t))\simeq L((0, 0))\otimes\det^{-t}$ and $L((t+1, t_1))\simeq L((1, -1))\otimes\det^t$ with Remark \ref{simplifying (2)}, we conclude the proof.
	\end{proof}
	Let $\mathbb{K}(t)$ denote the supergroup from Proposition \ref{if [g, g] is sl_2}. A little bit later we will describe the isomorphism classes of supergroups $\mathbb{K}(t)$ for various $t$.
	\begin{pr}\label{[g, g] is gl_2}
		Let $[\mathfrak{G}_{\bar 1}, \mathfrak{G}_{\bar 1}]=\mathfrak{gl}_2$. Then only one of the following alternaives holds :
		\begin{enumerate}
			\item $I_3=\emptyset$, $I_2=\{\pm l \}$ and $I\subseteq p\mathbb{Z}$, where $p>0$. Moreover, $\oplus_{k\in I_1} M_k$ is a trivial $\mathfrak{gl}_2$-module and the restriction of Lie bracket on it is described in Proposition \ref{when [g, g] is  z}, as well as the restriction of Lie bracket on $M_l+M_{-l}$ is described in Proposition \ref{if [g, g] is sl_2}.
			\item $I_3=\{ \pm k\}$ and if $I_3\neq I$, then $I_2=\emptyset$, $I\subseteq p\mathbb{Z}$, where $p>0$. Besides, $\oplus_{l\in I_1} M_l$ is a trivial $\mathfrak{gl}_2$-module. 
		\end{enumerate}
	\end{pr}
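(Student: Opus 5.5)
The plan is to work with the decomposition $\mathfrak{G}_{\bar 1}=\oplus_{k\in I}M_k$ and the partition $I=I_1\sqcup I_2\sqcup I_3$. The main tool is the super Jacobi identity $[[x,y],z]+[[y,z],x]+[[z,x],y]=0$ for $x,y,z\in\mathfrak{G}_{\bar 1}$, applied with $x,y$ taken from the pair $M_k,M_{-k}$ and $z$ from another pair $M_l,M_{-l}$ with $l\neq\pm k$. Because brackets $[M_k,M_m]$ vanish unless $k+m=0$, the Jacobi identity collapses. For $z\in M_l$ and $x\in M_k$, $y\in M_{-k}$ with $l\neq\pm k$, the terms $[y,z]$ and $[z,x]$ are zero, so $[[x,y],z]=0$. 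Hence $[M_k,M_{-k}]$ acts trivially on $M_l$ whenever $l\neq\pm k$.

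\emph{Step 1: $|I_3|\leq 2$ and $I_3\neq I$ forces the rest to be trivial.} Suppose $k\in I_3$. Then $\mathfrak{gl}_2$ acts trivially on every $M_l$ with $l\neq\pm k$. In particular $\mathfrak{z}=\Bbbk I_2$ acts trivially on $M_l$, and $I_2$ acts on $M_l$ by multiplication by $l$ (the $Z$-weight differentiated). So $p\mid l$, which requires $p>0$ when $l\neq 0$. If $l=0$, then $M_0$ is also a trivial $\mathfrak{gl}_2$-module, so $0\in I$ would imply $[M_0,M_0]$ is central. Next let $l\in I\setminus\{\pm k\}$. Since $M_{\pm l}$ are trivial $\mathfrak{gl}_2$-modules, $[M_l,M_{-l}]$ is a $\mathrm{GL}_2$-submodule of $\mathfrak{gl}_2$ that commutes with the action on $\mathfrak{G}_{\bar 1}$. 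Applying Jacobi with $x\in M_l$, $y\in M_{-l}$ and $z\in M_k$ shows that $[M_l,M_{-l}]$ acts trivially on $M_{\pm k}$ as well, hence on all of $\mathfrak{G}_{\bar 1}$. Then $[M_l,M_{-l}]$ lies in the kernel of the action of $\mathfrak{gl}_2$ on $M_k\oplus M_{-k}$. That kernel is a $\mathrm{GL}_2$-submodule, so by Remark \ref{submodules of gl_2} it is $0$, $\mathfrak{z}$, $\mathfrak{sl}_2$ or $\mathfrak{gl}_2$.
\begin{itemize}
\item If the kernel contained $\mathfrak{sl}_2$, then $[[M_k,M_{-k}],M_k]$ would only come from $\mathfrak{z}$, acting by the scalar $k$. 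Jacobi with $x,y\in M_k$ and $z\in M_{-k}$, combined with the symmetry of the bracket, should then produce a contradiction with $[M_k,M_{-k}]=\mathfrak{gl}_2$.
\item So the kernel is at most $\mathfrak{z}$, and therefore $l\in I_1$. This gives $I_2=\emptyset$ and $I\subseteq p\mathbb{Z}$.
\end{itemize}
The same reasoning applied to two distinct pairs in $I_3$ gives that each acts trivially on the other. The bracket of the second pair would then lie in a proper submodule of $\mathfrak{gl}_2$, contradicting membership in $I_3$. Hence $I_3=\{\pm k\}$, which proves alternative (2).

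\emph{Step 2: $I_3=\emptyset$.} Since $[\mathfrak{G}_{\bar 1},\mathfrak{G}_{\bar 1}]=\mathfrak{gl}_2$, both $I_1$ and $I_2$ are nonempty. Take $l\in I_2$. The element $[M_l,M_{-l}]=\mathfrak{sl}_2$ acts trivially on $M_m$ for $m\neq\pm l$. Then Lemma \ref{may be useful too}-type reasoning (restricting to $\mathrm{SL}_2$) makes every other $M_m$ a trivial $\mathfrak{sl}_2$-module. Hence $[M_m,M_{-m}]\subseteq\mathfrak{z}$, since a submodule generated by a bracket of trivial $\mathfrak{sl}_2$-modules that is invariant and commutes appropriately cannot contain $\mathfrak{sl}_2$. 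Two distinct pairs in $I_2$ would each act trivially on the other; the argument of Step 1, now using Proposition \ref{if [g, g] is sl_2} for the shape of each pair, excludes this. So $I_2=\{\pm l\}$. For $m\in I_1$, the element $[M_m,M_{-m}]=\mathfrak{z}$ acts trivially on $M_{\pm l}$, which gives $p\mid l$. Symmetrically $\mathfrak{z}$ acts on $M_{\pm l}$ with weight $\pm l$, so again $p\mid l$. The $\mathfrak{sl}_2$ from the $I_2$ pair kills $M_m$, and $\mathfrak{z}$ from the $I_1$ pair kills $M_m$ for $m\neq\pm m$. The delicate point is that $\mathfrak{z}$ from $M_m$ itself acts on $M_m$ by $m$. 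For this I intend to use Proposition \ref{when [g, g] is  z}, applied to the subpair $\oplus_{k\in I_1}M_k$, which forces $p\mid m$. Thus $I\subseteq p\mathbb{Z}$, $\oplus_{I_1}M_k$ is a trivial $\mathfrak{gl}_2$-module, and the restrictions of the bracket are as in Propositions \ref{when [g, g] is  z} and \ref{if [g, g] is sl_2}. This proves alternative (1).

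The main obstacle is the step excluding two distinct pairs in $I_2$, and ruling out $\mathfrak{sl}_2$ acting trivially on an $I_3$ pair. Both require using the concrete structure of the bracket on each pair (the Harish-Chandra condition $[[v,v],v]=0$ with mixed $v$) rather than equivariance alone. I would first try $v=x+y$ with $x$, $y$ from different pairs and expand $[[v,v],v]$.
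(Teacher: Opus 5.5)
Your strategy is the paper's. The collapse $[[M_k,M_{-k}],M_l]=0$ for $l\neq\pm k$, which you get from the odd Jacobi identity, is exactly what the paper extracts from $[[v,v],v]=0$ with $v=x+y+z$. The paper then reads off $p\mid l$ from the action of $\mathfrak{z}$, as you do.

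The step that is genuinely unfinished is the first bullet of Step 1, and the mechanism you propose there does not cover all cases. Suppose $\mathfrak{sl}_2$ kills $M_{\pm k}$ and $k\neq 0$. Then Jacobi with $x,y\in M_k$, $z\in M_{-k}$ gives $k(\langle y,z\rangle x+\langle x,z\rangle y)=0$. Setting $x=y$ contradicts $k\in I_3$ when $p\nmid k$. But when $p\mid k$ (including $k=0$), $\mathfrak{z}$ also acts by $0$ on $M_{\pm k}$. Every Jacobi term then vanishes and you get no contradiction.

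The repair is the argument you already use for $l=0$, and it also removes the kernel detour and both of your ``main obstacles''. Differentiating the $\mathrm{GL}_2$-equivariance of the bracket gives $[g,[x,y]]=[[g,x],y]+[x,[g,y]]$ for $g\in\mathfrak{gl}_2$.

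\begin{itemize}
\item If $M_{\pm l}$ are trivial $\mathfrak{gl}_2$-modules, then $[M_l,M_{-l}]$ is central in $\mathfrak{gl}_2$, i.e.\ it lies in $\mathfrak{z}$. This is what stands behind the paper's terse ``$[M_l,M_{-l}]=\mathfrak{z}$''. In your bullet's situation with $p\mid k$, it gives the contradiction with $k\in I_3$ directly.
\item Taking $g\in\mathfrak{sl}_2$ instead, the same identity puts $[M_m,M_{-m}]$, for $m\neq\pm l$, into the centralizer of $\mathfrak{sl}_2$ in $\mathfrak{gl}_2$, which is $\mathfrak{z}$. So a second pair in $I_2$ is already excluded by a sentence you wrote, without the explicit form of the bracket. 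The paper gets $I_2=\{\pm l\}$ differently, by applying the $\mathfrak{sl}_2$-proposition to the subpair $\oplus_{k\in I_2}M_k$.
\end{itemize}

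Two small items should also be stated explicitly.
\begin{itemize}
\item In alternative (2), $p\mid k$ for the $I_3$-index is not addressed. It follows because some $l\in I_1$ has bracket $[M_l,M_{-l}]=\mathfrak{z}$, which kills $M_k$ by the collapse.
\item For $m\in I_1$, $p\mid m$ is simply $[[x,y],x]=m\langle x,y\rangle x=0$ with $\langle x,y\rangle\neq 0$, as in the paper. Quoting the $\mathfrak{z}$-proposition for the subpair $\oplus_{k\in I_1}M_k$ is also legitimate, since its kernel is $K(\mathfrak{G}_{\bar 1})\cap\bigoplus_{k\in I_1}M_k=0$.
\end{itemize}
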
	
	\begin{proof}	
		First, assume that $I_3=\emptyset$. Then $I_1\neq\emptyset$ and $I_2\neq\emptyset$. By Proposition \ref{if [g, g] is sl_2}, $I_2=\{\pm l\}$ has cardinality at most two and $M_l+ M_{-l}\simeq L((-t, -t)\oplus L((t+1, t-1))$. Set $v=x+y+z$, where $x\in M_k, y\in M_{-k}, z\in M_l$ and $k\in I_1$. Then the identity $[[v, v], v]=0$ implies 
		\[[[x, y], x]=[[x, y], y]=[[x, y], z]=0, \] or equivalently,  $p|k$ and $p|l$. Note that $p>0$, otherwise we obtain impossible equality $k=l=0$. Symmetrically, considering an element $v=x+y+z, x\in M_l , y\in M_{-l}, z\in M_k$, we conclude that each $M_k$ is a trivial $\mathfrak{gl}_2$-module.
		
		Now, let $k\in I_3$. Assume that there is $l\neq \pm k$, such that $M_l\neq 0$. Again, for an element $v=x+y+z, x\in M_k, y\in M_{-k}, z\in M_l$, the identity $[[v, v], v]=0$ implies
		\[[[x, y], x]=[[x, y], y]=[[x, y], z]=0. \]
		Thus $M_l$ is a trivial $\mathfrak{gl}_2$-module, that in turn infers that $p|l$ and $[M_l, M_{-l}]=\mathfrak{z}$. Furthermore, $I_2=\emptyset, I_3=\{\pm k\}$. Symmetrically, one can easily derive that $p|k$, provided $I\neq I_3$.
	\end{proof}
	This proposition shows that all we need is to consider the case $\mathfrak{G}_{\bar 1}=M_k+M_{-k}$, where $[M_k, M_{-k}]=\mathfrak{gl}_2$. 
	
	In accordance with the discussion in Section 2, in this section we consider the case $k=0$ only. In this case the center $Z$ of $G$ is a central (super)subgroup in $\mathbb{G}$ as well. Furthermore, the Harish-Chandra pair of $\mathbb{G}/Z$ 
	is isomorphic to $(G/Z, \mathfrak{G}_{\bar 1})$ with $G/Z\simeq\mathrm{PGL}_2\simeq\mathrm{PSL}_2$. By Theorem \ref{general for PSL_2}, $\mathfrak{G}_{\bar 1}\simeq L((0, 0))\oplus 
	L((1, -1))$.
	\begin{pr}\label{GL_2 and k=0}
		If $\mathfrak{G}_{\bar 1}=M_0$, then $\mathfrak{G}_{\bar 1}\simeq\mathfrak{gl}_2$ regarded as a $\mathrm{GL}_2$-module with respect to the adjoint action. Moreover, the Lie bracket on
		$\mathfrak{G}_{\bar 1}$ is given by 
		\[[\alpha I_2+x, \beta I_2+y]=(\alpha\beta a+c\mathrm{tr}(xy)) I_2+ b(\beta x+\alpha y), \ x, y\in\mathfrak{sl}_2, \ \alpha, \beta\in\Bbbk,\]
		for some scalars $a, c\in\Bbbk, b\in\Bbbk\setminus 0$. Besides, $a$ and $c$ are not zero simultaneously.
	\end{pr}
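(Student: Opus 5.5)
We work under the standing hypotheses $K(\mathfrak{G}_{\bar 1})=0$ and $[\mathfrak{G}_{\bar 1},\mathfrak{G}_{\bar 1}]=\mathfrak{gl}_2$, with $\mathfrak{G}_{\bar 1}=M_0$. The plan is to fix the $\mathrm{GL}_2$-module structure first, then parametrize the equivariant bracket, and finally impose the Harish-Chandra condition $[[v,v],v]=0$.

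\emph{Module structure.} Since $Z$ acts trivially on $M_0$, the pair $(G/Z,\mathfrak{G}_{\bar 1})$ is a Harish-Chandra pair with $G/Z\simeq\mathrm{PSL}_2$. Its bracket is the $\mathfrak{sl}_2$-component of the original one. By Theorem \ref{general for PSL_2}, and since the kernel of the original bracket is zero, we get $\mathfrak{G}_{\bar 1}\simeq L((0,0))\oplus L((1,-1))$ as a $\mathrm{GL}_2$-module; indeed this decomposition was already recorded before the statement. We identify $L((0,0))$ with $\mathfrak{z}=\Bbbk I_2$ and $L((1,-1))$ with $\mathfrak{sl}_2$ under the adjoint action, so that $\mathfrak{G}_{\bar 1}\simeq\mathfrak{gl}_2$.

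\emph{Parametrizing the bracket.} Write elements as $\alpha I_2+x$ with $x\in\mathfrak{sl}_2$, and decompose the symmetric equivariant map $\mathfrak{G}_{\bar 1}\times\mathfrak{G}_{\bar 1}\to\mathfrak{z}\oplus\mathfrak{sl}_2$ into its components.
\begin{itemize}
\item The $\mathfrak{z}\times\mathfrak{z}\to\mathfrak{z}$ part is multiplication by a scalar $a$.
\item The $\mathfrak{z}\times\mathfrak{sl}_2\to\mathfrak{sl}_2$ part is an element of $\mathrm{End}_{\mathrm{SL}_2}(L(2))=\Bbbk$ (because $L(2)$ is simple), hence multiplication by a scalar $b$.
\item The components $\mathfrak{z}\times\mathfrak{z}\to\mathfrak{sl}_2$ and $\mathfrak{z}\times\mathfrak{sl}_2\to\mathfrak{z}$ vanish, since $L(0)\not\simeq L(2)$.
\item The $\mathfrak{sl}_2\times\mathfrak{sl}_2\to\mathfrak{z}$ part is an invariant symmetric form on $L(2)\simeq L(2)^*$. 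This space is one-dimensional and spanned by $\mathrm{tr}(xy)$, so it equals $c\,\mathrm{tr}(xy)$.
\item The $\mathfrak{sl}_2\times\mathfrak{sl}_2\to\mathfrak{sl}_2$ part is a symmetric element of $\mathrm{Hom}_{\mathrm{SL}_2}(L(2)^{\otimes 2},\mathfrak{sl}_2)$. By Lemma \ref{if m=n} with $n=2$ even, the whole Hom space is realized on $\Lambda^2$, so the symmetric part $\mathrm{Sym}_2$ gives $0$. This uses Remark \ref{onto} to pass from $V(2)$ to $L(2)=V(2)$ (note $p\neq2$).
\end{itemize}
Together these give exactly the displayed formula.

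\emph{Imposing $[[v,v],v]=0$ and nondegeneracy.} Take $v=\alpha I_2+x$. Then $[v,v]=(\alpha^2a+c\,\mathrm{tr}(x^2))I_2+2b\alpha x$. Acting on $v$, the element $I_2$ acts trivially on $M_0$, while $\mathrm{ad}\,x$ kills both $x$ and $I_2$. Hence $[[v,v],v]=0$ holds automatically, for any $a,b,c$, and the only remaining task is to translate $K(\mathfrak{G}_{\bar 1})=0$ together with $[\mathfrak{G}_{\bar 1},\mathfrak{G}_{\bar 1}]=\mathfrak{gl}_2$ into conditions on the scalars.
\begin{itemize}
\item If $b=0$, the image of the bracket lies in $\mathfrak{z}$, which contradicts $[\mathfrak{G}_{\bar 1},\mathfrak{G}_{\bar 1}]=\mathfrak{gl}_2$ (case $k=0\in I_3$). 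Hence $b\neq0$.
\item If $a=c=0$, then $[I_2,I_2]=0$ and $[I_2,y]=by\in\mathfrak{sl}_2$, while the bracket restricted to $\mathfrak{sl}_2$ is zero, so the image lies in $\mathfrak{sl}_2$. This again contradicts $[\mathfrak{G}_{\bar 1},\mathfrak{G}_{\bar 1}]=\mathfrak{gl}_2$. Hence $a$ and $c$ are not both zero.
\end{itemize}

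The main obstacle is the $\mathfrak{sl}_2\times\mathfrak{sl}_2\to\mathfrak{sl}_2$ component, where we must be careful that the symmetric part really vanishes in small characteristic, in particular $p=3$. I would check this directly with the formulas $(\circ\circ)$ for $n=2$. There $[s_0^*,s_1^*]$ and $[s_1^*,s_0^*]$ take opposite values, namely the bracket is proportional to the commutator, so any symmetric equivariant map is zero.
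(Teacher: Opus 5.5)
Given $\mathfrak{G}_{\bar 1}\simeq\mathfrak{gl}_2$, your derivation of the bracket is correct and is essentially the paper's. The paper also splits $[\ ,\ ]$ into its $\mathfrak{z}$- and $\mathfrak{sl}_2$-components, but it imports the $\mathfrak{sl}_2$-component ($[I_2,I_2]_2=0$, $[x,y]_2=0$, $[I_2,x]_2=bx$ with $b\neq 0$) from Remark~\ref{simplifying (2)}. You instead obtain $[x,y]_2=0$ directly from the antisymmetry of the unique map in Lemma~\ref{if m=n} for $n=2$, and your check via $(\circ\circ)$ is right. You also read off $b\neq 0$ and $(a,c)\neq(0,0)$ from $[\mathfrak{G}_{\bar 1},\mathfrak{G}_{\bar 1}]=\mathfrak{gl}_2$, which the paper leaves implicit. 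Your remark that $[[v,v],v]=0$ imposes no condition is also correct.

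The gap is in the first step. Theorem~\ref{general for PSL_2}, applied to $(G/Z,\mathfrak{G}_{\bar 1})$ with the bracket $[\ ,\ ]_2$, only describes $\mathfrak{G}_{\bar 1}/N$, where $N=\{x\in M_0\mid [x,M_0]\subseteq\mathfrak{z}\}$ is the kernel of \emph{that} pair. The hypothesis $K(\mathfrak{G}_{\bar 1})=0$ is about the full bracket and does not force $N=0$.

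For instance, let $\mathfrak{G}_{\bar 1}=\mathfrak{gl}_2\oplus\Bbbk e$, where $e$ spans a trivial $\mathrm{GL}_2$-module. Take the ${\bf Q}(2;a,c)$-bracket on $\mathfrak{gl}_2$, together with $[e,e]=I_2$ and $[e,\mathfrak{gl}_2]=0$. For $v=w+se$ one gets $[[v,v],v]=[[w,w],w]=0$, because $I_2$ acts trivially on $M_0$ and $\mathfrak{gl}_2$ kills $e$. Moreover $K(\mathfrak{G}_{\bar 1})=0$: pairing with $e$ forces $s=0$, and ${\bf Q}(2;a,c)$ has zero kernel since $b\neq 0$. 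Also $\mathfrak{G}_{\bar 1}=M_0$ and $[\mathfrak{G}_{\bar 1},\mathfrak{G}_{\bar 1}]=\mathfrak{gl}_2$, yet $\dim\mathfrak{G}_{\bar 1}=5$.

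So the identification $\mathfrak{G}_{\bar 1}\simeq\mathfrak{gl}_2$ requires the additional assumption $N=0$, or an analysis of $N$ analogous to the paper's treatment of $N_{\pm k}$ for $p\mid k$. The paper asserts this decomposition just before the statement and calls the first claim obvious, so you inherited the gap from it. However, the justification you add --- ``since the kernel of the original bracket is zero'' --- does not close it.
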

	\begin{proof}
		The first statement is obvious. By Remark \ref{simplifying (2)}, the Lie bracket on $\mathfrak{G}_{\bar 1}$ is defined as
		\[ [I_2, I_2]=a I_2, \ [I_2, x]= b x+f(x) I_2, \ [x, y]= g(x, y) I_2, \ x, y\in \mathfrak{sl}_2\simeq L((1, -1)), \]	
		where $b\in\Bbbk\setminus 0$, $f : L((1, -1))\to \Bbbk$ is a morphism of $\mathrm{GL}_2$-modules, thus obviously zero,  and $g : L((1, -1))\times L((1, -1))\to\Bbbk$ is a symmetric bilinear $\mathrm{GL}_2$-equivariant map. 
		
		Let $W$ denote $L((1, -1))$. Since $W\simeq W^*$, Schur lemma implies
		\[\mathrm{Hom}_{\mathrm{GL}_2}(W^{\otimes 2}, \Bbbk)\simeq \mathrm{Hom}_{\mathrm{GL}_2}(W, W)\simeq\Bbbk. \]
		On the other hand, $W^{\otimes 2}\simeq\mathrm{Sym}_2(W)\oplus\Lambda^2(W)$ and $\Lambda^2(W)\simeq W^*\simeq W$ infer
		\[\mathrm{Hom}_{\mathrm{GL}_2}(W^{\otimes 2}, \Bbbk)=\mathrm{Hom}_{\mathrm{GL}_2}(\mathrm{Sym}_2(W), \Bbbk).\]
		Therefore, up to a nonzero scalar there is only one such map $g$, and we obtain
		\[ g(x, y)=c\mathrm{tr}(xy), \ c\in\Bbbk, \ x, y\in\mathfrak{sl}_2,\]
		where $\mathrm{tr}$ is the trace function. 
	\end{proof}
	\begin{rem}\label{when G is Q(2)}
		Without loss of generality, one can assume that $b=1$. Then the structure of the supergroup from Proposition \ref{GL_2 and k=0} depends on two parameters $a, c$. In accordance with this remark, we denote it by
		${\bf Q}(2; a, c)$. The meaning of this notation is clear from the following. 
		
		Recall that for any matrices $x, y\in\mathfrak{sl}_2$, there holds $xy+yx=\mathrm{tr}(xy) I_2$. In particular, if $a=2c=1$, then the above formula can be transformed as	
		\[[u, v]=\frac{1}{2}(uv+vu), \ u, v\in\mathfrak{gl}_2, \]
		that is ${\bf Q}(2; 1, \frac{1}{2})\simeq {\bf Q}(2)$. In other words, a supergroup ${\bf Q}(2; a, c)$ is a "deformation" of ${\bf Q}(2)$. 
	\end{rem}
	Let $\mathfrak{q}(2; a, c)$ denote $\mathrm{Lie}({\bf Q}(2; a, c))$.
	\begin{pr}\label{when Q(a, c) is isomorphic to Q(a', c')?}
		${\bf Q}(2; a, c)\simeq {\bf Q}(2; a', c')$ if and only if there is $\alpha\in\Bbbk\setminus 0$, such that $a=\alpha a', c=\alpha^{-1} c'$.	
	\end{pr}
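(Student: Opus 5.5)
We work entirely with Harish-Chandra pairs, since supergroups and Harish-Chandra pairs are equivalent categories. So ${\bf Q}(2; a, c)\simeq {\bf Q}(2; a', c')$ holds if and only if there is an isomorphism $(f, u)$ from $(\mathrm{GL}_2, \mathfrak{gl}_2)$ with bracket $[\ ,\ ]_{a,c}$ to the same pair with bracket $[\ ,\ ]_{a',c'}$. Throughout we take $b=1$ as in Remark \ref{when G is Q(2)}.

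\emph{Sufficiency.} Suppose $a=\alpha a'$ and $c=\alpha^{-1}c'$. Take $f=\mathrm{id}$, so $\mathrm{d}_e f=\mathrm{id}$, and take $u(\lambda I_2+x)=\lambda\alpha I_2+x$ for $x\in\mathfrak{sl}_2$. This $u$ is $\mathrm{GL}_2$-equivariant. We check the bracket condition directly:
\[[u(\lambda I_2+x), u(\beta I_2+y)]_{a',c'}=(\lambda\beta\alpha^2a'+c'\mathrm{tr}(xy))I_2+\alpha(\beta x+\lambda y).\]
On the other side,
\[[\lambda I_2+x,\beta I_2+y]_{a,c}=(\lambda\beta\alpha a'+\alpha^{-1}c'\mathrm{tr}(xy))I_2+(\beta x+\lambda y).\]
These do not match, so the naive choice needs adjusting. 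The fix is to use the freedom in $f$ as well: compose with a scalar rescaling $s$ of the odd part. With $u(\lambda I_2+x)=s(\lambda\alpha I_2+x)$, the $\mathfrak{sl}_2$-part of the bracket scales by $s^2\alpha$ and the $I_2$-part by $s^2\alpha^2$ and $s^2$ respectively. Taking $s^2=\alpha^{-1}$ gives exactly $(\lambda\beta\alpha a'+\alpha^{-1}c'\mathrm{tr}(xy))I_2+(\beta x+\lambda y)$. This rescaling is the same normalization that justified setting $b=1$, so the two brackets agree and sufficiency follows.

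\emph{Necessity.} Let $(f,u)$ be an isomorphism of pairs. Then $f$ is an automorphism of $\mathrm{GL}_2$, so $f=\mathrm{Int}(g)\circ\theta$, where $\theta$ is either the identity or the automorphism $h\mapsto \det(h)^{\epsilon}\,{}^t h^{-1}$ twisted by a central character, and central twists $h\mapsto \chi(h)h$ are also possible. After composing with $\mathrm{Int}(g)$, which gives an automorphism of each pair, $\mathrm{d}_ef$ preserves $\mathfrak{sl}_2$ and acts on $\mathfrak{z}$ by some scalar $\mu\neq 0$. The map $u$ is a linear isomorphism intertwining the twisted actions, so it maps $\mathfrak{z}\to\mathfrak{z}$ and $\mathfrak{sl}_2\to\mathfrak{sl}_2$, because these are the isotypic components $L((0,0))$ and $L((1,-1))$ and twisting preserves them. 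By Schur's lemma, $u=\sigma$ on $\mathfrak{z}$, and on $\mathfrak{sl}_2$ it equals $\tau\,\mathrm{d}_ef|_{\mathfrak{sl}_2}$. We can check that $\mathrm{d}_ef|_{\mathfrak{sl}_2}$ is either the identity or $x\mapsto -{}^tx$, and both preserve $\mathrm{tr}(xy)$.

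Comparing the three components of the bracket condition then gives:
\[\sigma^2a'=\mu a,\qquad \sigma\tau=\tau\ \ (\text{from } [I_2,x]),\qquad \tau^2c'=\mu c.\]
The middle equation gives $\sigma=1$. Hence $a'=\mu a$ and $c'=\tau^2\mu^{-1}\cdot\mu c/\mu\ldots$; more precisely, $a=\mu^{-1}a'$ and $c=\mu^{-1}\tau^2c'$.

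The main obstacle is pinning down the relation between $\mu$ and $\tau$. It must come from compatibility of $\mathrm{d}_ef$ with the restriction to $\mathfrak{sl}_2$: since $f$ restricts to an automorphism of $\mathrm{SL}_2=[\mathrm{GL}_2,\mathrm{GL}_2]$, its differential is unimodular on $\mathfrak{sl}_2$, which forces $\tau$ to be constrained only through the bracket. If $\tau$ turns out to be free, we get $c=\mu^{-1}\tau^2c'$ with an independent parameter, which would contradict the stated invariant $ac$. So we will carefully recheck, in the sufficiency computation, whether $\tau$ can actually be varied. The $[I_2,x]$ equation gives $\sigma\tau=\tau\,\mathrm{d}_ef$-scale, which ties $\tau$ to $\mu$ through $b=1$. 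The resulting invariant should be $ac=a'c'$ with $a/a'=\alpha$, which is exactly the statement.
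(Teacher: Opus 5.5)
Your set-up matches the paper's. You pass to Harish-Chandra pairs and use that $\mathfrak z$ and $\mathfrak{sl}_2$ are non-isomorphic simple modules, so $u$ is a scalar $\sigma$ on $\mathfrak z$ and $\tau\,\mathrm{d}_ef$ on $\mathfrak{sl}_2$; you also use that $\mathrm{d}_ef$ preserves $\mathrm{tr}(xy)$. The map you end up using for sufficiency is correct: $f=\mathrm{id}$, $u=s^{-1}$ on $\mathfrak z$, $u=s$ on $\mathfrak{sl}_2$, with $s^2=\alpha^{-1}$.

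The necessity direction, however, contains an error and is not completed. Look at the $[I_2,x]$-component of the condition $[u(v),u(v')]_{a',c'}=\mathrm{d}_ef([v,v']_{a,c})$. The left side is $[\sigma I_2,\tau\,\mathrm{d}_ef(x)]_{a',c'}=\sigma\tau\,\mathrm{d}_ef(x)$ and the right side is $\mathrm{d}_ef(x)$. So this component gives $\sigma\tau=1$, not $\sigma\tau=\tau$. Your own sufficiency map has $\sigma=s^{-1}\neq 1$ in general, which already shows $\sigma=1$ cannot be forced. With the correct relation, $\tau=\sigma^{-1}$ is not a free parameter, and the three equations give $a=\mu^{-1}\sigma^2a'$ and $c=\mu^{-1}\sigma^{-2}c'$. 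The ``obstacle'' in your last paragraph is therefore an artifact of the miscomputation. The claim there that this equation ``ties $\tau$ to $\mu$'' is also wrong: it ties $\tau$ to $\sigma$. The closing sentence asserting what the invariant ``should be'' is not an argument.

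Even after this correction, one ingredient is missing: you need $\mu^2=1$. Writing $a=\alpha a'$ and $c=\alpha^{-1}c'$ with $\alpha=\mu^{-1}\sigma^2$ requires $\mu^{-1}=\mu$. You leave $\mu$ as an arbitrary nonzero scalar. Worse, your list of automorphisms (``central twists $h\mapsto\chi(h)h$ are also possible'') would admit $h\mapsto\det(h)^m h$, whose differential acts on $\mathfrak z$ by $\mu=2m+1$; with such $f$ the statement would fail.

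The fix is that $f$ restricts to an automorphism of the center $Z\simeq G_m$, hence to $z\mapsto z^{\pm1}$, so $\mathrm{d}_ef(I_2)=\pm I_2$. Equivalently, as in the paper, $f$ is an inner automorphism possibly composed with the graph automorphism $h\mapsto (h^t)^{-1}$, whose differential is $-1$ on $\mathfrak z$. Also, $h\mapsto\det(h)^mh$ is an automorphism only for $m=0,-1$ (cf.\ Remark \ref{determinant-based}). With $\mu=\pm1$, set $\alpha=\mu\sigma^2$; then $a=\alpha a'$ and $c=\alpha^{-1}c'$, which is exactly the paper's computation.
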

	\begin{proof}
		Let $(f, u)$ be an isomorphism of Harish-Chadra pairs, which induces an isomorphism ${\bf Q}(2; a, c)\simeq {\bf Q}(2; a', c')$. Without loss of generality, one can assume that
		$f\in\mathrm{Aut}(\mathrm{GL}_2)$. Recall that $\mathrm{Aut}(\mathrm{GL}_2)$ is generated by inner automorphisms (induced by elements $g\in\mathrm{GL}_2(\Bbbk)$) and the \emph{graph automorphism} $\tau : x\mapsto (x^t)^{-1}$(cf. \cite[Corollary 23.47]{milne}). Note that $\tau^2=\mathrm{id}_{\mathrm{GL}_2}$.
		
		If $f$ is an inner automorphism or a product of an inner automorphism and the graph automorphism $\tau$, then $\mathrm{d}_e(f)(x)=gxg^{-1}$ or $\mathrm{d}_e(f)(x)=-gx^tg^{-1}, x\in \mathfrak{gl}_2$, respectively. In both cases $\mathrm{tr}(xy)=\mathrm{tr}(\mathrm{d}_e(f)(x)\mathrm{d}_e(f)(y))$ for arbitrary $x, y\in\mathfrak{gl}_2$.  
		
		Since $\Bbbk I_2$ and $\mathfrak{sl}_2$ are non-isomorphic simple $\mathrm{GL}_2$-modules with respect to the both adjoint and twisted (by the automorphism $f$) adjoint actions, the linear map $u$ sends  $\Bbbk I_2$ and $\mathfrak{sl}_2$ to themselves. Set $u(I_2)=\alpha I_2, \alpha\in\Bbbk\setminus 0$.    
		
		Let $f=\tau^k\mathrm{Ad}(g)\in \mathrm{Aut}(\mathrm{GL}_2), k\in\{0, 1\}, g\in\mathrm{GL}_2(\Bbbk)$. For any $x, y\in\mathfrak{sl}_2, s, t\in\Bbbk$, we have  
		\[ [\alpha s I_2+u(x), \alpha t I_2+u(y) ]=(\alpha^2 st a'+c'\mathrm{tr}(u(x)u(y)))I_2 +\alpha(tu(x)+su(y))=\]
		\[\mathrm{d}_e(f)((sta+c\mathrm{tr}(xy))I_2+(tx+sy) )=(-1)^k (sta+c\mathrm{tr}(xy))I_2+(t\mathrm{d}_e(f)(x)+s\mathrm{d}_e(f)(y))\]
		and varying $s, t$, one easily sees that
		\[\alpha^2a'=(-1)^k a, \ c'\mathrm{tr}(u(x)u(y))=(-1)^k c\mathrm{tr}(xy),\] \[ u(x)=\alpha^{-1}\mathrm{d}_e(f)(x), u(y)=\alpha^{-1}\mathrm{d}_e(f)(y).\]
		By the above remark, $\mathrm{tr}(u(x)u(y))=\alpha^{-2}\mathrm{tr}(xy)$ and therefore, $\alpha^{-2}c'=(-1)^k c$.   
		
		It remains to note that $\mathrm{d}_e(f)(hxh^{-1})=f(h)\mathrm{d}_e(f)(x)f(h)^{-1}$ fo any $h\in\mathrm{GL}_2, x\in\mathfrak{sl}_2$.
	\end{proof}
	\begin{rem}\label{determinant-based}
		The group $\mathrm{Aut}(\mathrm{GL}_2)$ contains so-called {\bf determinant-based} automorphism $\sigma : h\mapsto \det(h)^{-1} h$, which satisfies the relation $\sigma\tau=\mathrm{Ad}(g)$, where
		\[g=\left(\begin{array}{cc}
			0 & 1 \\
			-1 & 0
		\end{array}\right).\]	
		The automorphisms $\sigma$ and $\tau$ commute with each other. 
		Furthermore, for any $n\in\mathbb{Z}$ one can define a group scheme endomorphism $\sigma_n : h\mapsto \det(h)^n h$. However, $\sigma_n$ is an automorphism if and only if $n=-1$ or $n=0$. 
	\end{rem}
	\begin{rem}\label{in general}
		Similarly to Corollary \ref{description in char=0}, one sees that $\mathbb{G}\simeq{\bf Q}(2; a, c)$ contains a normal supersubgroup $\mathbb{H}$, that corresponds to the Harish-Chandra subpair $(\mathrm{GL}_2, \mathfrak{sl}_2)$, such that $\mathbb{G}/\mathbb{H}\simeq\mathbb{G}_a^{-}$. Moreover, the center $Z$ of $G$ is contained in $\mathbb{H}$ and $\mathbb{H}/Z\simeq
		\mathrm{PGL}_2\ltimes (\mathbb{G}_a^-)^3$. One can also show that $\mathbb{H}=\mathcal{D}(\mathbb{G})$ is the derived supersubgroup of $\mathbb{G}$.
	\end{rem}
	\begin{pr}\label{isomorphism classes of K(t)}
		$\mathbb{K}(t)\simeq\mathbb{K}(t')$ if and only if $t=\pm t'$.	
	\end{pr}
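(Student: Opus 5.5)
Since algebraic supergroups are equivalent to Harish-Chandra pairs, I will work with pairs. $\mathbb{K}(t)$ corresponds to $(\mathrm{GL}_2, L((-t,-t))\oplus L((t+1,t-1)))$ with the bracket $[x\otimes\det^{-t}, y\otimes\det^{t}]=axy$ from Proposition \ref{if [g, g] is sl_2}. Rescaling the odd part shows $a$ can be normalized to $1$. An isomorphism $\mathbb{K}(t)\simeq\mathbb{K}(t')$ is then a pair $(f,u)$ with $f\in\mathrm{Aut}(\mathrm{GL}_2)$ and $u$ an $f$-twisted isomorphism of the odd modules compatible with the brackets.

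\emph{Sufficiency} ($t'=-t$). I will use the graph automorphism $\tau: h\mapsto (h^t)^{-1}$. Twisting a module by $\tau$ dualizes it on highest weights: $L(\lambda)^{\tau}\simeq L(\lambda^*)$. Hence
\[L((-t,-t))^\tau\simeq L((t,t)), \qquad L((t+1,t-1))^\tau\simeq L((1-t,-1-t))=L((-t+1,-t-1)).\]
So twisting the odd part of $\mathbb{K}(t)$ by $\tau$ gives exactly the odd part of $\mathbb{K}(-t)$. Concretely, I will take $u=\mathrm{id}\otimes(\det^{-t}\mapsto\det^{t})$ on the trivial factor and $y\mapsto \mathrm{d}_e\tau(y)=-y^t$ on $\mathfrak{sl}_2$. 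Then
\[[u(x), u(y)]=x\,\mathrm{d}_e\tau(y)=\mathrm{d}_e\tau(xy),\]
which is the required compatibility, so $(\tau,u)$ is an isomorphism of Harish-Chandra pairs. The case $t=t'$ is trivial.

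\emph{Necessity.} Suppose $(f,u)$ is an isomorphism and write $f=\tau^k\mathrm{Ad}(g)$, as in the proof of Proposition \ref{when Q(a, c) is isomorphic to Q(a', c')?}. Inner automorphisms do not change isomorphism classes of modules, so $u$ identifies $\mathfrak{G}_{\bar 1}(\mathbb{K}(t'))$ with the $\tau^k$-twist of $\mathfrak{G}_{\bar 1}(\mathbb{K}(t))$. The key invariant is the $Z$-weight of the one-dimensional summand. The only one-dimensional composition factor of the first module is $L((-t',-t'))$, with $Z$-weight $-2t'$. By the computation above, the one-dimensional factor of the twist is $L((-t,-t))$ or $L((t,t))$, with $Z$-weight $-2t$ or $2t$. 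Comparing gives $t'=t$ or $t'=-t$.

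The main obstacle is justifying that every automorphism of $\mathrm{GL}_2$ has the form $\tau^k\mathrm{Ad}(g)$. The determinant-based automorphism $\sigma$ of Remark \ref{determinant-based} needs care here. I will invoke \cite[Corollary 23.47]{milne}, as the paper does, and note that $\sigma=\mathrm{Ad}(g)\tau$ by Remark \ref{determinant-based}, so $\sigma$ is already covered. As a sanity check, twisting by $\sigma$ sends $L(\lambda)$ to $L(\lambda)\otimes\det^{-|\lambda|}$. This maps $L((-t,-t))$ to $L((t,t))$, consistent with the result.
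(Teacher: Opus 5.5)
Your proof is correct and follows essentially the same route as the paper. Writing $f=\tau^k\mathrm{Ad}(g)$, the paper compares the character $\det^{-t}$ of the one-dimensional summand with $\det^{-t'}\circ f=\det^{-\delta(k)t'}$ (your $Z$-weight comparison), and then shows $u$ on the $\mathfrak{sl}_2$-summand must be a scalar multiple of $\mathrm{d}_e(f)$, which is exactly your explicit construction with $f=\tau$ for the converse direction; you simply spell out the sufficiency step the paper leaves implicit.
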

	\begin{proof}
		Following the proof of Proposition \ref{when Q(a, c) is isomorphic to Q(a', c')?} and using the notations therein, we have
		\[u(x\otimes\mathrm{det}^t)=u(x)\otimes\mathrm{det}^{t'}, \ u(b\otimes\mathrm{det}^{-t})=\alpha b\otimes\mathrm{det}^{-t'}, x\in\mathfrak{sl}_2, b\in\Bbbk, \]
		for some $\alpha\in\Bbbk\setminus 0$. 	
		For any $h\in\mathrm{GL}_2$ there holds
		\[\alpha b\mathrm{det}^{-t}(h)\otimes \mathrm{det}^{-t'}=\mathrm{det}^{-t'}(f(h))\alpha b\otimes \mathrm{det}^{-t'},\]
		where $f=\tau^k\mathrm{Ad}(g)\in \mathrm{Aut}(\mathrm{GL}_2), k\in\{0, 1\}, g\in\mathrm{GL}_2(\Bbbk)$. Then $\det(f(h))=\det(h)^{\delta(k)}$, where $\delta(1)=-1$ and $\delta(0)=1$. Thus $t=\delta(k)t'$ and the equalities
		\[ [u(x)\otimes\mathrm{det}^{t'}, \alpha b\otimes\mathrm{det}^{-t'}]=\alpha a' bu(x)=\mathrm{d}_e(f)([x\otimes\mathrm{det}^t, b\otimes\mathrm{det}^{-t}])=ab' \mathrm{d}_e(f)(x)\]
		imply $u(x)=a(a'\alpha)^{-1}\mathrm{d}_e(f)(x)$.
	\end{proof}
	\begin{rem}\label{K is not Q}
		Note that despite of coincidence of superdimensions, no ${\bf Q}(2; a, c)$ is isomorphic to any $\mathbb{K}(t)$. Besides, $\mathbb{K}(t)$ contains a normal supersubgroup $\mathbb{H}$, isomorphic to
		$\mathbb{H}(0\oplus 2)$, such that $\mathbb{K}(t)/\mathbb{H}$ is isomorphic to the one-dimensional torus.	
	\end{rem}
	\subsection{Completion of description}
	
	Now, let $\mathfrak{G}_{\bar 1}=M_k\oplus M_{-k}, k\neq 0$. 
	For any $x, y\in\mathfrak{G}_{\bar 1}$ we have $[x, y]=[x, y]_1+[x, y]_2$, where $[x, y]_1\in\mathfrak{z}, [x, y]_2\in\mathfrak{sl}_2$. We also determine a bilinear pairing $< \ , \ >$ (of $\mathrm{GL}_2$-modules), such that $[x, y]_1=<x, y>I_2$.
	
	Let $v=x+y$, where $x\in M_k, y\in M_{-k}$.
	Then the identity $[[v, v], v]=0$ is equivalent to 
	\[(1) \ -k<x, y> x=[[x, y]_2, x], \ (2) \ k<x, y>y=[[x, y]_2, y].\]
	Let $p|k$, whence $p>0$. Then $(\mathrm{GL}_2, \mathfrak{G}_{\bar 1})$ has the structure of Harish-Chandra pair with respect to $[ \ , \ ]_2$. Its kernel equals
	$N_k\oplus N_{-k}$, where $N_{\pm k}=\{n\in M_{\pm k}\mid [n, M_{\mp k}]\subseteq\mathfrak{z} \}$. 
	Note that $N_{\pm k}\neq M_{\pm k}$ and by Proposition \ref{if [g, g] is sl_2} 
	we have \[M_{l}/N_l\simeq L((-t, -t)) \ \mbox{and} \ M_{-l}/N_{-l}\simeq L((t+1, t-1)), t\in p\mathbb{Z}, l\in\{\pm k\}.\]
	If $N_k=N_{-k}=0$, then the bilinear map $[ \ , \ ]_1$ induces a non-degenerate pairing \[L((-t, -t))\times L((t+1, t-1))\to\mathfrak{z},\] that infers $L((-t, -t))^*\simeq L((t, t))\simeq L((t+1, t-1))$, which is obviously inconsistent. 
	\begin{pr}\label{N_k is nonzero and N_{-k} is zero} If $N_k\neq 0$ and $N_{-k}=0$, then one of the following statements takes place:
		\begin{enumerate}
			\item $M_k=L((t, t))\oplus L((t+1, t-1)), M_{-k}=L((-t, -t))\simeq L((t, t))^*$, where the Lie bracket $[ \ , \ ]$ on $L((t, t))\times L((-t, -t))$ coincides with $[ \ , \ ]_1$, i.e. it is determined by the standard pairing of mutually dual modules, and on $L((t+1, t-1))\times L((-t, -t))$ is defined as in Proposition \ref{if [g, g] is sl_2};
			\item $M_k=L((t, t))\oplus L((t+1, t-1)), M_{-k}=L((-t+1, -t-1))\simeq L((t+1, t-1))^*$, where the Lie bracket $[ \ , \ ]$ on $L((t+1, t-1))\times L((-t+1, -t-1))$ coincides with $[ \ , \ ]_1$, i.e. it is determined by the standard pairing of mutually dual modules, and on $L((t, t))\times L((-t+1, -t-1))$ is defined as in Proposition \ref{if [g, g] is sl_2}. 
		\end{enumerate}
	\end{pr}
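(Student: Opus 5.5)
The plan is to reduce everything to the structure already obtained for the pair $(\mathrm{GL}_2,\mathfrak{G}_{\bar 1})$ with respect to $[\ ,\ ]_2$, together with duality and block considerations. Since $p|k$, the discussion preceding the proposition shows that $[\ ,\ ]_2$ is a Harish-Chandra structure with kernel $N_k\oplus N_{-k}=N_k$. By Proposition \ref{if [g, g] is sl_2}, $M_k/N_k$ and $M_{-k}$ are, in some order, $L((-t',-t'))$ and $L((t'+1,t'-1))$ up to relabeling. Matching the $Z$-weights $\pm k$ then forces $k=\pm 2t$ and $p|t$, since $p\neq 2$. Hence $M_{-k}$ is irreducible, and it is either $L((-t,-t))$ or $L((-t+1,-t-1))\simeq L((t+1,t-1))^*$. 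Correspondingly, $M_k/N_k$ is $L((t+1,t-1))$ or $L((t,t))$.

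The next step is to identify $N_k$. By definition $[N_k,M_{-k}]\subseteq\mathfrak{z}$, and $[N_k,M_k]=0$ by weights. Consider the pairing $\langle\ ,\ \rangle : N_k\times M_{-k}\to\Bbbk$. Its left kernel lies in $K(\mathfrak{G}_{\bar 1})=0$, so the induced map $N_k\to M_{-k}^*$ is injective. It is nonzero because $N_k\neq 0$, and $M_{-k}^*$ is irreducible, so $N_k\simeq M_{-k}^*$. Thus $N_k\simeq L((t,t))$ in the first case and $N_k\simeq L((t+1,t-1))$ in the second. In either case $M_k$ is an extension of $L((t+1,t-1))$ by $L((t,t))$ in one order or the other.

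The key step, and the one I expect to be the main obstacle, is to show that this extension splits. Twisting by $\det^{-t}$ reduces the question to $\mathrm{SL}_2$-modules with composition factors $L(0)$ and $L(2)$. I would argue exactly as in the proof of Lemma \ref{K_2=0}: $L(0)$ and $L(2)$ are tilting, so \cite[Proposition II.4.16]{jan} applies. Alternatively, Remark \ref{blocks} gives $2\notin B(0)$ because $p\neq 2$. Either way $\mathrm{Ext}^1$ vanishes, which yields $M_k=L((t,t))\oplus L((t+1,t-1))$.

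Finally, I would determine the bracket on each piece, for which Schur's lemma suffices. Let $C$ be the summand of $M_k$ complementary to $N_k$. Then $\mathrm{Hom}_{\mathrm{GL}_2}(C\otimes M_{-k},\Bbbk)=0$, because $C\not\simeq M_{-k}^*$, so $[C,M_{-k}]_1=0$; on $C\times M_{-k}$ the bracket is therefore $[\ ,\ ]_2$, which Proposition \ref{if [g, g] is sl_2} describes. On $N_k\times M_{-k}$ the bracket is $[\ ,\ ]_1$, and the component $[\ ,\ ]_2$ vanishes there by definition of $N_k$. This pairing is the standard duality pairing, up to a nonzero scalar that can be absorbed. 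It remains to check that the identities (1) and (2) actually hold. Their left-hand sides vanish because $p|k$. For the right-hand sides, the terms involving the $N_k$-component contribute $[[x,y]_2,x]$ with $[x,y]_2$ coming only from $C\times M_{-k}$; these terms vanish because $\mathfrak{sl}_2$ acts trivially on $N_k$, which is a Frobenius twist since $p|t$, and on $L((-t,-t))$, by Remark \ref{converse of Lemma 1.2}, and because of the Harish-Chandra identity for $[\ ,\ ]_2$. This gives the two alternatives (1) and (2) of the statement.
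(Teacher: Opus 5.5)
Your steps 1–4 are the paper's proof, and they already establish the statement, which only claims that one of the two alternatives occurs:
\begin{itemize}
\item use the Harish-Chandra structure $[\ ,\ ]_2$ with kernel $N_k$ and apply Proposition \ref{if [g, g] is sl_2};
\item use $K(\mathfrak{G}_{\bar 1})=0$ and the irreducibility of $M_{-k}$ to get $N_k\simeq M_{-k}^*$;
\item split $M_k$ by a block/$\mathrm{Ext}^1$ argument;
\item kill $[\ ,\ ]_1$ on the complement by Schur's lemma.
\end{itemize}
For the splitting, the paper applies the linkage principle to $\mathrm{GL}_2$ directly. Your reduction to $\mathrm{SL}_2$ is fine, provided you note that $Z$ acts on $M_k$ by scalars, so $\mathrm{GL}_2$- and $\mathrm{SL}_2$-submodules of $M_k$ coincide.

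Your last paragraph is not needed for the statement, and it is false in alternative (2). There $N_k\simeq L((t+1,t-1))\simeq\mathfrak{sl}_2\otimes\det^t$ is not a Frobenius twist, and $\mathfrak{sl}_2$ acts on it by the adjoint action. Concretely, take $x=\beta\otimes\det^t+X\otimes\det^t\in M_k$ and $y=Y\otimes\det^{-t}\in M_{-k}$, with $\beta\in\Bbbk$ and $X,Y\in\mathfrak{sl}_2$. Then $[x,y]_2=a\beta Y$, and
\[[[x,y]_2,x]=a\beta\,[Y,X]\otimes\det^{t}.\]
For $\beta\neq 0$, $X=E_{1,-1}$, $Y=H$ this equals $2a\beta E_{1,-1}\otimes\det^t\neq 0$. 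The left-hand side of identity (1) vanishes because $p|k$, so identity (1) fails.

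Hence alternative (2) can never occur. This is the same mechanism the paper itself uses in the proof of Proposition \ref{N_k and N_{-k} are not zero}: taking $v=s+n+m'$ forces $N_k$ to be a trivial $\mathfrak{sl}_2$-module. Equivalently, apply Lemma \ref{may be useful too} to the $\mathrm{SL}_2$-pair with bracket $[\ ,\ ]_2$.

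So you should drop the verification paragraph, or better, replace it with this computation, which sharpens the conclusion to alternative (1) alone. In (1) your check is correct. There $N_k=L((t,t))$ and $M_{-k}=L((-t,-t))$ are one-dimensional with trivial $\mathfrak{sl}_2$-action, and the remaining terms vanish by the Harish-Chandra identity of Proposition \ref{if [g, g] is sl_2}.
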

	\begin{proof}
		Since $M_{-k}$ is irreducible, the pairing $N_k\times M_{-k}\to\mathfrak{z}$, induced by $[ \ , \ ]$, is non-degenerate. Thus $N_k\simeq M_{-k}^*$ is isomorphic to either
		$L((t, t))$ or $L((-t+1, -t-1))$. Respectively, $M_k/N_k$ is isomorphic to either $L((t+1, t-1))$ or $L((-t, -t))$. Using \cite[II.7.2(1)]{jan}, one can easily show that for any integers $l$ and $s$ the $\mathrm{GL}_2$-modules $L((l+1, l-1))$ and $L((s, s))$ belong to the different blocks, hence $M_k=R_k\oplus N_k$ with $R_k\simeq M_k/N_k$.  
		Since $R_k$ is irreducible, the restriction of $[ \ , \ ]_1$ on $R_k\times M_{-k}$ is either zero, or non-degenerate. In the latter case $R_k\simeq M_{-k}^*$, which is obviously impossible.  
	\end{proof}
	\begin{pr}\label{N_k and N_{-k} are not zero}
		Let $N_k\neq 0$ and $N_{-k}\neq 0$. For the sake of certainty, let us assume that $M_{k}/N_{k}\simeq L((t+1, t-1))$ and $M_{-k}/N_{-k}\simeq L((-t, -t))$. Then one of the following statements holds :
		\begin{enumerate}
			\item $\mathfrak{G}_{\bar 1}\simeq (L((t+1, t-1))\oplus N_k)\oplus M_{-k}$, where $[L((t+1, t-1)), N_{-k}]=0$ and the induced pairing
			$L((t+1, t-1))\times M_{-k}/N_{-k}\to\mathfrak{sl}_2$ is determined as in Proposition \ref{if [g, g] is sl_2}. Besides, $N_k\times M_{-k}\to\mathfrak{z}$ is non-degenerate and both
			$N_k, M_{-k}$ are trivial $\mathfrak{gl}_2$-modules.  
			\item $\mathfrak{G}_{\bar 1}\simeq (L((t+1, t-1))\oplus N_k)\oplus (L((-t, -t))\oplus N_{-k})$, where 
			\[[L((t+1, t-1)), N_{-k}]=[L((-t, -t)), N_k]=0, \]
			the Lie bracket on $L((t+1, t-1))\times L((-t, -t))$ is determined as in Proposition \ref{if [g, g] is sl_2}, and it induces a non-degenerate pairing $N_k\times N_{-k}\to\mathfrak{z}$, where both $N_k, N_{-k}$ are trivial $\mathfrak{gl}_2$-modules.
		\end{enumerate}
	\end{pr}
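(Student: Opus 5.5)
We work in the setting fixed just before Proposition \ref{N_k is nonzero and N_{-k} is zero}: $p|k$, the bracket $[\ ,\ ]_2$ defines a Harish-Chandra pair with kernel $N_k\oplus N_{-k}$, and $M_k/N_k\simeq L((t+1,t-1))$, $M_{-k}/N_{-k}\simeq L((-t,-t))$, $t\in p\mathbb{Z}$.

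\emph{Step 1: splitting.} As in the proof of Proposition \ref{N_k is nonzero and N_{-k} is zero}, use \cite[II.7.2(1)]{jan}: modules of type $L((l+1,l-1))$ and $L((s,s))$ lie in different blocks. This lets us split the top $L((t+1,t-1))$ off $M_k$, giving $M_k=R_k\oplus N_k$ with $R_k\simeq L((t+1,t-1))$, provided no composition factor of $N_k$ lies in the block of $L((t+1,t-1))$.

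\emph{Step 2: triviality of $N_{\pm k}$.} This is what makes Step 1 work, and I expect it to be the main obstacle. Apply identities $(1)$ and $(2)$ with $x\in N_k$ and $y\in M_{-k}$. Then $[x,y]_2=0$, so $(1)$ reduces to $k\langle x,y\rangle x=0$, which is automatic since $p|k$. Identity $(1)$ therefore gives nothing, and I will use the Jacobi identity on triples instead.

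For $x\in R_k$, $x'\in N_k$ and $y,y'\in M_{-k}$, polarize $[[v,v],v]=0$. Using $[M_k,M_k]=0$, this gives
\[[[x,y]_2,x']+[[x',y],x]+[[x,x'],y]=0,\]
hence $[[x,y]_2,x']=-\langle x',y\rangle kx=0$, again because $p|k$. Since $[R_k,M_{-k}]_2=\mathfrak{sl}_2$, we get $\mathfrak{sl}_2N_k=0$; as $\mathfrak{z}$ acts on $M_k$ by the scalar $k=0$, $N_k$ is a trivial $\mathfrak{gl}_2$-module. The same argument with a preimage in $M_{-k}$ of the generator of $L((-t,-t))$ gives $\mathfrak{sl}_2N_{-k}=0$.

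By Lemma \ref{g-invariants}, the composition factors of $N_{\pm k}$ then have highest weights divisible by $p$, so they are $\det$-twists of Frobenius twists. Their $\mathrm{SL}_2$-parts have even highest weights divisible by $p$, hence congruent to $0\pmod{2p}$, which are not in the block of $2$. Step 1 is therefore justified, and $M_k=L((t+1,t-1))\oplus N_k$.

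\emph{Step 3: orthogonality.} Pick $R_k$ so that $[R_k,N_{-k}]=0$. The composition $R_k\times N_{-k}\to\mathfrak{gl}_2$ has image in $\mathfrak{z}$, and $R_k$ is irreducible and non-trivial, so the pairing $R_k\to N_{-k}^*$ is a morphism into a module whose factors are in other blocks. It vanishes, giving $[L((t+1,t-1)),N_{-k}]=0$.

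\emph{Step 4: dichotomy.} Case (1) is when $M_{-k}$ itself is a trivial $\mathfrak{gl}_2$-module, i.e. the extension of $L((-t,-t))$ by $N_{-k}$ is non-split. Case (2) is when it splits, $M_{-k}=L((-t,-t))\oplus N_{-k}$. In that case choose the complement orthogonal to $N_k$ by the same block argument if possible, or else by modifying it by a map $L((-t,-t))\to N_{-k}$ using non-degeneracy.

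Non-degeneracy of $N_k\times M_{-k}\to\mathfrak{z}$ (resp. $N_k\times N_{-k}\to\mathfrak{z}$) follows from $K(\mathfrak{G}_{\bar 1})=0$. An element of $N_k$ orthogonal to $M_{-k}$ lies in the kernel, since $[N_k,R_k]=0$ trivially. Symmetrically, anything in $M_{-k}$ orthogonal to $N_k$ and in $N_{-k}$ is killed by $R_k$ as well.

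Finally, the bracket on $L((t+1,t-1))\times M_{-k}/N_{-k}$ is the one of Proposition \ref{if [g, g] is sl_2}, applied to the $[\ ,\ ]_2$-pair modulo its kernel, as in Remark \ref{onto}.
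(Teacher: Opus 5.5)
Your Steps 1--3 are sound. In Step 2, let $x$ range over all of $M_k$ and use $[M_k,M_{-k}]_2=\mathfrak{sl}_2$, so that the step does not presuppose $R_k$ from Step 1. The Jacobi argument then correctly shows that $N_{\pm k}$ are trivial $\mathfrak{gl}_2$-modules, and the splitting $M_k=R_k\oplus N_k$ and $[R_k,N_{-k}]=0$ follow.

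The gap is in Step 4, where the dichotomy is drawn along the wrong line.

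First, $M_{-k}$ is \emph{always} a trivial $\mathfrak{gl}_2$-module here. All its composition factors ($L((-t,-t))$ with $p\mid t$, and those of $N_{-k}$) are Frobenius twists, so Remark \ref{converse of Lemma 1.2} applies. Hence ``trivial, i.e.\ non-split'' is not a valid equivalence.

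Second, a split $M_{-k}$ need not fall into case (2). Take $p\mid t\neq 0$, $M_k=R_k\oplus L((t,t))^{\oplus 2}$ and $M_{-k}=L((-t,-t))^{\oplus 2}$. Set $[x\otimes\mathrm{det}^t,(b_1,b_2)\otimes\mathrm{det}^{-t}]=ab_1x$ for $x\in\mathfrak{sl}_2$, and use the diagonal standard pairing $L((t,t))^{\oplus 2}\times L((-t,-t))^{\oplus 2}\to\mathfrak{z}$. This is a Harish-Chandra pair with $K(\mathfrak{G}_{\bar 1})=0$ and $N_k=L((t,t))^{\oplus 2}$. Here $N_{-k}$ is the second summand, so $M_{-k}$ splits. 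Yet no complement of $N_{-k}$ is orthogonal to $N_k$, and $N_k\times N_{-k}\to\mathfrak{z}$ is degenerate, since $\dim N_k=2>\dim N_{-k}=1$. This example belongs to case (1). Your fallback of correcting the complement ``using non-degeneracy'' presupposes exactly the non-degeneracy of $N_k\times N_{-k}$ that fails here.

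Third, $K(\mathfrak{G}_{\bar 1})=0$ does not by itself give non-degeneracy of $N_k\times M_{-k}\to\mathfrak{z}$. Your argument only kills the left kernel and the part of the right kernel lying in $N_{-k}$. In case (2) the right kernel is nonzero.

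The missing step is to split according to the right kernel $S_{-k}=\{y\in M_{-k}\mid [N_k,y]=0\}$, which is a $\mathrm{GL}_2$-submodule (the paper's $S_{-k}$). You have in effect shown $S_{-k}\cap N_{-k}=0$, hence $\dim S_{-k}\le 1$.
\begin{itemize}
\item If $S_{-k}=0$, then $N_k\times M_{-k}\to\mathfrak{z}$ is non-degenerate, $M_{-k}\simeq N_k^*$ is trivial, and you get (1).
\item If $S_{-k}\neq 0$, then $M_{-k}=S_{-k}\oplus N_{-k}$ with $S_{-k}\simeq L((-t,-t))$ and $[S_{-k},N_k]=0$. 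An element of $N_k$ orthogonal to $N_{-k}$ is then orthogonal to all of $M_{-k}$. An element of $N_{-k}$ orthogonal to $N_k$ is also killed by $R_k$. Both therefore lie in $K(\mathfrak{G}_{\bar 1})=0$, and (2) follows.
\end{itemize}

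With this repair your route is shorter than the paper's. The paper uses both $S_{\pm k}$, the count $\dim S_k+\dim S_{-k}=4$, and a case analysis of the auxiliary pairs $(\mathrm{GL}_2,S_k\oplus M_{-k}/N_{-k})$ and $(\mathrm{GL}_2,M_k/N_k\oplus S_{-k})$ via Propositions \ref{if [g, g] is sl_2} and \ref{N_k is nonzero and N_{-k} is zero}. What makes your splitting immediate is getting the triviality of $N_{\pm k}$ up front from the Jacobi identity.
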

	\begin{proof}
		
		Let $S_{\pm k}=\{y\in M_{\pm k}\mid [N_{\mp k}, y]=0 \}$.
		Then the induced pairings 
		$N_{\mp k}\times M_{\pm k}/S_{\pm k}\to\mathfrak{z}$ are non-degenerate, hence $N_{\mp k}\simeq (M_{\pm k}/S_{\pm k})^*$. 
		Comparing dimensions, we obtain
		\[\dim N_{-k}+\dim S_k=\dim M_k=\dim N_k+3, \]\[  \dim N_{k}+\dim S_{-k}=\dim M_{-k}=\dim N_{-k}+1 .\]
		These equalities imply $\dim S_k+\dim S_{-k}=4$. In particular, either $\dim S_k\neq 0$ or $\dim S_{-k}\neq 0$.
		
		We have two Harish-Chandra pairs 
		\[(a) \ (\mathrm{GL}_2, S_k\oplus M_{-k}/N_{-k}) \ \mbox{and} \ (b) \ (\mathrm{GL}_2, M_k/N_k\oplus S_{-k}),\]
		where
		\[[s, \overline{m'}]=[s, m'], \ [\overline{m}, s']=[m, s'], \ \overline{m'}=m'+N_{-k}, \ \overline{m}=m+N_{k}, \ s\in S_{k}, s'\in S_{-k}.\]
		Note that the kernels of both pairs are equal to zero.
		
		Assume that $S_k=0$, that infers $\dim S_{-k}=4$. Thus obviously follows that the Harish-Chandra pair $(b)$ satisfies the conditions neither  of Proposition \ref{when [g, g] is  z} nor of Proposition \ref{if [g, g] is sl_2}, that is $[M_k/N_k, S_{-k}]=\mathfrak{gl}_2$. Applying Proposition \ref{N_k is nonzero and N_{-k} is zero}, we conclude $S_{-k}=S'_{-k}\oplus (S_{-k}\cap N_{-k})$, where $S'_{-k}\simeq L((-t, -t)), S_{-k}\cap N_{-k}\simeq L((-t+1, -t-1))$.  Furthermore, $M_{-k}=S'_{-k}\oplus N_{-k}$ and the induced pairing $M_k\times N_{-k}\to\mathfrak{z}$ is non-degenerate. 
		
		Set $v=m+s'+n, m\in M_k, s'\in S'_{-k}, n\in N_{-k}$. Then the identity $[[v, v], v]=0$ implies $[[\overline{m}, s'], n]=0$, hence $N_{-k}$ is a trivial $\mathfrak{sl}_2$-module. 
		Since $M_k\simeq N_{-k}^*$, $M_k$ is a trivial $\mathfrak{sl}_2$-module also, which contradicts to the fact that $(M_k/N_k)|_{\mathfrak{sl}_2}\simeq\mathfrak{sl}_2$ is not. 
		
		Assume that $S_k\neq 0$ and consider the Harish-Chandra pair $(a)$. 
		
		Let $[S_k, M_{-k}/N_{-k}]=\mathfrak{gl}_2$. Proposition \ref{N_k is nonzero and N_{-k} is zero} infers $S_k=S'_k\oplus (S_k\cap N_k)$, where $S'_k\simeq L((t+1, t-1))$ and $S_k\cap N_k\simeq L((t, t))$. Moreover,  we have $M_k=S'_k\oplus N_k$ and since $\dim S_k=4$, there is $S_{-k}=0$. 
		
		Set $v=s+n+m', s\in S_k', n\in N_k, m'\in M_{-k}$. Then $[[v, v], v]=0$ implies $[[s, m'], n]=0$, hence $N_k$ is a trivial $\mathfrak{sl}_2$-module. Since $M_{-k}\simeq N_k^*$, $M_{-k}$ is 
		a trivial $\mathfrak{sl}_2$-module also, and $(1)$ follows. 
		
		If $[S_k, M_{-k}/N_{-k}]=\mathfrak{sl}_2$, then $S_{k}\cap N_{k}=0$ and Proposition \ref{if [g, g] is sl_2} implies that $S_{k}\simeq L((t+1, t-1))$ and $M_{k}=S_{k}\oplus N_{k}$. 
		Furthermore, $\dim S_{-k}=1$ and since the induced pairing $N_k\times N_{-k}\to\mathfrak{z}$ is non-degenerate, $S_{-k}\cap N_{-k}=0$, so that $M_{-k}=S_{-k}\oplus N_{-k}$ and
		$S_{-k}\simeq L((-t, -t))$. As above, the restriction of $[ \ , \ ]_1$ on $S_k\times S_{-k}$ is trivial.  
		
		Set $v=s+n+s' +n'$, where $s\in S_k, s'\in S_{-k}, n\in N_k, n'\in N_{-k}$. Then the identity $[[v, v], v]=0$ implies 
		$[[s, s'], n]=[[s, s'], n']=0$, hence $(2)$ follows. 
		
		Let $[S_k, M_{-k}/N_{-k}]=\mathfrak{z}$, that is $S_k\subseteq N_k$. By Proposition \ref{when [g, g] is  z}, $S_k\simeq L((t, t))$ and therefore, $\dim S_{-k}=3$. Consider the Harish-Chandra pair $(b)$.
		
		If $[M_k/N_k, S_{-k}]=\mathfrak{sl}_2$, then $S_{-k}\cap N_{-k}=0$, that implies $S_{-k}\simeq L((-t, -t))$, which contradicts $\dim S_{-k}=3$. If $[M_k/N_k, S_{-k}]=\mathfrak{gl}_2$, then Proposition \ref{N_k is nonzero and N_{-k} is zero} infers $\dim S_{-k} =4$, that is impossible as well. So, we conclude that $[M_k/N_k, S_{-k}]=\mathfrak{z}$, hence $S_{-k}\subseteq N_{-k}$ and $S_{-k}\simeq L((-t+1, -t-1))$. 
		
		Let $M_k=M'\oplus M''$ be a block decomposition, where $M'$ corresponds to the block of $(t+1, t-1)$ and $M''$ is the direct sum of all other block terms. Then $S_k\subseteq M''\subseteq N_k$ and $M'/(M'\cap N_k)\simeq L((t+1, t-1))$. The kernel of the Harish-Chandra pair $(\mathrm{GL}_2, M'\oplus M_{-k})$ is equal to $L=\{y\in M_{-k}\mid [M', y]=0 \}$.
		Since $M_k=M'+N_k$, we obtain that $L\subseteq N_{-k}$ and $S_{-k}\cap L=0$. Note that $\mathfrak{sl}_2\subseteq [M', M_{-k}]$ and if $[M', M_{-k}]=\mathfrak{sl}_2$, then Proposition
		\ref{if [g, g] is sl_2} implies $M_{-k}/L\simeq L((-t, -t))$, from which we derive a contradictory statement that $L=N_{-k}$ contains $S_{-k}$. Therefore, $[M', M_{-k}]=\mathfrak{gl}_2$.
		Arguing as above, we have \[\{x\in M' \mid [x, N_{-k}]=[x, N_{-k}/L]=0\}\subseteq M'\cap S_k=0,\]
		that is, we found ourselves in the already disscussed case $S_k=0$, which drives to a contradiction.
	\end{proof}
	\begin{rem}\label{normal in supergroups from Prop 5.14}
		Let $\mathbb{G}$ be a supergroup from Proposition \ref{N_k and N_{-k} are not zero}. It is obvious that $Z$ is a central (super)subgroup of $\mathbb{G}$. Moreover, $\mathrm{R}_u(\mathbb{G}/Z)$ is represented by the Harish-Chandra subpair $(e, N_k\oplus N_{-k})$, so that $(\mathbb{G}/Z)/\mathrm{R}_u(\mathbb{G}/Z)\simeq \mathbb{H}(0\oplus 2)/\mu_2$. The subpair $(Z, N_k\oplus N_{-k})$ represents $\mathrm{R}(\mathbb{G})$ (use again \cite[Corollary 6.4]{maszub1}). If $\mathbb{G}$ as in Proposition \ref{N_k and N_{-k} are not zero}(2), then  $\mathrm{R}(\mathbb{G})$ is reductive. However, if $\mathbb{G}$ as in Proposition \ref{N_k and N_{-k} are not zero}(1), then $\mathrm{R}(\mathbb{G})$ is not reductive. 
		
	\end{rem}
	Let $p\not| k$. 
	Polarization of the identity $(1)$ in $x$ gives
	\[ \ -k<u, y> v-k<v, y>u=[[u, y]_2, v]+[[v, y]_2, u], u, v\in M_k,\]
	and symmetrically, polarization of the identity $(2)$ in $y$ gives
	\[ \ k<x, w>z+k<x, z>w=[[x, w]_2, z]+[[x, z]_2, w], z, w\in M_{-k}.\]
	Assume that $M_k$ has a proper submodule $N$. Let $u\in M_k\setminus N$ and $v\in N\setminus 0$. Then
	\[-k<v, y>u\equiv [[v, y]_2, u]\pmod{N}. \]
	Thus, if $[N, M_{-k}]_2=\mathfrak{sl}_2$, then $\mathfrak{sl}_2$ acts on $M/N$ by scalar operators, hence trivially, that in turn implies
	$<N, M_{-k}>=0$. If $[N, M_{-k}]_2=0$, then $<N, M_{-k}>=0$ infers impossible equality $[N, M_{-k}]=0$. 
	Summing all up, we obtain that either $M_k$ is simple or $\mathrm{rad}(M_k)\neq 0$, so that $[\mathrm{rad}(M_k), M_{-k}]=\mathfrak{sl}_2$ and $M_k/\mathrm{rad}(M_k)$ is a simple $\mathrm{GL}_2$-module.  Indeed, if $M_k$ is semi-simple but not simple, or $M_k/\mathrm{rad}(M_k)$ is not simple, then there are proper submodules $N$ and $N'$, such that $N+N'=M_k$. The latter infers $[M_k, M_{-k}]\subseteq \mathfrak{sl}_2$, which is a contradiction.
	
	Symmetrically, we obtain that either $M_{-k}$ is simple or $\mathrm{rad}(M_{-k})\neq 0$, so that $[M_k, \mathrm{rad}(M_{-k})]=\mathfrak{sl}_2$ and $M_{-k}/\mathrm{rad}(M_{-k})$ is a simple $\mathrm{GL}_2$-module.
	
	Assume that both $M_k$ and $M_{-k}$ are not simple. The kernel of the Harish-Chandra subpair $(\mathrm{SL}_2, \mathrm{rad}(M_k)\oplus M_{-k})$ equals $L=\{y\in M_{-k}\mid [\mathrm{rad}(M_k), y]=0 \}$, so that Proposition \ref{if [g, g] is sl_2} implies that $\mathrm{rad}(M_k)$ and $M_{-k}/L$ are non-isomorphic simple
	$\mathrm{GL}_2$-modules from the list $L((-t, -t)), L((t+1, t-1))$, for some integer $t\in\mathbb{Z}$. In particular, $L=\mathrm{rad}(M_{-k})$.
	
	The same arguments, applied to the subpair $(\mathrm{SL}_2, M_k\oplus\mathrm{rad}(M_{-k}))$, show that $M_k/\mathrm{rad}(M_k)$ and $\mathrm{rad}(M_{-k})$ are non-isomorphic simple
	$\mathrm{GL}_2$-modules from the list $L((-l, -l)), L((l+1, l-1))$, for some integer $l\in\mathbb{Z}$. By the above remark on blocks of these modules, one can suppose that  
	\[\mathrm{rad}(M_k)\simeq L((-t, -t)), \ M_k/\mathrm{rad}(M_k)\simeq L((-l, -l)), \]
	\[\mathrm{rad}(M_{-k})\simeq L((l+1, l-1)), \ M_{-k}/\mathrm{rad}(M_{-k})\simeq L((t+1, t-1)).\]
	Then $k=-2t=-2l$ infers $t=l$, hence $M_k$ is semi-simple by \cite[II.2.12(1)]{jan}, which is a contradiction.
	
	Now, suppose that $M_{-k}$ is simple but $M_{k}$ is not. Arguing as above, we obtain that $\mathrm{rad}(M_k)$ and $M_{-k}$ are non-isomorphic simple $\mathrm{GL}_2$-modules from the list $L((-t, -t)), L((t+1, t-1))$, for some integer $t\in\mathbb{Z}$. The bilinear map $[ \ , \ ]_1$ induces a non-degenerate pairing $M_k/\mathrm{rad}(M_k)\times M_{-k}\to\mathfrak{z}$, that implies 
	$M_k/\mathrm{rad}(M_k)\simeq M_{-k}^*$ and therefore, $M_k$ is again semi-simple. 
	
	Finally, if $M_k$ and $M_{-k}$ are simple, then $[ \ , \ ]_1$ induces a non-degenerate pairing $M_k\times M_{-k}\to\mathfrak{z}$, that implies $M_k\simeq M_{-k}^*$. 
	
	Let $M_k\simeq L(\lambda)$. Note that $n=\lambda_1-\lambda_2\geq 1$, otherwise $[M_k, M_{-k}]$ is at most one-dimensional. Following Remark \ref{onto}, one can define the natural structure of Harish-Chandra pair on $(\mathrm{GL}_2, V(\lambda)\oplus V(\lambda^*))$, such that
	\[[V(\lambda), V(\lambda)]=[V(\lambda^*), V(\lambda^*)]=0 \ \mbox{and} \ [V(\lambda), V(\lambda^*)]=\mathfrak{gl}_2 . \]
	
	We use the notations from Section 3, so that $V(\lambda)$ has a basis $s_i^*\otimes\det^{\lambda_1}, 0\leq i\leq n$, and $V(\lambda^*)$ has a basis $t^*_j\otimes\det^{-\lambda_2},
	0\leq j\leq n$, respectively.
	\begin{lm}\label{a sort of Lemma 3.1} 
		The space $\mathrm{Hom}_{\mathrm{GL}_2}(V(\lambda)\otimes V(\lambda^*), \mathfrak{gl}_2)$ is two dimensional. More precisely, any $\phi\in \mathrm{Hom}_{\mathrm{GL}_2}(V(\lambda)\otimes V(\lambda^*), \mathfrak{gl}_2)$ has a form $\phi=\phi_1+\phi_2$, where $\phi_1\in \mathrm{Hom}_{\mathrm{GL}_2}(V(\lambda)\otimes V(\lambda^*), \mathfrak{z})$ and 	
		$\phi_2\in \mathrm{Hom}_{\mathrm{GL}_2}(V(\lambda)\otimes V(\lambda^*), \mathfrak{sl}_2)$, such that
		\[\phi_1((s_i^*\otimes\mathrm{det}^{\lambda_1})\otimes (t_j^*\otimes\mathrm{det}^{-\lambda_2}))=\delta_{n-i, j}(-1)^i\left( \begin{array}{c}
			n \\
			i
		\end{array}\right)d I_2\]
		and
		\[\phi_2((s_i^*\otimes\mathrm{det}^{\lambda_1})\otimes (t_j^*\otimes\mathrm{det}^{-\lambda_2}))=\psi(s_i^*\otimes t^*_j)  \ \mbox{for some} \]
		\[\psi\in\mathrm{Hom}_{\mathrm{SL}_2}((\mathrm{Sym}_n(V)^*)^{\otimes 2}, \mathfrak{sl}_2), d\in\Bbbk; 0\leq i, j\leq n.\]
		
	\end{lm}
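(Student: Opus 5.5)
The plan is to split $\phi$ along the $\mathrm{GL}_2$-module decomposition $\mathfrak{gl}_2=\mathfrak{z}\oplus\mathfrak{sl}_2$. This gives
\[\mathrm{Hom}_{\mathrm{GL}_2}(V(\lambda)\otimes V(\lambda^*), \mathfrak{gl}_2)=\mathrm{Hom}_{\mathrm{GL}_2}(V(\lambda)\otimes V(\lambda^*), \mathfrak{z})\oplus\mathrm{Hom}_{\mathrm{GL}_2}(V(\lambda)\otimes V(\lambda^*), \mathfrak{sl}_2),\]
so $\phi=\phi_1+\phi_2$ automatically. It then suffices to show that each summand is one-dimensional and to identify its generator.

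\textbf{Reduction to $\mathrm{SL}_2$.} Since $V(\lambda)\simeq \mathrm{Sym}_n(V)^*\otimes\det^{\lambda_1}$ and $V(\lambda^*)\simeq\mathrm{Sym}_n(V)^*\otimes\det^{-\lambda_2}$, we get
\[V(\lambda)\otimes V(\lambda^*)\simeq (\mathrm{Sym}_n(V)^*)^{\otimes 2}\otimes\det^{\lambda_1-\lambda_2}.\]
The center $Z$ acts on the left side by the character $z\mapsto z^{|\lambda|-|\lambda|}=1$, and it acts trivially on $\mathfrak{gl}_2$ as well. Since $\mathrm{GL}_2=Z\cdot\mathrm{SL}_2$, a linear map between these modules is $\mathrm{GL}_2$-equivariant if and only if it is $\mathrm{SL}_2$-equivariant; the twist by $\det$ is invisible after restriction to $\mathrm{SL}_2$.

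\textbf{The $\mathfrak{sl}_2$-component.} By the reduction,
\[\mathrm{Hom}_{\mathrm{GL}_2}(V(\lambda)\otimes V(\lambda^*),\mathfrak{sl}_2)=\mathrm{Hom}_{\mathrm{SL}_2}((\mathrm{Sym}_n(V)^*)^{\otimes 2},\mathfrak{sl}_2).\]
Lemma \ref{if m=n} says this space is one-dimensional, with generator $\psi$ given by $(\circ\circ)$. This produces $\phi_2$ exactly in the stated form.

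\textbf{The $\mathfrak{z}$-component.} Here
\[\mathrm{Hom}_{\mathrm{SL}_2}(\mathrm{Sym}_n(V)^*\otimes\mathrm{Sym}_n(V)^*,\Bbbk)\simeq\mathrm{Hom}_{\mathrm{SL}_2}(\mathrm{Sym}_n(V)^*,\mathrm{Sym}_n(V)).\]
This is the step needing the most care in positive characteristic, because $V(n)$ and $H^0(n)$ need not be isomorphic. I would use that $V(n)$ is generated by its highest weight vector. Any morphism $V(n)\to H^0(n)$ is therefore determined by the image of that vector, which is a weight-$n$ vector and hence lies in the line $H^0(n)^{n}$. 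So the space is at most one-dimensional. It is nonzero because the composition $V(n)\twoheadrightarrow L(n)\hookrightarrow H^0(n)$ is a nonzero morphism.

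To get the explicit formula, I would check directly that the bilinear form $\langle s_i^*,t_j^*\rangle=\delta_{n-i,j}(-1)^i\binom{n}{i}$ is invariant, using the distribution-algebra formulas of Section 3. For torus invariance, the weights $n-2i$ and $n-2j$ must sum to zero, which forces $j=n-i$. For invariance under $E_{1,-1}^{(k)}$, one needs
\[\langle E^{(k)}x,y\rangle+\dots=0,\]
which reduces to the binomial identity
\[(-1)^{i}\binom{n}{i}\binom{n-i+k}{k}\cdot(\pm1)=\text{the symmetric term}.\]
This holds because $\binom{n}{i}\binom{n-i+k}{k}$ and $\binom{n}{i-k}\binom{i}{k}$ both equal the multinomial coefficient $\frac{n!}{(i-k)!k!(n-i)!}$, up to the careful tracking of signs $(-1)^k$. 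The check for $E_{-1,1}^{(k)}$ is symmetric. Invariance under the full distribution algebra gives $\mathrm{SL}_2$-invariance, and hence $\mathrm{GL}_2$-invariance by the reduction.

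The form is nonzero, since $i=0$ gives $\langle s_0^*,t_n^*\rangle=1$. Hence it spans the $\mathfrak{z}$-component, giving $\phi_1$ with scalar $d$. The total dimension is therefore $2$.
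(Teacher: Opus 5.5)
Your reduction to $\mathrm{SL}_2$ (the center acts trivially on both sides), the splitting along $\mathfrak{gl}_2=\mathfrak{z}\oplus\mathfrak{sl}_2$, and the appeal to Lemma \ref{if m=n} for the $\mathfrak{sl}_2$-part all agree with the paper. For the dimension of the $\mathfrak{z}$-part you take a different, more elementary route. The paper identifies the space with $(\mathrm{Sym}_n(V)^{\otimes 2})^{\mathrm{SL}_2}$ and gets dimension one from the good-filtration multiplicity $[\mathrm{Sym}_n(V)^{\otimes 2}:H^0(0)]$, which is independent of the characteristic. You instead identify it with $\mathrm{Hom}_{\mathrm{SL}_2}(V(n),H^0(n))$ and use that $V(n)$ is generated by a weight-$n$ vector and $\dim H^0(n)^n=1$. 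That argument is correct and avoids Donkin--Mathieu altogether.

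The step that fails as written is the invariance check for the explicit pairing $\langle s_i^*,t_j^*\rangle=\delta_{n-i,j}(-1)^i\binom{n}{i}$.

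\begin{itemize}
\item For $k\geq 2$ the divided power $E_{1,-1}^{(k)}$ is not primitive: $\Delta(E_{1,-1}^{(k)})=\sum_{a+b=k}E_{1,-1}^{(a)}\otimes E_{1,-1}^{(b)}$. So invariance means $\sum_{a+b=k}\langle E_{1,-1}^{(a)}x,E_{1,-1}^{(b)}y\rangle=0$, not a two-term identity.
\item The two-term identity is in fact false for the correct pairing. For $x=s_i^*$, $y=t^*_{n+k-i}$, the terms $a=k$ and $a=0$ add up to $(-1)^i\binom{n}{i}\binom{i}{k}\bigl(1+(-1)^k\bigr)$, which is nonzero for even $k$. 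For $n=i=k=2$ this sum is $2$, and the middle term $a=b=1$ contributes $-2$.
\item Your multinomial identity is also mis-indexed. The correct one is $\binom{n}{i-k}\binom{n-i+k}{k}=\binom{n}{i}\binom{i}{k}$.
\item The same issue affects the claimed symmetric check for $E_{-1,1}^{(k)}$.
\end{itemize}

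The repair is to keep all terms. Using $\binom{n}{i-a}\binom{n-i+a}{a}=\binom{n}{i}\binom{i}{a}$ and $\binom{i}{a}\binom{i-a}{k-a}=\binom{i}{k}\binom{k}{a}$, the $(a,b)$-term equals $(-1)^{i+k}\binom{n}{i}\binom{i}{k}(-1)^a\binom{k}{a}$. The whole sum is therefore a multiple of $(1-1)^k=0$, and the same works for $E_{-1,1}^{(k)}$. Alternatively, check invariance over $\mathbb{Q}$, where only $k=1$ matters and your two-term check is right, and then reduce the integral form modulo $p$.

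Once repaired, this step is actually more complete than the paper's. The paper reads off $d_i$ from the recursion $id_i+(n-i+1)d_{i-1}=0$, which comes from $E_{-1,1}$ alone. When $p\leq n$ that recursion has a solution space of dimension greater than one; for $n=p$ it leaves $d_0$ and $d_p$ free. So some check at the level of $\mathrm{Dist}(\mathrm{SL}_2)$, or a reduction modulo $p$, is needed there too to pin down the ratios.
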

	\begin{proof}
		We have
		\[\mathrm{Hom}_{\mathrm{GL}_2}(V(\lambda)\otimes V(\lambda^*), \mathfrak{gl}_2)\simeq \mathrm{Hom}_{\mathrm{GL}_2}((\mathrm{Sym}_n(V)^*)^{\otimes 2}\otimes \mathrm{det}^n, \mathfrak{gl}_2)=\]
		\[\mathrm{Hom}_{\mathrm{PGL}_2}((\mathrm{Sym}_n(V)^*)^{\otimes 2}\otimes \mathrm{det}^n, \mathfrak{gl}_2)=\mathrm{Hom}_{\mathrm{PSL}_2}((\mathrm{Sym}_n(V)^*)^{\otimes 2}\otimes \mathrm{det}^n, \mathfrak{gl}_2)=\]
		\[\mathrm{Hom}_{\mathrm{SL}_2}((\mathrm{Sym}_n(V)^*)^{\otimes 2}\otimes \mathrm{det}^n, \mathfrak{gl}_2)\simeq \mathrm{Hom}_{\mathrm{SL}_2}((\mathrm{Sym}_n(V)^*)^{\otimes 2}, \mathfrak{gl}_2)\]	
		\[\simeq \mathrm{Hom}_{\mathrm{SL}_2}((\mathrm{Sym}_n(V)^*)^{\otimes 2}, \mathfrak{z})\oplus \mathrm{Hom}_{\mathrm{SL}_2}((\mathrm{Sym}_n(V)^*)^{\otimes 2}, \mathfrak{sl}_2),\]
		where the latter two terms are both nonzero. Further, we have 
		\[\mathrm{Hom}_{\mathrm{SL}_2}((\mathrm{Sym}_n(V)^*)^{\otimes 2}, \mathfrak{z})\simeq\mathrm{Hom}_{\mathrm{SL}_2}(\mathfrak{z}, \mathrm{Sym}_n(V)^{\otimes 2})\simeq (\mathrm{Sym}_n(V)^{\otimes 2})^{\mathrm{SL}_2}\]
		and $\dim((\mathrm{Sym}_n(V)^{\otimes 2})^{\mathrm{SL}_2})=[\mathrm{Sym}_n(V)^{\otimes 2} : H^0(0)]$ does not depend on $\mathrm{char}\Bbbk$.
		
		Comparing the weights, one easily sees that for any $\phi\in\mathrm{Hom}_{\mathrm{SL}_2}((\mathrm{Sym}_n(V)^*)^{\otimes 2}, \mathfrak{z})\setminus 0$,
		there holds
		\[\phi(s_i^*\otimes t_j^*)=0, \ \mbox{if} \ i+j\neq n, \ \mbox{otherwise} \ \phi(s_i^*\otimes t_{n-i}^*)=d_i I_2, d_i\in\Bbbk, 0\leq i\leq n.\]  
		Using equalities
		\[E_{-1, 1}(s^*_{i-1}\otimes t^*_{n-i})=-is^*_i\otimes t^*_{n-i}-(n-i+1)s^*_{i-1}\otimes t^*_{n-i+1}, 0\leq i\leq n,\]
		we derive
		\[id_i+(n-i+1)d_{i-1}=0, 0\leq i\leq n, d_{-1}=0,\]
		hence 
		\[d_i=(-1)^i\left( \begin{array}{c}
			n \\
			i
		\end{array}\right)d, 0\leq i\leq n. \]
	\end{proof}
	\begin{lm}\label{n is at most 1}
		$(\mathrm{GL}_2, V(\lambda)\oplus V(\lambda^*))$ has the above mentioned structure of Harish-Chandra pair if and only if $n=1$.	
	\end{lm}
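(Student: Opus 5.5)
The plan is to translate the Harish-Chandra identity $[[v,v],v]=0$ into explicit polynomial identities in the parameters $d$ (from $\phi_1$) and $a$ (from $\psi=\phi_2$) supplied by Lemma \ref{a sort of Lemma 3.1}. Since $[V(\lambda),V(\lambda)]=[V(\lambda^*),V(\lambda^*)]=0$, we write $v=x+y$ with $x\in V(\lambda)$ and $y\in V(\lambda^*)$. Then the identity is equivalent to the two conditions $(1)$ and $(2)$ from the beginning of this subsection, with $k=|\lambda|=\lambda_1+\lambda_2$:
\[-k<x,y>x=[[x,y]_2,x], \qquad k<x,y>y=[[x,y]_2,y].\]
Here $<x,y>$ is read off from $\phi_1$, and $[x,y]_2$ is given by the formulas $(\circ\circ)$ applied to $s_i^*\otimes t_j^*$. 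Recall that $[V(\lambda),V(\lambda^*)]=\mathfrak{gl}_2$ forces $d\neq 0$ and $a\neq 0$. I would also rather use the polarized forms of $(1)$ and $(2)$ displayed earlier, since these are linear in each variable and easier to test on basis vectors.

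For the direction ``$n\geq 2$ is impossible'', I would test the polarized identity $(1)$ on well-chosen weight vectors. Take $u=s_0^*\otimes\det^{\lambda_1}$ (highest weight) and $y=t_j^*\otimes\det^{-\lambda_2}$, and choose $v$ so that one side vanishes by weight reasons while the other does not. The simplest first attempt is $u=v=s_0^*$ and $y=t_n^*$. Then $<s_0^*,t_n^*>=d$, and $[s_0^*,t_n^*]_2=a_0H$ with $a_0=-a/2$, while $H$ acts on $s_0^*$ by the scalar $n$. Identity $(1)$ then gives $-kd=-\frac{a}{2}n$, which is a scalar relation.

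Next I would take $u=s_0^*$ and $y=t_{n-1}^*$. Now $<s_0^*,t_{n-1}^*>=0$, but $[s_0^*,t_{n-1}^*]_2$ is a multiple of $E_{-1,1}$, namely $c$-type coefficients in $(\circ\circ)$. In the unpolarized $(1)$ with $x=s_0^*+\varepsilon s_1^*$, the cross terms produce vectors $s_1^*$ and $s_2^*$. The coefficient of $s_2^*$ involves $E_{-1,1}s_1^*=-2s_2^*$ times a nonzero binomial coefficient, and it is absent on the left-hand side because $<x,y>x$ has no $s_2^*$ component. This forces a contradiction exactly when $s_2^*$ exists, i.e.\ when $n\geq 2$. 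This is the pattern already used in Proposition \ref{M_{(1)}}, where $[[v,v],v]=-4a t^*_{n-2}\neq 0$ for $n\geq 2$.

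In positive characteristic, the relevant coefficients ($2$, binomials $\binom{n-1}{i}$) must be nonzero, or I must pick indices where they are. I expect the main obstacle to be this bookkeeping: choosing the test vectors so that the obstructing coefficient is a unit regardless of $p\neq 2$, possibly using the symmetric choice $x=s_{n-1}^*+s_n^*$ when $p\mid (n-1)$.

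For $n=1$ I would verify directly that the four brackets of $V(\lambda)$, $V(\lambda^*)$ in bases $s_0^*,s_1^*$ and $t_0^*,t_1^*$ satisfy $(1)$ and $(2)$. Using $(\circ\circ)$ with $n=1$, we get $a_0=-a/2$ and $a_1=a/2$, and the brackets $[s_0^*,t_0^*]=aE_{1,-1}$ and $[s_1^*,t_1^*]=-aE_{-1,1}$. Together with $\phi_1$, which gives $d$ and $-d$, the conditions reduce to the single relation $kd=\pm a/2$; this matches the $4'$ formulas in the introduction. This is a finite check, and it shows that the required structure exists precisely when $n=1$.
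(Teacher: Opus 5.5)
Your overall strategy matches the paper's: test the identity $[[v,v],v]=0$ on weight vectors to exclude $n\geq 2$, then check $n=1$ directly. However, the step that is supposed to exclude $n\geq 2$ fails as written.

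The vector $s_0^*$ has weight $n$ and $t^*_{n-1}$ has weight $2-n$, so $[s_0^*,t^*_{n-1}]$ has weight $2$. By $(\circ\circ)$ it equals $b_0E_{1,-1}=aE_{1,-1}$. It is not a $c$-type multiple of $E_{-1,1}$; the $c$-terms pair $s_i^*$ with $t^*_{n+1-i}$. Consequently, for $x=s_0^*+\varepsilon s_1^*$ and $y=t^*_{n-1}$ you get $[x,y]_2=aE_{1,-1}+\varepsilon a_1H$. Since $E_{1,-1}s_0^*=0$ and $E_{1,-1}s_1^*=-ns_0^*$, the right-hand side of (1) lies in $\Bbbk s_0^*+\Bbbk s_1^*$. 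No $s_2^*$ term appears, and the mechanism you describe produces no contradiction.

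For this test pair the obstruction sits in (2), not (1). With $x=s_0^*$ and $y=t^*_{n-1}$ one has $\langle x,y\rangle=0$, while $[[x,y]_2,y]=aE_{1,-1}t^*_{n-1}=-2a\,t^*_{n-2}\neq 0$ for $n\geq 2$. Equivalently, $v=s_0^*+t^*_{n-1}$ gives $[[v,v],v]=-4a\,t^*_{n-2}$. This is exactly the paper's proof, and the same pattern as in Proposition \ref{M_{(1)}} that you cite.

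If you want to keep your $s_2^*$ idea, take $y=t_n^*$ instead. Then $[s_1^*,t_n^*]_2=c_1E_{-1,1}=-aE_{-1,1}$, so the $s_2^*$-coefficient of the right-hand side of (1) is $2a\varepsilon^2$. The left-hand side, $-kd\,x$, has no $s_2^*$-component.

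In either repaired version the obstructing coefficient is $-4a$ or $2a$. This is a unit because $p\neq 2$ and $a\neq 0$, and no binomial coefficients enter. So your worry about $p\mid(n-1)$ and the fallback choice $x=s^*_{n-1}+s^*_n$ are unnecessary.

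There are also sign slips. By $(\circ\circ)$ one has $a_0=+a/2$, and for $n=1$, $a_0=a_1=a/2$. So your first scalar relation should read $2kd+na=0$.

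For $n=1$, with $x=\alpha s_0^*+\beta s_1^*$ and $y=\gamma t_0^*+\delta t_1^*$, one finds $[[x,y]_2,x]=\frac{a}{2}(\alpha\delta-\beta\gamma)x$ and $[[x,y]_2,y]=-\frac{a}{2}(\alpha\delta-\beta\gamma)y$. Hence (1) and (2) both reduce to $(\alpha\delta-\beta\gamma)(a+2kd)=0$. The structure therefore exists exactly when $d=-a/(2k)$, which requires $p\nmid k$ as assumed, rather than $kd=\pm a/2$. This agrees with the paper's factorization $[[v,v],v]=(\alpha\delta-\beta\gamma)(a+2dk)(\cdots)$.
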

	\begin{proof}
		Assume that $n>1$. Set $v=s^*_0\otimes\det^{\lambda_1}+t^*_{n-1}\otimes\det^{-\lambda_2}$. 	Using formulas $(\circ\circ)$ and Lemma \ref{a sort of Lemma 3.1}, we obtain
		$[[v, v], v]=[2aE_{1, -1}, v]=-4a t^*_{n-2}\otimes\det^{-\lambda_2}\neq 0$, which is a contradiction. If $n=1$, then 
		\[ [s_0^*\otimes\mathrm{det}^{\lambda_1}, t_0^*\otimes\mathrm{det}^{-\lambda_2}]=aE_{1, -1}, \ [s_0^*\otimes\mathrm{det}^{\lambda_1}, t^*_1\otimes\mathrm{det}^{-\lambda_2}]=d I_2+\frac{a}{2}H, \]
		\[ [s_1^*\otimes\mathrm{det}^{\lambda_1}, t^*_0\otimes\mathrm{det}^{-\lambda_2}]=-d I_2+\frac{a}{2}H, \ [s_1^*\otimes\mathrm{det}^{\lambda_1}, t_1^*\otimes\mathrm{det}^{-\lambda_2}]=-aE_{-1, 1},\]
		where $\lambda_2=\lambda_1-1$.
		
		Set $v=\alpha s_0^*\otimes\mathrm{det}^{\lambda_1}+\beta s_1^*\otimes\mathrm{det}^{\lambda_1}+\gamma t_0^*\otimes\mathrm{det}^{-\lambda_2}+\delta t_1^*\otimes\mathrm{det}^{-\lambda_2}$.
		
		We have
		\[[v, v]=2\alpha\gamma aE_{1, -1}+2\alpha\delta (d I_2+\frac{a}{2}H) +2\beta\gamma (-d I_2+\frac{a}{2}H)-2\beta\delta a E_{-1, 1}= \]
		\[2\alpha\gamma aE_{1, -1}+2(\alpha\delta-\beta\gamma)d I_2+(\alpha\delta+\beta\gamma)a H-2\beta\delta a E_{-1, 1}\]
		and
		\[[[v, v], v]= \]
		\[-2\alpha\beta\gamma as^*_0\otimes\mathrm{det}^{\lambda_1}-2\alpha\gamma\delta a t_0^*\otimes\mathrm{det}^{-\lambda_2}+\]
		\[2(\alpha\delta-\beta\gamma)dk(\alpha s_0^*\otimes\mathrm{det}^{\lambda_1}+\beta s_1^*\otimes\mathrm{det}^{\lambda_1})\]\[-2(\alpha\delta-\beta\gamma)dk(\gamma t_0^*\otimes\mathrm{det}^{-\lambda_2}+\delta t_1^*\otimes\mathrm{det}^{-\lambda_2})+\]
		\[(\alpha\delta+\beta\gamma)a(\alpha s_0^*\otimes\mathrm{det}^{\lambda_1}-\beta s_1^*\otimes\mathrm{det}^{\lambda_1}+\gamma t_0^*\otimes\mathrm{det}^{-\lambda_2}-\delta t_1^*\otimes\mathrm{det}^{-\lambda_2})\]
		\[+2\alpha\beta\delta as_1^*\otimes\mathrm{det}^{\lambda_1}+2\beta\gamma\delta a t_1^*\otimes\mathrm{det}^{-\lambda_2}=\]
		\[(-2\alpha\beta\gamma a+2(\alpha\delta-\beta\gamma)\alpha dk+(\alpha\delta+\beta\gamma)\alpha a)s_0^*\otimes\mathrm{det}^{\lambda_1} +\]
		\[(2(\alpha\delta-\beta\gamma)\beta dk -(\alpha\delta+\beta\gamma)\beta a +2\alpha\beta\delta a)s^*_1\otimes\mathrm{det}^{\lambda_1}+\]
		\[(-2\alpha\gamma\delta a-2(\alpha\delta-\beta\gamma)\gamma dk+(\alpha\delta+\beta\gamma)\gamma a)t_0^*\otimes\mathrm{det}^{-\lambda_2}+\]
		\[(-2(\alpha\delta-\beta\gamma)\delta dk-(\alpha\delta+\beta\gamma)\delta a+2\beta\gamma\delta a)t_1^*\otimes\mathrm{det}^{-\lambda_2}=\]
		\[(\alpha\delta-\beta\gamma)(a+2dk)(\alpha s_0^*\otimes\mathrm{det}^{\lambda_1}+\beta s_1^*\otimes\mathrm{det}^{\lambda_1}-\gamma t_0^*\otimes\mathrm{det}^{-\lambda_2}-\delta t_1^*\otimes\mathrm{det}^{-\lambda_2}).\]
		Thus one easily sees that $[[v, v], v]=0$ if and only if $d=-\frac{a}{2k}$.  
	\end{proof}
	Summing all up, we obtain the following.
	\begin{pr}\label{when G_1=M_k+M_{-k}}
		Let $G\simeq\mathrm{GL}_2$ and $\mathfrak{G}_{\bar 1}=M_k\oplus M_{-k}, p\not | k$. Then $M_k\simeq L(\lambda), M_{-k}\simeq L(\lambda)^*$, where $\lambda=(t, t-1)$, so that $L(\lambda)\simeq V^*\otimes\det^{t}$ and $L(\lambda)^*\simeq V^*\otimes\det^{1-t}$. Moreover, the Lie bracket $[ \ , \ ]$ on $M_k\times M_{-k}$ is defined as
		\[ [s_0^*\otimes\mathrm{det}^t, t_0^*\otimes\mathrm{det}^{1-t}]=aE_{1, -1}, \ [s_0^*\otimes\mathrm{det}^t, t^*_1\otimes\mathrm{det}^{1-t}]=\frac{a}{2}(H-\frac{1}{k}I_2), \]
		\[ [s_1^*\otimes\mathrm{det}^t, t^*_0\otimes\mathrm{det}^{1-t}]=\frac{a}{2}(H+\frac{1}{k}I_2), \ [s_1^*\otimes\mathrm{det}^t, t_1^*\otimes\mathrm{det}^{1-t}]=-aE_{-1, 1},\]
		where $a\in\Bbbk\setminus 0$.	
		
	\end{pr}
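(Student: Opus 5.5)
The proof assembles the discussion preceding the statement. First I would invoke the case analysis carried out just above for $p\nmid k$. If $M_k$ or $M_{-k}$ has a proper submodule $N$, the polarized identities $(1)$ and $(2)$ give $-k\langle v,y\rangle u\equiv[[v,y]_2,u]\pmod N$. This forces $[\mathrm{rad}(M_{\pm k}),M_{\mp k}]=\mathfrak{sl}_2$ and simple tops. Proposition \ref{if [g, g] is sl_2}, applied to the subpairs $(\mathrm{SL}_2,\mathrm{rad}(M_k)\oplus M_{-k})$ and $(\mathrm{SL}_2,M_k\oplus\mathrm{rad}(M_{-k}))$, then pins down the radicals and tops. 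The block remark, together with $|\lambda|$ being determined by $k$, yields the same $t$ on both sides. By \cite[II.2.12(1)]{jan} the module would then be semisimple, a contradiction. The mixed case (one side simple, the other not) fails because of the non-degenerate pairing $[\ ,\ ]_1$ on the top. Hence $M_k$ and $M_{-k}$ are simple, and the non-degenerate pairing gives $M_{-k}\simeq M_k^*$. Writing $M_k\simeq L(\lambda)$ with $n=\lambda_1-\lambda_2$, we need $n\ge 1$, since otherwise $[M_k,M_{-k}]$ has dimension at most one and cannot equal $\mathfrak{gl}_2$.

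Second, I would lift to Weyl modules as in Remark \ref{onto}. The surjections $V(\lambda)\to L(\lambda)$ and $V(\lambda^*)\to L(\lambda^*)$ pull the bracket back to a Harish-Chandra pair structure on $(\mathrm{GL}_2,V(\lambda)\oplus V(\lambda^*))$. The identity $[[v,v],v]=0$ lifts because the action of $\mathfrak{gl}_2$ on $V(\lambda)$ lifts the action on $L(\lambda)$. Here one must check that the relevant test vectors are nonzero after projection, or else run the computation directly in $L(\lambda)$ using the lifted formulas. Lemma \ref{n is at most 1} then forces $n=1$. So $\lambda=(t,t-1)$, giving $L(\lambda)=V(\lambda)\simeq V^*\otimes\det^t$ and $L(\lambda)^*\simeq V^*\otimes\det^{1-t}$, with $k=|\lambda|=2t-1$.

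Third, Lemma \ref{a sort of Lemma 3.1} writes the bracket as $\phi_1+\phi_2$, with the explicit formulas for $n=1$ recorded in the proof of Lemma \ref{n is at most 1}. The cubic identity computed there vanishes exactly when $d=-\frac{a}{2k}$, and $a\neq 0$ because otherwise the bracket lands in $\mathfrak{z}$. Substituting this value of $d$ gives the stated brackets $\frac a2(H\mp\frac1kI_2)$.

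The main obstacle is the first step, specifically excluding the non-simple cases. The delicate point is the application of Proposition \ref{if [g, g] is sl_2} to subpairs whose kernels must be identified with the radicals. I would follow the argument sketched before Lemma \ref{a sort of Lemma 3.1}, checking that the weight $|\cdot|$ forces $t=l$.
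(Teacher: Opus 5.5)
Your three steps are the paper's own argument: the $p\nmid k$ analysis preceding Lemma \ref{a sort of Lemma 3.1}, then Lemmas \ref{a sort of Lemma 3.1} and \ref{n is at most 1}. Steps 1 and 3 are fine. Step 2, however, has a genuine gap, and the paper passes over it too.

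Pulling the bracket back along $V(\lambda)\oplus V(\lambda^*)\to L(\lambda)\oplus L(\lambda^*)$ does not produce a Harish-Chandra pair. The lifted $[[v,v],v]$ is only known to lie in $\mathrm{rad}\,V(\lambda)\oplus\mathrm{rad}\,V(\lambda^*)$, so your claim that the identity ``lifts'' is false. Consequently Lemma \ref{n is at most 1} tells you something about $L(\lambda)$ only if its test vector survives the projection.

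That test vector produces $-4a\,t^*_{n-2}$, of weight $4-n$. The case $p\mid n$ is already excluded by Lemma \ref{when pairing is trivial}(1), so write $n=n_0+pn'$ with $1\le n_0\le p-1$. By Steinberg's theorem, for $n>1$ the weight $4-n$ occurs in $L(n)$ if and only if $n<p$ or $n_0\ge 2$. So the check you defer fails for $n\equiv 1\pmod p$, $n>1$. For example, with $p=3$ and $n=4$, $L(4)\simeq L(1)\otimes L(1)^{[1]}$ has weights $\pm4,\pm2$ only, and $t^*_2$ spans $\mathrm{rad}\,V(4)$. This case is not vacuous: $L(2)$ is a direct summand of $L(p+1)^{\otimes 2}$, so equivariant brackets onto $\mathfrak{gl}_2$ exist there and must be ruled out by another argument.

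A uniform way to finish for $n\ge p$ uses the Steinberg decomposition. Restricted to $\mathfrak{gl}_2$, you have $M_k\simeq A\otimes B$, where $\mathfrak{sl}_2$ acts on $A\simeq L(n_0)$ and $I_2$ acts by $k$. The factor $B=L(n')^{[1]}$ is a trivial $\mathfrak{gl}_2$-module of dimension at least $2$.

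Take linearly independent $b,b'\in B$, and let $x\in A\otimes b$, $x'\in A\otimes b'$, $y\in M_{-k}$. Polarizing $[[x,y],x]=0$ (identity $(1)$) gives $[[x,y],x']=-[[x',y],x]$. The left side lies in $A\otimes b'$ and the right side in $A\otimes b$, so both vanish. Hence $[x,y]$ kills $A\otimes b'$; since it acts only on the $A$-factor, it kills all of $M_k$.

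Running over a basis of $B$, you get that $[M_k,M_{-k}]=\mathfrak{gl}_2$ acts trivially on $M_k$. This is absurd, because $\mathfrak{sl}_2$ acts nontrivially on $L(n_0)$. For $n<p$ one has $L(\lambda)=V(\lambda)$, so Lemma \ref{n is at most 1} applies verbatim. Only with this supplement do you obtain $n=1$, after which your step 3 goes through.
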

	Let $\mathbb{S}(t), \mathbb{L}(t)$ and $\mathbb{H}(t)$ denote the supergroups from Proposition \ref{N_k is nonzero and N_{-k} is zero} and Proposition \ref{when G_1=M_k+M_{-k}}, respectively. 
	Their Lie superalgebras are denoted by $\mathfrak{S}(t), \mathfrak{L}(t)$and $\mathfrak{H}(t)$, respectively. Compairing dimensions, one immediately sees that for any integers
	$t, t'$ there is $\mathbb{S}(t)\not\simeq\mathbb{L}(t')$, $\mathbb{S}(t)\not\simeq\mathbb{H}(t')$ and $\mathbb{L}(t)\not\simeq\mathbb{H}(t')$.
	\begin{pr}\label{isomorphism classes of S}
		$\mathbb{S}(t)\simeq\mathbb{S}(t')$ if and only if $t=\pm t'$.	
	\end{pr}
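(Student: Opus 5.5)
The plan is to follow the template of Propositions \ref{when Q(a, c) is isomorphic to Q(a', c')?} and \ref{isomorphism classes of K(t)}, working at the level of Harish-Chandra pairs. Recall that $\mathbb{S}(t)$ is the supergroup from Proposition \ref{N_k is nonzero and N_{-k} is zero}(1). Its odd part is
\[\mathfrak{S}(t)_{\bar 1}=M_k\oplus M_{-k},\qquad M_k=L((t, t))\oplus L((t+1, t-1)),\qquad M_{-k}=L((-t, -t)),\qquad k=2t,\ p|k.\]
The bracket is the standard pairing $L((t,t))\times L((-t,-t))\to\mathfrak{z}$ together with $[x\otimes\det^t, b\otimes\det^{-t}]=abx$ on $L((t+1,t-1))\times L((-t,-t))$.

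\textbf{Sufficiency.} We have $\mathbb{S}(t)\simeq\mathbb{S}(-t)$ via the graph automorphism $\tau$ of $\mathrm{GL}_2$. Twisting by $\tau$ replaces $\det$ by $\det^{-1}$. On $\mathfrak{sl}_2$, the twisted module $\mathfrak{sl}_2^{\tau}$ is again isomorphic to $\mathfrak{sl}_2$, via $x\mapsto -x^t=\mathrm{d}_e(\tau)(x)$. Hence the twisted odd module is $L((-t,-t))\oplus L((-t+1,-t-1))\oplus L((t,t))$, which is exactly $\mathfrak{S}(-t)_{\bar 1}$, the roles of $M_k$ and $M_{-k}$ being swapped with $k\mapsto -k$. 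Define $u$ as identity on $L((t,t))$ and $L((-t,-t))$ (suitably relabelled) and as $x\otimes\det^t\mapsto \mathrm{d}_e(\tau)(x)\otimes\det^{-t}$ on the $\mathfrak{sl}_2$-part. Rescaling $u$ on the pieces and using $\mathrm{d}_e(\tau)(I_2)=-I_2$ lets us match both the $\mathfrak{z}$-pairing and the scalar $a$. The check that $(\tau,u)$ respects the brackets is routine.

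\textbf{Necessity.} Let $(f,u)$ be an isomorphism of Harish-Chandra pairs with $f=\tau^{\epsilon}\mathrm{Ad}(g)$, $\epsilon\in\{0,1\}$. Then $\det(f(h))=\det(h)^{\delta(\epsilon)}$ with $\delta(0)=1$, $\delta(1)=-1$. The map $u$ is a $\mathrm{GL}_2$-isomorphism from $\mathfrak{S}(t)_{\bar 1}$ onto the $f$-twist of $\mathfrak{S}(t')_{\bar 1}$, so it preserves the $Z$-weight decomposition up to the sign $\delta(\epsilon)$ and must send simple components to isomorphic simple components. Restricting to the unique two-dimensional piece $L((t,t))\oplus L((-t,-t))$ consisting of one-dimensional summands, we compare the characters of $\mathrm{GL}_2$ acting there. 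This gives $\{\det^{t},\det^{-t}\}=\{\det^{\delta t'},\det^{-\delta t'}\}$, hence $t=\pm t'$. Alternatively, look at the three-dimensional summand $L((t+1,t-1))\simeq\mathfrak{sl}_2\otimes\det^t$: it must go to $\mathfrak{sl}_2\otimes\det^{\delta t'}$, so $t=\delta(\epsilon)t'$ directly. The brackets need not be used except to confirm consistency.

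\textbf{Main obstacle.} The main care point is the bookkeeping of how the twist by $\tau$ acts on the modules $L((t+1,t-1))$ and on $\mathfrak{z}$, where the sign $-1$ arises, in the sufficiency direction. One must verify that the pairing constant and $a$ can be simultaneously normalized by independent rescalings of the three simple summands; there are enough degrees of freedom, namely three scalars for two constraints.
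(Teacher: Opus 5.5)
Your proposal is correct and follows essentially the paper's route. You write $f=\tau^{\epsilon}\mathrm{Ad}(g)$ and use $\det\circ f=\det^{\delta(\epsilon)}$ to compare the determinant characters of the simple summands, which forces $t=\pm t'$; you then realize $t=-t'$ by twisting with $\tau$ and rescaling the three summands. The paper's constraint $\alpha\alpha'=\delta(k)$ is exactly your sign coming from $\mathrm{d}_e(\tau)(I_2)=-I_2$. The one difference is that the paper also works through the bracket equations to exclude the crossed matching of the one-dimensional summands, whereas your character comparison already gives $t=\pm t'$ in every case, so that step is not needed for the ``only if'' direction.
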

	\begin{proof}
		Let $(f, u)$ be a Harish-Chandra isomorphism, that determines a supergroup isomorphism $\mathbb{S}(t)\simeq\mathbb{S}(t')$.	Arguing as in Proposition \ref{when Q(a, c) is isomorphic to Q(a', c')?}, we conclude that $u(L((t+1, t-1))=L((t'+1, t'-1))$ and either  \[u(L((t, t)))=L((t', t')), u(L((-t, -t)))=L((-t', -t')), \] or \[u(L((t, t)))=L((-t', -t')), u(L((-t, -t)))=L((t', t')).\]
		Set $f=\tau^k\mathrm{Ad}(g)\in \mathrm{Aut}(\mathrm{GL}_2), k\in\{0, 1\}; g\in\mathrm{GL}_2(\Bbbk)$. Following Proposition \ref{isomorphism classes of K(t)}, 
		one can record
		\[u(x\otimes\mathrm{det}^t)=u(x)\otimes\mathrm{det}^{t'}, x\in\mathfrak{sl}_2,\]
		and either
		\[u(b\otimes\mathrm{det}^t)=\alpha b\otimes\mathrm{det}^{t'}, \ u(b'\otimes\mathrm{det}^{-t})=\alpha' b'\otimes\mathrm{det}^{-t'},\]
		or 
		\[u(b\otimes\mathrm{det}^t)=\alpha b\otimes\mathrm{det}^{-t'}, \ u(b'\otimes\mathrm{det}^{-t})=\alpha' b'\otimes\mathrm{det}^{t'}, \ b, b'\in\Bbbk.\]
		Then either $t=\delta(k)t'$, or $t=-\delta(k)t'$. In the first case we obtain $u(x)=a(a'\alpha)^{-1}\mathrm{d}_e(f)(x), x\in\mathfrak{sl}_2$, and $\alpha\alpha'=\delta(k)$.
		In the second case we obtain $\mathrm{d}_e(f)|_{\mathfrak{sl}_2}=0$, which is a contradiction.
	\end{proof}
	\begin{rem}\label{normal in S}
		$\mathbb{S}(t)$ contains a normal supersubgroup $\mathbb{H}$, which is represented by the Harish-Chandra subpair $(\mathrm{SL}_2, L((t+1, t-1)))$. Moreover, $\mathbb{H}\simeq \mathrm{SL}_2\ltimes (\mathbb{G}_a^-)^3$ and $\mathbb{S}(t)/\mathbb{H}$	is represented by the Harish-Chandra pair $(G_m, \Bbbk_{2t}\oplus\Bbbk_{-2t})$, where $p|t$. In particular,
		$\mathbb{S}(t)/\mathbb{H}$ is not split and solvable.
	\end{rem}
	\begin{pr}\label{isomorphism classes of L}
		$\mathbb{L}(t)\simeq\mathbb{L}(t')$ if and only if 	$t=\pm t'$.
	\end{pr}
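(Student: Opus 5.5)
The plan is to mimic the proof of Proposition \ref{isomorphism classes of S}, with the roles of the modules adjusted to case (2) of Proposition \ref{N_k is nonzero and N_{-k} is zero}. Recall that $\mathbb{L}(t)$ has Harish-Chandra pair $(\mathrm{GL}_2, L((t,t))\oplus L((t+1,t-1))\oplus L((-t+1,-t-1)))$. On this pair the bracket $L((t+1,t-1))\times L((-t+1,-t-1))\to\mathfrak{z}$ is the standard duality pairing. The bracket $L((t,t))\times L((-t+1,-t-1))\to\mathfrak{sl}_2$ is as in Proposition \ref{if [g, g] is sl_2}, and all other brackets vanish.

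For the \textbf{sufficiency} direction, the case $t'=t$ is trivial. For $t'=-t$ I would use the graph automorphism $\tau$ (or $\sigma\tau$ from Remark \ref{determinant-based}). Twisting by $\tau$ sends $\det$ to $\det^{-1}$, so it swaps $L((t,t))$ with $L((-t,-t))$. It sends $L((t+1,t-1))\simeq\mathfrak{sl}_2\otimes\det^t$ to $\mathfrak{sl}_2\otimes\det^{-t}\simeq L((-t+1,-t-1))$, and it sends $L((-t+1,-t-1))$ to $L((t+1,t-1))$. Moreover $\mathrm{d}_e(\tau)(x)=-x^t$ preserves $\mathfrak{z}$ and $\mathfrak{sl}_2$, so after rescaling the summands we can match the brackets with those of $\mathbb{L}(-t)$. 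In this matching the image of the $\mathfrak{z}$-pairing must still be the duality pairing, and the $\mathfrak{sl}_2$-valued pairing is unique up to scalar by Proposition \ref{if [g, g] is sl_2}.

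For the \textbf{necessity} direction, let $(f,u)$ be an isomorphism with $f=\tau^k\mathrm{Ad}(g)$, exactly as in Proposition \ref{when Q(a, c) is isomorphic to Q(a', c')?}. Since $u$ intertwines the $f$-twisted action and the summands are simple and pairwise non-isomorphic (even after twisting), $u$ maps the $2$-dimensional... rather, the $1$-dimensional summand $L((t,t))$ onto $L((t',t'))$. This is because it is the only one-dimensional summand of $\mathfrak{L}(t')_{\bar 1}$. The two $3$-dimensional summands are permuted. Comparing the action of the center via $\det(f(h))=\det(h)^{\delta(k)}$ gives $t=\delta(k)t'$ from the one-dimensional summand. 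Hence $t=\pm t'$ already, which is the claim.

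One should still check that the two $3$-dimensional summands cannot be matched inconsistently. This is the step I expect to be the only real obstacle: ruling out a degenerate case in which $u$ sends $L((t+1,t-1))$ to the summand whose brackets vanish against $u(L((t,t)))$. I would handle it as in the last line of Proposition \ref{isomorphism classes of S}. Compatibility $[u(x),u(y)]=\mathrm{d}_e(f)([x,y])$ with $x\in L((t,t))$ nonzero and $[x,L((-t+1,-t-1))]=\mathfrak{sl}_2$ forces $u(L((-t+1,-t-1)))$ to be the summand pairing into $\mathfrak{sl}_2$ with $L((t',t'))$, namely $L((-t'+1,-t'-1))$. Otherwise we would get $\mathrm{d}_e(f)|_{\mathfrak{sl}_2}=0$, which is impossible.

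Degenerate small cases, such as $t=0$ or coincidences of weights when $p\mid t$, need only a remark, since the argument uses $Z$-weights that are still distinguished by the sign of $t$ via $\delta(k)$.
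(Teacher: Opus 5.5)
Your proposal is correct and follows essentially the same route as the paper. For necessity, $u$ must send the unique one-dimensional summand $L((t,t))$ onto $L((t',t'))$, and comparing central characters via $\det(f(h))=\det(h)^{\delta(k)}$ gives $t=\delta(k)t'$. For sufficiency with $t'=-t$, you take $f$ involving the graph automorphism $\tau$ and build $u$ from $\mathrm{d}_e(f)$ up to rescaling the summands.

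The extra step you flag as the main obstacle, matching the two $3$-dimensional summands, is not needed: $t=\delta(k)t'$ already proves the claim. Also, the case $t=0$ never occurs, since $k=2t\neq 0$ in this setting.
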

	\begin{proof}
		Similarly to Proposition \ref{isomorphism classes of S}, we have 
		$u(b\otimes\mathrm{det}^t)=\alpha b\otimes\mathrm{det}^{t'}, b\in\Bbbk,$ that implies $t=\delta(k)t'$.	Further, if $t=-t'$, then we set
		\[u(L((t+1, t-1)))=L((-t+1, -t-1)), \ u(L((-t+1, -t-1)))=L((t+1, t-1)),\]
		where
		\[u(x\otimes\mathrm{det}^t)=u(x)\otimes\mathrm{det}^{-t}, \ u(x\otimes\mathrm{det}^{-t})=u'(x)\otimes\mathrm{det}^t, x\in\mathfrak{sl}_2.\]
		Thus $u(hxh^{-1})=f(h)u(x)f(h)^{-1}$ and $u'(hxh^{-1})=f(h)u'(x)f(h)^{-1}$ for arbitrary $h\in\mathrm{GL}_2$. Without loss of generality, one can assume that in both
		$\mathfrak{L}(t)_{\bar 1}$ and $\mathfrak{L}(t')_{\bar 1}$ it holds that  
		\[[x\otimes\mathrm{det}^t, y\otimes\mathrm{det}^{-t}]=\mathrm{tr}(xy)I_2, x, y\in\mathrm{sl}_2. \]
		It follows that
		\[a\mathrm{d}_e(f)(x)=\alpha a' u'(x) \ \mbox{and} \ <x, y>=-\mathrm{tr}(u(x)u'(y)), x, y\in\mathfrak{sl}_2 .\] 
		It remains to set $u=-a^{-1}\alpha a'\mathrm{d}_e(f)$ and use the above mentioned fact that $\mathrm{tr}(xy)=\mathrm{tr}(\mathrm{d}_e(f)(x)\mathrm{d}_e(f)(y))$.
	\end{proof}
	\begin{rem}\label{normal in L}
		Similarly to $\mathbb{S}(t)$, the supergroup $\mathbb{L}(t)$ contains a normal supersubgroup $\mathbb{H}$, represented by the Harish-Chandra subpair $(\mathrm{SL}_2, L((t, t)))$. Besides, $\mathbb{H}\simeq\mathrm{SL}_2\ltimes\mathbb{G}_a^-$ and $\mathbb{L}(t)/\mathbb{H}$ is represented by the Harish-Chandra pair $(G_m, \Bbbk_{2t}^{\oplus 3}\oplus\Bbbk_{-2t}^{\oplus 3}), p|t$. In particular, $\mathbb{L}(t)/\mathbb{H}$ is not split and solvable as well.	
	\end{rem}
	Observe that $\mathrm{SL}(2|1)\simeq\mathbb{H}(1)$. Indeed, recall that for any superalgebra $R$ the group $\mathrm{SL}(2|1)(R)$ consists of matrices
	\[ g=\left(\begin{array}{cc}
		A & v \\
		w & c	
	\end{array}\right), \ A\in\mathrm{GL}_2(R_{\bar 0}), v\in \mathrm{Mat}_{2\times 1}(R_{\bar 1}), \ w\in\mathrm{Mat}_{1\times 2}(R_{\bar 1}), c\in R_{\bar 0}^{\times},\]
	such that $\mathrm{Ber}(g)=\det(A-c^{-1}vw)c^{-1}=1$. In particular, $\mathrm{SL}(2|1)_{ev}(R)$ consists of matrices
	\[\left(\begin{array}{cc}
		A & 0 \\
		0 & \mathrm{det}(A)	
	\end{array}\right), A\in\mathrm{GL}_2(R_{\bar 0}),\]
	and the $\mathrm{SL}(2|1)_{ev}$-module $\mathfrak{sl}(2|1)_{\bar 1}$ is isomorphic to $L((1, 0))\oplus L((1, 0))^*$, where $L((1, 0))$ and $L((1, 0))^*$ coincide with the subspaces consisting of matrices
	\[\left(\begin{array}{cc}
		0 & 0 \\
		w & 0	
	\end{array}\right), w\in\mathrm{Mat}_{1\times 2}(\Bbbk),\]
	and
	\[\left(\begin{array}{cc}
		0 & v \\
		0 & 0	
	\end{array}\right), v\in\mathrm{Mat}_{2\times 1}(\Bbbk),\]
	respectively. 
	
	In general, it is clear that $\mathbb{H}(t)\simeq \mathbb{H}(1-t)$.
	\begin{pr}\label{all is SL(2|1)?}
		$\mathbb{H}(t)\simeq \mathbb{H}(t')$ if and only if either $t=t'$ or $t+t'=1$.
	\end{pr}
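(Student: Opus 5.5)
We work entirely with Harish-Chandra pairs, as in Propositions \ref{isomorphism classes of K(t)}, \ref{isomorphism classes of S} and \ref{isomorphism classes of L}. The odd part of $\mathfrak{H}(t)$ is $M_k\oplus M_{-k}$ with $k=2t-1$, where $M_k\simeq V^*\otimes\det^{t}$ and $M_{-k}\simeq V^*\otimes\det^{1-t}$. The bracket is given by Proposition \ref{when G_1=M_k+M_{-k}}.

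For sufficiency, $t=t'$ is trivial and $t+t'=1$ is the already observed isomorphism $\mathbb{H}(t)\simeq\mathbb{H}(1-t)$. To make the latter explicit, take $f=\mathrm{id}$ and let $u$ swap the two summands, $s_i^*\otimes\det^t\mapsto t_i^*\otimes\det^{t}$ and $t_i^*\otimes\det^{1-t}\mapsto s_i^*\otimes\det^{1-t}$. These now sit in $\mathfrak{H}(1-t)$, where $k'=1-2t=-k$. Since the bracket is symmetric, the $\mathfrak{sl}_2$-parts match, and the $I_2$-coefficients $\mp\frac{a}{2k}$ match $\pm\frac{a}{2k'}$ after the swap. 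A short check with $\lambda=(t,t-1)$ confirms the formulas agree, possibly after rescaling $a$.

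For necessity, let $(f,u)$ be an isomorphism of Harish-Chandra pairs $\mathbb{H}(t)\to\mathbb{H}(t')$. As in Proposition \ref{when Q(a, c) is isomorphic to Q(a', c')?}, write $f=\tau^{e}\mathrm{Ad}(g)$ with $e\in\{0,1\}$. Then $\det(f(h))=\det(h)^{\delta(e)}$, and $f$ acts on the center $Z$ by $z\mapsto z^{\delta(e)}$. Since $u$ intertwines the $f$-twisted action, it sends the $Z$-weight component $M_{\pm k}$ onto $M'_{\pm\delta(e)k}$. Comparing $Z$-weights therefore gives $k=\pm k'$, i.e.\ $2t-1=\pm(2t'-1)$. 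The first sign gives $t=t'$, the second gives $t+t'=1$. This already yields the claim.

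One point needs care: the $Z$-weight of $L((t,t-1))$ is $|\lambda|=2t-1$, so the statement ``$M_k$ with $k=2t-1$'' is consistent. One must check that the twisted module ${}^f(V^*\otimes\det^t)$, when $f$ involves $\tau$, is $V\otimes\det^{-t}\simeq V^*\otimes\det^{1-t}$, so that the weight really flips sign. This is where $\tau$ and the determinant character interact, and it is the only step I expect to need an explicit computation. Everything else reduces to the weight bookkeeping of Proposition \ref{isomorphism classes of K(t)}.
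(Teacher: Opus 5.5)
Your proof is correct, but your necessity argument takes a different and lighter route than the paper's. The paper identifies $V^*\otimes\det^l$ with $V\otimes\det^{l-1}$ and records $u$ by two matrices $U,U'$. Equivariance then becomes $f(h)=\det(h)^{t-1-\delta(e)(t'-1)}UhU^{-1}=\det(h)^{-t+\delta(e)t'}U'h(U')^{-1}$. Remark \ref{determinant-based} (the maps $h\mapsto\det(h)^n h$ are automorphisms only for $n\in\{0,-1\}$) forces these two exponents to be equal and to lie in $\{0,-1\}$. The case where $u$ interchanges the two summands is reduced to this one via $\mathbb{H}(t')\simeq\mathbb{H}(1-t')$.

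You use only that $f$ restricts to an automorphism $z\mapsto z^{\pm1}$ of the center $Z\simeq G_m$. Hence $u$ carries the $Z$-weight spaces of weights $\pm(2t-1)$ onto those of weights $\pm(2t'-1)$, up to one global sign. This needs neither the bracket formulas, nor the form $f=\tau^e\mathrm{Ad}(g)$, nor Remark \ref{determinant-based}. Because the weight set $\{k,-k\}$ is symmetric, your ``point needing care'' about ${}^{\tau}(V^*\otimes\det^t)$ is superfluous.

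What each route buys:
\begin{itemize}
\item The paper's computation gives extra information about every isomorphism: $U'$ is proportional to $U$, and $f$ is determined by $U$. The statement does not need this.
\item Your argument shows that the central character $|2t-1|$ is the only invariant. This also explains the paper's later remark that all $\mathfrak{H}(t)$ are isomorphic. At the Lie algebra level a trace-based automorphism of $\mathfrak{gl}_2$ can rescale the central weight, whereas at the group level $\mathrm{Aut}(G_m)=\{\pm 1\}$ cannot.
\end{itemize}

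For sufficiency, the paper only asserts that $\mathbb{H}(t)\simeq\mathbb{H}(1-t)$ is clear. Your explicit swap with $f=\mathrm{id}$ does check out on all four brackets, even with the same scalar $a$. Replacing $k$ by $k'=-k$ exchanges the two $I_2$-coefficients in exactly the way the swap does.
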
	
	\begin{proof}
		Let $(f, u)$ be a Harish-Chandra isomorphism, that determines a supergroup isomorphism $\mathbb{H}(t)\simeq\mathbb{H}(t')$. Arguing as in Proposition \ref{when Q(a, c) is isomorphic to Q(a', c')?}, we conclude that either $u(L(\lambda))=L(\lambda'), u(L(\lambda)^*)=L(\lambda')^*$, or $u(L(\lambda))=L(\lambda')^*, u(L(\lambda)^*)=L(\lambda')$. 
		
		Consider the first case. 
		
		For arbitrary integer $l$ we identify  $V^*\otimes\det^l$ with $V\otimes\det^{l-1}$ by the rule \[s_0^*\otimes\mathrm{det}^l\mapsto v_1\otimes\mathrm{det}^{l-1}, \ s_1^*\mathrm{det}^l\mapsto -v_{-1}\mathrm{det}^{l-1}.\] 
		Then the formulas in Proposition \ref{when G_1=M_k+M_{-k}}(2) are transformed as
		\[ [v_1\otimes\mathrm{det}^{t-1}, v_1\otimes\mathrm{det}^{-t}]=E_{1, -1}, \ [v_1\otimes\mathrm{det}^{t-1}, v_{-1}\otimes\mathrm{det}^{-t}]=-\frac{1}{2}(H-\frac{1}{k}I_2), \]
		\[ [v_{-1}\otimes\mathrm{det}^{t-1}, v_1\otimes\mathrm{det}^{-t}]=-\frac{1}{2}(H+\frac{1}{k}I_2), \ [v_{-1}\otimes\mathrm{det}^{t-1}, v_{-1}\otimes\mathrm{det}^{-t}]=-E_{-1, 1},\]
		where $t\in\mathbb{Z}$ and $k=2t-1$ is coprime to $p=\mathrm{char}\Bbbk$.
		
		Let \[u(v_i\otimes\mathrm{det}^{t-1})=\sum_{j\in\{\pm 1\}} u_{ji}v_j\otimes\mathrm{det}^{t'-1} \ \mbox{and} \ u(v_i\otimes\mathrm{det}^{-t})=\sum_{j\in\{\pm 1\}} u'_{ji}v_j\otimes\mathrm{det}^{-t'},\]
		where $u_{ji}, u'_{ji}\in\Bbbk, i\in\{\pm 1\}$. Let $U$ and $U'$ denote the matrices $(u_{ji})_{j, i\in\{\pm 1\}}$ and $(u'_{ji})_{j, i\in\{\pm 1\}}$ respectively. For any $h\in\mathrm{GL}_2$, we have
		\[U(\mathrm{det}(h)^{t-1}h)=\mathrm{det}(f(h))^{t'-1}f(h)U, \ U'(\mathrm{det}(h)^{-t}h)=\mathrm{det}(f(h))^{-t'}f(h)U'.\]
		Set $f=\tau^k\mathrm{Ad}(g)\in \mathrm{Aut}(\mathrm{GL}_2), k\in\{0, 1\}; g\in\mathrm{GL}_2(\Bbbk)$. Then 
		\[f(h)=\mathrm{det}(h)^{t-1-\delta(k)(t'-1)} UhU^{-1}, \ f(h)=\mathrm{det}(h)^{-t+\delta(k)t'}U'h(U')^{-1}.\]
		By Remark \ref{determinant-based}, there should be $t-1-\delta(k)(t'-1)=-t+\delta(k)t'=0, -1$ and $U$ equals $U'$ up to a nonzero scalar multiple. If $k=1$, then $t+t'=1$, otherwise $k=0$ and $t-t'=0$.
		
		The second case can be reduced to the first one if the isomorphism $\mathbb{H}(t)\simeq\mathbb{H}(t')$ is replaced by a composition of isomorphisms
		$\mathbb{H}(t)\simeq\mathbb{H}(t')\simeq\mathbb{H}(1-t')$.
	\end{proof}
	\begin{rem}\label{but Lie superalgebras are the same!}
		Contrary to the statement of Proposition \ref{all is SL(2|1)?}, for any $t$ and $t'$, such that $p$ is coprime to both $k=2t-1$ and $k'=2t'-1$, we have $\mathfrak{H}(t)\simeq\mathfrak{H}(t')\simeq\mathfrak{sl}(2|1)$!
		In the matter of fact, let $f$ be a {\bf trace-based} automorphism of $\mathfrak{gl}_2$, such that $f(x)=x+q\mathrm{tr}(x)I_2, x\in\mathfrak{gl}_2, q\in\Bbbk$. We need to find a linear map $u : \mathfrak{H}(t)_{\bar 1}\to\mathfrak{H}(t')_{\bar 1}$, such that
		\[(\cdot) \ u([x, v])=[f(x), u(v)], \ \mbox{and} \ f([v, v'])=[u(v), u(v')], \ x\in\mathfrak{gl}_2, v, v'\in\mathfrak{H}(t)_{\bar 1}.\]
		In the above notations, we set $U=U'=I_2$. Then the equalities in $(\cdot)$ are valid if and only if $k=(2q+1)k'$.
	\end{rem}
	
	\section{Applications and concluding remarks}
	
	Let $\mathbb{G}$ be a smooth connected quasi-reductive supergroup.  
	Let $T$ be a maximal torus of $G=\mathbb{G}_{ev}$ and $\Delta$ be the corresponding root system of $\mathbb{G}$. 
	
	For any $\alpha\in\Delta$, let $\overline{\mathbb{G}_{\alpha}}$ denote $\mathbb{G}_{\alpha}/T'$. 
	\begin{pr}\label{if alpha is not multiple of odd}
		Let $\alpha\in\Delta_{\bar 0}\setminus\mathbb{Q}\Delta_{\bar 1}$. Then $[\mathfrak{G}_{\bar 1}^T, \mathfrak{G}_{\bar 1}^T]\subseteq\mathrm{Lie}(T_{\alpha})$, so that the subpair
		$(T_{\alpha}, \mathfrak{G}_{\bar 1}^T)$ corresponds to the solvable radical $\mathrm{R}(\mathbb{G}_{\alpha})$. Moreover, if $\overline{G_{\alpha}}$ is isomorphic to $\mathrm{SL}_2$ or
		$\mathrm{PSL}_2$, then $\mathbb{G}_{\alpha}\simeq \overline{G_{\alpha}}\ltimes\mathrm{R}(\mathbb{G}_{\alpha})$. In general, $\mathbb{G}_{\alpha}/\mathrm{R}(\mathbb{G}_{\alpha})\simeq\mathrm{PGL}_2$. 	
	\end{pr}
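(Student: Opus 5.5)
By Lemma \ref{structure of centralizer}(1), the hypothesis $\alpha\notin\mathbb{Q}\Delta_{\bar 1}$ means that no odd root lies on the line $\mathbb{Q}\alpha$. Hence $\mathfrak{G}_{\bar 1}^{T_{\alpha}}=\mathfrak{G}_{\bar 1}^T$, and the Harish-Chandra pair of $\mathbb{G}_{\alpha}$ is $(G_{\alpha}, \mathfrak{G}_{\bar 1}^T)$ with $\mathrm{Lie}(G_{\alpha})=\mathfrak{t}\oplus\mathfrak{G}^{\alpha}_{\bar 0}\oplus\mathfrak{G}^{-\alpha}_{\bar 0}$. The plan has three steps: pin down where the bracket lands, verify that the subpair is the solvable radical, and then split off $\overline{G_{\alpha}}$ or take the quotient.

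\textbf{Step 1: the bracket lies in $\mathrm{Lie}(T_{\alpha})$.} Since $T$ acts trivially on $\mathfrak{G}_{\bar 1}^T$, we have $[\mathfrak{G}_{\bar 1}^T,\mathfrak{G}_{\bar 1}^T]\subseteq \mathrm{Lie}(G_{\alpha})^T=\mathfrak{t}$. The bracket is also $G_{\alpha}$-equivariant, so its image $\mathfrak{s}$ is a $G_{\alpha}$-submodule of $\mathfrak{t}$ on which $G_{\alpha}$ acts trivially. In particular the root subgroups $U_{\pm\alpha}$ fix $\mathfrak{s}$ pointwise. For $h\in\mathfrak{t}$ and $x\in\mathfrak{G}^{\alpha}_{\bar 0}$, $\mathrm{Ad}(U_{\alpha}(s))h=h-s\,\mathrm{d}\alpha(h)x$, so fixedness forces $\mathrm{d}\alpha(\mathfrak{s})=0$, i.e. $\mathfrak{s}\subseteq\ker\mathrm{d}\alpha$.

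The main obstacle is showing $\ker \mathrm{d}\alpha=\mathrm{Lie}(T_{\alpha})$. Since $\mathrm{char}\Bbbk\neq 2$ and $T_{\alpha}=\ker(\alpha)^0$, this is exactly the condition that $\alpha$ is not divisible by $p$ in $X(T)$. Roots of reductive groups are primitive up to a factor $2$, so with $p\neq 2$ this holds. If that route fails, I would instead use the normality criterion for $(T_{\alpha},\mathfrak{G}_{\bar 1}^T)$ and a dimension count in $\mathrm{Lie}(\overline{G_{\alpha}})$ via the decomposition $[x,y]=[x,y]_1+[x,y]_2$. Here $[x,y]_1$ is $\overline{G_{\alpha}}$-invariant in $\mathrm{Lie}(\overline{G_{\alpha}})$, hence lies in its center. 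That center is zero for $\mathrm{SL}_2$ and $\mathrm{PSL}_2$ (as $p\ne2$), and for $\mathrm{GL}_2$ it is $\mathfrak{z}\subseteq\mathrm{Lie}(T_{\alpha})$.

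\textbf{Step 2: the subpair is $\mathrm{R}(\mathbb{G}_{\alpha})$.} Check the normality criterion for $(T_{\alpha},\mathfrak{G}_{\bar 1}^T)$. First, $T_{\alpha}$ is central in $G_{\alpha}$, hence normal. Second, the quotient $\mathfrak{G}_{\bar 1}/\mathfrak{G}_{\bar 1}^T$ is zero. Third, the bracket condition is Step 1. The resulting supergroup is smooth and connected, and it is solvable by \cite[Corollary 6.4]{maszub1}, since its even part is a torus (as in Remark \ref{if in the above char=0}). The quotient has Harish-Chandra pair $(G_{\alpha}/T_{\alpha},0)$, and $G_{\alpha}/T_{\alpha}\simeq\mathrm{PGL}_2$ is semisimple. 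Therefore this subgroup is the largest normal smooth connected solvable supersubgroup, which gives $\mathbb{G}_{\alpha}/\mathrm{R}(\mathbb{G}_{\alpha})\simeq\mathrm{PGL}_2$.

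\textbf{Step 3: the semidirect decomposition.} When $\overline{G_{\alpha}}\simeq\mathrm{SL}_2$ or $\mathrm{PSL}_2$, the paper noted that $T'=T_{\alpha}$, so $G_{\alpha}=\overline{G_{\alpha}}\times T_{\alpha}$. The subpair $(\overline{G_{\alpha}},0)$ is a closed supersubgroup. It meets $\mathrm{R}(\mathbb{G}_{\alpha})$ trivially, because $\overline{G_{\alpha}}\cap T_{\alpha}=e$ and the odd parts intersect in $0$. Together with $\mathrm{R}(\mathbb{G}_{\alpha})$ it generates the pair $(G_{\alpha},\mathfrak{G}_{\bar 1}^T)$. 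Hence $\mathbb{G}_{\alpha}\simeq\overline{G_{\alpha}}\ltimes\mathrm{R}(\mathbb{G}_{\alpha})$, which can be verified directly on Harish-Chandra pairs.
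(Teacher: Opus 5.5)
Your proposal is correct and follows the paper's outline. Like the paper, you read off the pair $(G_\alpha,\mathfrak{G}_{\bar 1}^T)$, show the bracket lands in $\mathrm{Lie}(T_\alpha)$, and identify $(T_\alpha,\mathfrak{G}_{\bar 1}^T)$ with $\mathrm{R}(\mathbb{G}_\alpha)$ using the same solvability corollary and the semisimplicity of $G_\alpha/T_\alpha$.

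The real difference is in Step 1. The paper notes that $\mathfrak{G}_{\bar 1}^T$ has only the zero weight, so it is a trivial $G_\alpha$-module. Hence the bracket lies in $\mathrm{Lie}(G_\alpha)^{G_\alpha}=\mathrm{Lie}(\mathrm{Z}(G_\alpha))=\mathrm{Lie}(T_\alpha)$. You instead land in $\mathfrak{t}$ by $T$-invariance, then cut down to $\ker\mathrm{d}\alpha$ with a root subgroup. This is more elementary and makes explicit where $p\neq 2$ enters: $\langle\alpha,\alpha^\vee\rangle=2$ rules out $\alpha\in pX(T)$, so $\ker\mathrm{d}\alpha=\mathrm{Lie}(T_\alpha)$ by dimension. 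Your primary route works, and the fallback is unnecessary.

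Two small repairs are needed.
\begin{enumerate}
\item You assert without reason that $G_\alpha$ acts trivially on $\mathfrak{s}$. Either invoke the zero-weight triviality of $\mathfrak{G}_{\bar 1}^T$, or note that $G_\alpha$-stability of $\mathfrak{s}\subseteq\mathfrak{t}$ already forces $\mathrm{d}\alpha(\mathfrak{s})=0$. Indeed, for $s\neq 0$, $\mathrm{Ad}(U_\alpha(s))h\notin\mathfrak{t}$ whenever $\mathrm{d}\alpha(h)\neq 0$.
\item The claim $G_\alpha/T_\alpha\simeq\mathrm{PGL}_2$ is false when $\overline{G_\alpha}\simeq\mathrm{SL}_2$; there the quotient is $\mathrm{SL}_2$, as your own Step 3 shows. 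This slip is inherited from the statement's final sentence. It is harmless, because only semisimplicity of the quotient is used to identify the radical.
\end{enumerate}

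The paper's proof never argues the semidirect product claim, so your Step 3 supplies something it omits. It is cleanest to phrase it as a splitting. Since $\overline{G_\alpha}\to G_\alpha/T_\alpha$ is an isomorphism, the subpair $(\overline{G_\alpha},0)$ splits the quotient map $\mathbb{G}_\alpha\to\mathbb{G}_\alpha/\mathrm{R}(\mathbb{G}_\alpha)$. Moreover, $\overline{G_\alpha}$ centralizes $T_\alpha$ and acts trivially on $\mathfrak{G}_{\bar 1}^T$, and the bracket lands in $\mathrm{Lie}(T_\alpha)$. So the Harish-Chandra pair of $\mathbb{G}_\alpha$ is the product of $(\overline{G_\alpha},0)$ and $(T_\alpha,\mathfrak{G}_{\bar 1}^T)$, and the decomposition is in fact direct.
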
	
	\begin{proof}
		By Lemma \ref{structure of centralizer}(1), the Harish-Chandra pair of centralizer $\mathbb{G}_{\alpha}$ is isomorphic to 
		$(G_{\alpha}, \mathfrak{G}_{\bar 1}^T)$. 
		Since all weights of $\mathfrak{G}_{\bar 1}^T$ are zero, it is a trivial $G_{\alpha}$-module. Thus $[\mathfrak{G}_{\bar 1}^T, \mathfrak{G}_{\bar 1}^T]\subseteq \mathrm{Lie}(G_{\alpha})^{G_{\alpha}}=\mathrm{Lie}(\mathrm{Z}(G_{\alpha}))=\mathrm{Lie}(T_{\alpha})$, hence $(T_{\alpha}, \mathfrak{G}_{\bar 1}^T)$ is a Harish-Chandra subpair, that corresponds to a normal smooth connected supersubgroup $\mathbb{H}$, which is also solvable by \cite[Corollary 6.4]{maszub1}. It remains to note that  $\mathbb{G}_{\alpha}/\mathbb{H}\simeq\mathrm{PGL}_2$ is semi-simple.	
	\end{proof}
	\begin{pr}\label{if any even is not multiple of odd}
		If $\Delta_{\bar 0}\cap\mathbb{Q}\Delta_{\bar 1}=\emptyset$, then $\mathfrak{G}_{\bar 1}^T\subseteq \mathfrak{G}_{\bar 1}^G$ and $[\mathfrak{G}_{\bar 1}, \mathfrak{G}_{\bar 1}^T]\subseteq\mathrm{Lie}(\mathrm{Z}(G))$. In particular,  the subpair
		$(\mathrm{Z}(G), \mathfrak{G}_{\bar 1}^T)$ corresponds to a normal solvable supersubgroup $\mathbb{H}$ of $\mathbb{G}$. Moreover, if $\mathrm{Z}(G)$ is a torus, then	
		$\mathbb{H}\leq\mathrm{R}(\mathbb{G})$.
	\end{pr}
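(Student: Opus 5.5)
Assume $\Delta_{\bar 0}\cap\mathbb{Q}\Delta_{\bar 1}=\emptyset$. The plan is to show first that $G$ acts trivially on $\mathfrak{G}_{\bar 1}^T$. Then $[\mathfrak{G}_{\bar 1},\mathfrak{G}_{\bar 1}^T]$ lands in the $G$-invariants of $\mathfrak{g}=\mathrm{Lie}(G)$, and the normality criterion finishes the argument.

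\emph{Step 1: $G$ fixes $\mathfrak{G}_{\bar 1}^T$.} Since $G$ is reductive, it is generated by $T$ and the root subgroups $U_\gamma$, $\gamma\in\Delta_{\bar 0}$; equivalently, by the centralizers $G_\gamma=\mathrm{Cent}_G(T_\gamma)$. So it suffices to show that each $G_\gamma$ acts trivially on $\mathfrak{G}_{\bar 1}^T$. Fix $\gamma\in\Delta_{\bar 0}$. By hypothesis $\mathbb{Q}\gamma\cap\Delta_{\bar 1}=\emptyset$, so Lemma \ref{structure of centralizer}(1) gives $\mathfrak{G}_{\bar 1}^{T_\gamma}=\mathfrak{G}_{\bar 1}^T$. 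This space is $G_\gamma$-stable and has only the zero $T$-weight. A finite-dimensional $G_\gamma$-module all of whose $T$-weights are zero is trivial: each composition factor is $L(0)$, and $\mathrm{Ext}^1_{G_\gamma}(\Bbbk,\Bbbk)=0$ for a connected reductive group. More directly, $U_{\pm\gamma}$ must act trivially, because $(x_\gamma(s)-1)$ shifts weights by positive multiples of $\gamma$ via the distribution algebra, and no nonzero weights occur. Hence $U_{\pm\gamma}$ acts trivially for every $\gamma$, and therefore $G$ does, giving $\mathfrak{G}_{\bar 1}^T\subseteq\mathfrak{G}_{\bar 1}^G$. (This is the same argument as in the proof of Proposition \ref{if alpha is not multiple of odd}.)

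\emph{Step 2: the bracket.} The bracket map $\mathfrak{G}_{\bar 1}\otimes\mathfrak{G}_{\bar 1}^T\to\mathfrak{g}$ is $G$-equivariant. I intend to show that its image is contained in $\mathfrak{g}^G=\mathrm{Lie}(\mathrm{Z}(G))$, and this is the step I expect to be the main obstacle. The image of $\mathfrak{G}_{\bar 1}^\beta\otimes\mathfrak{G}_{\bar 1}^T$ has weight $\beta$. If $\beta\notin\Delta_{\bar 0}\cup\{0\}$, this piece vanishes. If $\beta=\gamma\in\Delta_{\bar 0}$, then $\gamma\in\Delta_{\bar 1}$, which contradicts the hypothesis. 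So only $\beta=0$ contributes, and $[\mathfrak{G}_{\bar 1},\mathfrak{G}_{\bar 1}^T]=[\mathfrak{G}_{\bar 1}^T,\mathfrak{G}_{\bar 1}^T]\subseteq\mathfrak{t}$. The latter is the image of a trivial $G$-module under an equivariant map, so it lies in $\mathfrak{g}^G$. Finally, $\mathfrak{g}^G=\mathrm{Lie}(\mathrm{Z}(G))$ holds for reductive $G$, as already used in Proposition \ref{if alpha is not multiple of odd} via \cite{milne}; I take this on the same footing, keeping in mind that in small characteristic it is the delicate point.

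\emph{Step 3: normality and solvability.} Check the normality criterion for the pair $(\mathrm{Z}(G),\mathfrak{G}_{\bar 1}^T)$. First, $\mathrm{Z}(G)\unlhd G$. Next, $\mathfrak{G}_{\bar 1}^T$ is a $G$-submodule, being $G$-fixed. Then $\mathrm{Z}(G)$ acts trivially on $\mathfrak{G}_{\bar 1}/\mathfrak{G}_{\bar 1}^T$: this holds since $\mathrm{Z}(G)\le T$ and every $T$-weight $\beta$ of $\mathfrak{G}_{\bar 1}$ restricts trivially to $\mathrm{Z}(G)$. I would argue this by noting that $\mathrm{Z}(G)$ acts on each $G$-submodule through central characters, or else replace this check by working with the subpair directly. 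Last, $[\mathfrak{G}_{\bar 1},\mathfrak{G}_{\bar 1}^T]\subseteq\mathrm{Lie}(\mathrm{Z}(G))$ by Step 2. It is also a Harish-Chandra subpair, since the bracket restricted to $\mathfrak{G}_{\bar 1}^T$ lands in $\mathrm{Lie}(\mathrm{Z}(G))$. The resulting supergroup $\mathbb{H}$ has an even part that is commutative (diagonalizable), so $\mathbb{H}$ is solvable by \cite[Corollary 6.4]{maszub1}. If $\mathrm{Z}(G)$ is a torus, then $\mathbb{H}$ is smooth and connected, hence $\mathbb{H}\le\mathrm{R}(\mathbb{G})$ by the maximality of the solvable radical.
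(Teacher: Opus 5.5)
Your Steps 1 and 2 are correct and coincide with the paper's proof. The paper also shows that $\mathfrak{G}_{\bar 1}^T=\mathfrak{G}_{\bar 1}^{T_\gamma}$ is a trivial $G_\gamma$-module for every $\gamma\in\Delta_{\bar 0}$, and then uses that the $G_\gamma$ generate $G$. Its remark that $G_\alpha=T$ for $\alpha\in\Delta_{\bar 1}$ (so $[\mathfrak{G}_{\bar 1}^\alpha,\mathfrak{G}_{\bar 1}^T]$ would be a vector of weight $\alpha\neq 0$ in $\mathfrak{t}$) is exactly your weight comparison. Your solvability argument is also fine.

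\textbf{The gap is your check of condition (3) of the normality criterion in Step 3.}

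\begin{itemize}
\item \emph{Your justification is false.} It is not true that the $T$-weights of $\mathfrak{G}_{\bar 1}$ restrict trivially to $\mathrm{Z}(G)$. The central-character remark only says that $\mathrm{Z}(G)$ acts on each isotypic piece by \emph{some} character, not the trivial one.
\item \emph{What condition (3) really requires.} $\mathrm{Z}(G)$ is diagonalizable and already acts trivially on $\mathfrak{G}_{\bar 1}^T$. So condition (3) is equivalent to $\mathrm{Z}(G)$ acting trivially on all of $\mathfrak{G}_{\bar 1}$, and this fails in general.
\item \emph{Counterexample to normality.} Take $\mathbb{G}=\mathrm{GL}(1|1)$. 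Then $\Delta_{\bar 0}=\emptyset$, $\mathrm{Z}(G)=G=T$ and $\mathfrak{G}_{\bar 1}^T=0$, so $\mathbb{H}=T$. This is not normal, since $T$ acts on $\mathfrak{G}_{\bar 1}$ through $\pm(\epsilon_1-\epsilon_2)$. More generally, in $\mathrm{GL}(m|n)$ the element $\mathrm{diag}(aI_m,bI_n)\in\mathrm{Z}(G)$ acts on the odd part by $ab^{-1}$ and $a^{-1}b$.
\item \emph{The final assertion fails too.} Your deduction of $\mathbb{H}\leq\mathrm{R}(\mathbb{G})$ uses maximality of the radical among normal supersubgroups, so it collapses with normality. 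The conclusion itself is false: for $\mathrm{GL}(2|1)$ the hypothesis holds in the paper's sense ($\mathbb{Q}\gamma\cap\Delta_{\bar 1}=\emptyset$ for even $\gamma$). There $\mathbb{H}=\mathrm{Z}(G)$ is a two-dimensional torus, while $\mathrm{R}(\mathbb{G})$ is the one-dimensional torus of scalar matrices.
\end{itemize}

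The paper's proof does not verify condition (3) either; it only proves the bracket inclusion and solvability. So the step cannot be repaired, and the normality claim in the statement needs amending.

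What your Steps 1--2 do give is a normal subpair $(\mathrm{Z}',\mathfrak{G}_{\bar 1}^T)$, where $\mathrm{Z}'=\ker(\mathrm{Z}(G)\to\mathrm{GL}(\mathfrak{G}_{\bar 1}))$.
\begin{itemize}
\item Conditions (1)--(3) then hold automatically.
\item Condition (4) survives. For $x,y\in\mathfrak{G}_{\bar 1}^T$ and $z\in\mathfrak{G}_{\bar 1}$, the super Jacobi identity gives $[[x,y],z]=[x,[y,z]]+[y,[x,z]]=0$, because $\mathfrak{g}$ annihilates the $G$-invariant vectors $x,y$. Hence $[\mathfrak{G}_{\bar 1},\mathfrak{G}_{\bar 1}^T]=[\mathfrak{G}_{\bar 1}^T,\mathfrak{G}_{\bar 1}^T]\subseteq\mathrm{Lie}(\mathrm{Z}')$.
\item The containment in $\mathrm{R}(\mathbb{G})$ then follows as you argue, provided $\mathrm{Z}'$ (rather than $\mathrm{Z}(G)$) is a torus.
\end{itemize}
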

	\begin{proof}
		Arguing as in Proposition \ref{if alpha is not multiple of odd}, we obtain that $\mathfrak{G}_{\bar 1}^T$ is a trivial $G_{\alpha}$-module for arbitrary $\alpha\in\Delta_{\bar 0}$. Thus, \cite[Theorem 21.11(f)]{milne} implies that $\mathfrak{G}_{\bar 1}^T$ is a trivial $G$-submodule of $\mathfrak{G}_{\bar 1}$, hence again $[\mathfrak{G}_{\bar 1}^T, \mathfrak{G}_{\bar 1}^T]\subseteq\mathrm{Lie}(\mathrm{Z}(G))$. Further, Lemma \ref{structure of centralizer}(2) implies that for any $\alpha\in\Delta_{\bar 1}$ the Harish-Chandra pair of
		centralizer $\mathbb{G}_{\alpha}$ is $(T, (\oplus_{\beta\in\mathbb{Q}\alpha\cap\Delta_{\bar 1}}\mathfrak{G}_{\bar 1}^{\beta})\oplus\mathfrak{G}_{\bar 1}^T)$. Comparing the weights, we obtain $[\mathfrak{G}_{\bar 1}^{\alpha}, \mathfrak{G}_{\bar 1}^T]=0$. 
		
		Since $\mathrm{Z}(G)\leq T$, then $\mathrm{Z}(G)$ is a central subgroup of $\mathbb{H}$, such that $\mathbb{H}/\mathrm{Z}(G)\simeq (\mathbb{G}_a^-)^{\dim\mathfrak{G}_{\bar 1}^T}$ is an abelian supergroup.  
		
		Finally, if $\mathrm{Z}(G)$ is a torus, then $\mathrm{R}(G)=\mathrm{Z}(G)$ by \cite[Corollary 17.62]{milne}.
	\end{proof}
	Recall that for any $\alpha\in\Delta$, such that $\mathbb{Q}\alpha\cap\Delta_{\bar 1}\neq\emptyset$, we have the induced Harish-Chandra pair structure on $(\overline{G_{\alpha}}, \mathfrak{G}_{\bar 1}^{T_{\alpha}})$ (see Section 2). The kernel of $\mathfrak{G}_{\bar 1}^{T_{\alpha}}$, associated with this structure, is also a $G_{\alpha}$-module. Furthermore, it can be defined as the largest $G_{\alpha}$-submodule $N$ of $\mathfrak{G}_{\bar 1}^{T_{\alpha}}$, such that $[\mathfrak{G}_{\bar 1}^{T_{\alpha}}, N]\subseteq\mathrm{Lie}(T')$. We denote it by $K'(\mathfrak{G}_{\bar 1}^{T_{\alpha}})$ 
	\begin{pr}\label{SL_2}
		Let $\alpha\in\Delta_{\bar 0}\cap\mathbb{Q}\Delta_{\bar 1}$ and $\overline{G_{\alpha}}\simeq\mathrm{SL}_2$ or $\overline{G_{\alpha}}\simeq\mathrm{PSL}_2$.	Then the subpair $(T_{\alpha}, K'(\mathfrak{G}_{\bar 1}^{T_{\alpha}}))$ corresponds to $\mathrm{R}(\mathbb{G}_{\alpha})$ and $\mathbb{G}_{\alpha}/\mathrm{R}(\mathbb{G}_{\alpha})$ is isomorphic to one of the reductive supergroups from Theorem \ref{or ... } or Theorem \ref{general for PSL_2} respectively.
	\end{pr}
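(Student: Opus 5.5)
We work throughout with the Harish-Chandra pair of $\mathbb{G}_{\alpha}$, which by Lemma \ref{structure of centralizer} is $(G_{\alpha}, \mathfrak{G}_{\bar 1}^{T_{\alpha}})$, where $G_{\alpha}=\overline{G_{\alpha}}\times T'$ and $T'=T_{\alpha}$ (shown in Section 2 for the $\mathrm{SL}_2$ and $\mathrm{PSL}_2$ cases). Write $N=K'(\mathfrak{G}_{\bar 1}^{T_{\alpha}})$. The plan has three steps.

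\textbf{Step 1: $(T_{\alpha}, N)$ is a normal subpair.} We check the normality criterion.
\begin{enumerate}
\item $T_{\alpha}$ is central in $G_{\alpha}$, so it is normal.
\item $N$ is a $G_{\alpha}$-submodule by construction.
\item $T_{\alpha}$ acts trivially on all of $\mathfrak{G}_{\bar 1}^{T_{\alpha}}$, so it acts trivially on the quotient.
\item By definition of $N$, $[\mathfrak{G}_{\bar 1}^{T_{\alpha}}, N]\subseteq \mathrm{Lie}(T')=\mathrm{Lie}(T_{\alpha})$.
\end{enumerate}
We also need the subpair axioms themselves, namely $[N,N]\subseteq \mathrm{Lie}(T_{\alpha})$, which is immediate. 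Hence $(T_{\alpha}, N)$ defines a normal closed supersubgroup $\mathbb{R}$. It is smooth and connected because $T_{\alpha}$ is a torus. It is solvable by \cite[Corollary 6.4]{maszub1}, as in Remark \ref{if in the above char=0} and Proposition \ref{if alpha is not multiple of odd}: $T_{\alpha}$ acts trivially on $N$, so $\mathbb{R}$ is a central extension of the purely odd abelian supergroup with pair $(e,N)$. Therefore $\mathbb{R}\leq \mathrm{R}(\mathbb{G}_{\alpha})$.

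\textbf{Step 2: identify the quotient.} The pair of $\mathbb{G}_{\alpha}/\mathbb{R}$ is $(G_{\alpha}/T_{\alpha}, \mathfrak{G}_{\bar 1}^{T_{\alpha}}/N)\simeq(\overline{G_{\alpha}}, \mathfrak{G}_{\bar 1}^{T_{\alpha}}/N)$. The bracket on this quotient is $[\ ,\ ]_1$, the $\mathrm{Lie}(\overline{G_{\alpha}})$-component of the original bracket. Its kernel, in the sense of Section 3, is exactly the image of the largest submodule $N$ with $[\mathfrak{G}_{\bar 1}^{T_{\alpha}},N]_1=0$, and this kernel is zero. One small point to check is that the largest $\overline{G_{\alpha}}$-submodule killed by $[\ ,\ ]_1$ is automatically $G_{\alpha}$-stable; this holds because $T'$ acts trivially. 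So $\mathbb{G}_{\alpha}/\mathbb{R}$ is a reductive supergroup with even part $\overline{G_{\alpha}}$, equal to its own quotient by $\mathrm{R}_u$. By Theorem \ref{or ... } (respectively Theorem \ref{general for PSL_2}), its odd part is one of $0$, $L(1)$, $L(0)\oplus L(2)$, $V(3)$ (respectively $0$, $L(0)\oplus L(2)$).

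\textbf{Step 3: maximality, $\mathbb{R}=\mathrm{R}(\mathbb{G}_{\alpha})$.} This is the step I expect to be the main obstacle. The image of $\mathrm{R}(\mathbb{G}_{\alpha})$ in $\mathbb{Q}=\mathbb{G}_{\alpha}/\mathbb{R}$ is a normal smooth connected solvable supersubgroup, so it suffices to show $\mathrm{R}(\mathbb{Q})=e$. Let $(S,W)$ be the pair of $\mathrm{R}(\mathbb{Q})$. Then $S$ is a normal smooth connected solvable subgroup of $\overline{G_{\alpha}}$, hence trivial. Consequently $W$ is a submodule with $[\mathfrak{Q}_{\bar 1},W]\subseteq\mathrm{Lie}(S)=0$, so $W\subseteq K=0$, and therefore $\mathrm{R}(\mathbb{Q})=e$. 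Here one must be careful that $\mathrm{R}(\mathbb{Q})_{ev}$ is normal in $\overline{G_{\alpha}}$; this follows from the normality criterion. One must also check that the even part of a solvable smooth connected supergroup is solvable, which is the direction of \cite[Corollary 6.4]{maszub1} we need. Given these, $\mathbb{R}=\mathrm{R}(\mathbb{G}_{\alpha})$, and the quotient is one of the listed reductive supergroups.
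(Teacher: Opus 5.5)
Your proposal is correct and follows the same route as the paper: the normality criterion makes $(T_{\alpha}, K'(\mathfrak{G}_{\bar 1}^{T_{\alpha}}))$ a normal smooth connected solvable supersubgroup, and Theorem~\ref{or ... } (resp.\ Theorem~\ref{general for PSL_2}) identifies the quotient. Your Step 3 only spells out what the paper dismisses as ``obviously semi-simple'': the even part of $\mathrm{R}(\mathbb{Q})$ is a connected solvable normal subgroup of $\mathrm{SL}_2$ or $\mathrm{PSL}_2$, hence trivial, so its odd part lies in the zero kernel and $\mathrm{R}(\mathbb{Q})=e$.
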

	\begin{proof}
		By normality criterion the subpair $(T_{\alpha}, K'(\mathfrak{G}_{\bar 1}^{T_{\alpha}}))$ corresponds to a normal supersubgroup $\mathbb{H}$. Moreover, $\mathbb{H}$ is smooth, connected and solvable. Using Theorem \ref{or ... }, we conclude that $\mathbb{G}_{\alpha}/\mathbb{H}$ is isomorphic to one of the following supergroups : 
		$\mathrm{SL}_2$, $\mathrm{SpO}(2|1)$,  $\mathbb{H}(3/1)$ (if $\mathrm{char}\Bbbk=3$), or $\mathbb{H}(0\oplus 2)$. Since all of them are obviously semi-simple, the first statement follows. The case $\overline{G_{\alpha}}\simeq\mathrm{PSL}_2$ is similar. 
	\end{proof}
	\begin{pr}\label{GL_2}
		Let $\alpha\in\Delta_{\bar 0}\cap\mathbb{Q}\Delta_{\bar 1}$ and $\overline{G_{\alpha}}\simeq\mathrm{GL}_2$. The following statements hold :
		\begin{enumerate}
			\item If $\overline{\mathbb{G}_{\alpha}}/\mathrm{R}_u(\overline{\mathbb{G}_{\alpha}})$ is isomorphic to a supergroup from Proposition \ref{when [g, g] is  z}, then $\mathrm{R}(\mathbb{G}_{\alpha})$ is represented by the Harish-Chandra subpair $(T_{\alpha}, \mathfrak{G}_{\bar 1}^{T_{\alpha}})$, so that $\mathbb{G}_{\alpha}/\mathrm{R}(\mathbb{G}_{\alpha})\simeq\mathrm{PGL}_2$;
			\item If $\overline{\mathbb{G}_{\alpha}}/\mathrm{R}_u(\overline{\mathbb{G}_{\alpha}})$ is isomorphic to a supergroup from Proposition \ref{if [g, g] is sl_2} or from Proposition \ref{[g, g] is gl_2}, then $\mathrm{R}(\mathbb{G}_{\alpha})$ is
			represented by the Harish-Chandra subpair $(T_{\alpha}, K'(\mathfrak{G}_{\bar 1}^{T_{\alpha}}))$, so that $\mathbb{G}_{\alpha}/\mathrm{R}(\mathbb{G}_{\alpha})\simeq\mathbb{H}(0\oplus 2)/\mu_2$.
		\end{enumerate} 	
	\end{pr}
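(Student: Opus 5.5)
I would model the argument on Propositions \ref{if alpha is not multiple of odd} and \ref{SL_2}: exhibit a Harish-Chandra subpair of $(G_{\alpha}, \mathfrak{G}_{\bar 1}^{T_{\alpha}})$, check it is normal, check that it is solvable, smooth and connected, and show that the quotient is semisimple.

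In case (1), write $G_{\alpha}=\overline{G_{\alpha}}\times T'$, with the center $Z$ of $\overline{G_{\alpha}}\simeq\mathrm{GL}_2$, so that $T_{\alpha}=Z\times T'$. Under the hypothesis, $[\mathfrak{G}_{\bar 1}^{T_{\alpha}},\mathfrak{G}_{\bar 1}^{T_{\alpha}}]_1$ lies in $\mathfrak{z}$ modulo the kernel. By Proposition \ref{when [g, g] is  z}, all of $[\ ,\ ]_1$ then lies in $\mathfrak{z}$, since kernel elements bracket into $\mathrm{Lie}(T')$ already. Hence $[\mathfrak{G}_{\bar 1}^{T_{\alpha}},\mathfrak{G}_{\bar 1}^{T_{\alpha}}]\subseteq\mathfrak{z}\oplus\mathfrak{t}'=\mathrm{Lie}(T_{\alpha})$.

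The normality criterion for $(T_{\alpha},\mathfrak{G}_{\bar 1}^{T_{\alpha}})$ then needs four things. First, $T_{\alpha}$ is central, hence normal. Second, the submodule is everything, so condition (3) is vacuous. Third, the bracket condition was just shown. So $(T_{\alpha},\mathfrak{G}_{\bar 1}^{T_{\alpha}})$ gives a normal $\mathbb{H}$. It is smooth and connected because its even part is a torus. It is solvable by \cite[Corollary 6.4]{maszub1}, since its even part is solvable. The quotient has pair $(G_{\alpha}/T_{\alpha},0)\simeq(\mathrm{PGL}_2,0)$, which is semisimple. Any larger normal solvable smooth connected supersubgroup would map onto a nontrivial one in $\mathrm{PGL}_2$, which is impossible, so $\mathbb{H}=\mathrm{R}(\mathbb{G}_{\alpha})$.

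In case (2), I take the subpair $(T_{\alpha},K'(\mathfrak{G}_{\bar 1}^{T_{\alpha}}))$. The first check is that $[\mathfrak{G}_{\bar 1}^{T_{\alpha}},K']\subseteq\mathrm{Lie}(T_{\alpha})$. By definition, $K'$ brackets into $\mathrm{Lie}(T')$ modulo the $\overline{G_{\alpha}}$-part. Here I should double-check that the intended kernel is the one for the image in $\overline{\mathbb{G}_{\alpha}}/Z$, that is, brackets landing in $\mathfrak{z}\oplus\mathfrak{t}'$. I would therefore read $K'$ as the largest $G_{\alpha}$-submodule $N$ with $[\mathfrak{G}_{\bar 1}^{T_{\alpha}},N]\subseteq\mathrm{Lie}(T_{\alpha})$; this is exactly $N_k\oplus N_{-k}$ in Propositions \ref{N_k is nonzero and N_{-k} is zero} and \ref{N_k and N_{-k} are not zero}.

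The second check, condition (3), is that $T_{\alpha}$ acts trivially on the quotient $\mathfrak{G}_{\bar 1}^{T_{\alpha}}/K'$. Modulo $K'$, the structure results of Section 5 (Propositions \ref{if [g, g] is sl_2}, \ref{[g, g] is gl_2}, \ref{N_k is nonzero and N_{-k} is zero}, \ref{N_k and N_{-k} are not zero}, \ref{GL_2 and k=0}) leave $L((-t,-t))\oplus L((t+1,t-1))$ with the bracket of Remark \ref{simplifying (2)}. The center $Z$ acts on it through $\det^{\pm t}$. This is the main obstacle, since it requires $t=0$ or at least that $Z$ act trivially. I would settle it using the observation in Section 2 that the center of $\overline{G_{\alpha}}\simeq\mathrm{GL}_2$ acts trivially on $\mathfrak{G}_{\bar 1}^{T_{\alpha}}$, which forces $k=0$, so everything lies in $M_0$. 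The same observation rules out the $p\nmid k$ case of Proposition \ref{when G_1=M_k+M_{-k}}.

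With that settled, the quotient pair is $(\mathrm{PGL}_2, L(0)\oplus L(2))$ with the bracket of Remark \ref{simplifying (2)}. This is $\mathbb{H}(0\oplus 2)/\mu_2$ by Theorem \ref{general for PSL_2}. That supergroup is semisimple, and, as in Proposition \ref{SL_2}, maximality of the solvable radical finishes the proof.
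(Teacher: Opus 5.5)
Your proposal is correct and is essentially the paper's argument. In both, the key point is that $Z\le T_{\alpha}$ acts trivially on $\mathfrak{G}_{\bar 1}^{T_{\alpha}}$, so $\mathfrak{G}_{\bar 1}^{T_{\alpha}}=M_0$. After that, the relevant brackets land in $\mathrm{Lie}(T_{\alpha})$ (all of them in case (1), those against $K'$ in case (2)), the normality criterion applies, and the quotients are $\mathrm{PGL}_2$ and $\mathbf{Q}(2;a,c)/Z\simeq\mathbb{K}(0)/Z\simeq\mathbb{H}(0\oplus 2)/\mu_2$.

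Some minor remarks. Condition (3) is not an obstacle, since $T_{\alpha}$ fixes $\mathfrak{G}_{\bar 1}^{T_{\alpha}}$ pointwise by definition. The citations of $N_k\oplus N_{-k}$ and of the $p\nmid k$ case are moot once $k=0$. Your reading of $K'$ via $\mathrm{Lie}(T_{\alpha})$ agrees in case (2) with the paper's definition via $\mathrm{Lie}(T')$. This holds because neither $\mathbf{Q}(2;a,c)$ nor $\mathbb{K}(0)$ has a nonzero odd submodule that brackets everything into $\mathfrak{z}$.
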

	\begin{proof}
		Recall that in the notations of the previous section, $\mathfrak{G}_{\bar 1}^{T_{\alpha}}=M_0$. Thus $[\mathfrak{G}_{\bar 1}^{T_{\alpha}}, \mathfrak{G}_{\bar 1}^{T_{\alpha}}]\subseteq\mathrm{Lie}(T_{\alpha})$ in the first case, and $[\mathfrak{G}_{\bar 1}^{T_{\alpha}}, K'(\mathfrak{G}_{\bar 1}^{T_{\alpha}})]\subseteq\mathrm{Lie}(T_{\alpha})$ in the second one. It remains to note that by Proposition \ref{GL_2 and k=0}, ${\bf Q}(2; a, c)/Z$ is isomorphic to $\mathbb{H}(0\oplus 2)/\mu_2$, where $Z$ is the center of
		${\bf Q}(2; a, c)_{ev}\simeq\mathrm{GL}_2$.	
	\end{proof}
	\begin{rem}\label{odd root}
		If $\alpha\in\Delta_{\bar 1}\setminus\mathbb{Q}\Delta_{\bar 0}$, then $\mathbb{G}_{\alpha}=\mathrm{R}(\mathbb{G}_{\alpha})$.	
	\end{rem}
	It is long known that centralizers of tori in reductive algebraic groups are reductive as well (cf. \cite[Corollary 17.59]{milne}). We believe that this statement remains true in the category of algebraic supergroups.
	\begin{hyp}\label{hyp about reductivness}
		Let $\mathbb{G}$ be a quasi-reductive and reductive supergroup, and let $S$ be a torus in $\mathbb{G}_{ev}=G$. Then $\mathrm{Cent}_{\mathbb{G}}(S)$ is a reductive supergroup also. 	
	\end{hyp} 
	As we have already noted, $\mathrm{R}_u(\mathbb{G}_{\alpha})\leq \mathrm{R}_u(\mathrm{R}(\mathbb{G}_{\alpha}))$. Thus immediately follows that under conditions of Proposition \ref{if alpha is not multiple of odd} the supergroup $\mathbb{G}_{\alpha}$ is reductive if and only if $\mathrm{R}(\mathbb{G}_{\alpha})$ is if and only if $K(\mathfrak{G}_{\bar 1}^T)=0$. Similarly, under conditions of Proposition \ref{GL_2}(1) the supergroup $\mathbb{G}_{\alpha}$ is reductive if and only if $\mathrm{R}(\mathbb{G}_{\alpha})$ is. In the rest cases the reductivity of $\mathbb{G}_{\alpha}$ does not necessarily entail the reductivity of its solvable radical (but the converse obviously takes place). 
	\begin{pr}\label{about K(g_1^T)}
		We have $[K(\mathfrak{G}_{\bar 1}^T), \mathfrak{G}_{\bar 1}]=0$, or equivalently, $K(\mathfrak{G}_{\bar 1}^T)=K(\mathfrak{G}_{\bar 1})\cap\mathfrak{G}_{\bar 1}^T=K(\mathfrak{G}_{\bar 1})^T$. In particular, if $\mathbb{G}$ is reductive, then $K(\mathfrak{G}_{\bar 1}^T)=0$ and the Cartan supergroup $\mathbb{T}$ is reductive as well.	
	\end{pr}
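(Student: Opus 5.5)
The plan is to reduce the statement to rank-one centralizers and then use the classification of Sections 3--5. Fix $x\in K(\mathfrak{G}_{\bar 1}^T)$; note that $x$ has $T$-weight $0$. It suffices to show $[x,\mathfrak{G}_{\bar 1}^{\beta}]=0$ for every $\beta\in\Delta_{\bar 1}$, since $[x,\mathfrak{G}_{\bar 1}^T]=0$ holds by definition. For $y\in\mathfrak{G}^{\beta}_{\bar 1}$ we have $[x,y]\in\mathfrak{G}_{\bar 0}^{\beta}$, so $[x,y]=0$ automatically unless $\beta\in\Delta_{\bar 0}\cap\Delta_{\bar 1}$. The two equivalent forms of the statement then follow at once, because $K(\mathfrak{G}_{\bar 1})^T\subseteq K(\mathfrak{G}_{\bar 1}^T)$ trivially. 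The final claim about $\mathbb{T}$ is immediate from its Harish-Chandra pair $(T,\mathfrak{G}_{\bar 1}^T)$: the unipotent radical of $\mathbb{T}$ corresponds to the largest $T$-submodule $\mathfrak{R}$ of $\mathfrak{G}_{\bar 1}^T$ with $[\mathfrak{G}_{\bar 1}^T,\mathfrak{R}]=0$, i.e. to $K(\mathfrak{G}_{\bar 1}^T)$.

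So let $\beta\in\Delta_{\bar 0}\cap\Delta_{\bar 1}$ and pass to $\mathbb{G}_{\beta}=\mathrm{Cent}_{\mathbb{G}}(T_{\beta})$. By Lemma \ref{structure of centralizer} its Harish-Chandra pair is $(G_{\beta},\mathfrak{G}_{\bar 1}^{T_{\beta}})$, and $\mathfrak{G}_{\bar 1}^{\beta}\oplus\mathfrak{G}_{\bar 1}^T\subseteq\mathfrak{G}_{\bar 1}^{T_{\beta}}$. As in Section 2, write $G_{\beta}=\overline{G_{\beta}}\times T'$, so that $[\ ,\ ]=[\ ,\ ]_1+[\ ,\ ]_2$. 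Then $\overline{\mathbb{G}_{\beta}}=\mathbb{G}_{\beta}/T'$ has Harish-Chandra pair $(\overline{G_{\beta}},\mathfrak{G}_{\bar 1}^{T_{\beta}})$ with bracket $[\ ,\ ]_1$, and its kernel is $K'=K'(\mathfrak{G}_{\bar 1}^{T_{\beta}})$. The key claim is $x\in K'$. Granting it, $[x,y]\in\mathrm{Lie}(T')\cap\mathfrak{G}_{\bar 0}^{\beta}=0$, because $\mathrm{Lie}(T')\subseteq\mathfrak{t}$ has weight $0$ while $\beta\neq 0$.

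To prove $x\in K'$, look at the image $\bar x$ of $x$ in $\overline{\mathfrak{G}}_{\bar 1}=\mathfrak{G}_{\bar 1}^{T_{\beta}}/K'$. This quotient carries the reductive Harish-Chandra pair structure of $\overline{\mathbb{G}_{\beta}}/\mathrm{R}_u(\overline{\mathbb{G}_{\beta}})$. The element $\bar x$ has $T$-weight $0$ and satisfies $[\bar x,\overline{\mathfrak{G}}_{\bar 1}^{T}]=0$, where $\overline{\mathfrak{G}}_{\bar 1}^{T}$ denotes the weight-zero subspace; this holds because $T$ is linearly reductive, so weight-zero vectors of the quotient lift to $\mathfrak{G}_{\bar 1}^T$. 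It therefore suffices to check the following property in every model from Theorem \ref{or ... }, Theorem \ref{general for PSL_2} and Section 5: a weight-zero vector that brackets trivially with all weight-zero vectors must be $0$.

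For $L(1)$ and $V(3)$ there are no weight-zero vectors at all, since all weights are odd. For $L(0)\oplus L(2)$, take $z=\lambda b+\mu H$; then $[z,b]=\mu aH$ and $[z,H]=\lambda aH$, which forces $\lambda=\mu=0$. The Harish-Chandra pairs of ${\bf Q}(2;a,c)$ and $\mathbb{K}(t)$ (Propositions \ref{GL_2 and k=0} and \ref{if [g, g] is sl_2}) are handled by the same $2\times 2$ computation on the span of $I_2$ and $H$. The cases $\mathbb{S}(t)$, $\mathbb{L}(t)$ and Proposition \ref{N_k and N_{-k} are not zero} do not arise: the center of $\overline{G_{\beta}}\simeq\mathrm{GL}_2$ acts trivially on $\mathfrak{G}_{\bar 1}^{T_{\beta}}$, so only $k=0$ occurs. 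The purely central case of Proposition \ref{when [g, g] is  z} needs a nondegeneracy argument for $\langle\ ,\ \rangle$ restricted to weight zero, using $M_0\simeq M_0^*$.

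I expect the main obstacle to be exactly this case-by-case verification. In particular, one has to check that after dividing by $K'$ the weight-zero part of each reductive model pairs nondegenerately with itself, and also treat carefully the $\mathrm{GL}_2$ cases where $\mathfrak{z}$-valued pairings appear.
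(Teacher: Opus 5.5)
Your proposal is correct and is essentially the paper's own argument: reduce to $\beta\in\Delta_{\bar 0}\cap\Delta_{\bar 1}$, show that the image of $x$ vanishes in the reductive quotient $\mathfrak{G}_{\bar 1}^{T_\beta}/K'(\mathfrak{G}_{\bar 1}^{T_\beta})$ by inspecting its weight-zero part, and conclude $[x,y]\in\mathrm{Lie}(T')\cap\mathfrak{G}^{\beta}_{\bar 0}=0$. You are more careful than the paper, whose proof silently omits the case where the induced bracket is $\mathfrak{z}$-valued; that case needs no nondegeneracy argument, though, because a $\mathfrak{z}$-valued contribution has weight $0$ and so cannot lie in $\mathfrak{G}^{\beta}_{\bar 0}$ (more generally, projecting $\mathfrak{gl}_2\to\mathfrak{gl}_2/\mathfrak{z}$ and invoking Theorem \ref{general for PSL_2} lets you bypass the $\mathrm{GL}_2$ case list entirely).
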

	\begin{proof}
		Let $\alpha\in\Delta$ be a root, such that $[K(\mathfrak{G}_{\bar 1}^T), \mathfrak{G}^{\alpha}_{\bar 1}]\neq 0$. Since 	$[K(\mathfrak{G}_{\bar 1}^T), \mathfrak{G}^{\alpha}_{\bar 1}]\subseteq\mathfrak{G}_{\bar 0}^{\alpha}$, it implies $\alpha\in\Delta_{\bar 0}\cap\Delta_{\bar 1}$. If the space $\mathfrak{G}_{\bar 1}^T$ is not entirely contained in the kernel of
		the Harish-Chandra pair $(\overline{G_{\alpha}}, \mathfrak{G}_{\bar 1}^{T_{\alpha}})$, then $\mathfrak{G}_{\bar 1}^{T_{\alpha}}/K(\mathfrak{G}_{\bar 1}^{T_{\alpha}})$ is isomorphic either to $L(0)\oplus L(2)$ or to $L((0, 0))\oplus L((1, -1))$ (in the latter case $\overline{G_{\alpha}}\simeq\mathrm{GL}_2$). But in both cases $K(\mathfrak{G}_{\bar 1}^T)$ is equal to zero
		modulo the kernel. In particular,  $[K(\mathfrak{G}_{\bar 1}^T), \mathfrak{G}^{\alpha}_{\bar 1}]\subseteq \mathrm{Lie}(T')\cap\mathfrak{G}_{\bar 0}^{\alpha}=0$, and this contradiction concludes the proof. 
	\end{proof}
	The problem of describing centralizers of tori can be modified into the problem of describing centralizers of super-tori in the case where the Cartan supergroup is not purely even. Observe that for any $\alpha\in\Delta$, $T_{\alpha}$ can be defined as $\ker(T\stackrel{\mathrm{Ad}}{\to}\mathrm{GL}(\mathfrak{G}^{\alpha}))^0$. Note also that $\mathfrak{G}^{\alpha}$ is a $\mathbb{T}$-supersubmodule of $\mathfrak{G}$. In fact, the supersubgroup $\mathrm{Stab}_{\mathbb{G}}(\mathfrak{G}^{\alpha})$ contains $T$ and its Lie superalgebra contains $\mathfrak{G}_{\bar 1}^T$.
	
	Following this idea, we set \[\mathbb{T}_{\alpha}=\ker(\mathbb{T}\stackrel{\mathrm{Ad}}{\to}\mathrm{GL}(\mathfrak{G}^{\alpha}))^0, \alpha\in\Delta .\]
	\begin{lm}
		Let $\alpha\in\Delta$. Then the Harish-Chandra pair of $\mathbb{T}_{\alpha}$ is isomorphic to $(T_{\alpha}, \{v\in \mathfrak{G}_{\bar 1}^T\mid [v, \mathfrak{G}^{\alpha}]=0\})$.	
	\end{lm}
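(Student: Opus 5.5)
We will compute the kernel $\mathbb{K}=\ker(\mathbb{T}\stackrel{\mathrm{Ad}}{\to}\mathrm{GL}(\mathfrak{G}^{\alpha}))$ through Harish-Chandra pairs and then pass to the identity component. The functor $\mathbb{G}\mapsto(G,\mathfrak{G}_{\bar 1})$ is an equivalence, and the Harish-Chandra pair of $\mathbb{T}$ is $(T,\mathfrak{G}_{\bar 1}^T)$. So it suffices to identify the even part of $\mathbb{K}$ and its odd Lie superalgebra component.

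\emph{Even part.} We have $\mathbb{K}_{ev}=\ker(T\to\mathrm{GL}(\mathfrak{G}^{\alpha}))$. The torus $T$ acts on $\mathfrak{G}^{\alpha}=\mathfrak{G}^{\alpha}_{\bar 0}\oplus\mathfrak{G}^{\alpha}_{\bar 1}$ by the single character $\alpha$. Hence this kernel is $\ker(\alpha)$, and its identity component is $\ker(\alpha)^0=T_{\alpha}$, as observed in Section 2 using $\mathrm{char}\Bbbk\neq 2$. In general $\ker(\alpha)^0=(\ker\alpha)_t$ for any nonzero character.

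\emph{Odd part.} The differential of the representation $\mathbb{T}\to\mathrm{GL}(\mathfrak{G}^{\alpha})$ is the restriction of $\mathrm{ad}$ to $\mathrm{Lie}(\mathbb{T})=\mathfrak{t}\oplus\mathfrak{G}_{\bar 1}^T$. The Lie superalgebra of a kernel is the kernel of the differential, which follows from the standard fact that $\mathrm{Lie}$ commutes with fibre products, or directly via $\mathbb{K}(\Bbbk[\epsilon])$ with odd $\epsilon$. Therefore $\mathrm{Lie}(\mathbb{K})_{\bar 1}=\{v\in\mathfrak{G}_{\bar 1}^T\mid [v,\mathfrak{G}^{\alpha}]=0\}$.

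It remains to check that passing to $\mathbb{K}^0$ keeps the odd part and replaces the even group by its identity component. For an algebraic supergroup, $\mathbb{K}^0$ corresponds to the pair $(K^0,\mathfrak{K}_{\bar 1})$, since the odd directions live over the identity component. Thus the pair of $\mathbb{T}_{\alpha}$ is $(T_{\alpha},\{v\in\mathfrak{G}_{\bar 1}^T\mid[v,\mathfrak{G}^{\alpha}]=0\})$.

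As a sanity check, this is indeed a Harish-Chandra subpair. The subspace is $T$-stable, hence $T_{\alpha}$-stable. Moreover, the bracket of two such vectors lies in $\mathfrak{t}$ and kills $\mathfrak{G}^{\alpha}$ by the super Jacobi identity. Such an element $h\in\mathfrak{t}$ satisfies $\mathrm{d}\alpha(h)=0$, so it lies in $\mathrm{Lie}(\ker\alpha)=\mathrm{Lie}(T_{\alpha})$.

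The main obstacle is the rigorous step identifying the Lie superalgebra of a kernel with the kernel of the differential, together with its compatibility with taking the identity component. There is a subtlety when $p\mid\alpha$, where $\ker\alpha$ is non-reduced. I would handle this by computing the odd part directly from the Hopf superideal defining the kernel, evaluated on $\Bbbk[\epsilon]$, and by citing that the even part of a kernel is the kernel of the even parts.
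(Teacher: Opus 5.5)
Your argument is the paper's own proof: the even part of $\ker(\mathbb{T}\stackrel{\mathrm{Ad}}{\to}\mathrm{GL}(\mathfrak{G}^{\alpha}))$ is $\ker(\alpha)$, whose identity component is identified with $T_{\alpha}$. The odd part of its Lie superalgebra is the kernel of $\mathrm{ad}$ on $\mathfrak{G}_{\bar 1}^T$, namely $\{v\in\mathfrak{G}_{\bar 1}^T\mid [v,\mathfrak{G}^{\alpha}]=0\}$, and it is unchanged by passing to the identity component.

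One caveat concerns your side claim that $\ker(\alpha)^0=(\ker\alpha)_t$ for every nonzero character. This is false when $\alpha$ is divisible by $p$ in $X(T)$: then $\ker(\alpha)^0$ contains an infinitesimal $\mu_{p^e}$, and $\mathrm{Lie}(\ker\alpha)\neq\mathrm{Lie}(T_{\alpha})$, so your sanity check also breaks. That subtlety lives in the even part, not the odd part, so your Hopf-superideal computation of the odd part would not resolve it. Like the paper, you are really relying on the identification $T_{\alpha}=\ker(\alpha)^0$ asserted in Section 2, which holds only when $p\nmid\alpha$ in $X(T)$.
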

	\begin{proof}
		It is clear that $\ker(\mathbb{T}\stackrel{\mathrm{Ad}}{\to}\mathrm{GL}(\mathfrak{G}^{\alpha}))_{ev}=\ker(\alpha)$, hence $(\mathbb{T}_{\alpha})_{ev}=T_{\alpha}$. It remains to note that
		\[\mathrm{Lie}(\ker(\mathbb{T}\stackrel{\mathrm{Ad}}{\to}\mathrm{GL}(\mathfrak{G}^{\alpha})))_{\bar 1}=\ker(\mathrm{Lie}(\mathbb{T})\stackrel{\mathrm{ad}}{\to}\mathfrak{gl}(\mathfrak{G}^{\alpha}))_{\bar 1}=  \{v\in \mathfrak{G}_{\bar 1}^T\mid [v, \mathfrak{G}^{\alpha}]=0\}.\]
	\end{proof}
	Applying \cite[Theorem 6.6(2)]{mas-shib} again, we obtain that the Harish-Chandra pair of $\widehat{\mathbb{G}}_{\alpha}=\mathrm{Cent}_{\mathbb{G}}(\mathbb{T}_{\alpha})$ has a form
	\[(\mathrm{Cent}_{G_{\alpha}}(W_{\alpha}), \{v\in \mathfrak{G}_{\bar 1}^{T_{\alpha}}\mid [v, W_{\alpha}]=0\}),\] where $W_{\alpha}=\{v\in \mathfrak{G}_{\bar 1}^T\mid [v, \mathfrak{G}^{\alpha}]=0\}$. It is clear that $T\leq\mathrm{Cent}_{G_{\alpha}}(W_{\alpha})\leq G_{\alpha}$. Comparing the weights, we obtain $[\oplus_{\beta\in(\Delta_{\bar 1}\cap\mathbb{Q}\alpha)\setminus\{\pm\alpha\}}\mathfrak{G}^{\beta}_{\bar 1}, \mathfrak{G}_{\bar 1}^T]=0$, that in turn implies
	\[\{v\in \mathfrak{G}_{\bar 1}^{T_{\alpha}}\mid [v, W_{\alpha}]=0\}=(\oplus_{\beta\in\Delta_{\bar 1}\cap\mathbb{Q}\alpha}\mathfrak{G}^{\beta}_{\bar 1})\oplus \{v\in \mathfrak{G}_{\bar 1}^T\mid [v, W_{\alpha}]=0 \}.\]
	Let $\alpha\in\Delta_{\bar 0}\setminus\mathbb{Q}\Delta_{\bar 1}$. By Proposition \ref{if alpha is not multiple of odd}, $\mathfrak{G}_{\bar 1}^{T_{\alpha}}=\mathfrak{G}_{\bar 1}^T$ is
	a trivial $G_{\alpha}$-module. Thus $W_{\alpha}=\mathfrak{G}_{\bar 1}^T$ and the Harish-Chandra pair of $\widehat{\mathbb{G}}_{\alpha}$ is isomorphic to
	$(G_{\alpha}, K(\mathfrak{G}_{\bar 1}^T))$. In particular, $\widehat{\mathbb{G}}_{\alpha}\simeq G_{\alpha}\times (\mathbb{G}_a^-)^{\dim K(\mathfrak{G}_{\bar 1}^T)}$.
	This also shows that, in general, $\widehat{\mathbb{G}}_{\alpha}$ does not contain $\mathbb{T}$. For example, it takes place if $\mathbb{G}$ is reductive and $\mathfrak{G}_{\bar 1}^T\neq 0$.
	
	If $\alpha\in\Delta_{\bar 1}\setminus\mathbb{Q}\Delta_{\bar 0}$, then $\mathrm{Cent}_{G_{\alpha}}(W_{\alpha})=T$. Arguing as in Proposition \ref{if any even is not multiple of odd}, we obtain
	$W_{\alpha}=\mathfrak{G}_{\bar 1}^T$, hence the Harish-Chandra pair of $\widehat{\mathbb{G}}_{\alpha}$ is isomorphic to 
	\[(T, (\oplus_{\beta\in\Delta_{\bar 1}\cap\mathbb{Q}\alpha}\mathfrak{G}^{\beta}_{\bar 1})\oplus K(\mathfrak{G}_{\bar 1}^T)).\]
	Since $[(\oplus_{\beta\in\Delta_{\bar 1}\cap\mathbb{Q}\alpha}\mathfrak{G}^{\beta}_{\bar 1}), \mathfrak{G}_1^T]=0$, we obtain that the subpair $(e, K(\mathfrak{G}_{\bar 1}^T))$
	determines a normal purely odd supersubgroup in $\widehat{\mathbb{G}}_{\alpha}$ (which is trivial, provided $\mathbb{G}$ is reductive). 
	
	However, if $\alpha\in\Delta_{\bar 0}\cap\mathbb{Q}\Delta_{\bar 1}$, we can not say anything definite about the subspace $W_{\alpha}$ and the group $\mathrm{Cent}_{G_{\alpha}}(W_{\alpha})$. 
	Thus, even the formulation of plausible hypotheses about the structure of super-tori centralizers seems premature yet.
	
	Another interesting topic is the extension of the even Weyl group, determined only by a couple $(G, T)$, by \emph{odd reflections}, whatever the latter would mean.
	
	By \cite[Theorem 6.6(1)]{mas-shib}, the normalizers $\mathrm{N}_{\mathbb{G}}(\mathbb{T})$ and $\mathrm{N}_{\mathbb{G}_{\alpha}}(\mathbb{T})$ are represented by the Harish-Chandra pairs $(\mathrm{N}_{G}(T), \mathfrak{G}_{\bar 1}^{T})$ and 
	$(\mathrm{N}_{G_{\alpha}}(T), \mathfrak{G}_{\bar 1}^{T})$, respectively. 
	
	Similarly, the Harish-Chandra pairs of $\mathrm{Cent}_{\mathbb{G}}(\mathbb{T})$ and $\mathrm{Cent}_{\mathbb{G}_{\alpha}}(\mathbb{T})$ are both isomorphic to
	$(T, K(\mathfrak{G}_{\bar 1}^T))$. Thus the \emph{Weyl supergroups} of $\mathbb{G}$ and $\mathbb{G}_{\alpha}$, i.e. the supergroups $W(\mathbb{G}, \mathbb{T})=\mathrm{N}_{\mathbb{G}}(\mathbb{T})/\mathrm{Cent}_{\mathbb{G}}(\mathbb{T})$ and $W(\mathbb{G}_{\alpha}, \mathbb{T})=\mathrm{N}_{\mathbb{G}_{\alpha}}(\mathbb{T})/\mathrm{Cent}_{\mathbb{G}_{\alpha}}(\mathbb{T})$,  are represented by the Harish-Chandra pairs 
	$(W(G, T), \mathfrak{G}_{\bar 1}^T/K(\mathfrak{G}_{\bar 1}^T))$ and $(W(G_{\alpha}, T), \mathfrak{G}_{\bar 1}^T/K(\mathfrak{G}_{\bar 1}^T))$, respectively.
	In other words, both Weyl supergroups are not purely even if and only if $[\mathfrak{G}_{\bar 1}^T, \mathfrak{G}_{\bar 1}^T]\neq 0$.

\end{document}